    \newtheorem{Lem}{Lemma}[section]
    \newtheorem{Lem-Def}[Lem]{Lemma-Definition}
		\newtheorem{CorDef}[Lem]{Corollary-Definition}
    \newtheorem{Prop}[Lem]{Proposition}
    \newtheorem{Thm}[Lem]{Theorem}  
		\newtheorem*{Thm*}{Theorem}
    \newtheorem{Cor}[Lem]{Corollary}
\theoremstyle{definition}
\font\smallsc=cmcsc10
\font\smallsl=cmsl10
    \newtheorem{Def}[Lem]{Definition}
    \newtheorem{Exa}[Lem]{Example}
    \newtheorem{Rem}[Lem]{Remark}
    \newtheorem{Cons}[Lem]{Construction}
    \newtheorem{Not}[Lem]{Notation}
\newcommand{\ora}[1]{\overrightarrow{#1}}
\newcommand{\E}{\mathcal E}
\newcommand{\Spec}{\text{Spec}\,}
\newcommand{\red}{\text{\rm red}}
\newcommand{\A}{\textnormal{Adm}}
\newcommand{\F}{\mathcal F}
\newcommand{\Z}{\mathcal Z}
\newcommand{\I}{\mathcal I}
\newcommand{\M}{\mathcal M}
\renewcommand{\L}{\mathcal L}
\renewcommand{\O}{\mathcal O}
\newcommand{\C}{\mathcal C}
\newcommand{\X}{\mathcal X}
\newcommand{\Y}{\mathcal Y}
\newcommand{\D}{\mathcal D}
\newcommand{\R}{\mathbb R}
\newcommand{\col}{\colon}
\newcommand{\ra}{\rightarrow}
\newcommand{\ol}{\overline}
\newcommand{\supp}{\text{supp}}
\newcommand{\wh}{\widehat}
\newcommand{\J}{\mathcal{J}}
\newcommand{\lra}{\longrightarrow}
\DeclareMathOperator{\spa}{span}
\DeclareMathOperator{\st}{st}
\DeclareMathOperator{\Hom}{Hom}
\DeclareMathOperator{\codim}{codim}
\DeclareMathOperator{\DivMW}{\textbf{Div}}
\DeclareMathOperator{\aj}{aj}
\newcommand{\Tor}{\textnormal{TV}}
\renewcommand{\l}{\ell}
\newcommand{\Pos}{\mathcal{QD}_{v_0,\mu}}
\newcommand{\Aut}{\textnormal{Aut}}
\newcommand{\cone}{\textnormal{cone}}
\newcommand{\val}{\text{val}}
\renewcommand{\div}{\textnormal{div}}
\newcommand{\Div}{\text{Div}}
\newcommand{\Prin}{\text{Prin}}
\renewcommand{\Im}{\text{Im}}
\newcommand{\grap}{\mathcal{SG}}
\newcommand{\ord}{\textnormal{ord}}
\newcommand{\trop}{{\operatorname{trop}}}
\newcommand{\an}{{\operatorname{an}}}
\title[The resolution of the universal Abel map]{The resolution of the universal Abel map via tropical geometry and applications}
\author{Alex Abreu and Marco Pacini}
\begin{document}

\begin{abstract}
Let $g$ and $n$ be nonnegative integers and $\mathcal A=(a_0,\dots,a_n)$ a sequence of $n+1$  integers summing up to $d$. Let $\ol{\M}_{g,n+1}$ be the moduli space of $(n+1)$-pointed stable curves of genus $g$ and  $\ol{\J}_{\mu,g}\ra \ol{\M}_{g,1}$ be the Esteves' universal Jacobian, where $\mu$ is a universal genus-$g$ polarization of degree $d$. We give an explicit resolution of the universal Abel map $\alpha_{\mathcal A,\mu}\col \ol{\M}_{g,n+1}\dashrightarrow \ol{\J}_{\mu,g}$, taking a pointed curve $(X,p_0,\dots,p_n)$ to $\O_X(\sum_{0\le i\le n} a_ip_i)$. The blowup of $\ol{\M}_{g,n+1}$ giving rise to the resolution is  inspired by the  resolution of the tropical analogue of the map $\alpha_{\mathcal A,\mu}$ (in the category of generalized cone complexes). 
As an application, we describe the double ramification cycle in terms of the universal sheaf inducing the resolution of the map $\alpha_{\mathcal A,\mu}$.
\end{abstract}

\maketitle
\noindent MSC (2010):  14H10, 14H40, 14T05.\\
 Keywords: Geometric Abel map, tropical Abel map, double ramification cycle.

\tableofcontents

\section{Introduction}

\subsection{History and motivation}
The Abel map of a smooth curve encodes many geometric properties of the curve. For instance, the complete linear systems and the Brill-Noether varieties $W_{g,d}^0$  are, respectively, the fibers and the images of the Abel map. On the other hand, a powerful tool  for studying smooth curves consists in taking degenerations to singular curves: it was through this technique that the Brill-Noether and Gieseker-Petri theorems were proved, see \cite{G} and \cite{GH}. It is natural to expect that a construction of the Abel map for singular curves could be very useful to study the degeneration process from smooth curves to singular ones. \par

  Several works have been dedicated to Abel maps of singular curves. To our knowledge, the first construction appeared in \cite{AK} for irreducible curves. The reducible case is more complicated, principally due to the fact that, in general, the compactified Jacobian does not parametrize all the invertible sheaves on the curve. In this case it is usual to resort to a one-parameter deformation to smooth curves, and construct an Abel map as a limit of the Abel maps of the smooth fibers. This has been started in degree one in \cite{CE}, in degree two in \cite{CEP} and \cite{P}, for curves of compact type and any degree in \cite{CP}, and for curves with two components and any degree in \cite{AbCP}. The general problem is still open.\par 
	
	 Instead of considering a one-parameter deformation, one can consider the universal deformation. More precisely, let $g$ and $n$ be nonnegative integers, and $d$ be an integer. Let $\M_{g,n+1}$ be the moduli stack of  smooth $(n+1)$-pointed curves of genus $g$ and $\J_{d,g}\to \M_g$ be the universal Jacobian parametrizing degree-$d$ invertible sheaves on smooth curves of genus $g$. Given a sequence of  integers $\mathcal{A}:=(a_0,\ldots,a_n,m)$ such that $d=\sum a_i+m(2g-2)$, one can define the universal Abel map $\alpha_{\mathcal{A}}\col \M_{g,n+1}\to \J_{d,g}$ taking $[(X,p_0,\ldots,p_n)]$ to $[(X,\omega_X^m\otimes\O_X(a_0p_0+\ldots+a_np_n))]$. In the classical setting, it is usual to consider $\mathcal{A}=(1,1,\ldots,1,0)$ or $\mathcal{A}=(-n,1,\ldots,1,0)$. \par 

	Many interesting classes in $\M_{g,n+1}$ can be defined in terms of the universal Abel map such as, for instance, the double ramification cycle, which is the pullback via $\alpha_{\mathcal{A}}$ of the zero section in $\J_{0,g}$ for $d=m=0$. A natural question is how to extend these classes to the compactification $\overline{\M}_{g,n+1}$ in a meaningful way, and express them in terms of the tautological classes. In the case of the double ramification cycle, an extension of the class was defined in \cite{L1,L2, GV} and computed in  \cite{JPPZ}; previous partial results were achieved in \cite{Hai, GZ, GZ1}. 	One possible approach to extend these classes is to define an Abel map over  (a modification of) $\ol{\M}_{g,n+1}$, the Deligne-Mumford-Knudsen compactification of  $\M_{g,n+1}$. This requires the choice of  a compactified Jacobian as a target. \par

The first construction of a universal compactified Jacobian over $\overline{\M}_g$ is due to Caporaso  \cite{C} and \cite{C08}; a version over $\overline{\M}_{g,n+1}$ was recently introduced by Melo \cite{M11}. Another remarkable construction was given by Esteves \cite{Es01} for families of (pointed) curves; this construction can be carried through to the universal setting, giving rise to a universal compactified Jacobian $\ol{\J}_{\mu,g,n+1}$ over $\overline{\M}_{g,n+1}$ or, when $n=1$, simply $\ol{\J}_{\mu,g}$ over $\ol{\M}_{g,1}$. This compactified Jacobian depends on the choice of a universal genus-$g$ polarization $\mu$ of degree $d$ (see \cite{M15}). \par

	Recently the problem of resolving the universal Abel map or, in some cases, the Abel section $\M_{g,n+1}\to \J_{g,n+1}$,  has attracted a lot of attention, principally motivated, as already mentioned, by the connection with the double ramification cycle. In \cite{KP} and \cite{M15} the authors found a suitable polarization $\mu$ depending on $\mathcal{A}$ such that the Abel section extends to a map $\overline{\M}_{g,n+1}\to\ol{\J}_{\mu,g,n+1}$, while in \cite{H} and \cite{MW} a modification of the source $\overline{\M}_{g,n+1}$ was introduced to define a partial resolution of the Abel map. We refer to Section \ref{sec:comparison} for a more precise discussion. \par

	This paper is dedicated to the construction of an explicit resolution of the universal Abel map $\alpha_{\mathcal A,\mu}\col \ol{\M}_{g,n+1}\dashrightarrow\ol{\J}_{\mu,g}$. First, we resolve the analogue problem in tropical geometry. More precisely, let $M_{g,n+1}^{\trop}$ be the moduli space of stable $(n+1)$-pointed tropical curves of genus $g$, and $J_{\mu,g}^{\trop}\ra M_{g,1}^{\trop}$ be the universal tropical Jacobian, recently introduced in \cite{AP2}. We define the universal tropical Abel map $\alpha^{\trop}_{\mathcal A,\mu}\col M_{g,n+1}^{\trop}\ra J_{\mu,g}^{\trop}$, and introduce an explicit refinement of $M_{g,n+1}^{\trop}$ whose composition with $\alpha^{\trop}_{\mathcal A,\mu}$ is a morphism of generalized cone complexes. The upshot is that this refinement induces a natural blowup $\ol{\M}_{\mathcal{A},\mu}$ of $\ol{\M}_{g,n+1}$ resolving  $\alpha_{\mathcal A,\mu}$. 
As an application, we describe the double ramification cycle in terms of the universal sheaf over $\ol{\M}_{\mathcal{A},\mu}$ inducing the resolution of $\alpha_{\mathcal A,\mu}$. \par

 It seems to us that the results about the universal tropical Abel map are interesting on their own. The tropical approach already appears in \cite{MW} and implicitly in \cite{H} where the resolution of the Abel section is described via toric geometry. This paper is inspired by the work of Holmes \cite{H}: our stack $\ol{\M}_{\mathcal{A},\mu}$ is a compactification of the stack $\M^\diamond$ appearing there. 

\subsection{The results} 
Let us give more details about our results. We start with the tropical setting. Let $(X,p_0)$ be a pointed tropical curve and $J^\trop(X)$ its tropical Jacobian. There is a piecewise-linear map $X^{n+1}\to J^\trop(X)$ sending $(p_0,\ldots,p_n)$ to the class of the divisor $p_1+\ldots+p_n-np_0$. This map was extensively studied in \cite{MZ} and \cite{BF}. One can also consider the universal setting as follows. \par
 Given a sequence $\mathcal A=(a_0, \ldots , a_n , m)$ of integers and a universal genus-$g$ polarization $\mu$ of degree-$d$  such that $d=\sum a_i+m(2g-2)$, we define the universal tropical Abel map $\alpha_{\mathcal A,\mu}^{\trop}\col M_{g,n+1}^\trop\to J^\trop_{\mu,g}$ taking an $(n+1)$-pointed tropical curve $(X,p_0,\ldots,p_n)$ to the pair $(\st(X,p_0),\D)$ where $\st(X,p_0)$ is the stable model of $(X,p_0)$ (see Section \ref{sec:tropicalcurves}) and $\D$ is the unique $(p_0,\mu)$-quasistable divisor equivalent to $m\omega_X+\sum a_ip_i$ (see Theorem \ref{thm:quasistable}). The map $\alpha_{\mathcal A,\mu}^{\trop}$ is continuous but it is not, in general, a morphism of generalized cone complexes. This is the tropical analogue of the fact that the Abel map $\alpha_{\mathcal A,\mu}\col \overline{\M}_{g,n+1}\dashrightarrow\ol{\J}_{\mu,g}$ is just  rational. In Section \ref{sec:abeltrop} we construct a resolution of the map $\alpha_{\mathcal A,\mu}^{\trop}$ in the category of generalized cone complexes, in the sense of the following theorem.

\begin{Thm*}[\ref{thm:abeltrop}]
There exists an explicit refinement $\beta_{\mathcal A,\mu}^{\trop}\col M_{\mathcal{A},\mu}^\trop\to M_{g,n+1}^\trop$ such that $\alpha_{\mathcal A,\mu}^{\trop}\circ\beta_{\mathcal A,\mu}^{\trop}\col M_{\mathcal{A},\mu}^\trop\to J_{\mu,g}^\trop$ is a morphism of generalized cone complexes.
\end{Thm*}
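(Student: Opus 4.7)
The plan is to carry out the construction cone-by-cone. Fix a cone $\sigma_G$ of $M_{g,n+1}^{\trop}$ indexed by a stable $(n+1)$-pointed weighted graph $(G,v_0,\ldots,v_n,w)$, with coordinates the edge-length vector $\ell\in\R_{\geq 0}^{E(G)}$. The divisor class of $D_{\mathcal A}=m\omega_X+\sum_i a_ip_i$ on $(X,p_0,\ldots,p_n)$ depends only on the combinatorial type $G$, and by Theorem \ref{thm:quasistable} the $(v_0,\mu)$-quasistable representative $\widetilde D$ on $\st(G,v_0)$ is intrinsic to $G$ (together with $\mathcal A$ and $\mu$). The resulting map $\sigma_G\to \sigma_{(\st(G,v_0),\widetilde D)}\subset J_{\mu,g}^{\trop}$ records the edge lengths of $\st(G,v_0)$ as $\ze$-linear combinations of the lengths of $G$, so $\alpha_{\mathcal A,\mu}^{\trop}$ is already $\ze$-linear on the relative interior of each cone.

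The obstruction therefore lies in the face relations. If $\tau\subset\sigma_G$ is a face obtained by contracting some edges $S\subseteq E(G)$, the induced contraction $\st(G,v_0)\to \st(G/S,v_0)$ pushes $\widetilde D$ to a divisor in the correct linear equivalence class, but this pushforward need not be $(v_0,\mu)$-quasistable. The unique quasistable representative $\widetilde D'$ over $\sigma_{G/S}$ may then differ from the specialization of $\widetilde D$ by a chip-firing move, so $\sigma_{(\st(G/S,v_0),\widetilde D')}$ need not be a face of $\sigma_{(\st(G,v_0),\widetilde D)}$. This is precisely where cone-wise linearity fails to glue across faces.

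To resolve it, I will subdivide each $\sigma_G$ along all the walls that record a potential change of quasistable type under some contraction. Concretely, every subgraph $V\subseteq V(\st(G,v_0))$ appearing in the quasistability condition of Theorem \ref{thm:quasistable} yields a $\ze$-linear inequality in the edge lengths of $G$ via the sum-of-lengths formula for the induced metric on $\st(G,v_0)$; the walls are the hyperplanes where any such inequality degenerates to an equality. I define $M_{\mathcal A,\mu}^\trop$ cone-by-cone as the polyhedral subdivision of each $\sigma_G$ cut out by these walls. Within any sub-cone, the combinatorial type of the quasistable representative stays constant along every face of the sub-cone, so the restriction of $\alpha_{\mathcal A,\mu}^{\trop}$ factors through a single cone of $J_{\mu,g}^\trop$ and is $\ze$-linear there.

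The main obstacle I expect is to check that these local subdivisions assemble into a genuine refinement in the category of generalized cone complexes: the walls inside $\sigma_G$ must pull back to the walls already defined on every face $\sigma_{G/e}$, and must be invariant under the action of $\Aut(G,v_0,\ldots,v_n)$. This reduces to showing that the combinatorial data indexing the walls (subgraphs of $\st(G,v_0)$ together with the associated chip-firing moves) behaves functorially under graph contractions, which should follow from the intrinsic nature of $(v_0,\mu)$-quasistability. Once this compatibility is in place, $\alpha_{\mathcal A,\mu}^{\trop}\circ\beta_{\mathcal A,\mu}^{\trop}$ is by construction $\ze$-linear on every cone of the refinement, hence a morphism of generalized cone complexes.
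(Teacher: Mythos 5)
Your proposal hinges on the claim that the $(v_0,\mu)$-quasistable representative $\widetilde D$ of the class of $D_{\mathcal A}$ on $\st(G,v_0)$ is intrinsic to the combinatorial type $G$, so that $\alpha^\trop_{\mathcal A,\mu}$ is already $\ze$-linear on the relative interior of each cone $\sigma_G$, and the only obstruction lives on the faces. This is false, and it is precisely the point the construction has to address. Linear equivalence of tropical divisors depends on the edge lengths, so as the point of $\sigma_G^\circ$ varies, the quasistable representative changes its combinatorial type (its multidegree and even the set $\E$ of edges carrying an exceptional vertex). Example \ref{exa:fan} shows this concretely: for a single graph with two vertices and three edges and $D_0=(4,-4)$, the interior $\R^3_{>0}$ is cut into many pieces (Figure \ref{fig:fan}) that map into different maximal cones $\sigma_{(\Gamma,\E,D)}$ of $J^\trop_{\mu,g}$. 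The subdivision must therefore happen throughout the interiors of the cones, not just along faces.

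The walls you propose are also misidentified. The quasistability inequalities $\beta_D(V)\geq 0$ are purely combinatorial conditions on the multidegree (involving degrees, the polarization, and $\delta_V$) and make no reference to edge lengths, so there is no "sum-of-lengths formula" that turns a subcurve $V\subseteq V(\st(G,v_0))$ into a $\ze$-linear constraint on $\R_{\geq0}^{E(G)}$. What actually varies with the metric is which equivalent divisor is quasistable, and that variation is controlled by acyclic flows: for each admissible pair $(\E,\phi)\in\A_\Gamma(D_{\Gamma,\mathcal A})$, the locus where the quasistable representative of $\D_0$ has flow type $\phi$ is the cone $K_{\Gamma,\E,\phi}$ cut out by the cycle conditions \eqref{eq:C} that make $\phi$ integrable to a single-valued rational function. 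Proposition \ref{prop:union} gives the resulting partition of $\R^{E(\Gamma)}_{>0}$, Theorem \ref{thm:fan} shows these cones assemble into a fan $\Sigma_{\Gamma,D_0,\mu}$, and then Theorem \ref{thm:map} (and the reduction to $\st(\Gamma)$ via the product decomposition $K_{\Gamma,\E,\phi}=K_{\widehat{\Gamma},\E,\text{red}_{\Gamma*}(\phi)}\times\R^{E(\Gamma)\setminus E(\widehat{\Gamma})}_{\geq0}$, which uses that separating edges lie on no cycle) produces the morphism of generalized cone complexes. Your proposal, as stated, never identifies the correct walls and starts from a wrong premise about where the failure of linearity occurs, so the gap is not repairable by the compatibility check you flag at the end.
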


	The refinement $\beta_{\mathcal A,\mu}^{\trop}$ is explictly given in terms of acyclic flows on graphs (see Theorem \ref{thm:fan}). We note that refinements of $M_{g,n+1}^\trop$ correspond to blow-ups of $\ol{\M}_{g,n+1}$ and can be used to reveal geometric properties of birational modifications of $\ol{\M}_{g,n+1}$. This idea already appeared in \cite{AP1} and \cite{H}. In fact, the refinement $\beta_{\mathcal A}^{\trop}$ is closely related to the fan defined by Holmes \cite{H}: the difference is that Holmes only considers flows arising from the zero divisor, while we consider flows arising from every quasistable divisor (see Section \ref{sec:comparison}). We get the following result.
	
	\begin{Thm*}[\ref{thm:mainglobal}]
Let $\beta_{\mathcal{A},\mu}\col \ol{\M}_{\mathcal{A},\mu}\to \ol{\M}_{g,n+1}$ be the  birational morphism induced by $M^{\trop}_{\mathcal{A},\mu}$. Then $\ol{\M}_{\mathcal{A},\mu}$ is the normalization of the closure of the image of the Abel section $\ol{\M}_{g,n+1}\dashrightarrow \ol{\J}_{\mu,g,n+1}$ and the rational map $\alpha_{\mathcal{A},\mu}\circ\beta_{\mathcal{A},\mu}\col \ol{\M}_{\mathcal{A},\mu}\dashrightarrow  \ol{\J}_{\mu,g}$ is defined everywhere, i.e., it is a morphism of Deligne-Mumford stacks.  
	\end{Thm*}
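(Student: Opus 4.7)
The plan is to deduce this algebraic theorem from its tropical counterpart (Theorem \ref{thm:abeltrop}) via the standard dictionary between subdivisions of generalized cone complexes and toroidal/logarithmic modifications of Deligne-Mumford stacks, as developed in \cite{AP1} and by Abramovich-Caporaso-Payne. The boundaries of $\ol{\M}_{g,n+1}$ and $\ol{\J}_{\mu,g}$ are normal crossings, so the stacks are toroidal with associated generalized cone complexes $M^{\trop}_{g,n+1}$ and $J^{\trop}_{\mu,g}$. Under this dictionary, the refinement $\beta^{\trop}_{\mathcal{A},\mu}$ produces the toroidal birational morphism $\beta_{\mathcal{A},\mu}\col \ol{\M}_{\mathcal{A},\mu}\to \ol{\M}_{g,n+1}$, and $\ol{\M}_{\mathcal{A},\mu}$ is automatically normal.

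\textbf{Extension of the rational map.} By Theorem \ref{thm:abeltrop}, the composition $\alpha^{\trop}_{\mathcal{A},\mu}\circ\beta^{\trop}_{\mathcal{A},\mu}$ is a morphism of generalized cone complexes. Applying the same tropical-algebraic dictionary to the composition, one obtains an algebraic morphism $\ol{\M}_{\mathcal{A},\mu}\to \ol{\J}_{\mu,g}$ agreeing with $\alpha_{\mathcal{A},\mu}\circ\beta_{\mathcal{A},\mu}$ on the smooth locus $\M_{g,n+1}$, where the latter is already defined. This gives the second assertion.

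\textbf{Identification with the normalization.} Let $Z\subset \ol{\J}_{\mu,g,n+1}$ denote the closure of the image of the Abel section and $Z^\nu\to Z$ its normalization. The composition $\ol{\M}_{\mathcal{A},\mu}\to \ol{\M}_{g,n+1}\dashrightarrow \ol{\J}_{\mu,g,n+1}$ extends to a morphism by the same reasoning as above (applied now to the section into $\ol{\J}_{\mu,g,n+1}$, whose tropicalization is also a morphism of cone complexes once composed with $\beta^{\trop}_{\mathcal{A},\mu}$). Since $\ol{\M}_{\mathcal{A},\mu}$ is normal and birational to its image, the universal property of normalization produces a birational morphism $\ol{\M}_{\mathcal{A},\mu}\to Z^\nu$. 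To show it is an isomorphism I would verify minimality of the refinement: the subdivision $\beta^{\trop}_{\mathcal{A},\mu}$ is the coarsest refinement of $M^{\trop}_{g,n+1}$ making $\alpha^{\trop}_{\mathcal{A},\mu}$ a morphism of cone complexes (this is visible in the acyclic-flow description of Theorem \ref{thm:fan}, since each maximal cone corresponds to a distinct quasistable limit). Translating back, $\ol{\M}_{\mathcal{A},\mu}$ is the smallest toroidal modification of $\ol{\M}_{g,n+1}$ resolving the Abel section, hence it must coincide with $Z^\nu$.

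\textbf{Main obstacle.} The hard part is the minimality claim in the second step: proving that distinct acyclic flows over the same dual graph genuinely give rise to non-isomorphic limits of the Abel section, so that no further coarsening of $M^{\trop}_{\mathcal{A},\mu}$ is possible. This would be handled by a direct DVR analysis: for a test arc $\Spec\ol{R}\to \ol{\M}_{g,n+1}$ through a boundary stratum with dual graph $\Gamma$, one produces a semistable model and uses Theorem \ref{thm:quasistable} to extract the unique $(p_0,\mu)$-quasistable limit of $\O_X(\sum a_i p_i)$. The combinatorial data specifying which chain of exceptional components is inserted at each node is exactly an acyclic flow on $\Gamma$, and two different flows produce limits supported on non-isomorphic semistable models, hence distinct points of $\ol{\J}_{\mu,g,n+1}$. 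This compatibility between quasistable limits and the cones of $M^{\trop}_{\mathcal{A},\mu}$ is the technical heart of the argument and also identifies the fibers of $\ol{\M}_{\mathcal{A},\mu}\to Z^\nu$ as singletons.
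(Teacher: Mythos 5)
There is a genuine gap here, and it lies in the very first step: the appeal to a ``standard dictionary'' between morphisms of generalized cone complexes and morphisms of toroidal Deligne-Mumford stacks. No such general functoriality is established in this paper or in the cited literature. The paper itself states, in its final subsection on tropicalization of the Abel map, only that it is \emph{expected} (not proved) that the diagram relating $\alpha_{\mathcal A,\mu}^{\trop}\circ\beta_{\mathcal A,\mu}^{\trop}$ to $\alpha_{\mathcal A,\mu}\circ\beta_{\mathcal A,\mu}$ through Berkovich skeletons commutes; you cannot cite that expectation as a proof mechanism. The ACP-type results and \cite[AP2]{AP2} identify the \emph{spaces} $M^{\trop}_{g,n+1}$ and $J^{\trop}_{\mu,g}$ with skeletons, but do not provide a formal principle that a continuous map of cone complexes yields a morphism of the corresponding stacks, and in particular do not produce the torsion-free rank-1 sheaf that such a morphism to $\ol{\J}_{\mu,g,n+1}$ must carry.

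The actual technical heart of the paper, which your proposal does not address, is Theorem \ref{thm:mainlocal}: on the local models $T_{\L_X,\mu}=\Tor_A(\Sigma_{\Gamma,D_0,\mu})$ one must construct, over each toric chart $U_\sigma$, a genuine $(\Delta,\mu)$-quasistable torsion-free rank-1 sheaf on the blown-up family. This is done by exhibiting an explicit Cartier divisor $\Y$ as a union of closures of the rays' divisors $\Y_r$ (Lemma \ref{lem:Dr}), and the fact that the resulting ideal sheaf is well-behaved along the nodes of the central fiber rests on the ideal-theoretic intersection formula of Proposition \ref{prop:Icap} together with the torsion-freeness computation of Proposition \ref{prop:Ifree}. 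The combinatorics of $M^{\trop}_{\mathcal A,\mu}$ inspires this construction but does not replace it. Only after the local sheaf is in hand does the identification of $T_{\L_X,\mu}$ with the normalization of the closure of the Abel image follow, from finiteness of the induced map $T_{\L_X,\mu}\to\ol{\J}_{\pi_X,\mu}$ (quasi-finiteness is proved orbit by orbit by comparing multidegrees, and properness of $\beta_{\L,\mu}$ and $\ol{\J}_{\pi,\mu}\to T$ then upgrades this to finiteness). Theorem \ref{thm:mainglobal} is then deduced because normalization commutes with \'etale base change and because $\text{red}\col\ol{\J}_{\mu,g,n+1}\to\ol{\J}_{\mu,g}$ is a morphism. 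Your ``main obstacle'' paragraph gestures at distinguishing flows via a DVR argument, which is in the right spirit for the finiteness step, but leaves out entirely the construction of the extending sheaf and its quasistability, which is where the work is.
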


  Our approach first deals with the local case (see Theorem \ref{thm:mainlocal}). The local case could be used to resolve variants of Abel maps for other moduli stacks of curves endowed with marked points and universal sheaves such as, for instance, the moduli stack of spin curves or stable maps. The main technical tool is Proposition \ref{prop:Icap}, which translates the combinatorial properties of $M_{\mathcal{A},\mu}^\trop$ to algebraic properties. In particular, it tells us that certain ideal sheaves on the universal family over $\ol{\M}_{\mathcal{A},\mu}$ are relative torsion-free and rank-$1$, which is a fundamental property satisfied by the sheaves parametrized by the compactified Jacobian $\ol{\J}_{\mu,g}$.\par

Finally, let $d=0$ and $\mu=0$, and let $\I$ be the universal sheaf over the universal family $\pi\col\C_{g,n+1}\times_{\ol{\M}_{g,n+1}}\ol{\M}_{\mathcal{A},\mu}\to \ol{\M}_{\mathcal{A},\mu}$ inducing the morphism $\alpha_{\mathcal A,\mu}\circ\beta_{\mathcal A,\mu}$. We get the following formula, following the work of Dudin \cite{D}. 

\begin{Thm*}[\ref{thm:drc}]
Let $DRC_{\mathcal{A}}$ be the double ramification cycle. Then 
\[
DRC_{\mathcal A}=(\beta_{\mathcal{A},\mu})_*(c_g(R^1\pi_*(\I))).
\]
\end{Thm*}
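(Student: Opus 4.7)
The plan is to use the extended Abel map provided by Theorem \ref{thm:mainglobal} to pull back the class of the zero section of $\ol{\J}_{0,g}$ to $\ol{\M}_{\mathcal{A},\mu}$, identify that pullback with $c_g(R^1\pi_*\I)$ via a Chern-class computation in the spirit of Dudin \cite{D}, and finally push forward by $\beta_{\mathcal{A},\mu}$.

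First, I would recall that for $d=0$ and $\mu=0$, the double ramification cycle is characterized as the pushforward of the pullback of the zero section $e\col\ol{\M}_{g,1}\to\ol{\J}_{0,g}$ along any proper birational modification of $\ol{\M}_{g,n+1}$ on which $\alpha_{\mathcal{A},0}$ extends to a morphism. This viewpoint, developed in \cite{H} and \cite{D} building on \cite{JPPZ}, applies in our setting because Theorem \ref{thm:mainglobal} exhibits $\ol{\M}_{\mathcal{A},\mu}$ as exactly such a modification: $\beta_{\mathcal{A},\mu}$ is proper and birational, and $\alpha_{\mathcal{A},\mu}\circ\beta_{\mathcal{A},\mu}$ is a genuine morphism. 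Hence
$$
DRC_{\mathcal A} \;=\; (\beta_{\mathcal{A},\mu})_*\bigl((\alpha_{\mathcal{A},\mu}\circ\beta_{\mathcal{A},\mu})^*[e]\bigr).
$$

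Next, I would identify $[e]$ with a Chern class. Let $\I^{\mathrm{un}}$ denote a local universal torsion-free rank-$1$ sheaf on the universal family $\pi^{\mathrm{un}}$ over $\ol{\J}_{0,g}$; on the smooth locus the zero section is the jumping locus of $R^1\pi^{\mathrm{un}}_*\I^{\mathrm{un}}$, and by representing $R\pi^{\mathrm{un}}_*\I^{\mathrm{un}}$ locally by a perfect two-term complex of locally free sheaves, the argument of Dudin \cite{D} identifies $[e]$ with $c_g(R^1\pi^{\mathrm{un}}_*\I^{\mathrm{un}})$ (interpreted as the top Chern class of the virtual rank-$g$ complex). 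Since $\I$ is by construction the pullback of $\I^{\mathrm{un}}$ to $\C_{g,n+1}\times_{\ol{\M}_{g,n+1}}\ol{\M}_{\mathcal{A},\mu}$, cohomology and base change give
$$
(\alpha_{\mathcal A,\mu}\circ\beta_{\mathcal A,\mu})^*[e] \;=\; c_g\bigl(R^1\pi_*\I\bigr),
$$
and substituting into the previous display yields the statement.

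The main obstacle is the Chern-class identification of $[e]$ in the second step. The sheaf $\I^{\mathrm{un}}$ is only relatively torsion-free, not locally free, along the boundary of $\ol{\J}_{0,g}$, so some care is required to realize $R\pi^{\mathrm{un}}_*\I^{\mathrm{un}}$ locally as a perfect two-term complex of the correct virtual rank and to apply the Thom--Porteous type argument of Dudin beyond the compact-type case. The relative torsion-freeness of rank one of $\I$ furnished by Proposition \ref{prop:Icap} is precisely the input needed to transport Dudin's calculation to the full Esteves compactified Jacobian and to legitimize the base-change step above.
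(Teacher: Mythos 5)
Your route — express $[\Z_{\mu,g}]$ as a Chern class on $\ol{\J}_{\mu,g}$ and then pull it back — is genuinely different from the paper's and, as written, leaves real gaps. The paper works on $\ol{\M}_{\mathcal{A},\mu}$ directly: Dudin's \cite[Corollary 2.2]{D} is used only as the \emph{pointwise} criterion $\I|_X\cong\O_X\iff h^0(\I|_X)=1$, which identifies $(\alpha_{\mathcal{A},\mu}\circ\beta_{\mathcal{A},\mu})^{-1}(\Z_{\mu,g})$ with the degeneracy locus of the vector-bundle map $\varphi\colon\pi_*(\I(D))\to\pi_*(\I(D)|_D)$ coming from a relatively ample twist $D$; the resulting four-term exact sequence exhibits $R^1\pi_*\I$ as $\operatorname{coker}\varphi$, and Thom--Porteous together with the observation that $\pi_*\I$ is $0$ or trivial gives the formula in one step, with nothing computed on $\ol{\J}_{\mu,g}$ itself.

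Your proposal has three holes. First, $\I$ is \emph{not} the pullback of $\I^{\mathrm{un}}$: the universal family over $\ol{\J}_{\mu,g}$ lies over $\ol{\M}_{g,1}$, so its fibers are the $1$-pointed stable models $\st(X,p_0)$, not the $(n+1)$-pointed curves $X$ on which $\I$ lives; hence ``cohomology and base change'' does not apply verbatim, and you need the stabilization map $\red$ of \eqref{eq:pipist} together with $R^1\red_*\I=0$ on the contracted rational chains to deduce $R^1\pi_*\I\cong(\alpha_{\mathcal{A},\mu}\circ\beta_{\mathcal{A},\mu})^*R^1\pi^{\mathrm{un}}_*\I^{\mathrm{un}}$. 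Second, the identity $[\Z_{\mu,g}]=c_g(R^1\pi^{\mathrm{un}}_*\I^{\mathrm{un}})$ on $\ol{\J}_{\mu,g}$ is not a statement found in \cite{D}: Dudin runs the degeneracy-locus computation on the source of an Abel section, not on the compactified Jacobian, so you would need to redo Thom--Porteous on $\ol{\J}_{\mu,g}$ itself, including checking that $\Z_{\mu,g}$ is a codimension-$g$ regular embedding equal, as a cycle, to the expected-codimension degeneracy locus. Third, the parenthetical ``virtual rank-$g$'' is off: $\chi(\I|_X)=1-g$, so the generic rank of $R^1\pi_*\I$ is $g-1$ (it jumps to $g$ only on the double ramification locus), and $c_g$ of a virtual rank-$(g-1)$ class is precisely what Thom--Porteous produces. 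None of these is unfixable, but the paper's argument avoids all three by never leaving $\ol{\M}_{\mathcal{A},\mu}$.
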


In a nutshell, in Sections \ref{sec:divgraph} we introduce some preliminaries on divisors on graphs and tropical curves, including quasistability. In Section \ref{sec:abeltrop} we construct the moduli space $M_{\mathcal{A},\mu}^\trop$ and show that the induced map $M_{\mathcal{A},\mu}^\trop\to J_{\mu,g}^\trop$ is a morphism of generalized cone complexes. Section \ref{sec:torbin} is devoted to toric geometry and binomial ideals, while Section \ref{sec:combalg} manly translates the combinatorial properties of $M_{\mathcal{A},\mu}^\trop$ to geometric and algebraic results. In Section \ref{sec:Abelres} we construct the local and the universal resolutions of the Abel map, and Section \ref{sec:app} is dedicated to the application to the double ramification cycle and to a discussion on some future research directions. In Section \ref{sec:comparison}, we compare the results achieved in this paper with previous related works.

\section{Divisors on graphs and tropical curves}\label{sec:divgraph}

\subsection{Graphs}
Given a graph $\Gamma$, we denote by $E(\Gamma)$ the set of edges  and by $V(\Gamma)$ the set of vertices of $\Gamma$. If $\E\subset E(\Gamma)$ and $v$ is a vertex of $\Gamma$, the \emph{valence of $v$ in $\E$} is
\begin{equation}\label{eq:valE}
\val_\E(v):=\#\{e\in \E ; e \text{ is incident to } v\},
\end{equation}
with loops counting twice. 
If $\E=E(\Gamma)$, we simply write $\val(v)$ and call it the \emph{valence} of $v$. We say that $\Gamma$ is \emph{$k$-regular} if every vertex of $\Gamma$ has valence $k$; we say that $\Gamma$ is a \emph{circular graph} if $\Gamma$ is $2$-regular and connected. A \emph{cycle} on $\Gamma$ is a circular subgraph of $\Gamma$. \par
	A \emph{digraph} (directed graph) is a graph $\Gamma$ where each edge has an orientation, i.e., there are functions $s,t\col E(\Gamma)\to V(\Gamma)$ called the \emph{source} and \emph{target} and each edge is oriented from the source to the target. We denote a digraph by $\overrightarrow{\Gamma}$ and  its underlying graph by $\Gamma$. We let $E(\overrightarrow{\Gamma})$ be the set of oriented edges of $\overrightarrow{\Gamma}$ and we set $V(\overrightarrow{\Gamma}):=V(\Gamma)$.\par
 Let $\Gamma$ be a graph. Fix disjoint subsets $V, W\subset V(\Gamma)$. We define $E(V,W)$ as the set of edges joining a vertex in $V$ with one in $W$. More generally if $V$ and $W$ have nonempty intersection, we define $E(V,W):=E(V\setminus W, W\setminus V)$. We set $V^c:=V(\Gamma)\setminus V$.  If $E(V,V^c)$ is nonempty, it is called a \emph{cut} of $\Gamma$. We set 
\begin{equation}\label{eq:delta}
\delta_{\Gamma,V}:=|E(V,V^c)|.
\end{equation}
 Sometimes, we simply write $\delta_V$ instead of $\delta_{\Gamma,V}$. \par

		Fix a subset $\E\subset E(\Gamma)$. We define the graphs $\Gamma/\E$ and $\Gamma_\E$ as the graphs obtained, respectively, by the contraction of the edges in $\E$ and by the removal of edges in $\E$. We say that $\E$ is \emph{nondisconnecting} if $\Gamma_\E$ is connected. Note that there is a natural surjection $V(\Gamma)\to V(\Gamma/\E)$ and a natural identification $E(\Gamma/\E)=E(\Gamma)\setminus\E$. Moreover, $V(\Gamma_\E)=V(\Gamma)$ and $E(\Gamma_\E)=E(\Gamma)\setminus\E$.\par
	
We let $b_i(\Gamma)$ be the $i$-th Betti number of $\Gamma$ for $i=0,1$, i.e., $b_0(\Gamma)$ is the number of connected components of $\Gamma$ and $b_1(\Gamma):=|E(\Gamma)|-|V(\Gamma)|+b_0(\Gamma)$.	
		
\begin{Prop}
\label{prop:b0b1}
Let $\Gamma$ be a connected graph and let $E(\Gamma)=\E\coprod\F$ be a partition of $E(\Gamma)$. Then
\[
b_1(\Gamma/\E)=|\F|-b_0(\Gamma_\F)+1.
\]
\end{Prop}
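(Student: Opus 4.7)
The plan is to apply the defining formula $b_1(H) = |E(H)| - |V(H)| + b_0(H)$ to the graph $\Gamma/\E$ and reduce each of the three terms to quantities appearing in the right-hand side of the proposition.

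First I would observe that, by definition of contraction, $E(\Gamma/\E) = E(\Gamma)\setminus\E = \F$, and since $\Gamma$ is connected, so is its contraction $\Gamma/\E$, giving $b_0(\Gamma/\E)=1$. Thus
\[
b_1(\Gamma/\E) = |\F| - |V(\Gamma/\E)| + 1,
\]
and the proposition reduces to the identity $|V(\Gamma/\E)| = b_0(\Gamma_\F)$.

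The key step is to identify vertices of $\Gamma/\E$ with connected components of $\Gamma_\F$. Under the natural surjection $V(\Gamma)\to V(\Gamma/\E)$, two vertices of $\Gamma$ get identified precisely when they are joined by a path whose edges all lie in $\E$. Since $\Gamma_\F$ is the spanning subgraph of $\Gamma$ with edge set exactly $\E$, this is the same as saying the two vertices lie in the same connected component of $\Gamma_\F$. Hence the fibers of $V(\Gamma)\to V(\Gamma/\E)$ are precisely the connected components of $\Gamma_\F$, yielding $|V(\Gamma/\E)| = b_0(\Gamma_\F)$. Combining with the displayed equation gives the desired formula.

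There is no real obstacle here; the only subtle point is being careful that, while $\Gamma_\F$ has $V(\Gamma_\F)=V(\Gamma)$ (isolated vertices are allowed and do count towards $b_0$), the identification with vertices of the contraction still goes through on the nose, because isolated vertices of $\Gamma_\F$ correspond exactly to vertices of $\Gamma$ touching only edges of $\F$, and these are the vertices of $\Gamma/\E$ that are not glued to anything.
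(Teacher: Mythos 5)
Your proof is correct and follows the same route as the paper's: apply $b_1(H)=|E(H)|-|V(H)|+b_0(H)$ to $\Gamma/\E$, use connectedness of $\Gamma/\E$, and identify $V(\Gamma/\E)$ with the components of $\Gamma_\F$. The only difference is that you make explicit the identification $|V(\Gamma/\E)|=b_0(\Gamma_\F)$, which the paper leaves implicit in the final equality.
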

\begin{proof} 
We compute:
\begin{align*}
b_1(\Gamma/\E)&=|E(\Gamma/\E)|-|V(\Gamma/\E)|+1\\
              &=|E(\Gamma)|-|\E|-|V(\Gamma/\E)|+1\\
							&=|\F|-b_0(\Gamma_\F)+1,
\end{align*}
from which the result follows.
\end{proof}

A graph $\Gamma$ \emph{specializes} to a graph $\Gamma'$ if there is $\E\subset E(\Gamma)$ such that $\Gamma'$ is isomorphic to $\Gamma/\E$. We denote a specialization of $\Gamma$ to $\Gamma'$ by $\iota\col \Gamma\ra\Gamma'$. A specialization $\iota\col \Gamma\ra\Gamma'$ comes equipped with a surjective map $\iota^V\col V(\Gamma)\ra V(\Gamma')$ and an injective map $\iota^E\col E(\Gamma')\ra E(\Gamma)$. We usually write $\iota=\iota^V$ and see $E(\Gamma')$ as a subset of $E(\Gamma)$ via $\iota^E$. A similar notion of specialization can be given for digraphs. \par 
   
We define the graph $\Gamma^\E$ as the graph obtained from $\Gamma$ by adding exactly one vertex in the interior of each edge in $\E$. We call $\Gamma^\E$ the \emph{$\E$-subdivision} of $\Gamma$. Note that there is a natural inclusion $V(\Gamma)\subset V(\Gamma^\E)$. We call a vertex in $V(\Gamma^\E)\setminus V(\Gamma)$ an \emph{exceptional vertex}. We set $\Gamma^{(2)}:=\Gamma^{E(\Gamma)}$. \par

	More generally, a graph $\Gamma'$ is a \emph{refinement} of a graph $\Gamma$ if $\Gamma'$ is obtained from $\Gamma$ by successive subdivisions. In particular, there is an inclusion $a\col V(\Gamma)\to V(\Gamma')$ and a surjection $b\col E(\Gamma')\to E(\Gamma)$ such that for any edge $e$ of $\Gamma$ there are distinct vertices $x_0,\ldots, x_n\in V(\Gamma')$ and distinct edges $e_1,\ldots, e_n\in E(\Gamma')$ such that
	\begin{enumerate}
\item $x_0=a(v_0)$, $x_n=a(v_1)$ and $x_{i}\notin\Im(a)$ for every $i=1,\ldots,n-1$, where $v_0$ and $v_1$ are precisely the vertices of $\Gamma$ incident to $e$;
\item $b^{-1}(e)=\{e_1,\ldots, e_n\}$;
\item the vertices in $V(\Gamma')$ incident to $e_i$ are precisely $x_{i-1}$ and $x_i$, for $i=1,\dots,n$.
\end{enumerate}
We say that the edges $e_1,\ldots, e_n$ (respectively, $x_1,\ldots, x_{n-1}$) are the edges (respectively, the exceptional vertices) \emph{over} $e$.\par
One can also give a similar notion of refinement for digraphs, where in addition we ask in (1) that the vertices $v_0$ and $v_1$ are respectively the source and the target of $e$, and in  (3) that $x_{i-1}$ and $x_i$ are respectively the source and the target of $e_i$ for $i=1,\dots,n$.\par

	Let $\iota\col \Gamma\to\Gamma'$ be a specialization of graphs, and $\E'\subset E(\Gamma')$, $\E\subset E(\Gamma)$ be sets such that $\E'\subset\E\cap E(\Gamma')$. We call a specialization  $\iota^\E\col\Gamma^\E\ra\Gamma'^{\E'}$ \emph{compatible with $\iota$} if the following diagrams
\[
\SelectTips{cm}{11}
\begin{xy} <16pt,0pt>:
\xymatrix{ V(\Gamma)\ar[d]\ar[r]^{\iota} &V(\Gamma')\ar[d]&& E(\Gamma')\ar[r]^\iota &E(\Gamma)\\
             V(\Gamma^\E)\ar[r]^{\iota^\E}& V(\Gamma'^{\E'})&&E(\Gamma'^{\E'})\ar[u]\ar[r]^{\iota^\E} & E(\Gamma^\E)\ar[u]
}
\end{xy}
\]
are commutative. If $\iota$ is the identity of $\Gamma$, we just call $\iota^\E$ \emph{compatible}. Note that there are $2^{|\E\setminus\E'|}$ compatible specializations of $\Gamma^\E$ to $\Gamma^{\E'}$.

	A \emph{(vertex-)weighted graph} is a pair $(\Gamma,w)$, where $\Gamma$ is a connected graph and $w$ is a function $w\colon V(\Gamma)\to\mathbb{Z}_{\geq0}$. The genus of a weighted graph $(\Gamma,w)$ is defined as $g(\Gamma,w):=\sum_{v\in V(\Gamma)} w(v)+b_1(\Gamma)$, where $b_1(\Gamma)$ is the first Betti number of $\Gamma$. \par

	A \emph{graph with legs} indexed by the finite set $L$ (the set of legs) is the data of a graph $\Gamma$ and a map $\text{leg}_\Gamma\col L\to V(\Gamma)$. We denote by $L(v)$ the set of legs incident to $v$, i.e., $L(v):=\text{leg}_\Gamma^{-1}(v)$.  Usually, we will simply write $\Gamma$ for a weighted graph with legs and we denote by $L(\Gamma)$ its set of legs. A \emph{graph with $n$ legs} is a  graph with legs $\Gamma$ such that $L(\Gamma)=I_n:=\{0,1,\ldots,n-1\}$. If $\Gamma$ is a graph with $n$ legs, we will always set $v_0:=\text{leg}_\Gamma(0)\in V(\Gamma)$. We will denote by $(\Gamma,v_0)$ a graph with $1$ leg. 
	
	If $(\Gamma,w,\text{leg}_{\Gamma})$ and $(\Gamma',w',\text{leg}_{\Gamma'})$ are weighted graphs with legs, we say that a specialization $\iota\col\Gamma\ra\Gamma'$ is a \emph{specialization of weighted graphs with legs} if, for every $v'\in V(\Gamma')$, we have  $w'(v')=g(\iota^{-1}(v))$, and $\text{leg}_{\Gamma}=\text{leg}_{\Gamma'}\circ \iota^V$.

We let $\grap_{g,n}$ be the poset of the isomorphism classes of weighted graphs with $n$ legs of genus $g$, where the partial order is given by specialization of graphs.

	A weighted graph with $n$ legs $\Gamma$ is \emph{stable} if $\val(v)+2w(v)+|L(v)|\geq3$ for every vertex $v\in V(\Gamma)$. 
A \emph{tree} is a connected graph whose first Betti number is $0$ or, equivalently, a connected graph whose number of edges is equal to the number of its vertices minus one.

 Every weighted graph $\Gamma$ with $n$ legs has a \emph{stable reduction} $\st(\Gamma)$, which is the weighted stable graph with $n$ legs endowed with a unique refinement $\widehat{\st(\Gamma)}$ and a unique specialization $\text{red}_\Gamma\col\Gamma\ra \widehat{\st(\Gamma)}$ such that for every specialization $\iota\col\Gamma\to\widehat{\Gamma'}$ where $\widehat{\Gamma'}$ is a refinement of a weighted stable graph $\Gamma'$ with $n$ legs, $\iota$ factors through $\text{red}_\Gamma$. We note that $\text{red}_\Gamma$ is a sequence of specializations starting from $\Gamma$, in each step contracting an edge  incident to a vertex $v$ of valence $1$, weight $0$ and where there is at most one leg attached to $v$. Then $\st(\Gamma)$ is obtained by just removing from $\widehat{\st(\Gamma)}$ any vertex with valence $2$, weight $0$ and with no legs attached to it. We illustrate this process in the following example.\par
\begin{Exa}
\label{exa:stgraph}
Let $(\Gamma,v_0)$ be the leftmost weighted graph with 1  leg in Figure \ref{fig:stgraph}. Then $\st(\Gamma)$ is the rightmost graph, while $\widehat{\st(\Gamma)}$ is the graph in the middle.
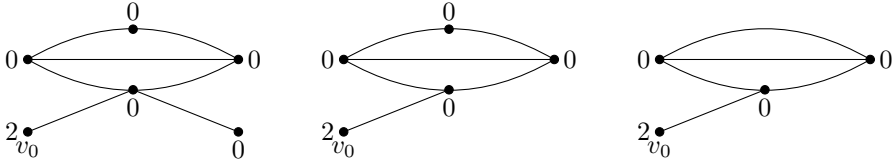
\begin{figure}[h]
\begin{tikzpicture}[scale=2.8]
\begin{scope}[shift={(0,0)}]
\draw (0,0) to [out=30, in=150] (1,0);
\draw (0,0) to (1,0);
\draw (0,0) to [out=-30, in=-150] (1,0);
\draw (0.5,-0.144) to (0,-0.344);
\draw (0.5,-0.144) to (1,-0.344);
\draw[fill] (0,0) circle [radius=0.02];
\node[left] at (0,0) {$0$};
\draw[fill] (1,0) circle [radius=0.02];
\node[right] at (1,0) {$0$};
\draw[fill] (0.5,0.144) circle [radius=0.02];
\node[above] at (0.5,0.144) {$0$};
\draw[fill] (0.5,-0.144) circle [radius=0.02];
\node[below] at (0.5,-0.144) {$0$};
\draw[fill] (0,-0.344) circle [radius=0.02];
\node[left] at (0,-0.344) {$2$};
\node[below] at (0,-0.344) {$v_0$};
\draw[fill] (1,-0.344) circle [radius=0.02];
\node[below] at (1,-0.344) {$0$};
\end{scope}
\begin{scope}[shift={(1.5,0)}]
\draw (0,0) to [out=30, in=150] (1,0);
\draw (0,0) to (1,0);
\draw (0,0) to [out=-30, in=-150] (1,0);
\draw (0.5,-0.144) to (0,-0.344);
\draw[fill] (0,0) circle [radius=0.02];
\node[left] at (0,0) {$0$};
\draw[fill] (1,0) circle [radius=0.02];
\node[right] at (1,0) {$0$};
\draw[fill] (0.5,0.144) circle [radius=0.02];
\node[above] at (0.5,0.144) {$0$};
\draw[fill] (0.5,-0.144) circle [radius=0.02];
\node[below] at (0.5,-0.144) {$0$};
\draw[fill] (0,-0.344) circle [radius=0.02];
\node[left] at (0,-0.344) {$2$};
\node[below] at (0,-0.344) {$v_0$};
\end{scope}
\begin{scope}[shift={(3,0)}]
\draw (0,0) to [out=30, in=150] (1,0);
\draw (0,0) to (1,0);
\draw (0,0) to [out=-30, in=-150] (1,0);
\draw (0.5,-0.144) to (0,-0.344);
\draw[fill] (0,0) circle [radius=0.02];
\node[left] at (0,0) {$0$};
\draw[fill] (1,0) circle [radius=0.02];
\node[right] at (1,0) {$0$};
\draw[fill] (0.5,-0.144) circle [radius=0.02];
\node[below] at (0.5,-0.144) {$0$};
\draw[fill] (0,-0.344) circle [radius=0.02];
\node[left] at (0,-0.344) {$2$};
\node[below] at (0,-0.344) {$v_0$};
\end{scope}
\end{tikzpicture}
\caption{Stable reduction of a weighted graph with legs.}
\label{fig:stgraph}
\end{figure}
\end{Exa}

\subsection{Divisors and flows on graphs}

Let $\Gamma$ be a graph. A divisor $D$ on $\Gamma$ is a function $D\colon V(\Gamma)\to \mathbb{Z}$. The degree of $D$ is the integer $\deg D:=\sum_{v\in V(\Gamma)}D(v)$. The set of divisors on $\Gamma$ form an Abelian group denoted by $\Div(\Gamma)$. Given a subset $\E\subset E(\Gamma)$ and a divisor $D$ on $\Gamma$, we define $D^\E$ as the divisor on $\Gamma^\E$ such that 
\[
D^\E(v)=\begin{cases}
         \begin{array}{ll}
				  D(v),&\text{ if }v\in V(\Gamma);\\
					0,&\text{ if }v\in V(\Gamma^\E)\setminus V(\Gamma).
					\end{array}
					\end{cases}
\]
We also define a divisor $D_\E$ on $\Gamma_\E$ as $D_\E(v):=D(v)$, for every $v\in V(\Gamma_\E)=V(\Gamma)$.\par

	A \emph{pseudo-divisor} on $\Gamma$ is a pair $(\E,D)$ where $\E\subset E(\Gamma)$ and $D$ is a divisor on $\Gamma^\E$ such that $D(v)=-1$ for every exceptional vertex $v\in V(\Gamma^\E)$. If $\E=\emptyset$, then $(\E,D)$ is just a divisor of $\Gamma$. Since every divisor $D$ on $\Gamma^\E$ can be lifted to a divisor on $\Gamma^{(2)}$, it is equivalent to define a pseudo-divisor on $\Gamma$ as a divisor $D$ on $\Gamma^{(2)}$ such that $D(v)=0,-1$ for every exceptional vertex $v\in V(\Gamma^{(2)})$.\par
	Let $\Gamma$ and $\Gamma'$ be graphs. Given a specialization $\iota\col\Gamma\ra\Gamma'$ and a divisor $D$ on $\Gamma$, we define the divisor $\iota_*(D)$ on $\Gamma'$ taking $v'\in V(\Gamma')$ to 
\[
\iota_*(D)(v'):=\sum_{v\in\iota^{-1}(v')}D(v).
\] 
	We say that a pair $(\Gamma,D)$ \emph{specializes} to a pair $(\Gamma',D')$, where $D$ is a divisor on $\Gamma$ and $D'$ is a divisor on $\Gamma'$, if there is a specialization of graphs $\iota\col\Gamma\ra\Gamma'$ such that $D'=\iota_*(D)$; we denote by $\iota\col(\Gamma,D)\ra(\Gamma',D')$ a specialization of pairs. Given a specialization $\iota\col \Gamma\ra\Gamma'$ and a subset $\E$ of $E(\Gamma)$, there exists an induced specialization $\iota^\E\col \Gamma^\E\to\Gamma'^{\E'}$, where $\E':=\E\cap E(\Gamma')$; in this case, if $(\E,D)$ is a pseudo-divisor on $\Gamma$, we define the pseudo-divisor $\iota_*(\E,D)$ on  $\Gamma'$ as $\iota_*(\E,D):=(\E',\iota_*^\E(D))$.
	Given pseudo-divisors $(\E,D)$ on $\Gamma$ and $(\E',D')$ on $\Gamma'$, we say that $(\Gamma,\E, D)$ \emph{specializes} to $(\Gamma',\E',D')$ if there is a specialization $\iota\col\Gamma\to\Gamma'$ such that $\E'\subset \E\cap E(\Gamma')$, and a specialization $\iota^\E\col \Gamma^\E\ra\Gamma'^{\E'}$ compatible with $\iota$ such that $\iota^\E_*(D)=(D')$. We denote by $\iota\col (\Gamma,\E,D)\ra (\Gamma',\E',D')$ such a specialization. \par

	Let $\overrightarrow{\Gamma}$ be a digraph. A \emph{directed path} on $\overrightarrow{\Gamma}$ is a sequence 
\[
v_1, e_1, v_2,\ldots, e_n, v_{n+1}
\]
 such that $s(e_i)=v_i$ and $t(e_i)=v_{i+1}$ for every $i=1,\dots,n$. A \emph{directed cycle} on $\overrightarrow{\Gamma}$ is a directed path on $\overrightarrow{\Gamma}$ such that $v_{n+1}=v_1$.  A \emph{cycle} on $\overrightarrow{\Gamma}$ is just a cycle on $\Gamma$. 
A \emph{source} (respectively, \emph{sink}) of $\overrightarrow{\Gamma}$ is a vertex in $V(\overrightarrow{\Gamma})$ such that $t(e)\neq v$ (respectively, $s(e)\neq v$) for every $e\in E(\overrightarrow{\Gamma})$. We say that $\overrightarrow{\Gamma}$ is \emph{acyclic} if it has no directed cycles. It is a well known result that every acyclic (finite) digraph has at least a source and a sink.\par

 A \emph{flow} on $\overrightarrow{\Gamma}$ is a function $\phi \col E(\overrightarrow{\Gamma})\to \mathbb{Z}_{\geq 0}$. We say that $\phi$ is \emph{acyclic} if the digraph $\ora{\Gamma}/S$ is acyclic, where $S:=\{e\in E(\ora{\Gamma});\phi(e)=0\}$.  Moreover, we say that a flow $\phi$ is \emph{positive} if $\phi(e)>0$ for all $e\in E(\ora{\Gamma})$. Abusing terminology, we will say that a flow $\phi$ on a graph $\Gamma$ is a pair $(\ora{\Gamma},\phi)$ made by an orientation on $\Gamma$ and a flow $\phi$ on $\ora{\Gamma}$. Given a flow $\phi$ on $\ora{\Gamma}$, we define the divisor $\div(\phi)$ on $\Gamma$ associated to $\phi$ as 
\[
\div(\phi)(v)=\underset{t(e)=v}{\sum_{e\in E(\Gamma)}}\phi(e)-\underset{s(e)=v}{\sum_{e\in E(\Gamma)}}\phi(e).
\]\par
Let $\ora{\Gamma}$ and $\ora{\Gamma'}$ be digraphs. Given a specialization $\iota\col \ora{\Gamma}\ra \ora{\Gamma}'$ and a flow $\phi$ on $\Gamma$, we define $\iota_*(\phi)$ as the flow on $\ora{\Gamma}'$ taking $e\in E(\ora{\Gamma}')\subset E(\ora{\Gamma})$ to $\iota_*(\phi)(e):=\phi(e)$. Given flows $\phi$ on $\ora{\Gamma}$ and $\phi'$ on $\ora{\Gamma'}$, we say that the pair $(\ora{\Gamma},\phi)$ \emph{specializes} to $(\ora{\Gamma}',\phi')$ if there is a specialization $\iota:\ora{\Gamma}\ra \ora{\Gamma}'$ such that $\phi'=\iota_*(\phi)$.\par
	Given a graph $\Gamma$ and a ring $A$, we define 
\[
C_0(\Gamma,A):=\bigoplus_{v\in V(\Gamma)}A\cdot v\quad\text{and}\quad C_1(\Gamma,A):=\bigoplus_{e\in E(\Gamma)}A\cdot e.
\]
Fix a orientation on $\Gamma$, i.e., choose a digraph $\ora{\Gamma}$ with $\Gamma$ as underlying graph. We define the differential operator $d\col C_0(\ora{\Gamma},A)\to C_1(\ora{\Gamma}, A)$ as the linear operator taking a generator $v$ of $C_0(\ora{\Gamma},A)$ to
\[
d(v):=\underset{t(e)=v}{\sum_{e\in{E(\ora{\Gamma})}}}e-\underset{s(e)=v}{\sum_{e\in{E(\ora{\Gamma})}}}e.
\]
The adjoint of $d$ is the linear operator $d^*\col C_1(\ora{\Gamma},A)\to C_0(\ora{\Gamma},A)$ taking a generator $e$ of $C_1(\ora{\Gamma},A)$ to
\[
d^*(e):=t(e)-s(e).
\]

There is a natural identification between $C_0(\Gamma,\mathbb{Z})$ and $\Div(\Gamma)$. The composition $d^*d\col C_0(\Gamma,A)\to C_0(\Gamma,A)$ does not depend on the choice of the orientation. The group of principal divisors on $\Gamma$ is the subgroup $\Prin(\Gamma):=\Im(d^*d)$ of $\Div(\Gamma)$. Given $D,D'\in \Div(\Gamma)$, we say that $D$ is \emph{equivalent} to $D'$ if $D-D'\in \Prin(\Gamma)$.

The space of $1$-cycles of $\ora{\Gamma}$ (over $A$) is defined as $H_1(\ora{\Gamma},A):=\ker(d^*)$. 
 Note that if $\ora{\Gamma}'$ is a refinement of $\ora{\Gamma}$, then there is a canonical isomorphism between $H_1(\ora{\Gamma},A)$ and $H_1(\ora{\Gamma}',A)$. 

Let $\gamma$ be a cycle on $\ora{\Gamma}$. Let $v_1,e_1,v_2\ldots,e_n,v_{n+1}$ be a sequence such that $v_1,\dots,v_{n+1}$ are the vertices of $\gamma$, where $v_{n+1}=v_1$ and $v_i\neq v_j$ if $i\neq j$ and $\{i,j\}\neq\{1,n+1\}$, and where $e_i$ is the edge of $\gamma$ connecting $v_i$ with $v_{i+1}$ for $i=1,\dots,n$. For every $e\in E(\ora{\Gamma})$ we define 
\begin{equation}
\label{eq:gamma}
\gamma(e):=\begin{cases}
            \begin{array}{ll}
						0,&\text{ if } e\neq e_i \text{ for all $i=1,\ldots, n$};\\
						1,&\text{ if } e=e_i \text{ and } s(e)=v_i \text{ for some $i=1,\ldots,n$};\\
						-1,&\text{ if } e=e_i \text{ and } t(e)=v_i \text{ for some $i=1,\ldots,n$}.
						\end{array}
						\end{cases}
\end{equation}
The definition of $\gamma(e)$ depends on the choice of the sequence. However a different choice possibly changes the signs of all $\gamma(e)$, and in what follows a change of sign on all $\gamma(e)$ for a given cycle $\gamma$ is not relevant. So we will implicitly fix a choice of such a sequence for every cycle $\gamma$.\par

\begin{Prop}
\label{prop:flow}
Let $\ora{\Gamma}$ be an acyclic digraph and $D$ be a degree-$0$ divisor on $\Gamma$. Then there are finitely many flows $\phi$ such that $\div(\phi)=D$.
\end{Prop}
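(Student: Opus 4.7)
The plan is to deduce finiteness from boundedness of a real polyhedron, using acyclicity to kill the recession cone. Concretely, I would work with the real polyhedron
\[
P := \{\phi \in \mathbb{R}_{\geq 0}^{E(\ora{\Gamma})} : d^*\phi = D\},
\]
noting that $\div(\phi) = d^*\phi$ under the identification $C_0(\Gamma,\mathbb{Z}) = \Div(\Gamma)$. Every flow $\phi$ with $\div(\phi) = D$ lies in $P \cap \mathbb{Z}^{E(\ora{\Gamma})}$, so it is enough to show that $P$ is bounded.

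Next I would compute the recession cone of $P$, namely
\[
C := \{\psi \in \mathbb{R}_{\geq 0}^{E(\ora{\Gamma})} : d^*\psi = 0\},
\]
and claim $C = \{0\}$. Suppose for contradiction that there exists a nonzero $\psi \in C$. Pick an edge $e_1$ with $\psi(e_1) > 0$ and let $v_1 := t(e_1)$. The equation $d^*\psi(v_1) = 0$ reads
\[
\sum_{t(e) = v_1} \psi(e) \;=\; \sum_{s(e) = v_1} \psi(e),
\]
and the left-hand side is strictly positive, so there exists an edge $e_2$ with $s(e_2) = v_1$ and $\psi(e_2) > 0$. Setting $v_2 := t(e_2)$ and iterating produces an infinite directed walk in the support of $\psi$. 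Since $V(\ora{\Gamma})$ is finite, some vertex is revisited, yielding a directed cycle in $\ora{\Gamma}$, which contradicts acyclicity.

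With $C = \{0\}$, standard polyhedral geometry (e.g.\ the Minkowski--Weyl decomposition) implies that $P$ is bounded, hence a compact polytope. A compact polytope in $\mathbb{R}^{E(\ora{\Gamma})}$ contains only finitely many integer points, and the proposition follows.

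The only genuinely content-bearing step is the cycle-extraction argument showing $C = \{0\}$; the rest is formal. A purely combinatorial alternative would be to fix a topological ordering $v_1, \ldots, v_N$ of $V(\ora{\Gamma})$ (which exists by acyclicity) and solve for $\phi$ inductively: at each $v_i$ the incoming edges have already been assigned, and $\sum_{s(e) = v_i} \phi(e)$ is pinned down by the divisor condition, leaving only finitely many nonnegative integer distributions among the outgoing edges. I prefer the polyhedral phrasing since it makes the role of acyclicity transparent.
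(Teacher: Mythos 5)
Your proposal is correct, and it takes a genuinely different route from the paper. The paper argues by induction on $|V(\ora{\Gamma})|$: pick a sink $v$ (which exists since $\ora{\Gamma}$ is acyclic); the conservation equation at $v$ forces $\sum_{t(e)=v}\phi(e)=D(v)$, and since each $\phi(e)\ge 0$ there are only finitely many choices for the incoming values; for each such choice, delete $v$ and its incoming edges and recurse on the resulting acyclic digraph with the appropriately modified degree-$0$ divisor. Your main argument is instead polyhedral: you identify the solution set with the integer points of $P=\{\phi\in\mathbb{R}_{\ge 0}^{E(\ora{\Gamma})}:d^*\phi=D\}$, and you kill the recession cone by extracting a directed cycle from any nonzero nonnegative $\psi$ with $d^*\psi=0$, using the local conservation $\sum_{t(e)=v}\psi(e)=\sum_{s(e)=v}\psi(e)$ to keep extending a directed walk in the support of $\psi$ until a vertex repeats. (Two small points you could make explicit: if $P=\emptyset$ the statement is vacuous, and since $\ora{\Gamma}$ is acyclic it has no loops, so the extracted walk genuinely moves and yields a nontrivial directed cycle.) The paper's induction is more elementary and constructive — it bounds the number of flows by an explicit product over the sink-removal steps — whereas your polyhedral phrasing isolates exactly where acyclicity enters (triviality of the recession cone) and is easier to generalize. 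The "combinatorial alternative" you sketch at the end, working forward along a topological order from sources, is essentially the mirror image of the paper's sink-removal induction.
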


\begin{proof}
The proof is by induction on the number of vertices of $\ora{\Gamma}$. If $\ora{\Gamma}$ has only one vertex the result is clear, so we can assume that $\ora{\Gamma}$ has at least two vertices.

Let $v$ be a sink of $\ora{\Gamma}$. If $\phi$ is a flow on $\ora{\Gamma}$ such that $\div(\phi)=D$, then 
\[
D(v)=\underset{t(e)=v}{\sum_{e\in E(\ora{\Gamma})}}\phi(e).
\]
Since $\phi(e)\geq0$, then there is a finite number of possibilities for $(\phi(e))_{t(e)=v}$. For each one of these possibilities $(\phi(e))_{t(e)=v}$, we remove the vertex $v$ together with all edges $e$ such that $t(e)=v$. This gives rise to an acyclic digraph $\ora{\Gamma'}$ with $V(\ora{\Gamma'})$ strictly contained in $V(\ora{\Gamma})$, and to the degree-0 divisor $D'$ on $\ora{\Gamma'}$ taking a vertex $v'\in V(\ora{\Gamma'})$ to
\[
D'(v'):=D(v')-\underset{t(e)=v, s(e)=v'}{\sum_{e\in E(\ora{\Gamma})}}\phi(e).
\]
By the induction hypothesis there are finitely many flows $\phi'$ on $\ora{\Gamma'}$ such that $\div(\phi')=D'$, and hence finitely many ways to complete $\phi'$ to a flow $\phi$ on $\ora{\Gamma}$ such that $\div(\phi)=D$.
\end{proof}

\subsection{Quasistability on graphs}

We introduce the key notion of quasistability for pseudo-divisors on graphs and the poset of quasistable pseudo-divisors. These notions will be crucial to define the universal tropical Jacobian in Section \ref{sec:tropmod}.

Let $\Gamma$ be a graph. Given an integer $d$, a \emph{degree-$d$ polarization} on $\Gamma$ is a function $\mu\col V(\Gamma)\to\R$ such that $\sum_{v\in V(\Gamma)}\mu(v)=d$. 

Let $\mu$ be a degree-$d$ polarization on $\Gamma$. For every subset $V\subset V(\Gamma)$, we set $\mu(V):=\sum_{v\in V}\mu(v)$. 
Given a degree-$d$ divisor $D$ on $\Gamma$, we define 
\[
\beta_D(V):=\deg(D|_V)-\mu(V)+\frac{\delta_V}{2}.
\]
 If $\iota\col\Gamma\ra\Gamma'$ is a specialization, then there is an induced degree-$d$ polarization $\iota_*(\mu)$ on $\Gamma'$ defined as 
\[
\iota_*(\mu)(v'):=\underset{v \in \iota^{-1}(v')}{\sum_{v\in V(\Gamma)}}\mu(v).
\]
 
If $\E\subset E(\Gamma)$ is a nondisconnecting subset of edges, we can define a degree-$(d+|\E|)$ polarization $\mu_\E$ on $\Gamma_\E$ (the graph induced by removing the edges $\E$ in $E(\Gamma)$) by 
\[
\mu_\E(v):=\mu(v)+\frac{1}{2}\val_\E(v).
\]
 Given a subdivision $\Gamma^{\E}$ of $\Gamma$ for some $\E\subset E(\Gamma)$, there is an induced degree-$d$ polarization $\mu^\E$ on $\Gamma^\E$ given as
\[
\mu^\E(v):=
\begin{cases}
\begin{array}{ll}
\mu(v)& \text{if}\;v\in V(\Gamma);\\
0&\text{otherwise.}
\end{array}
\end{cases}
\]

Let $\Gamma$ be a graph and $\mu$ a degree-$d$ polarization on $\Gamma$. A degree-$d$ divisor $D$ on $\Gamma$ is $\mu$-\emph{semistable} if $\beta_D(V)\geq0$ for every $V\subset V(\Gamma)$. Given $v_0\in V(\Gamma)$, we say that a degree-$d$ divisor $D$ on $\Gamma$ is $(v_0,\mu)$-\emph{quasistable} if $\beta_D(V)\geq0$ for every $V\subsetneq V(\Gamma)$, with strict inequality if $v_0\in V$, or, equivalently, interchanging $V$ and $V^c$, $\beta_D(V)\leq\delta_V$, with strict inequality if $v_0\notin V$. 
If $\mu$ is a degree-$d$ polarization on a graph with $1$ leg $(\Gamma,v_0)$, we will call $(\Gamma,v_0,\mu)$ a \emph{degree-$d$ polarized graph with $1$ leg}.

We say that a pseudo-divisor $(\E,D)$ on $\Gamma$  is \emph{$\mu$-semistable} (respectively, $(v_0,\mu)$-\emph{quasistable}) if $D$ is $\mu^\E$-semistable (respectively, $(v_0,\mu^\E)$-quasistable) on $\Gamma^\E$. Clearly every $(v_0,\mu)$-quasistable pseudo-divisor is $\mu$-semistable.\par

Let us recall some important properties of quasistable pseudo-divisors on a graph.

\begin{Prop}
\label{prop:spec}
Let $(\Gamma,v_0,\mu)$ be degree-$d$ polarized graph with $1$ leg. Assume that $\iota\col\Gamma\ra\Gamma'$ is a specialization of graphs. For every pseudo-divisor $(\E,D)$ on $\Gamma$ of  degree-$d$, the following properties hold:
\begin{enumerate}
\item[(i)] if $(\E,D)$ is $(v_0,\mu)$-quasistable then the  pseudo-divisor $\iota_*(\E,D)$ on $\Gamma'$ is $(\iota(v_0),\iota_*(\mu))$-quasistable;
\item[(ii)] $(\E,D)$ is $(v_0,\mu)$-quasistable if and only if $\E$ is nondisconnecting and the divisor $D_\E$ on $\Gamma_\E$ is $(v_0,\mu_\E)$-quasistable.
\end{enumerate}
\end{Prop}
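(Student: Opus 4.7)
The plan is to reduce both parts to direct computations relating $\beta_D(V)$ for $V\subset V(\Gamma^\E)$ to the analogous quantity on the smaller graph.

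For part~(i), given $V'\subsetneq V(\Gamma'^{\E'})$ I would set $V:=(\iota^\E)^{-1}(V')$ and verify the three identities
\[
\deg(\iota^\E_*(D)|_{V'})=\deg(D|_V),\quad (\iota_*\mu)^{\E'}(V')=\mu^\E(V),\quad \delta_{\Gamma'^{\E'},V'}=\delta_{\Gamma^\E,V}.
\]
The first two are immediate from the definitions of pushforward of divisors and induced polarization, the only point being that exceptional vertices contribute zero on both sides. For the third, the key observation is that any contracted edge of $\iota^\E$ has both endpoints in the same fiber of $\iota^\E$ and therefore cannot cross the cut $(V,V^c)$; hence $\iota^\E$ restricts to a bijection between the edges of the two cuts. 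Together these three equalities give $\beta_{\iota^\E_*(D)}(V')=\beta_D(V)$, and since $V\subsetneq V(\Gamma^\E)$ and $v_0\in V\Leftrightarrow \iota(v_0)\in V'$, quasistability of $(\E,D)$ passes directly to quasistability of $\iota_*(\E,D)$.

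The heart of part~(ii) is a single bookkeeping identity. Writing $W:=V\cap V(\Gamma)$, partition $\E=\E_W\sqcup\E_{W^c}\sqcup \E^{\partial}$ according to whether both, neither, or exactly one endpoint of an edge lies in $W$, and set $\widetilde{\E}_V:=\{e\in\E : w_e\in V\}$, where $w_e$ denotes the exceptional vertex on $e$. I would prove
\[
\beta_D(V)=\beta_{D_\E}(W)+2\bigl|\E_W\setminus\widetilde{\E}_V\bigr|+\bigl|\E^{\partial}\setminus\widetilde{\E}_V\bigr|.
\]
The derivation expands $\deg(D|_V)=\deg(D_\E|_W)-|\widetilde{\E}_V|$ using $D(w_e)=-1$; uses $\mu^\E(V)=\mu(W)$ and $\mu_\E(W)=\mu(W)+\val_\E(W)/2$; and computes the difference $\delta_{\Gamma^\E,V}-\delta_{\Gamma_\E,W}$ by inspecting, for each $e\in\E$, how many of its two half-edges lie in the cut, based on which of $\E_W$, $\E_{W^c}$, $\E^{\partial}$ contains $e$ and on whether $w_e\in V$. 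After the contribution $\val_\E(W)/2$ cancels against the portion of this difference coming from $\E_W$, the stated formula falls out; loops in $\E$ are handled uniformly, since the two half-edges of a loop either both belong to the cut or neither does.

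With this identity both directions of~(ii) are essentially immediate. For $(\Rightarrow)$: given $W\subsetneq V(\Gamma)$, the choice $V:=W\cup\{w_e : e\in\E_W\cup\E^{\partial}\}$ kills the two correction terms, so $\beta_D(V)=\beta_{D_\E}(W)\geq 0$ with strict inequality when $v_0\in W$, which is exactly quasistability of $D_\E$. If $\E$ were disconnecting, a component $W$ of $\Gamma_\E$ containing $v_0$ would satisfy $\delta_{\Gamma_\E,W}=0$; both $\beta_{D_\E}(W)$ and $\beta_{D_\E}(W^c)$ are nonnegative (by the argument just given, applied to each) and they sum to $\delta_{\Gamma_\E,W}=0$, so both vanish, contradicting the strict inequality $\beta_{D_\E}(W)>0$. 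For $(\Leftarrow)$: given $V\subsetneq V(\Gamma^\E)$, either $W=V\cap V(\Gamma)$ is a proper subset of $V(\Gamma)$, in which case the master identity gives $\beta_D(V)\geq \beta_{D_\E}(W)\geq 0$ with the required strictness, or $W=V(\Gamma)$, in which case $\E^{\partial}=\emptyset$ and $\beta_{D_\E}(V(\Gamma))=0$, so the identity collapses to $\beta_D(V)=2|\E\setminus\widetilde{\E}_V|$, strictly positive because $V\subsetneq V(\Gamma^\E)$. The only genuinely delicate step is the proof of the master identity itself: the case analysis for the contribution of $\E$ to $\delta_{\Gamma^\E,V}$ is where careful attention to loops and to the several subcases of how $w_e$ relates to the two endpoints of $e$ will be required.
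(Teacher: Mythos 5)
The paper gives no in-text proof of this proposition: it only cites \cite[Proposition 4.6]{AP2}, so there is no in-paper argument to compare against. Your self-contained proof is correct. For part~(i), taking $V:=(\iota^\E)^{-1}(V')$ does give the three identities you list, and $v_0\in V\Leftrightarrow\iota(v_0)\in V'$ by compatibility; the cut-size identity holds because contracted edges of $\iota^\E$ have both endpoints in a single fiber. For part~(ii), I checked the master identity
\[
\beta_D(V)=\beta_{D_\E}(W)+2\bigl|\E_W\setminus\widetilde{\E}_V\bigr|+\bigl|\E^{\partial}\setminus\widetilde{\E}_V\bigr|,
\]
and it is right: the $\E$-contribution to $\delta_{\Gamma^\E,V}$ is $2|\E_W\setminus\widetilde{\E}_V|+2|\E_{W^c}\cap\widetilde{\E}_V|+|\E^\partial|$, the polarization shift is $\val_\E(W)/2=|\E_W|+|\E^\partial|/2$, the degree correction is $-|\widetilde{\E}_V|$, and the $\E_{W^c}\cap\widetilde{\E}_V$ terms cancel exactly against the corresponding piece of $-|\widetilde{\E}_V|$. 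The forward and backward applications then go through as you say. One small thing worth recording explicitly rather than leaving implicit: the nondisconnecting argument uses $\beta_{D_\E}(W)+\beta_{D_\E}(W^c)=\delta_{\Gamma_\E,W}$, which in turn requires that $\deg D_\E = d+|\E| = \mu_\E(V(\Gamma_\E))$; both sides do equal $d+|\E|$, but the equality of the total degree of $D_\E$ with the total degree of the modified polarization $\mu_\E$ deserves a line.
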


\begin{proof}
See \cite[Proposition 4.6]{AP2}.
\end{proof}

\begin{Rem}
\label{rem:subdivision}
It is easy to see that if a divisor $D$ on $\Gamma^\E$ is $(v_0,\mu^\E)$-quasistable, then $D(v)=0, -1$ for every exceptional vertex $v\in V(\Gamma^\E)$. Moreover if $\widehat{\Gamma}$ is a refinement of $\Gamma$ then $\mu$ induces a polarization $\widehat{\mu}$ in $\widehat{\Gamma}$ given by 
\[
\widehat{\mu}(v)=\begin{cases}
                           \begin{array}{ll}
													  0,&\text{ if $v$ is exceptional;}\\
														\mu(v),& \text{ if $v\in V(\Gamma)$},
														\end{array}
									\end{cases}
									\]
where we view $V(\Gamma)$ as a subset of $V(\widehat{\Gamma})$ via the injection $a\col V(\Gamma)\ra V(\wh{\Gamma})$ induced by the refinement. If a divisor $D$ on $\widehat{\Gamma}$ is $(v_0,\widehat{\mu})$-quasistable, then for every edge $e\in E(\Gamma)$ we have $D(v)=0$ for all but at most one exceptional vertex $v$ over $e$; if such a vertex $v$ over $e$ exists, then $D(v)=-1$. Hence, every $(v_0,\widehat{\mu})$-quasistable divisor $D$ of $\widehat{\Gamma}$ induces a $(v_0,\mu)$-quasistable pseudo-divisor $(\E',D')$ on $\Gamma$.
\end{Rem}

Let $\Pos(\Gamma)$ be the set of $(v_0,\mu)$-quasistable pseudo-divisors on $\Gamma$. Note that $\Pos(\Gamma)$ is a poset where the partial order is $(\E,D)\geq (\E',D')$ if $(\E,D)$ specializes to $(\E',D')$. 
In Figure \ref{fig:poset}, we illustrate the poset $\Pos(\Gamma)$, where $\Gamma$ is a graph with 2 vertices $v_0,v_1$ and 3 edges, and $\mu$ is the trivial polarization of degree 0. 
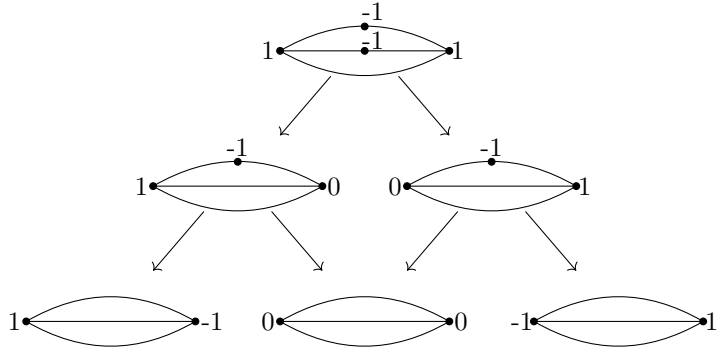
\begin{figure}[h!]
\begin{tikzpicture}[scale=2.25]
\begin{scope}[shift={(0,0)}]
\draw (0,0) to [out=30, in=150] (1,0);
\draw (0,0) to (1,0);
\draw (0,0) to [out=-30, in=-150] (1,0);
\draw[fill] (0,0) circle [radius=0.02];
\draw[fill] (1,0) circle [radius=0.02];
\draw[fill] (0.5,0.144) circle [radius=0.02];
\draw[fill] (0.5,0) circle [radius=0.02];
\node at (-0.07,0) {1};
\node at (1.05,0) {1};
\node at (0.55,0.23) {-1};
\node at (0.55,0.06) {-1};
\end{scope}
\draw[->] (0.3,-0.15) to (0, -0.5);
\draw[->] (0.7,-0.15) to (1, -0.5);
\begin{scope}[shift={(-0.75,-0.8)}]
\draw (0,0) to [out=30, in=150] (1,0);
\draw (0,0) to (1,0);
\draw (0,0) to [out=-30, in=-150] (1,0);
\draw[fill] (0,0) circle [radius=0.02];
\draw[fill] (1,0) circle [radius=0.02];
\draw[fill] (0.5,0.144) circle [radius=0.02];
\node at (-0.07,0) {1};
\node at (1.07,0) {0};
\node at (0.5,0.23) {-1};
\draw[->] (0.3,-0.15) to (0, -0.5);
\draw[->] (0.7,-0.15) to (1, -0.5);
\end{scope}
\begin{scope}[shift={(+0.75,-0.8)}]
\draw (0,0) to [out=30, in=150] (1,0);
\draw (0,0) to (1,0);
\draw (0,0) to [out=-30, in=-150] (1,0);
\draw[fill] (0,0) circle [radius=0.02];
\draw[fill] (1,0) circle [radius=0.02];
\draw[fill] (0.5,0.144) circle [radius=0.02];
\node at (-0.07,0) {0};
\node at (1.05,0) {1};
\node at (0.5,0.23) {-1};
\draw[->] (0.3,-0.15) to (0, -0.5);
\draw[->] (0.7,-0.15) to (1, -0.5);
\end{scope}
\begin{scope}[shift={(-1.5,-1.6)}]
\draw (0,0) to [out=30, in=150] (1,0);
\draw (0,0) to (1,0);
\draw (0,0) to [out=-30, in=-150] (1,0);
\draw[fill] (0,0) circle [radius=0.02];
\draw[fill] (1,0) circle [radius=0.02];
\node at (-0.07,0) {1};
\node at (1.1,0) {-1};
\end{scope}
\begin{scope}[shift={(0,-1.6)}]
\draw (0,0) to [out=30, in=150] (1,0);
\draw (0,0) to (1,0);
\draw (0,0) to [out=-30, in=-150] (1,0);
\draw[fill] (0,0) circle [radius=0.02];
\draw[fill] (1,0) circle [radius=0.02];
\node at (-0.07,0) {0};
\node at (1.07,0) {0};
\end{scope}
\begin{scope}[shift={(1.5,-1.6)}]
\draw (0,0) to [out=30, in=150] (1,0);
\draw (0,0) to (1,0);
\draw (0,0) to [out=-30, in=-150] (1,0);
\draw[fill] (0,0) circle [radius=0.02];
\draw[fill] (1,0) circle [radius=0.02];
\node at (-0.07,0) {-1};
\node at (1.05,0) {1};
\end{scope}
\end{tikzpicture}
\caption{The poset of pseudo-divisors.}
\label{fig:poset}
\end{figure}

We say that $\mu$ is a \emph{universal genus-$g$ polarization (of degree $d$)}, if $\mu$ is a collection of polarizations $\mu_\Gamma$ of degree $d$ for every genus-$g$ weighted stable graph with $1$ leg $\Gamma$, such that $\mu_{\Gamma'}=\iota_*(\mu_\Gamma)$ for every specialization $\iota\col\Gamma\to\Gamma'$.  This polarization extends to every genus-$g$ semistable graph, since every genus-$g$ semistable graph is a subdivision of a stable graph. 

If $\mu$ is a universal genus-$g$ polarization, we define the poset $\mathcal{QD}_{\mu,g}$  as
\[
\mathcal{QD}_{\mu,g}:=\frac{\left\{(\Gamma,\E,D);\begin{array}{c}
                      \Gamma \text{ is a genus-$g$ stable graph with $1$ leg}\\
											(\E,D)\text{ is a }(v_0,\mu_\Gamma)\text{-quasistable pseudo-divisor on }\Gamma\end{array}\right\}}{\sim}
\]
where the ordering is given by specializations, and $(\Gamma,\E,D)\sim (\Gamma',\E',D')$ if there exists an isomorphism $\iota\col\Gamma\ra\Gamma'$ such that $(\E',D')=\iota_*(\E,D)$.

\subsection{Tropical curves}
\label{sec:tropicalcurves}

 A \emph{metric graph} is a pair $(\Gamma,\l)$ where $\Gamma$ is a graph and $\l$ is a function $\l\col E(\Gamma)\to \mathbb{R}_{>0}$, called \emph{length function}. If $\ora{\Gamma}$ is an orientation on $\Gamma$, we define the \emph{tropical curve} $X$ associated to $(\ora{\Gamma},\ell)$ as
\[
X=\frac{\left(\bigcup_{e\in E(\ora{\Gamma})}I_e\cup V(\ora{\Gamma})\right)}{\sim}
\]
where $I_e=[0,\l(e)]\times\{e\}$ and $\sim$ is the equivalence relation generated by $(0,e)\sim s(e)$  and $(\l(e),e)\sim t(e)$. The tropical curve $X$ has a natural topology and its connected components have a natural structure of metric space. Note that the definition of $X$ does not depend on the chosen orientation. For two points $p,q\in I_e$ we denote by $\overline{pq}$ (respectively, $\overrightarrow{pq}$) the interval (respectively, oriented interval) $[p,q]\subset I_e$. \par

  We say that the tropical curves $X$ and $Y$ are \emph{isomorphic} if there is a bijection between $X$ and $Y$ that is an isometry over each connected component of $X$. Moreover, a graph $\Gamma$ (respectively, $\overrightarrow{\Gamma}$) is a  \emph{model} (respectively, \emph{directed model}) of a tropical curve $X$ if there exists a length function $\l$ on $\Gamma$ such that $X$ and the tropical curve associated to $(\Gamma,\l)$ are isomorphic. Sometimes, we will use interchangeably the notion of tropical curve and of its isomorphism class. If $X$ and $Y$ are tropical curves and $\Gamma_X$ and $\Gamma_Y$ are models for $X$ and $Y$, we say that the pairs $(X,\Gamma_X)$ and $(Y,\Gamma_Y)$ are \emph{isomorphic} if there exists an isomorphism $f\col X\to Y$ of tropical curves such that $f(V(\Gamma_X))=V(\Gamma_Y)$ and $f$ induces an isomorphism of graphs between $\Gamma_X$ and $\Gamma_Y$.  \par

  We say that a tropical curve $Y$ is a \emph{tropical subcurve} of $X$ if there is an injection $Y\subset X$ that is an isometry onto its image. In this case, if $\Gamma_Y$ is a model of $Y$, one can choose a model $\Gamma_X$ of $X$ such that $\Gamma_Y$ is a subgraph of $\Gamma_X$. Conversely, if $\Gamma'$ is a subgraph of a model $\Gamma_X$ of $X$, then $\Gamma'$ induces a tropical subcurve $Y$ of $X$. Note that a tropical subcurve can be a single point. If $Y$ and $Z$ are tropical subcurves of $X$ then $Y\cap Z$ and $Y\cup Z$ are also tropical subcurves of $X$. 
  From now on  all tropical curves will be connected (and hence pure-dimensional), while we will allow possibly nonconnected tropical subcurves (and hence possibly non pure dimensional). \par
	
Let $X$ be a tropical curve with a model $\Gamma_X$ and $Y\subset X$ be a tropical subcurve of $X$. Then, there is a minimal refinement $\Gamma_{X,Y}$ of $\Gamma_X$ such that $Y$ is induced by a subgraph $\Gamma_Y$ of $\Gamma_{X,Y}$. We define
\[
\delta_{X,Y}:=\sum_{v\in V(\Gamma_Y)}\val_{E(\Gamma_{X,Y})\setminus E(\Gamma_Y)}(v)
\]
The definition of $\delta_{X,Y}$ does not depend on the choice of the model $\Gamma_X$ of $X$. When no confusion may arise, we will simply write $\delta_Y$ instead of $\delta_{X,Y}$.

  Let $X$ be a tropical curve, $\Gamma_X$ a model of $X$, and $\l$ the induced metric on $\Gamma_X$. A specialization $\iota\col\Gamma_X\ra\Gamma'$ induces a metric $\l'$ on $\Gamma'$ defined as
\[
\l'\colon E(\Gamma')\stackrel{\iota_*}{\hookrightarrow} E(\Gamma)\stackrel{\l}{\to} \R_{>0}.
\]
Let $Y$ be the tropical curve associated to $(\Gamma',\l')$. Then there is an induced function $\iota\col X\to Y$ that is constant on the edges of $\Gamma_X$ contracted by $\iota$. We call this function a \emph{specialization of $X$ to $Y$}.\par

  An \emph{$n$-pointed tropical curve} is a pair $(X,\text{leg})$ where $X$ is a tropical curve and $\text{leg}\col I_n=\{0,\dots,n-1\} \to X$ is a function. For every $p\in X$, we set $\l(p):=\#\text{leg}^{-1}(p)$. A \emph{weight} on a tropical curve $X$ is a function $w\col X\to \mathbb{Z}_{\geq0}$ such that $w(p)\neq 0$ only for finitely many points $p\in X$. 

 An $n$-pointed weighted tropical curve $(X,w,\text{leg})$ is \emph{stable} if $\delta_{X,p}+2w(p)+\l(p)\ge 3$ for every point $p\in X$ such that $\delta_{X,p}\leq1$.
The genus $g(X,w)$ of a weighted tropical curve $(X,w)$ is defined by the formula
\[
2g(X,w)-2:=\sum_{p\in X}(2w(p)-2+\delta_{X,p}).
\]
The sum on right-hand side of the last formula is finite, since $(\delta_{X,p}, w(p))\neq (2,0)$ only for finitely many $p\in X$. We often abuse notation and denote by $X$ the pair $(X,w)$ leaving the weight implicit.

\subsection{Divisors on tropical curves}

  Let $X$ be a tropical curve. A \emph{divisor} on $X$ is a map $\D\col X\to \mathbb{Z}$ such that $\D(p)\neq0$ for finitely many points $p\in X$.  \par

Let $\D$ be a divisor on $X$. The \emph{degree} of $\D$ is the integer $\deg \D:=\sum_{p\in X} \D(p)$. The \emph{support} of $\D$, written $\supp(\D)$, is the set of points $p$ of $X$ such that $\D(p)\neq0$. We say that $\D$ is \emph{effective} if $\D(p)\ge0$ for every $p\in X$. We let $\Div(X)$ be the Abelian group of divisors on $X$ and $\Div^d(X)$ the subset of degree-$d$ divisors on $X$. \par
   Given a weighted tropical curve $(X,w)$, the \emph{canonical divisor} $\omega_X$ of $X$ is defined as $\omega_X(p)=2w(p)+\delta_{X,p}-2$. Clearly, $\deg(\omega_X)=2g(X)-2$.\par
  If $\iota\col X\to Y$ is a specialization (or an inclusion $\iota\col X\hookrightarrow Y$) of tropical curves, we define $\iota_*(\D)$ as 
\[
\iota_*(\D)(q):=\sum_{p\in \iota^{-1}(q)}\D(p)
\]
for every $q\in Y$. Note that $\deg(\D)=\deg(\iota_*(\D))$.\par

Let $\Gamma_X$ be a model of $X$. Then every divisor on $\Gamma_X$ can be seen as a divisor on $X$. Given a divisor $\D$ on $X$, we define $\Gamma_{X,\D}$ as the smallest refinement of $\Gamma_X$ such that $\supp(\D)\subset V(\Gamma_{X,\D})$.   
If $X$ and $Y$ are tropical curves and $\D$ and $\D'$ are divisors on $X$ and $Y$, respectively, we say that the pairs $(X,\D)$ and $(Y, \D')$ are \emph{isomorphic} if there is an isomorphisms $f\col X\to Y$ of tropical curves such that $\D(p)=\D'(f(p))$ for every $p\in X$.

   A \emph{rational function} on $X$ is a continuous, piecewise-linear function $f\col X\ra \R$ with integer slopes. 
  If $(\ora{\Gamma},\l)$ is a directed model of $X$, then for every $e\in E(\ora{\Gamma})$  and $a\in\R$ with $0\le a\le\l(e)$, we let $p_{a,e}$ be the point of $X$ lying on $e$ whose distance from $s(e)$ is $a$. 
 We say that a rational function $f$ on $X$ has \emph{slope $b$ over} $\ora{p_{e,a_1},p_{e,a_2}}$, for $a_1,a_2\in\R$ and $0\leq a_1< a_2\leq \ell(e)$, if the restriction of $f$ to the locus of points $p_{a,e}$ for $a_1\leq a \leq a_2$ is linear and has slope $b$.

	A \emph{principal divisor} on $X$ is a divisor of the form
\[
\div_X(f):=\sum_{p\in X} \ord_p(f) p\in \Div(X),
\]
where $f$ is a rational function on $X$ and $\ord_p(f)$ is the sum of the incoming slopes of $f$ at $p$. A principal divisor has degree zero. The \emph{support} of a rational function $f$ on $X$ is defined as $\supp(f)=\{p\in X;\ord_p(f)\neq 0\}$. 
  We denote by $\Prin(X)$ the subgroup of $\Div(X)$ of principal divisors. Given divisors $\D_1,\D_2\in\Div(X)$, we say that $\D_1$ and $\D_2$ are \emph{equivalent} if $\D_1-\D_2\in \Prin(X)$. \par
 
	Let $f$ be a rational function on the tropical curve $X$ and $\Gamma$ be a model of $X$ such that $\supp(f)\subset V(\Gamma)$. Then $f$ is linear over each edge of $\Gamma$. If $f$ is nowhere constant, then it induces an orientation $\ora{\Gamma}$ on $\Gamma$, such that $f$ has always positive  slopes. In this case, we can define a positive flow $\phi_f$ on $\ora{\Gamma}$ where $\phi_f(e)$ is equal to the slope of $f$ over $e$, for every $e\in E(\ora\Gamma)$. It is clear that $\div_X(f)=\div(\phi_f)$, where $\div(\phi_f)$ is seen as a divisor on $X$. Note that the orientation $\ora{\Gamma}$ on $\Gamma$ induced by a rational function $f$ is acyclic, because there are no strictly increasing functions on the circle $S^1$. If $f$ is constant on a subset $\E\subset E(\Gamma)$, then we can contract all edges in $\E$ and get a specialization $\iota\col X\to Y$ of tropical curves. Clearly, $f$ induces a nowhere constant rational function on $Y$. Hence $f$ induces an acyclic orientation $\ora{\Gamma/\E}$ on $\Gamma/\E$ and a positive flow $\phi_f$ on $\Gamma/\E$. We abuse notation and denote by $\phi_f$ also the nonnegative flow induced by $f$ on $\Gamma$.\par

Every $n$-pointed weighted tropical curve $X$ has a \emph{stable reduction} $\st(X)$, which is a stable $n$-pointed weighted tropical  curve, such that there is a unique specialization $\text{red}_X\col X\to \st(X)$ satisfying the following condition: if $\iota\col X\to Y$ is any specialization, where $Y$ is a stable $n$-pointed weighted tropical curve, then $\iota$ factors through $\text{red}_X$.  Similar to the discussion before Example \ref{exa:stgraph}, we note that $\text{red}_X$ is induced  by a sequence of specializations starting from $X$, in each step contracting a segment of $X$ that has an endpoint of valence $1$, weight $0$ and at most one leg attached to it. In this way, we can view $\st(X)$ as a tropical subcurve of $X$. Let $\iota\col \st(X)\to X$ be the natural inclusion. Note that every divisor $\D$ on $X$ is equivalent to the divisor $\text{red}_{X*}(\D)$, seen as a divisor in $X$ in the natural way via the pushforward via $\iota$.

\begin{Exa}
\label{exa:sttrop}
Let $(X,p_0)$ be the leftmost pointed weighted tropical curve in Figure \ref{fig:sttrop}. Then, the stable reduction $\st(X)$ is the rightmost pointed weighted tropical curve  in Figure \ref{fig:sttrop}. Note that there is a natural specialization $X\to \st(X)$ and $\st(X)$ can be seen as a subcurve of $X$.
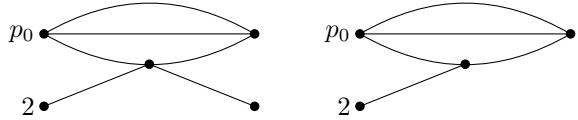
\begin{figure}[h]
\begin{tikzpicture}[scale=2.8]
\begin{scope}[shift={(0,0)}]
\draw (0,0) to [out=30, in=150] (1,0);
\draw (0,0) to (1,0);
\draw (0,0) to [out=-30, in=-150] (1,0);
\draw (0.5,-0.144) to (0,-0.344);
\draw (0.5,-0.144) to (1,-0.344);
\draw[fill] (0,0) circle [radius=0.02];
\draw[fill] (1,0) circle [radius=0.02];
\draw[fill] (0.5,-0.144) circle [radius=0.02];
\draw[fill] (0,-0.344) circle [radius=0.02];
\node[left] at (0,-0.344) {$2$};
\node[left] at (0,0) {$p_0$};
\draw[fill] (1,-0.344) circle [radius=0.02];
\end{scope}
\begin{scope}[shift={(1.5,0)}]
\draw (0,0) to [out=30, in=150] (1,0);
\draw (0,0) to (1,0);
\draw (0,0) to [out=-30, in=-150] (1,0);
\draw (0.5,-0.144) to (0,-0.344);
\draw[fill] (0,0) circle [radius=0.02];
\draw[fill] (1,0) circle [radius=0.02];
\draw[fill] (0.5,-0.144) circle [radius=0.02];
\draw[fill] (0,-0.344) circle [radius=0.02];
\node[left] at (0,-0.344) {$2$};
\node[left] at (0,0) {$p_0$};
\end{scope}
\end{tikzpicture}
\caption{Stable reduction of a pointed weighted tropical curve.}
\label{fig:sttrop}
\end{figure}
\end{Exa}

\subsection{Quasistability on tropical curves}

In analogy with graphs, we introduce the notion of quasistability for a divisor on a tropical curve.

Let $X$ be a tropical curve. A degree-$d$ polarization on $X$ is a function $\mu\col X\to\R$ such that $\mu(p)=0$ for all, but finitely many $p\in X$, with $\sum_{p\in X}\mu(p)=d$. We define the \emph{support} of $\mu$ as 
\[
\supp(\mu):=\{p\in X;\mu(p)\neq 0\}.
\]

   Let $\mu$ be a degree-$d$ polarization on $X$. For every tropical subcurve $Y\subset X$, we define $\mu(Y):=\sum_{p\in Y}\mu(p)$. 
For any divisor $\D$ on $X$ and every tropical subcurve $Y\subset X$, we set 
\[
\beta_\D(Y):=\deg(\D|_Y)-\mu(Y)+\frac{\delta_Y}{2}.
\]
A divisor $\D$ on a tropical curve $X$ is \emph{$\mu$-semistable}  if $\beta_\D(Y)\geq0$, for every tropical subcurve $Y\subset X$. Given a point $p_0$ of $X$, we say that $\D$ is \emph{$(p_0,\mu)$-quasistable} if it is $\mu$-semistable and $\beta_\D(Y)>0$ for every proper tropical subcurve $Y\subset X$ such that $p_0\in Y$.\par

If $\mu$ is a degree-$d$ polarization on $X$ and $p_0$ a point on $X$, 
we will call $(X,p_0,\mu)$ a \emph{degree-$d$ polarized pointed tropical curve}. 

In the next proposition we establish a connection between quasistability on graphs and tropical curves. Note that, given a polarization $\mu$ on a tropical curve $X$ and a model $\Gamma_X$ of $X$ such that $\supp(\mu)\subset V(\Gamma_X)$, we can view $\mu$  as a polarization on $\Gamma_X$.

\begin{Prop}
\label{prop:quasiquasi}
Let $(X,p_0,\mu)$ be a degree-$d$ polarized pointed tropical curve. A degree-$d$ divisor $\D$ on $X$ is $(p_0,\mu)$-quasistable if and only if $D$ is $(p_0,\mu)$-quasistable on $\Gamma_{X,\D}$, where $D$ is the divisor $\D$ seen as a divisor on $\Gamma_{X,\D}$. 
\end{Prop}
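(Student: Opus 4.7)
The plan is to compare the function $\beta$ on both sides through a direct correspondence between subsets $V\subset V(\Gamma_{X,\D})$ and tropical subcurves of $X$. I assume throughout that $\Gamma_{X,\D}$ has been chosen (or tacitly refined) so that $p_0$ and $\supp(\mu)$ lie among its vertices; otherwise the right-hand side has no meaning.

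For the $(\Rightarrow)$ direction, given $V\subsetneq V(\Gamma_{X,\D})$, I would form the tropical subcurve $Y\subset X$ whose model is the induced subgraph of $\Gamma_{X,\D}$ on $V$, i.e.\ $Y$ consists of the points of $V$ together with all edges of $\Gamma_{X,\D}$ having both endpoints in $V$. Taking $\Gamma_{X,\D}$ itself as the common refinement $\Gamma_{X,Y}$, one gets $\deg(\D|_Y)=\deg(D|_V)$, $\mu(Y)=\mu(V)$, and $\delta_Y=\delta_V$, hence $\beta_\D(Y)=\beta_D(V)$. Since $V\subsetneq V(\Gamma_{X,\D})$ forces $Y\subsetneq X$ (a whole neighbourhood of the missing vertex is absent) and $p_0\in V\Leftrightarrow p_0\in Y$, the $(p_0,\mu)$-quasistability of $\D$ on $X$ immediately yields the desired inequalities for $D$.

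For the $(\Leftarrow)$ direction, let $Y\subset X$ be an arbitrary tropical subcurve. Pass to the minimal common refinement $\Gamma':=\Gamma_{X,\D,Y}$ of $\Gamma_{X,\D}$ and a model of $X$ containing $Y$ as a subgraph $\Gamma_Y$, and extend $D$ and $\mu$ by zero on exceptional vertices to $D'$ and $\mu'$. One has $\beta_{D'}(V(\Gamma_Y))=\beta_\D(Y)$ by construction. Setting $V:=V(\Gamma_Y)\cap V(\Gamma_{X,\D})$, the key point is the inequality $\delta_V\le\delta_{V(\Gamma_Y)}$, which I verify edge by edge: for each edge $e=uv$ of $\Gamma_{X,\D}$ with subdividing path $u=x_0,e_1,x_1,\ldots,e_n,x_n=v$ in $\Gamma'$, the contribution of $e$ to $\delta_V$ equals $1$ exactly when precisely one of $u,v$ lies in $V$; in that case the sequence of memberships $x_i\in V(\Gamma_Y)$ must switch an odd number of times, which forces at least one edge $e_j\notin E(\Gamma_Y)$ with an endpoint in $V(\Gamma_Y)$, contributing at least $1$ to $\delta_{V(\Gamma_Y)}$. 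In the remaining cases the contribution to $\delta_{V(\Gamma_Y)}$ is trivially nonnegative. Since $D$ and $\mu$ agree on $V$ and on $V(\Gamma_Y)$ (exceptional vertices carry zero), this yields $\beta_D(V)\le\beta_\D(Y)$.

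The main obstacle is ensuring strictness in the boundary case $V=V(\Gamma_{X,\D})$, in which graph-quasistability gives only $\beta_D(V)=0$. Here $Y$ already contains every vertex of $\Gamma_{X,\D}$, so if $Y$ is a proper subcurve it must fail to contain some subinterval in the interior of an edge $e$ of $\Gamma_{X,\D}$; reading the direct computation of $\beta_\D(Y)$ on the refinement shows that each such missing interval contributes at least $1$, giving $\beta_\D(Y)\ge1$. Combining with the previous paragraph, $\beta_\D(Y)\ge 0$ for every tropical subcurve $Y$ and $\beta_\D(Y)>0$ whenever $p_0\in Y\subsetneq X$ (using, in the case $V\subsetneq V(\Gamma_{X,\D})$, that $p_0\in V\Leftrightarrow p_0\in Y$), which is exactly $(p_0,\mu)$-quasistability of $\D$ on $X$.
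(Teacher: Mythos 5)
Your $(\Rightarrow)$ direction is correct, and the strategy throughout — matching a subset $V\subset V(\Gamma_{X,\D})$ with a tropical subcurve and comparing the two $\beta$'s directly — is reasonable (the paper itself just cites \cite{AP2} here). But the $(\Leftarrow)$ direction opens with a false claim that you should fix: the equality $\beta_{D'}(V(\Gamma_Y))=\beta_\D(Y)$ does \emph{not} hold in general. The degree and $\mu$ terms agree, but $\delta_{\Gamma',V(\Gamma_Y)}$ (the number of $\Gamma'$-edges joining $V(\Gamma_Y)$ to its complement) and $\delta_{X,Y}$ (the sum $\sum_{v\in V(\Gamma_Y)}\val_{E(\Gamma')\setminus E(\Gamma_Y)}(v)$) differ exactly by twice the number of edges of $\Gamma'$ that have \emph{both} endpoints in $V(\Gamma_Y)$ yet are not in $\Gamma_Y$. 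For example, take $\Gamma'=\Gamma_{X,\D}$ with two vertices $a,b$ and two parallel edges $e_1,e_2$, and $Y=\bar{e_1}$: then $\delta_Y=2$ while $\delta_{\Gamma',\{a,b\}}=0$, so $\beta_{D'}(\{a,b\})=0\ne 1=\beta_\D(Y)$. (Your $(\Rightarrow)$ direction is immune precisely because there you pick $Y$ to be the full induced subcurve on $V$, in which case no such "missing" edges exist.)

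Fortunately the error is not fatal, because the correct relation is the inequality $\delta_{\Gamma',V(\Gamma_Y)}\le\delta_{X,Y}$, which points in the direction you need. Chaining it with your edge-by-edge bound $\delta_{\Gamma_{X,\D},V}\le\delta_{\Gamma',V(\Gamma_Y)}$ (which is correct — the parity argument along the subdividing path is fine) and the equality of the degree and $\mu$ terms still gives $\beta_D(V)\le\beta_\D(Y)$, which is all your proof uses. The strictness discussion in the boundary case $V=V(\Gamma_{X,\D})$ is also correct: each missing open interval in the interior of an edge has both boundary points in $V(\Gamma_Y)$ and forces an edge of $\Gamma'\setminus\Gamma_Y$ with both endpoints in $V(\Gamma_Y)$, contributing $2$ to $\delta_Y$ and hence $1$ to $\beta_\D(Y)=\tfrac{\delta_Y}{2}$. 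So: replace "One has $\beta_{D'}(V(\Gamma_Y))=\beta_\D(Y)$ by construction" with the inequality $\beta_{D'}(V(\Gamma_Y))\le\beta_\D(Y)$, and note explicitly that equality holds only when $\Gamma_Y$ is the induced subgraph on $V(\Gamma_Y)$; then the argument is sound.
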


\begin{proof}
See \cite[Proposition 5.3]{AP2}.
\end{proof}

Combining  Remark \ref{rem:subdivision} with Proposition \ref{prop:quasiquasi}, we deduce the following result.

\begin{CorDef}
\label{cor:quasiquasi}
Let $\D$ a $(p_0,\mu)$-quasistable degree-$d$ divisor on $X$. Then $\Gamma_{X,\D}$ is an $\E$-subdivision of $\Gamma_X$ for some $\E\subset E(\Gamma_X)$, and the pair $(\E,D)$ is a $(p_0,\mu)$-quasistable degree-$d$ pseudo-divisor on $\Gamma_X$, where $D$ is the divisor $\D$ seen as a divisor on $\Gamma_X^\E$. We call $(\E,D)$ the pseudo-divisor on $\Gamma_X$ induced by $\D$.
\end{CorDef}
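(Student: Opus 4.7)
The plan is to extract the statement directly from the two results the author has already flagged, namely Proposition \ref{prop:quasiquasi} and Remark \ref{rem:subdivision}. First I would apply Proposition \ref{prop:quasiquasi} to $\D$: since $\D$ is $(p_0,\mu)$-quasistable on $X$, the divisor $D$ obtained by viewing $\D$ as living on the model $\Gamma_{X,\D}$ is $(p_0,\mu)$-quasistable on $\Gamma_{X,\D}$, where we think of $\mu$ as a polarization on $\Gamma_{X,\D}$ (note $\supp(\mu)\subset V(\Gamma_X)\subset V(\Gamma_{X,\D})$, so this is legitimate and in fact $\mu$ on $\Gamma_{X,\D}$ agrees with $\widehat{\mu}$ of Remark \ref{rem:subdivision}).

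Next, by construction $\Gamma_{X,\D}$ is a refinement of $\Gamma_X$, so Remark \ref{rem:subdivision} applies: for every edge $e\in E(\Gamma_X)$, the $(p_0,\widehat{\mu})$-quasistable divisor $D$ on $\Gamma_{X,\D}$ satisfies $D(v)=0$ for all but at most one exceptional vertex $v$ over $e$, and if such a $v$ exists then $D(v)=-1$. On the other hand, $\Gamma_{X,\D}$ is by definition the \emph{smallest} refinement of $\Gamma_X$ whose vertex set contains $\supp(\D)$. Combining these two facts, every exceptional vertex of $\Gamma_{X,\D}$ must carry the value $-1$ and there is at most one such vertex lying over any given edge of $\Gamma_X$. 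Letting $\E\subset E(\Gamma_X)$ be the set of edges that do acquire such an exceptional vertex, this forces $\Gamma_{X,\D}=\Gamma_X^{\E}$, proving the first assertion.

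Finally, for the second assertion, the same application of Remark \ref{rem:subdivision} tells us that the divisor $D$ on $\Gamma_X^{\E}=\Gamma_{X,\D}$ satisfies $D(v)=-1$ on every exceptional vertex, so $(\E,D)$ is a pseudo-divisor on $\Gamma_X$ in the sense of the definition given earlier. Its $(v_0,\mu)$-quasistability is precisely the statement that $D$ is $(v_0,\mu^{\E})$-quasistable on $\Gamma_X^{\E}$, which we already obtained in the first step via Proposition \ref{prop:quasiquasi}.

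There is no real obstacle here: the content of the statement is already packaged inside Proposition \ref{prop:quasiquasi} (semi-continuity of quasistability when passing between a divisor on a tropical curve and its combinatorial avatar on a sufficiently fine model) and inside Remark \ref{rem:subdivision} (the structural constraint that quasistability on a refinement forces at most one subdivision point per edge carrying a nonzero value). The only point requiring a moment of care is the minimality of $\Gamma_{X,\D}$, which is what eliminates exceptional vertices where $D$ vanishes and lets us conclude that $\Gamma_{X,\D}$ is a genuine $\E$-subdivision rather than merely a refinement.
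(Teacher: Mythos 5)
Your argument is correct and follows precisely the route the paper itself signals immediately before the statement ("Combining Remark \ref{rem:subdivision} with Proposition \ref{prop:quasiquasi}, we deduce the following result"); the paper then simply cites \cite[Proposition 5.4]{AP2} rather than writing out the details. In particular you correctly isolate the one non-trivial ingredient: minimality of $\Gamma_{X,\D}$ forces $D(v)\neq 0$ at every exceptional vertex, so Remark \ref{rem:subdivision} upgrades "at most one nonzero exceptional vertex per edge" to "at most one exceptional vertex per edge," which is exactly the $\E$-subdivision condition.
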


\begin{proof}
See \cite[Proposition 5.4]{AP2}.
\end{proof}

The following useful result tells us that in every class of divisors on a tropical curve there is a canonical representative which is $(p_0,\mu)$-quasistable.

\begin{Thm}
\label{thm:quasistable}
Let $(X,p_0,\mu)$ be a degree-$d$ polarized pointed tropical curve. Given a divisor $\D$ on $X$ of degree $d$, there exists a unique degree-$d$ divisor on $X$ equivalent to $\D$ which is $(p_0,\mu)$-quasistable.
\end{Thm}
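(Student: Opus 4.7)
The plan is to reduce to the combinatorial setting of divisors on graphs via Proposition~\ref{prop:quasiquasi}, and then treat uniqueness by a maximum-principle argument and existence by a chip-firing algorithm.

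For uniqueness, let $\D_1,\D_2$ be two $(p_0,\mu)$-quasistable divisors of degree $d$ with $\D_2-\D_1=\div_X(f)$ for some rational function $f$; I claim $f$ is constant. Choose a model $\Gamma$ of $X$ containing $p_0,\supp(\D_1),\supp(\D_2),\supp(\mu)$ in $V(\Gamma)$ and on which $f$ is linear with integer slope on each edge; the corresponding divisors $D_1,D_2$ on $\Gamma$ are then $(p_0,\mu)$-quasistable by Proposition~\ref{prop:quasiquasi}. Suppose $f$ is non-constant and set $V=\{v\in V(\Gamma):f(v)=\max f\}$, a proper non-empty subset. On each of the $\delta_V$ edges of the cut $E(V,V^c)$, the slope of $f$ going out of $V$ is a strictly negative integer, so the outgoing flux of $f$ across $\partial V$ is at most $-\delta_V$ and
\[
\deg(\div_X(f)|_V)\;\geq\;\delta_V.
\]
Hence $\beta_{D_2}(V)-\beta_{D_1}(V)\geq\delta_V$. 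Combined with $\beta_{D_1}(V)\geq 0$ and $\beta_{D_2}(V)\leq\delta_V$ (from semistability of $D_1$ on $V$ and of $D_2$ on $V^c$, using $\beta_D(V)+\beta_D(V^c)=\delta_V$), equality must hold throughout: $\beta_{D_1}(V)=0$ and $\beta_{D_2}(V)=\delta_V$. If $p_0\notin V$, then $p_0$ lies in the proper subset $V^c$, and quasistability of $D_2$ forces $\beta_{D_2}(V^c)>0$, i.e.\ $\beta_{D_2}(V)<\delta_V$, a contradiction. If $p_0\in V$, apply the symmetric argument at the minimum locus $V'=\{v\in V(\Gamma):f(v)=\min f\}$, which is disjoint from $V$ and hence misses $p_0$, using the quasistability of $D_1$.

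For existence, given an arbitrary degree-$d$ divisor $\D$ on $X$, pass to a model $\Gamma$ as above and run a chip-firing algorithm: while some proper subset $V\subsetneq V(\Gamma)$ satisfies $\beta_D(V)<0$, replace $D$ by $D+d^*d(\chi_V)$, which preserves the equivalence class and increases $\beta_D(V)$ by $\delta_V$. A potential-function argument analogous to Dhar's burning algorithm shows this terminates at a $\mu$-semistable divisor. To promote semistability to $(p_0,\mu)$-quasistability, handle each remaining tight set $V\ni p_0$ with $\beta_D(V)=0$ by subdividing a suitable edge in $E(V,V^c)$ and redistributing chips consistently, obtaining a pseudo-divisor on a refinement $\Gamma^\E$ which is $(p_0,\mu^\E)$-quasistable; Corollary-Definition~\ref{cor:quasiquasi} then identifies this with a $(p_0,\mu)$-quasistable divisor on $X$ equivalent to $\D$.

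The main obstacle is existence: verifying termination of the chip-firing algorithm and carrying out the subdivision/redistribution step in the passage from semistability to quasistability require careful combinatorial bookkeeping. Uniqueness, by contrast, is a clean maximum-principle argument in which the strict inequality in the quasistability condition on the subset containing $p_0$ is exactly what closes the argument.
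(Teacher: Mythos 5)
The paper does not actually prove this theorem — its ``proof'' is the citation ``See \cite[Theorem 5.6]{AP2}'' — so there is no in-paper argument to compare against; the remarks below concern the internal soundness of your sketch.

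Your uniqueness argument is correct and is the standard maximum-principle proof. Two small remarks. First, when $p_0\in V$ the detour through the minimum locus is unnecessary: quasistability of $D_1$ already forces $\beta_{D_1}(V)>0$, which directly contradicts the forced equality $\beta_{D_1}(V)=0$. Second, you invoke Proposition~\ref{prop:quasiquasi} to pass from tropical quasistability of $\D_i$ to graph-level quasistability of $D_i$ on your chosen model $\Gamma$, but that proposition is stated for the \emph{minimal} model $\Gamma_{X,\D_i}$. The implication you need (tropical implies graph, on any model whose vertex set contains $\supp(\D_i)$, $\supp(\mu)$ and $p_0$) is immediate — each $V\subset V(\Gamma)$ spans a subcurve $Y\subset X$ with the same $\deg$, $\mu$, $\delta$ — but you should say so, since it is what makes the reduction to the chosen model legitimate.

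The existence half, however, contains two genuine gaps which you flag but do not close, and they are the heart of the matter. First, termination of the chip-firing loop is not automatic: replacing $D$ by $D+d^*d(\chi_V)$ raises $\beta_D(V)$ by $\delta_V$, but it can drive $\beta_D(W)$ below zero for other subsets $W$, and the orbit $D+\Prin(\Gamma)$ is infinite, so one needs an explicit monovariant (or a disciplined choice of which $V$ to fire, e.g.\ always a maximal offending set) together with a proof that it strictly decreases. The phrase ``a potential-function argument analogous to Dhar's burning algorithm'' names a hoped-for mechanism rather than supplying one. Second, the promotion from $\mu$-semistable to $(p_0,\mu)$-quasistable is a qualitatively different move — pushing a chip into the interior of a boundary edge of a tight set $V\ni p_0$ — and you would need to specify which edge to subdivide, verify that the resulting pair $(\E,D)$ on $\Gamma^\E$ is genuinely $(p_0,\mu^\E)$-quasistable (the exceptional vertex has degree $-1$, and no previously slack $W$ becomes unstable), and argue that the family of tight sets containing $p_0$ is exhausted in finitely many such steps. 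Without these two arguments the existence half is a plan, not a proof.
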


\begin{proof}
See \cite[Theorem 5.6]{AP2}.
\end{proof}

\section{The universal tropical Abel map}
\label{sec:abeltrop}

\subsection{Tropical moduli spaces}\label{sec:tropmod}
In this section we introduce the tropical moduli spaces which we will be dealing with, focusing on the universal tropical Jacobian.

 Given a finite set $S\subset \R^n$ we define 
\[
\cone(S):=\left\{\sum_{s\in S}\lambda_ss|\lambda_s\in \mathbb R_{\ge0}\right\}.
\]
A subset $\sigma\subset \R^n$ is called a \emph{polyhedral cone} if $\sigma=\cone(S)$ for some finite set $S\subset \mathbb{R}^n$. If there exists $S\subset \mathbb{Z}^n$ with $\sigma=\cone(S)$ then $\sigma$ is called \emph{rational}.\par

Let $\sigma\subset \R^n$ be a polyhedral cone. Then $\sigma$ is the intersection of finitely many closed half-spaces. We denote by $\text{span}(\sigma)$ the minimal linear subspace containing $\sigma$. The \emph{dimension} of $\sigma$, written $\dim(\sigma)$, is the dimension of $\text{span}(\sigma)$.
The \emph{relative interior} $\sigma^\circ$ is the interior of $\sigma$ inside $\text{span}(\sigma)$. A \emph{face} of $\sigma$ is either $\sigma$ itself or the intersection of $\sigma$ with some linear subspace $H\subset \mathbb{R}^n$ of codimension one such that $\sigma$ is contained in one of the closed half-spaces determined by $H$. A face of $\sigma$ is also a polyhedral cone. If $\tau$ is a face of $\sigma$, then we write $\tau\prec \sigma$. In the sequel we will use the word \emph{cone} to mean rational polyhedral cone. \par

	 A \emph{morphism} $f\col \tau\to\sigma$ between cones $\tau\subset \R^m$ and $\sigma\subset \R^n$ is the restriction to $\tau$ of an integral linear transformation $T\col\R^n\to \R^m$ such that $T(\tau)\subset\sigma$. 
We say that $f$ is an \emph{isomorphism} if there exists an inverse morphism $f^{-1}\col\sigma\to\tau$.  A morphism $f\col\tau\to\sigma$ is called a \emph{face morphism} if $f$ is an isomorphism between $\tau$ and a (not necessarily proper) face  of $\sigma$.\par

	\begin{Rem}
	\label{rem:primitive}
	If the integral linear transformation $T$ is injective and primitive, i.e., $T^{-1}(\mathbb{Z}^m)=\mathbb{Z}^n$, and $f$ is a surjection, then $f$ is an isomorphism. Indeed, since $f$ is a bijection, the image of the restriction of $T$ to $\spa(\tau)$ is precisely $\spa(\sigma)$. Moreover $T|_{\spa(\tau)}\col\spa(\tau)\to\spa(\sigma)$ is an integral isomorphism, because $T$ is primitive, hence it admits an integral inverse $T'\col\spa(\sigma)\to\spa(\tau)$ with $T'(\sigma)=\tau$. Upon choosing a basis $e_1,\ldots, e_m$ of $\mathbb{Z}^m$ such that $e_1,\ldots,e_k$ is a basis of $\mathbb{Z}^m\cap\spa(\sigma)$, we can extend the map $T'$ to $\mathbb{R}^m$ by $T'(e_j)=0$ for every $j=k+1,\ldots,m$. This gives rise to the inverse $f^{-1}$.\end{Rem}

 A \emph{fan} $\Sigma$ is a set of cones such that:
\begin{enumerate}
\item if $\sigma\in \Sigma$ and $\tau\prec\sigma$, then $\tau\in\Sigma$;
\item if $\sigma,\tau\in \Sigma$, then $\sigma\cap\tau$ is a face of both $\sigma$ and $\tau$.
\end{enumerate}
	
	A \emph{generalized cone complex} $\Sigma$ is the colimit (as a topological space) of a finite diagram $\mathbf{D}$ of cones with face morphisms. 
A cone $\sigma$ in a  finite diagram of cones $\mathbf{D}$ is \emph{maximal} if there is no proper face morphism $f\col\sigma\to\tau$ in $\mathbf{D}$. 

Let $\Sigma$ and $\Sigma'$ be  generalized cone complexes.
	 A \emph{morphism of generalized cone complexes} is a continuous map of topological spaces $f\col\Sigma\to\Sigma'$ such that for every cone $\sigma\in \mathbf{D}$ there exists a cone $\sigma'\in \mathbf{D}'$ such that the induced map $\sigma\to\Sigma'$ factors through a cone morphism $\sigma\to\sigma'$. In fact,  for a continuous map $f\col\Sigma\to\Sigma'$ to be a morphism of generalized cone complexes it suffices that for every maximal cone $\sigma\in \mathbf{D}$ there exists a cone $\sigma'\in \mathbf{D}'$ such that the induced map $\sigma\to\Sigma'$ factors through a cone morphism $\sigma\to\sigma'$.
\par
		
  For a graph $\Gamma$, the open cone $\mathbb{R}_{> 0}^{|E(\Gamma)|}$ parametrizes all possible choices for the lengths of the edges of $\Gamma$. Hence, $M_\Gamma^{\trop}:=\R^{E(\Gamma)}_{>0}/\Aut(\Gamma)$ parametrizes isomorphism classes of pairs $(X,\Gamma_X)$, where $X$ is a tropical curve and $\Gamma_X$ is a model of $X$ isomorphic to $\Gamma$.  We will identify $E(\Gamma)$ with the canonical basis of $\mathbb{R}^{|E(\Gamma)|}$.

 If $\iota\col \Gamma\ra \Gamma'$ is a specialization, then there is an inclusion $\R^{E(\Gamma')}\subset \R^{E(\Gamma)}$ induced by the inclusion $E(\Gamma')\subset E(\Gamma)$. Given a refinement $\Gamma'$ of $\Gamma$, there is a map 
\begin{align}\label{eq:hat}
F\col\R^{E(\Gamma')}\to  &  \R^{E(\Gamma)}\\
      (x_{e'})_{{e'}\in E(\Gamma')}  \mapsto&(y_e)_{e\in E(\Gamma)}\nonumber,
\end{align}
where, if $b\col E(\Gamma')\to E(\Gamma)$ is the surjection induced by the refinement $\Gamma'$, 
\[
y_e:=\underset{b(e')=e}{\sum_{e'\in E(\Gamma')}}x_{e'}.
\] 

		If $X$ is a stable $n$-pointed tropical curve of genus-$g$, then $X$ admits exactly one stable model $\Gamma_X$. Hence the moduli space $M_{g,n}^{\text{trop}}$ of stable $n$-pointed tropical curves of genus $g$ is the generalized cone complex given as the colimit of the diagram whose cones are $\mathbb{R}_{\geq0}^{|E(\Gamma)|}$, where $\Gamma$ runs through all stable genus-$g$ weighted graphs with $n$ legs, with face morphisms specified by specializations. More precisely, if $\Gamma$ specializes to $\Gamma'$, then $\mathbb{R}_{\geq 0}^{|E(\Gamma')|}$ is a face of $\mathbb{R}_{\geq 0}^{|E(\Gamma)|}$ via the inclusion $E(\Gamma')\subset E(\Gamma)$.  For more details about $M_{g,n+1}^{\text{trop}}$ and its compactification $\overline{M}_{g,n+1}^{\textnormal{trop}}$, see \cite[Section 2]{M},  \cite[Sections 2.1 and 3.2]{BMV}, \cite[Section 3]{Caporaso1}, \cite[Section 3]{Caporaso}, and \cite[Section 4]{ACP}.\par

If $(X,w,\text{leg})$ is a stable $n$-pointed weighted tropical curve, then there is a unique stable weighted graph with $n$ legs $\Gamma_{st}$ that is a model of $X$ with  weights and legs  satisfying the following property:  for every $p\in X$ such that either $w(p)\neq0$ or $\l(p)\neq0$, then $p\in V(\Gamma_{st})$. We call $\Gamma_{st}$ the \emph{stable model} of $X$.\par
 For the remainder of the paper we will usually omit to denote the weight $w$ of a weighted tropical curve.

\begin{Def}
Let $(\Gamma,v_0,\mu)$ be a degree-$d$ polarized graph with $1$ leg.  For each $(v_0,\mu)$-quasistable pseudo-divisor $(\E,D)$ on $\Gamma$, we define
\[
\sigma_{(\Gamma,\E,D)}:=\mathbb{R}^{E(\Gamma^\E)}_{\geq0} \quad \text{and}\quad \sigma^\circ_{(\Gamma,\E,D)}:=\mathbb{R}^{E(\Gamma^\E)}_{>0}.
\]
If $\iota\col(\Gamma,\E,D)\ra(\Gamma',\E',D')$ is a specialization, then there is a natural inclusion $\sigma_{(\Gamma',\E',D')}\to \sigma_{(\Gamma,\E,D)}$.  
For a fixed $(\Gamma,v_0,\mu)$, we define the generalized cone complex
\[
J^\trop_{\Gamma,\mu}:=\lim_{\longrightarrow}\sigma_{(\Gamma',\E',D')},
\]
where $(\Gamma',\E',D')$ runs through all specializations $\iota\col\Gamma\ra\Gamma'$ and  $(\iota(v_0),\iota_*(\mu))$-quasistable pseudo-divisors $(\E',D')$ on $\Gamma'$. We note that every $(\iota(v_0),\iota_*(\mu))$-quasistable pseudo divisor $(\E',D')$ on $\Gamma'$ is the specialization of a $(v_0,\mu)$-quasistable pseudo-divisor on $\Gamma$ (see \cite[Proposition 4.10]{AP1}).\par
 Let $\mu$ be a universal genus-$g$ polarization. The \emph{universal tropical Jacobian (with respect to $\mu$)} is defined as the generalized cone complex 
\[
J^\trop_{\mu,g}:=\lim_{\longrightarrow} \sigma_{(\Gamma,\E,D)}=\coprod_{(\Gamma,\E,D)} \sigma_{(\Gamma,\E,D)}^\circ/\text{Aut}(\Gamma,\E,D)
\] 
where $(\Gamma,\E,D)$ runs through all elements of $\mathcal{QD}_{\mu,g}$. 
\end{Def}

In the next result we collect some properties of the universal tropical Jacobian. 

\begin{Thm}
\label{thm:maintrop} 
The generalized cone complex $J^\trop_{\mu,g}$ has pure dimension $4g-2$ and is connected in codimension $1$. The natural forgetful map $\pi^{trop}\col J^\trop_{\mu,g}\to M^\trop_{g,1}$ is a morphism of generalized cone complexes. For every stable pointed tropical curve $X$ of genus $g$, there is a homeomorphism
\[
(\pi^{trop})^{-1}([X])\cong J^\trop(X)/\Aut(X),
\]
where $J^\trop(X)$ is the tropical Jacobian of $X$.
\end{Thm}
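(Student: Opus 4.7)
The plan is to handle the four assertions in turn: the dimension count with purity, the definition of the forgetful morphism, the identification of the fiber, and finally the connectedness in codimension one. The first three are essentially bookkeeping with the combinatorics developed in Section~\ref{sec:divgraph}, while the last is where the genuine work lies.

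\emph{Dimension and purity.} Each cone $\sigma_{(\Gamma,\E,D)} = \mathbb{R}_{\geq 0}^{E(\Gamma^{\E})}$ has dimension $|E(\Gamma)| + |\E|$. A genus-$g$ weighted stable graph with one leg satisfies $|E(\Gamma)| \leq 3g-2$, with equality when $\Gamma$ is trivalent and every weight vanishes (a direct count from $g = \sum_v w(v) + b_1(\Gamma)$, stability, and the handshake lemma). Proposition~\ref{prop:spec}~(ii) implies $\E$ is nondisconnecting, so $|\E| \leq b_1(\Gamma) = g$. Thus $\dim \sigma_{(\Gamma,\E,D)} \leq 4g-2$, with equality iff $\Gamma$ is trivalent weightless and $\E$ is a spanning cotree. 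Purity then requires showing each triple specializes from such a maximal one: first lift $\Gamma$ to a trivalent weightless stable graph $\widetilde{\Gamma}$ by splitting high-valence vertices and unfolding weights into bouquets of loops; next extend $\E$ (viewed inside $E(\widetilde{\Gamma})$) to a spanning cotree $\widetilde{\E}$ (a basis-extension in the cographic matroid of $\widetilde{\Gamma}$); finally lift $(\E,D)$ to a $(v_0,\mu_{\widetilde{\Gamma}})$-quasistable pseudo-divisor on $\widetilde{\Gamma}$ using the quasistability-lifting statement already invoked in the definition of $J^{\trop}_{\Gamma,\mu}$.

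\emph{Forgetful map and fiber.} On each cone $\sigma_{(\Gamma,\E,D)}$ I define $\pi^{\trop}$ by the integral linear map $F$ of \eqref{eq:hat} that sums the two subdivided lengths over each $e \in \E$ and lands in the cone $\mathbb{R}^{E(\Gamma)}_{\geq 0}$ associated with $\Gamma$ in $M^{\trop}_{g,1}$. Compatibility with specializations on either side is immediate from the definitions, so these maps glue into a morphism of generalized cone complexes. For the fiber over $[X]$, let $\Gamma_X$ be the stable model of $X$. By Corollary-Definition~\ref{cor:quasiquasi}, a $(p_0,\mu)$-quasistable degree-$d$ divisor on $X$ is the same datum as a $(v_0,\mu_{\Gamma_X})$-quasistable pseudo-divisor $(\E,D)$ on $\Gamma_X$ together with a partition of the length of each $e \in \E$ among its two subdivided pieces. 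Theorem~\ref{thm:quasistable} provides a canonical bijection between quasistable divisors and degree-$d$ divisor classes on $X$, so after quotienting by $\Aut(X)$ the fiber becomes $J^{\trop}(X)/\Aut(X)$; the cones $\sigma_{(\Gamma_X,\E,D)}$ yield the resulting polyhedral decomposition.

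\emph{Connectedness in codimension one.} Fix a trivalent weightless stable $\Gamma$ and two maximal triples $(\Gamma,\widetilde{\E}_i,\widetilde{D}_i)$ for $i=1,2$. Any two spanning cotrees of $\Gamma$ are connected by single-element exchanges in the cographic matroid: one edge $x \in \widetilde{\E}_1 \setminus \widetilde{\E}_2$ is swapped for some $y \in \widetilde{\E}_2 \setminus \widetilde{\E}_1$ to form a new cotree $\widetilde{\E}_1 - x + y$ whose intersection with $\widetilde{\E}_1$ has size $g-1$. The intermediate triple $(\Gamma, \widetilde{\E}_1 \setminus \{x\}, D^{\ast})$, with $D^{\ast}$ obtained by pushing forward $\widetilde{D}_1$ along the contraction of one of the two edges over $x$, furnishes a common codimension-one face of both $\sigma_{(\Gamma,\widetilde{\E}_1,\widetilde{D}_1)}$ and $\sigma_{(\Gamma,\widetilde{\E}_1 - x + y, \widetilde{D}_1')}$ for an appropriate choice of $\widetilde{D}_1'$. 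Iterating, the maximal cones over a fixed $\Gamma$ form a subcomplex connected in codimension one. To traverse between different stable $\Gamma$, invoke that $M^{\trop}_{g,1}$ itself is connected in codimension one via single-edge contractions between trivalent graphs, and lift these incidences along $\pi^{\trop}$ using the purity construction. The main obstacle is checking that the pushforward $D^{\ast}$ is genuinely $(v_0,\mu)$-quasistable and that it specializes to both $\widetilde{D}_i$; this requires Proposition~\ref{prop:spec} together with a case analysis on where the $-1$ value at the exceptional vertex is transferred.
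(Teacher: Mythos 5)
The paper does not give its own argument here: the proof is the single line ``See [Theorem 5.14]{AP2},'' so there is nothing internal to compare against. Taking your sketch on its own terms, the dimension count ($|E(\Gamma)|\le 3g-2$ for a trivalent weightless stable graph with one leg, plus $|\E|\le b_1(\Gamma)\le g$ since $\E$ is nondisconnecting), the construction of the forgetful map as the length-summing map $F$ of \eqref{eq:hat} on each cone, and the fiber identification via Corollary-Definition~\ref{cor:quasiquasi} and Theorem~\ref{thm:quasistable} are all reasonable and match what one would expect. One thing worth spelling out for the fiber claim: for $\widetilde\E$ a spanning cotree, $\Gamma_{\widetilde\E}$ is a tree, so Theorem~\ref{thm:quasistable} applied to a tree forces the quasistable $\widetilde D$ to be unique; without this observation the structure of the top-dimensional cones over a fixed $(\Gamma,\widetilde\E)$ is ambiguous.

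The genuine gaps are in the purity step and, more seriously, in connectedness in codimension one — and they are really the same gap. The lifting statement quoted in the paper (from [AP1, Proposition 4.10]) lifts a quasistable pseudo-divisor along a specialization $\iota\colon\widetilde\Gamma\to\Gamma$, producing \emph{some} quasistable $(\widetilde\E,\widetilde D)$ with $\E\subset\widetilde\E\cap E(\Gamma)$; it does not let you prescribe $\widetilde\E$, so it does not by itself let you enlarge $\E$ to a chosen spanning cotree over the same $\Gamma$ (which is what purity needs) nor realize the basis-exchange step. Concretely, in the exchange step you contract one of the two edges over $x\in\widetilde\E_1$ to reach $(\Gamma,\widetilde\E_1\setminus\{x\},D^\ast)$, and you then need a quasistable $\widetilde D_1'$ on $\Gamma^{\widetilde\E_1-x+y}$ that specializes to $D^\ast$ by contracting one of the two edges over $y$ — that is, for one of the two compatible subdivisions the divisor with the $-1$ placed appropriately must again be quasistable. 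You invoke Proposition~\ref{prop:spec}, but that goes the wrong direction (pushforwards of quasistable are quasistable); the needed fact is an existence/lifting statement, and it is not clear from your sketch which of the two choices works, or that one always does. The same lifting is needed, and not addressed, in the step connecting cones over distinct trivalent graphs $\Gamma_1\to\Gamma_0\leftarrow\Gamma_2$: one must lift the quasistable pseudo-divisor on $\Gamma_0$ compatibly to codimension-one faces of maximal cones over both $\Gamma_1$ and $\Gamma_2$. Without an explicit statement and proof of the relevant lifting lemma for pseudo-divisors (enlarging $\E$ by one edge while preserving quasistability), the connectedness argument does not close.
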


\begin{proof}
See \cite[Theorem 5.14]{AP2}.
\end{proof}
	
\subsection{The universal tropical Abel map}\label{sesc:Abeltrop}

We introduce the universal tropical Abel map and study its properties. The universal tropical Abel map is not, in general, a morphism of generalized cone complexes. We will provide its ``resolution" within the category of generalized cone complexes.

Fix a nonnegative integer $n$, a sequence $\mathcal A=(a_0, a_1,\ldots,a_n,m)$ of $n+2$ integers and a universal genus-$g$ polarization $\mu$ of degree $d$ over $M_{g,1}^{\trop}$, where 
\[
d:=\sum_{0\le i\le n}a_i+m(2g-2).
\] 

The \emph{universal tropical Abel map} is defined as
\begin{align}
\label{eq:atrop}\alpha^\trop_{\mathcal A,\mu}\col M_{g,n+1}^{trop}\to& J^\trop_{\mu,g}\\
           (X,p_0,\ldots,p_n)\mapsto& (\st(X,p_0), \D) \label{eq:Abeltrop}\nonumber,
\end{align}
where $\st(X,p_0)$ is the stable reduction of $(X,p_0)$ and, if $\omega_X$ is the canonical divisor of $X$, then $\D$ is the unique $(p_0,\mu)$-quasistable divisor on $\st(X)$ equivalent to  
\[
\red_{X,*}(m\omega_X+\sum_{0\le i\le n} a_ip_i).
\]
(Recall Theorem \ref{thm:quasistable} and that $\st(X)$ can be viewed as a subcurve of $X$ and then $\D$ can be viewed as a divisor on $X$, which is equivalent to $m\omega_X+\sum_{0\le i\le n} a_ip_i$.)\par

In general, this map is not a morphism of cone complexes. We want to construct a refinement $\beta_{\mathcal A,\mu}^\trop\col M_{g,\mathcal A}^{\trop}\to M_{g,n+1}^{\trop}$ such that the composition $\alpha_{\mathcal A,\mu}^\trop\circ\beta_{\mathcal A,\mu}^\trop\col M_{g,\mathcal A}^{trop}\ra J_{\mu,g}^{trop}$ is a morphism of generalized cone complexes.\par
  Fix a degree-$d$ polarized graph with $1$ leg $(\Gamma,v_0,\mu)$. Consider a degree-$d$ divisor $D_0$ on $\Gamma$. For each $(v_0,\mu)$-quasistable pseudo-divisor $(\E,D)$ of degree $d$ on $\Gamma$ there exists an orientation $\ora{\Gamma^\E}$ on $\Gamma^\E$ and a flow $\phi$ on $\ora{\Gamma^\E}$ such that $D_0^\E+\div(\phi)=D$. Indeed, $D_0^\E$ and $D$ have the same degree and the image of the map $d^*\col C_1(\Gamma,\mathbb{Z})\to C_0(\Gamma,\mathbb{Z})$ consists precisely of the divisors of degree $0$.   By Proposition \ref{prop:flow}, there is  a finite number of pairs $(\E,\phi)$ with $\phi$ an acyclic flow on $\Gamma^\E$ such that $D_0^\E+\div(\phi)$ is $(v_0,\mu)$-quasistable.\par

\begin{Def}\label{def:CGamma}
For each pair $(\E,\phi)$ where $\phi$ is an acyclic flow on $\Gamma^\E$, we define the cone $C_{\Gamma,\E,\phi}\subset\R_{\geq0}^{E(\Gamma^\E)}$ (respectively, $C_{\Gamma,\E,\phi}^\circ\subset\R_{>0}^{E(\Gamma^\E)}$) as the cone (respectively, the open cone) given, in the coordinates $(x_e)_{e\in E(\Gamma^\E)}$ of $\R^{E(\Gamma^\E)}$, by intersecting $\R^{E(\Gamma^\E)}_{\geq0}$ (respectively, $\R^{E(\Gamma^\E)}_{>0}$) with the linear subspace given by the following equations: 
\begin{equation}
\label{eq:C}
\sum_{e\in E(\Gamma^\E)} \gamma(e)\phi(e)x_e=0
\end{equation}
where $\gamma$ runs over the set of cycles in $\Gamma$ or, equivalently, over the elements of a basis of $H_1(\Gamma,\mathbb{Z})$ (recall the definition \eqref{eq:gamma} of $\gamma(e)$ for $e\in E(\Gamma^\E)$).  
\end{Def}

 Note that there is a natural bijection between the cycles in $\Gamma$ and the cycles of $\Gamma^\E$, taking a cycle $\gamma$ in $\Gamma$ to its subdivision induced by $\E$. In the sequel, we will identify the cycles in $\Gamma$ and the cycles in $\Gamma^\E$.

\begin{Rem}
\label{rem:functionflow}
The cone $C_{\Gamma,\E,\phi}^\circ$ parametrizes tropical curves $X$ with model $\Gamma$ such that $X$ admits a rational function $f$ whose induced flow $\phi_f$  is equal to the flow $\phi$ on $\Gamma^\E$.
\end{Rem}

\begin{Not}\label{not:KGamma}
We let $K_{\Gamma,\E,\phi}$ (respectively, $K^\circ_{\Gamma,\E,\phi}$) be the cone obtained as the image of $C_{\Gamma,\E,\phi}$ (respectively, $C^\circ_{\Gamma,\E,\phi}$) via the map $F\col \R_{\geq0}^{E(\Gamma^\E)}\to \R_{\geq0}^{E(\Gamma)}$  in   \eqref{eq:hat}.  
\end{Not}

We have a natural map 
\[
 C_{\Gamma,\E,\phi}\to K_{\Gamma,\E,\phi}.
\] 

\begin{Prop}
\label{prop:isoCK}
If $\E\subset E(\Gamma)$ is nondisconnecting and $\phi$ is an acyclic flow on $\Gamma^\E$ such that $\div(\phi)(v)=-1$ for every exceptional vertex $v\in V(\Gamma^\E)$, then the  induced map $C_{\Gamma,\E,\phi}\to K_{\Gamma,\E,\phi}$ is an isomorphism of cones. 
\end{Prop}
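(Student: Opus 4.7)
The plan is to invoke Remark \ref{rem:primitive} applied to the integer linear map $F\col\R^{E(\Gamma^\E)}\to\R^{E(\Gamma)}$ from \eqref{eq:hat}, restricted to $\spa(C_{\Gamma,\E,\phi})$. Surjectivity of the map $C_{\Gamma,\E,\phi}\to K_{\Gamma,\E,\phi}$ holds by the very definition of $K_{\Gamma,\E,\phi}$, so it suffices to establish injectivity of $F|_{\spa(C_{\Gamma,\E,\phi})}$ and integrality of its inverse. The key reformulation is that the cycle relations \eqref{eq:C} defining $C_{\Gamma,\E,\phi}$ are equivalent, via the standard pairing between $H_1(\Gamma^\E,\R)$ and $C_1(\Gamma^\E,\R)$, to the $1$-cochain $e\mapsto \phi(e)x_e$ being a coboundary: there exists $g\col V(\Gamma^\E)\to\R$, unique up to an additive constant, with
\[
\phi(e)x_e=g(t(e))-g(s(e))\quad\text{for every }e\in E(\Gamma^\E).
\]

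For injectivity, I would assume $x\in \spa(C_{\Gamma,\E,\phi})$ with $F(x)=0$ and choose such a $g$. Then $x_e=0$ on every edge of $\Gamma^\E$ lying over $E(\Gamma)\setminus\E$, and since $\E$ is nondisconnecting, $\Gamma_\E$ is connected, forcing $g$ to be constant on $V(\Gamma)\subset V(\Gamma^\E)$. At an exceptional vertex $v$ over $e\in\E$ with incident pair of edges $\{e',e''\}$, the hypothesis $\div(\phi)(v)=-1$ combined with $\phi\geq 0$ leaves three orientation scenarios: (i) $v=t(e')=t(e'')$, immediately excluded because it would give $\phi(e')+\phi(e'')=-1$; (ii) $v=s(e')=s(e'')$, in which case $\{\phi(e'),\phi(e'')\}=\{0,1\}$; (iii) $v$ is the source of one edge and the target of the other, forcing $|\phi(e')-\phi(e'')|=1$. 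In scenarios (ii) and (iii), combining the coboundary equation with $x_{e'}+x_{e''}=0$ forces $x_{e'}=x_{e''}=0$, the essential input being $\phi(e')\neq\phi(e'')$.

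For the integrality of the inverse, I would fix $y\in\spa(K_{\Gamma,\E,\phi})\cap\mathbb{Z}^{E(\Gamma)}$ and let $x$ be its unique preimage in $\spa(C_{\Gamma,\E,\phi})$. The equality $x_e=y_e\in\mathbb{Z}$ holds on every edge of $\Gamma^\E$ over $E(\Gamma)\setminus\E$. Normalizing $g(u_0)=0$ at some $u_0\in V(\Gamma)$, the identities $g(t(e))-g(s(e))=\phi(e)y_e\in\mathbb{Z}$ along the edges of the connected graph $\Gamma_\E$ propagate integrality of $g$ to all of $V(\Gamma)$. Rerunning the case analysis at each exceptional vertex now produces explicit integer formulas for $x_{e'}$ and $x_{e''}$; for instance, in scenario (iii) with $t(e')=v=s(e'')$, $\phi(e'')=\phi(e')+1$, $u_1=s(e')$ and $u_2=t(e'')$, eliminating $g(v)$ yields $x_{e'}=\phi(e'')y_e-g(u_2)+g(u_1)$ and then $x_{e''}=y_e-x_{e'}$. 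Remark \ref{rem:primitive} then concludes the isomorphism.

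The main obstacle is the local analysis at each exceptional vertex: one needs to reduce the $2\times 2$ system relating $(x_{e'},x_{e''})$ to $y_e$ and $g(u_1),g(u_2)$ to a linear equation whose leading coefficient is an integer unit. The hypothesis $\div(\phi)(v)=-1$ is precisely what makes that coefficient equal to $\pm 1$ and what rules out the pathological two-target orientation; without it, both uniqueness of the preimage and its integrality would fail.
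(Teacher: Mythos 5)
Your proof is correct. You replace the paper's explicit choice of a spanning tree $T\subset E(\Gamma_\E)$ and fundamental cycles $\{\gamma_{e'}\}_{e'\in\E}$ by the dual reformulation of the relations \eqref{eq:C} as a coboundary condition: the 1-cochain $e\mapsto\phi(e)x_e$ lies in $\textnormal{Im}(d)$, equivalently equals $g(t(e))-g(s(e))$ for a vertex potential $g$, unique up to a constant on the connected graph $\Gamma^\E$. You then split the task into injectivity of $F|_{\spa C_{\Gamma,\E,\phi}}$ and integrality of preimages of lattice points, both obtained by propagating $g$ over the connected $\Gamma_\E$ and then performing a local $2\times 2$ elimination at each exceptional vertex. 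The paper instead directly inverts $F|_H$ on the (a priori larger) subspace $H$ cut out only by the equations from $\{\gamma_{e'}\}$, producing explicit integer formulas for $x_{e_0},x_{e_1}$ in terms of the base coordinates $u$. Both arguments turn on the identical algebraic fact that the leading coefficient of the relevant $2\times 2$ system — $\gamma(e_0)\phi(e_0)-\gamma(e_1)\phi(e_1)$ in the paper, $\phi(e')\pm\phi(e'')$ in your case analysis — equals $\pm 1$ precisely because $\div(\phi)=-1$ at the exceptional vertex. What your formulation buys is a cleaner, coordinate-free separation of the role of the hypothesis (ruling out the two-target orientation; making the local determinant a unit) and avoids fixing an explicit tree, at the cost of a slightly longer case analysis; the paper's version is more compressed but is essentially the same computation after choosing a tree and using it to normalize $g$. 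One small imprecision: in your scenario (ii) the operative quantity in the injectivity step is $\phi(e')+\phi(e'')\neq 0$ rather than $\phi(e')\neq\phi(e'')$ (although both hold given $\phi(e')+\phi(e'')=1$); your integrality discussion identifies the right invariant.
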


\begin{proof}
If $\E=\emptyset$, then the result is clear, hence we can assume $\E\ne\emptyset$. \par
 Fix a spanning tree $T$ not intersecting $\E$. Hence, for each $e'\in \E$, there exists a unique cycle $\gamma_{e'}$ on $\Gamma$ such that $E(\gamma_{e'})\setminus T=\{e'\}$. Let $H$ be the linear subspace of $\R^{E(\Gamma^\E)}$ given in the coordinates $(x_e)_{e\in E(\Gamma^\E)}$ of $\R^{E(\Gamma^\E)}$ by the equations
\[
\sum_{e\in E(\Gamma^\E)} \gamma_{e'}(e)\phi(e)x_e=0
\]
for every $e'\in \E$. We will prove that the restriction $F|_H\col H\to\R^{E(\Gamma)}$ is an integral isomorphism.\par

Consider a point $(x_e)_{e\in E(\Gamma^\E)}\in H$ and write $F((x_e)_{e\in E(\Gamma^\E)})=(u_{e'})_{e'\in E(\Gamma)}$, with  $(u_{e'})_{e'\in E(\Gamma)}\in \mathbb{R}^{E(\Gamma)}$. Fix an edge $e_0$ of $\Gamma^\E$.  Since $\Gamma^\E$ is a subdivision of $\Gamma$, the natural surjection $b\col E(\Gamma^\E)\to E(\Gamma)$ satisfies $|b^{-1}(e')|=1$ for every $e'\in E(\Gamma)\setminus \E$, and $|b^{-1}(e')|=2$ for every $e'\in \E$. So, if $b(e_0)\notin \E$, then $b^{-1}(b(e_0))=\{e_0\}$, and hence $x_{e_0}=u_{b(e_0)}$. On the other hand, if $b(e_0)\in \E$, we consider $\gamma:=\gamma_{b(e_0)}$ (the cycle defined above), and we let $b^{-1}(b(e_0))=\{e_0,e_1\}$. Consider the equation
\[
\sum_{e\in E(\Gamma^\E)\setminus b^{-1}(\E)} \gamma(e)\phi(e)x_e+\sum_{e\in b^{-1}(\E)}\gamma(e)\phi(e)x_e=0.
\]
Recall that $x_e=u_{b(e)}$ for $e\notin b^{-1}(\E)$. Then
\begin{equation}\label{eq:sum}
\sum_{e\in b^{-1}(\E)}\gamma(e)\phi(e)x_e=-\sum_{e\in E(\Gamma^\E)\setminus b^{-1}(\E)}\gamma(e)\phi(e)u_{b(e)}.
\end{equation}
By the definition of $\gamma$, we have $\gamma(e)=0$ for every $e\in E(\Gamma^\E)$ such that $b(e)\in \E\setminus\{e_0\}$, thus \eqref{eq:sum} translates into 
\[
\gamma(e_0)\phi(e_0)x_{e_0}+\gamma(e_1)\phi(e_1)x_{e_1}=-\sum_{e\in E(\Gamma^\E)\setminus b^{-1}(\E)}\gamma(e)\phi(e)u_{b(e)}.
\]
Note that $x_{e_0}+x_{e_1}=u_{b(e_0)}$ because  $F((x_e)_{e\in E(\Gamma^\E)})=(u_{e'})_{e'\in E(\Gamma)}$, hence
\[
(\gamma(e_0)\phi(e_0)-\gamma(e_1)\phi(e_1))x_{e_0}=-\gamma(e_1)\phi(e_1)u_{b(e_0)}-\sum_{e\in E(\Gamma^\E)\setminus b^{-1}(\E)}\gamma(e)\phi(e)u_{b(e)}.
\]
However, since $\div(\phi)$ has degree $-1$  on the exceptional vertices, it follows that  
\[
\gamma(e_0)\phi(e_0)-\gamma(e_1)\phi(e_1)\in\{1,-1\},
\]
and we get 
\[
x_{e_0}=\pm\left(\gamma(e_1)\phi(e_1)u_{b(e_0)}+\sum_{e\in E(\Gamma^\E)\setminus b^{-1}(\E)}\gamma(e)\phi(e)u_{b(e)}\right),
\]
and $x_{e_1}=u_{b(e_0)}-x_{e_0}$. This gives rise to the inverse $(F|_H)^{-1}\col\mathbb{R}^n\to H$ as an integral linear map. Since $K_{\Gamma,\E,\phi}=F(C_{\Gamma,\E,\phi})$, the cones $K_{\Gamma,\E,\phi}$ and $C_{\Gamma,\E,\phi}$ are isomorphic by Remark \ref{rem:primitive}.
\end{proof}

Let $C_{\Gamma,\E,\phi}$ and $C_{\Gamma',\E',\phi'}$ be cones as in Definition \ref{def:CGamma}. Assume that 
   $\iota\col(\Gamma^\E,\phi)\ra (\Gamma'^{\E'},\phi')$ is a specialization. Then there is a natural inclusion $C_{\Gamma',\E',\phi'}\subset C_{\Gamma,\E,\phi}$ induced by the inclusion $\R^{E(\Gamma'^{\E'})}\subset \R^{E(\Gamma^\E)}$. Since the diagram
\[
\begin{CD}
\R^{E(\Gamma'^{\E'})}@>>> \R^{E(\Gamma^\E)}\\
@VVV @VVV\\
\R^{E(\Gamma')}@>>>\R^{E(\Gamma)}
\end{CD}
\]
is commutative, we also have an induced inclusion $K_{\Gamma',\E',\phi'}\subset K_{\Gamma,\E,\phi}$.

  Let $(\Gamma,v_0,\mu)$ be a degree-$d$ polarized graph with one leg. For a degree-$d$ divisor $D$ on $\Gamma$, we let $\A_\Gamma(D)$ be the set whose elements are pairs $(\E,\phi)$ with $\E$ a nondisconnecting subset of $E(\Gamma)$ and $\phi$ an acyclic flow on $\Gamma^\E$ such that the pseudodivisor $(\E,D^\E+\div(\phi))$ is $(v_0,\mu)$-quasistable on $\Gamma$; we call the elements of $\A_\Gamma(D)$ the \emph{admissible pairs (with respect to $D$)}.

 We note that if $\iota\col(\Gamma,\E,\phi)\ra (\Gamma',\E',\phi')$ is a specialization with $\phi$ and $\phi'$ acyclic and $(\E,\phi)\in \A_\Gamma(D)$, then $(\E',\phi')\in \A_{\Gamma'}(\iota_*(D))$.

\begin{Prop}
\label{prop:union}
Let $(\Gamma,v_0,\mu)$ be a degree-$d$ polarized graph with one leg  and $D_0$ be a degree-$d$ divisor on $\Gamma$. There is a partition
\[
\R_{>0}^{E(\Gamma)}=\coprod_{(\E,\phi)\in \A_\Gamma(D_0)} K^\circ_{\Gamma,\E,\phi}.
\]
 Moreover, if $(\E,\phi)\in \A_\Gamma(D_0)$ then
\[
K_{\Gamma,\E,\phi}=\coprod K^\circ_{\Gamma',\E',\phi'}
\]
where the union runs through all specializations $(\Gamma,\E,\phi)\ra (\Gamma',\E',\phi')$ with $\phi'$ acyclic.

\end{Prop}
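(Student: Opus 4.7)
The plan rests on two pillars: the uniqueness of the $(v_0,\mu)$-quasistable representative in each equivalence class (Theorem~\ref{thm:quasistable}), and the bijection between $(p_0,\mu)$-quasistable divisors on a tropical curve and $(v_0,\mu)$-quasistable pseudo-divisors on any of its models (Corollary-Definition~\ref{cor:quasiquasi}). The interpretation of $K^\circ_{\Gamma,\E,\phi}$ via Remark~\ref{rem:functionflow} is what translates these tropical facts into statements about the cones.

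For the first claim, given $(x_e)\in\R_{>0}^{E(\Gamma)}$, I would form the tropical curve $X$ with model $\Gamma$ and edge lengths $(x_e)$ and view $D_0$ as a divisor on $X$. Let $\D$ be the unique $(p_0,\mu)$-quasistable divisor on $X$ equivalent to $D_0$, and let $(\E,D)$ be the pseudo-divisor on $\Gamma$ induced by $\D$. A rational function $f$ on $X$ with $\div_X(f)=\D-D_0$ exists and is unique up to a constant; its associated flow $\phi_f$ on $\Gamma^\E$ is acyclic (no strictly increasing rational function on $S^1$) and satisfies $D_0^\E+\div(\phi_f)=D$, so $(\E,\phi_f)\in\A_\Gamma(D_0)$. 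By Remark~\ref{rem:functionflow}, the length vector on $\Gamma^\E$ (with exceptional vertex positions dictated by $f$) lies in $C^\circ_{\Gamma,\E,\phi_f}$, whence $(x_e)\in K^\circ_{\Gamma,\E,\phi_f}$. For disjointness, if $(x_e)\in K^\circ_{\Gamma,\E,\phi}\cap K^\circ_{\Gamma,\E',\phi'}$, then $X$ admits rational functions $f,f'$ realizing the two flows; the divisors $D_0^\E+\div(\phi)$ and $D_0^{\E'}+\div(\phi')$, viewed on $X$, are both $(p_0,\mu)$-quasistable and equivalent to $D_0$. Theorem~\ref{thm:quasistable} forces them to coincide, Cor-Def~\ref{cor:quasiquasi} then yields $(\E,D)=(\E',D')$, and $f-f'$ having vanishing divisor is constant, so the flows (and their orientations) agree.

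For the second claim, the inclusion $K_{\Gamma',\E',\phi'}\subset K_{\Gamma,\E,\phi}$ for every specialization $(\Gamma,\E,\phi)\to(\Gamma',\E',\phi')$ is immediate from the compatible inclusions $\R^{E(\Gamma'^{\E'})}\subset\R^{E(\Gamma^\E)}$ and $\R^{E(\Gamma')}\subset\R^{E(\Gamma)}$. Conversely, given $(y_e)\in K_{\Gamma,\E,\phi}$, I would lift it to $(x_{e'})\in C_{\Gamma,\E,\phi}$ (unique by Proposition~\ref{prop:isoCK}, since $\div(\phi)(v)=-1$ on exceptional vertices by quasistability) and set $S:=\{e'\in E(\Gamma^\E):x_{e'}=0\}$. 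Contracting $\Gamma^\E$ along $S$ produces the subdivision $\Gamma'^{\E'}$ of a uniquely determined specialization $\iota\col\Gamma\to\Gamma'$: an edge $e\in E(\Gamma)$ is contracted precisely when all preimages under $b\col E(\Gamma^\E)\to E(\Gamma)$ lie in $S$, and for $e\in\E\cap E(\Gamma')$ one has $e\in\E'$ exactly when both preimages of $e$ avoid $S$. Setting $\phi':=\iota^\E_*(\phi)$, acyclicity of $\phi'$ follows because contracting any edge of a directed acyclic graph (here, the DAG $\ora{\Gamma^\E}/\{e:\phi(e)=0\}$, further contracted by the images in $S$) yields a DAG. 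By construction $(y_e)\in K^\circ_{\Gamma',\E',\phi'}$, and disjointness of these open cones holds because $E(\Gamma')=\{e\in E(\Gamma):y_e>0\}$ is intrinsic to $(y_e)$, after which the first claim applied to $\Gamma'$ with divisor $\iota_*(D_0)$ fixes $(\E',\phi')$ uniquely.

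The main subtlety is the bookkeeping in the second claim, namely distinguishing edges of $\Gamma$ that actually get contracted (all preimages in $S$) from edges in $\E\cap E(\Gamma')$ that merely lose their subdivision structure (exactly one preimage in $S$, giving $e\in\E\setminus\E'$). Once this combinatorial dictionary between $S$ and the specialization datum $(\iota,\E')$ is set up, the rest reduces to routine applications of Theorem~\ref{thm:quasistable}, Cor-Def~\ref{cor:quasiquasi}, and the observation that contracting an edge in a DAG preserves acyclicity.
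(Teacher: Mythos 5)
Your overall strategy mirrors the paper's: form the tropical curve $X$ from $(x_e)$, invoke Theorem~\ref{thm:quasistable} for the unique $(p_0,\mu)$-quasistable representative, pass to the pseudo-divisor $(\E,D)$ on $\Gamma$ via Corollary-Definition~\ref{cor:quasiquasi}, and read off the acyclic flow $\phi_f$ from the rational function $f$; for the second claim, lift along the isomorphism of Proposition~\ref{prop:isoCK} and contract the zero-length edges. Your disjointness arguments also match the paper's.

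However, the verification that $\phi'$ is acyclic is incorrect. You assert that contracting an edge of a directed acyclic graph yields a directed acyclic graph. This is false: in the DAG on vertices $a,b,c$ with edges $a\to b$, $b\to c$, $a\to c$, contracting $a\to c$ identifies $a$ and $c$ and produces a directed $2$-cycle through $b$. In general, contracting an edge $u\to v$ in a DAG creates a directed cycle precisely when a second directed path from $u$ to $v$ exists. The conclusion you want is nonetheless true, but the argument must use the defining equations of $C_{\Gamma,\E,\phi}$: if $\ora{\Gamma'^{\E'}}/\{e\colon\phi'(e)=0\}$ had a directed cycle, it would lift to a closed walk in $\ora{\Gamma^\E}$ traversing some edges $e_1,\ldots,e_k$ consistently with $\phi(e_i)>0$ and $y_{e_i}>0$, supplemented only by edges on which $\phi$ or $y$ vanishes. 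Pairing the corresponding class in $H_1(\Gamma^\E,\mathbb{Z})$ against the constraint $\sum_e\gamma(e)\phi(e)y_e=0$ kills the supplementary terms and forces $\sum_i\phi(e_i)y_{e_i}=0$, a contradiction. (Alternatively, via Remark~\ref{rem:functionflow}: a point of $C^\circ_{\Gamma',\E',\phi'}$ corresponds to a tropical curve carrying a rational function whose induced flow is $\phi'$, and such flows are automatically acyclic because there is no strictly increasing continuous function on a circle.) The paper's own proof leaves this acyclicity implicit, but your argument invokes a false general principle and cannot stand as written.
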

\begin{proof}
Let us prove the existence of the partition. Let $(x_e)$ be an element of $\R_{>0}^{E(\Gamma)}$. Then $(x_e)$ induces a tropical curve $X$ associated to the metric graph $(\Gamma,\l)$ with metric $\l$ such that $\l(e)=x_e$ for every $e\in E(\Gamma)$. Note that $D_0$ induces a divisor on $X$, which we will call $\D_0$. By Theorem \ref{thm:quasistable}, there is a $(v_0,\mu)$-quasistable divisor $\D'$ on $X$ such that $\D'-\D_0=\div(f)$ for some rational function $f$ on $X$. By Corollary \ref{cor:quasiquasi}, the divisor $\D'$ induces an $\E$-subdivision of $\Gamma$ for some $\E\subset E(\Gamma)$ such that $(\E,D')$ is a $(v_0,\mu^\E)$-quasistable pseudo-divisor on $\Gamma$, where $D'$ is the divisor on $\Gamma^\E$ induced by $\D'$. Moreover, since $\supp(f)\subset \supp(D_0^\E)\cup\supp(D')$, the rational function $f$ induces an acyclic flow $\phi_f$ on $\Gamma^\E$ such that $D_0^\E+\div(\phi_f)=D'$. This proves that $(x_e)\in K_{\Gamma,\E,\phi_f}$, and hence $\R_{>0}^{E(\Gamma)}$ is the union of the cones $K^\circ_{\Gamma,\E,\phi}$. By \cite[Remark 3.2]{AP2}, there is a unique rational function $f$ (up to scalars) such that $\D'-\D_0=\div(f)$. This means that $(x_e)$ belongs only to $K_{\Gamma,\E,\phi_f}$, and hence there is a partition as stated.\par

  We now prove the second statement. We begin by showing that if $(\Gamma'_1,\E'_1,\phi'_1)$ and $(\Gamma'_2,\E'_2,\phi'_2)$ are distinct then $K^\circ_{\Gamma'_1,\E'_1,\phi'_1}\cap K^\circ_{\Gamma'_2,\E'_2,\phi'_2}=\emptyset$. This is clear if $\Gamma_1'\neq \Gamma_2'$, because, in this case,
\[
K^\circ_{\Gamma'_i,\E'_i,\phi'_i}\subset \mathbb{R}^{E(\Gamma'_i)}_{>0}\subset \mathbb{R}^{E(\Gamma)}_{\geq0}
\]
for $i=1,2$, and $\R^{E(\Gamma'_1)}_{>0}\cap \R^{E(\Gamma'_2)}_{>0}=\emptyset$ (when viewed as subsets of $\R^{E(\Gamma)}$). On the other hand, if $\Gamma'_1=\Gamma'_2=:\Gamma'$, then $K^\circ_{\Gamma',\E'_1,\phi'_1}\subset \R^{E(\Gamma')}$ and $K^\circ_{\Gamma',\E'_2,\phi'_2}\subset \R^{E(\Gamma')}$, hence the first statement implies that $K^\circ_{\Gamma',\E'_1,\phi'_1}\cap K^\circ_{\Gamma',\E'_2,\phi'_2}=\emptyset$.\par

  There remains to show that, if $(x_e)_{e\in E(\Gamma)}\in K_{\Gamma,\E,\phi}$, then $(x_e)_{e\in E(\Gamma)}\in K^\circ_{\Gamma',\E',\phi'}$ for some specialization $(\Gamma,\E,\phi)\ra(\Gamma',\E',\phi')$ as in the statement. By Proposition \ref{prop:isoCK}, the induced map $C_{\Gamma,\E,\phi}\ra K_{\Gamma,\E,\phi}$ is bijective, hence there exists a unique point $(y_e)_{e\in E(\Gamma^\E)}$ in $C_{\Gamma,\E,\phi}$ over $(x_e)_{e\in E(\Gamma)}$. Let $(\Gamma',\E',\phi')$ be the specialization of $(\Gamma,\E,\phi)$ contracting the edges $e$ such that $y_e=0$. Then $(y_e)_{e\in E(\Gamma')}\in C^\circ_{\Gamma',\E',\phi'}$, which means that $(x_e)_{e\in E(\Gamma)}$ belongs to the image of $C^\circ_{\Gamma',\E',\phi'}$ which is $K^\circ_{\Gamma',\E',\phi'}$.
\end{proof}

Given a degree-$d$ polarized graph $(\Gamma,v_0,\mu)$ with $1$ leg and a degree-$d$ divisor $D_0$ on $\Gamma$, we define 
\begin{equation}\label{def:fan}
\Sigma_{\Gamma,D_0,\mu}:=\{K_{\Gamma',\E',\phi'}\}
\end{equation}
where $(\Gamma',\E',\phi')$ runs through all specializations $\iota\col \Gamma\ra \Gamma'$ and admissible pairs $(\E',\phi')\in \A_{\Gamma'}(\iota_*(D_0))$. We now prove that $\Sigma_{\Gamma,D_0,\mu}$ is a fan.

\begin{Thm}
\label{thm:fan}
Let $(\Gamma,v_0,\mu)$ be a degree-$d$ polarized graph with $1$ leg and $D_0$ be a degree-$d$ divisor of $\Gamma$.  If $\iota\col (\Gamma,\E,\phi)\ra (\Gamma',\E',\phi')$ is a specialization, then $K_{\Gamma',\E',\phi'}$ is a face of $K_{\Gamma,\E,\phi}$, and conversely, every face of $K_{\Gamma,\E,\phi}$ arises in this way. In particular, 
 $\Sigma_{\Gamma,D_0,\mu}$ is a fan in $\R^{E(\Gamma)}$ whose support is precisely $\R^{E(\Gamma)}_{\geq0}$.
\end{Thm}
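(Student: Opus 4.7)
The plan is to transfer everything to the cones $C_{\Gamma,\E,\phi}$, where the face structure is transparent, via the isomorphism of Proposition~\ref{prop:isoCK}, and then use Proposition~\ref{prop:union} to match up faces with specializations. I first observe that admissibility of $(\E,\phi)\in\A_\Gamma(D_0)$ supplies the hypotheses of Proposition~\ref{prop:isoCK}: the set $\E$ is nondisconnecting by Proposition~\ref{prop:spec}(ii), and $\div(\phi)(v)=-1$ on every exceptional vertex because $D_0^\E(v)=0$ there while $(\E,D_0^\E+\div(\phi))$ is a pseudo-divisor by definition of admissibility. Thus $C_{\Gamma,\E,\phi}\to K_{\Gamma,\E,\phi}$ is an isomorphism of cones, and likewise on the target of any specialization.

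For the first assertion, let $\iota\col(\Gamma,\E,\phi)\to(\Gamma',\E',\phi')$ be a specialization with compatible subdivided specialization $\iota^\E\col\Gamma^\E\to\Gamma'^{\E'}$ contracting a set $\E_c\subset E(\Gamma^\E)$. I would show that $C_{\Gamma',\E',\phi'}$ equals the intersection $C_{\Gamma,\E,\phi}\cap\{x_e=0:e\in\E_c\}$. A basis of $H_1(\Gamma^\E,\mathbb{Z})$ can be chosen so that it projects to a basis of $H_1(\Gamma'^{\E'},\mathbb{Z})$ together with some classes in the kernel; those kernel classes must be supported entirely on contracted edges, since $\Gamma'^{\E'}$ has no $2$-cells and so a cycle in $\Gamma$ dies in $\Gamma'$ only if all of its edges lie in $\E_c$. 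The equations~\eqref{eq:C} attached to kernel cycles then restrict trivially to $0=0$ on $\{x_e=0:e\in\E_c\}$, while the remaining equations exactly recover those defining $C_{\Gamma',\E',\phi'}$, because the contribution of contracted edges is killed and $\iota_*(\phi)=\phi'$ agrees with $\phi$ on the surviving edges. Since $C_{\Gamma,\E,\phi}\subset\R^{E(\Gamma^\E)}_{\geq 0}$, intersecting with a coordinate hyperplane is a supporting face; transporting via Proposition~\ref{prop:isoCK} yields that $K_{\Gamma',\E',\phi'}$ is a face of $K_{\Gamma,\E,\phi}$.

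For the converse, take any face $\tau\prec K_{\Gamma,\E,\phi}$. Pick a point in the relative interior $\tau^\circ$, transport it through the isomorphism of Proposition~\ref{prop:isoCK} to a point $y\in K_{\Gamma,\E,\phi}$. By Proposition~\ref{prop:union}, second part, $y$ lies in a unique open cone $K^\circ_{\Gamma',\E',\phi'}$ indexed by a specialization $(\Gamma,\E,\phi)\to(\Gamma',\E',\phi')$. By the first part of the theorem just proved, $K_{\Gamma',\E',\phi'}$ is itself a face of $K_{\Gamma,\E,\phi}$ whose relative interior is precisely $K^\circ_{\Gamma',\E',\phi'}$. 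Since relative interiors of distinct faces of a cone are disjoint, the connected open set $\tau^\circ$ must coincide with $K^\circ_{\Gamma',\E',\phi'}$, so $\tau=K_{\Gamma',\E',\phi'}$.

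Finally, to conclude that $\Sigma_{\Gamma,D_0,\mu}$ is a fan, axiom~(1) (closure under faces) is the content of the above. For axiom~(2), given two cones $K_{\Gamma_i,\E_i,\phi_i}$ in $\Sigma_{\Gamma,D_0,\mu}$, Proposition~\ref{prop:union}(2) presents each as the disjoint union of open cones $K^\circ_{\Gamma',\E',\phi'}$ over its own specializations, and the global disjointness in Proposition~\ref{prop:union}(1) forces the intersection to be the union of those $K^\circ_{\Gamma',\E',\phi'}$ arising from specializations common to both, which by the face correspondence is itself a common face. For the support, Proposition~\ref{prop:union}(1) partitions $\R^{E(\Gamma)}_{>0}$ among open cones $K^\circ_{\Gamma,\E',\phi'}$ indexed by admissible pairs on $\Gamma$, so $\bigcup_{\sigma\in\Sigma_{\Gamma,D_0,\mu}}\sigma\supset\R^{E(\Gamma)}_{>0}$; closing up and invoking the face structure fills out all of $\R^{E(\Gamma)}_{\geq 0}$, while the reverse inclusion is immediate from $K_{\Gamma',\E',\phi'}\subset\R^{E(\Gamma')}_{\geq 0}\subset\R^{E(\Gamma)}_{\geq 0}$. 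The main obstacle I expect is making the intersection axiom rigorous, where the uniqueness clause of Proposition~\ref{prop:union} must be leveraged carefully to identify the set-theoretic intersection of two admissible cones with a single common face rather than with a larger union of boundary pieces.
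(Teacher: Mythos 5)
Your proof is correct and follows essentially the same route as the paper: reduce via Proposition~\ref{prop:isoCK} to the cones $C_{\Gamma,\E,\phi}$, show that the specialization corresponds to intersecting with the coordinate subspace $\{x_e=0 : e\in\E_c\}$, and appeal to the disjoint decomposition of Proposition~\ref{prop:union} both for the converse and for the fan axioms. Your $H_1$-basis argument and relative-interior argument are sound fillings-in of steps the paper leaves terse; the only slips are cosmetic (e.g.\ the ``transport through Proposition~\ref{prop:isoCK}'' in the converse is unnecessary, since a point of $\tau^\circ$ already lies in $K_{\Gamma,\E,\phi}$).
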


\begin{proof}
The fact that $\Sigma_{\Gamma,D_0,\mu}$ is a fan follows by the first statement and Proposition \ref{prop:union}.
We note that, by Proposition \ref{prop:isoCK}, the fact that $K_{\Gamma',\E',\phi'}$ is a face of $K_{\Gamma,\E,\phi}$ is equivalent to $C_{\Gamma',\E',\phi'}$ being a face of $C_{\Gamma,\E,\phi}$. If $\iota$ is an isomorphism, then $C_{\Gamma',\E',\phi'}=C_{\Gamma,\E,\phi}$, and in particular $C_{\Gamma',\E',\phi'}$ is a face of $C_{\Gamma,\E,\phi}$. Otherwise, we have that $E(\Gamma'^{\E'})\subsetneq E(\Gamma^{\E})$ and we consider the hyperplane $H\subset \R^{E(\Gamma^\E)}$ defined in the coordinates $(x_e)_{e\in E(\Gamma^\E)}$ as
\[
H:\sum_{e\in E(\Gamma^\E)\setminus E(\Gamma'^{\E'})}x_e=0.
\]
Note that $C_{\Gamma,\E,\phi}$ lies in one of the two half-spaces determined by $H$. Moreover the intersection $H\cap C_{\Gamma,\E,\phi}$ is precisely the set of points $(x_e)_{e\in E(\Gamma^\E)}\in C_{\Gamma,\E,\phi}$ such that $x_e=0$ for every $e\in E(\Gamma^\E)\setminus E(\Gamma'^{\E'})$. But, setting $x_e=0$ for every $e\in E(\Gamma^\E)\setminus E(\Gamma'^{\E'})$ in \eqref{eq:C}, we  get the equations of $C_{\Gamma',\E',\phi'}$, so $C_{\Gamma',\E',\phi'}$ is a face of $C_{\Gamma,\E,\phi}$. Finally, using Proposition \ref{prop:union} we deduce that every face of $K_{\Gamma',\E',\phi'}$ arises in this way.
\end{proof}

\begin{Exa}
\label{exa:fan}
Let $\Gamma$ be a graph with $2$ vertices $v_0, v_1$ and $3$ edges $e_0,e_1,e_2$ connecting $v_0,v_1$, as in Figure \ref{fig:poset}. Let $\mu$ be the degree-$0$ polarization given by $\mu(v)=0$ for every $v \in V(\Gamma)$ and $D_0$ be the divisor $D_0(v_0)=4,  D_0(v_1)=-4$.
In Figure \ref{fig:fan} we depict a section of the fan $\Sigma_{\Gamma,D_0,\mu}$.
\begin{figure}[h]
\begin{tikzpicture}[scale=4]
\coordinate (a) at ($2*(0,0)$);
\coordinate (b) at ($2*(1,0)$);
\coordinate (c) at ($2*(0.5,0.866025)$);
\coordinate (d) at ($1/5*(a)+2/5*(b)+2/5*(c)$);
\coordinate (e) at ($2/5*(a)+1/5*(b)+2/5*(c)$);
\coordinate (f) at ($2/5*(a)+2/5*(b)+1/5*(c)$);
\coordinate (g) at ($1/3*(a)+1/3*(b)+1/3*(c)$);
\coordinate (h) at ($1/4*(a)+1/4*(b)+1/2*(c)$);
\coordinate (i) at ($1/4*(a)+1/2*(b)+1/4*(c)$);
\coordinate (j) at ($1/2*(a)+1/4*(b)+1/4*(c)$);
\coordinate (k) at ($1/7*(a)+3/7*(b)+3/7*(c)$);
\coordinate (l) at ($3/7*(a)+1/7*(b)+3/7*(c)$);
\coordinate (m) at ($3/7*(a)+3/7*(b)+1/7*(c)$);
\draw (a) to (b);
\draw (a) to (c);
\draw (c) to (b);
\draw[fill] (a) circle [radius=0.01];
\draw[fill] (b) circle [radius=0.01];
\draw[fill] (c) circle [radius=0.01];
\draw[fill] (d) circle [radius=0.01];
\draw[fill] (e) circle [radius=0.01];
\draw[fill] (f) circle [radius=0.01];
\draw[fill] (g) circle [radius=0.01];
\draw[fill] (h) circle [radius=0.01];
\draw[fill] (i) circle [radius=0.01];
\draw[fill] (j) circle [radius=0.01];
\draw[fill] (k) circle [radius=0.01];
\draw[fill] (l) circle [radius=0.01];
\draw[fill] (m) circle [radius=0.01];
\draw (a)-- (j);
\draw (a)-- (l);
\draw (a)-- (m);
\draw (a)-- (e);
\draw (a)-- (f);
\draw (b)-- (i);
\draw (b)-- (k);
\draw (b)-- (m);
\draw (b)-- (d);
\draw (b)-- (f);
\draw (c)-- (h);
\draw (c)-- (l);
\draw (c)-- (k);
\draw (c)-- (e);
\draw (c)-- (d);
\draw (d)-- (k);
\draw (e)-- (l);
\draw (f)-- (m);
\draw (g)-- (d);
\draw (g)-- (e);
\draw (g)-- (f);
\draw (d)-- (h);
\draw (d)-- (i);
\draw (e)-- (h);
\draw (e)-- (j);
\draw (f)-- (j);
\draw (f)-- (i);
\draw[fill, gray] (g)--(d)--(h)--(e)--(g);
\end{tikzpicture}
\caption{A section of the fan $\Sigma_{\Gamma,D_0,\mu}$.}
\label{fig:fan}
\end{figure}
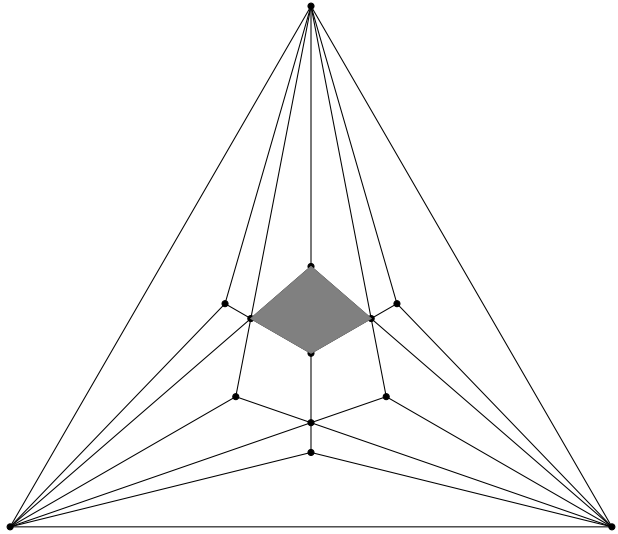

Let $\E=\{e_0,e_1\}$ and $\phi$ be the flow in Figure \ref{fig:flow}. Then $D:=D_{0}^\E+\div(\phi)$ is $(v_0,\mu)$ quasistable (in fact, $D$ is the uppermost divisor in Figure \ref{fig:poset}). Consider the basis for $H_1(\Gamma,\mathbb{Z})$ given by $e_0-e_2$ and $e_1-e_2$ (keeping the orientation as in Figure \ref{fig:flow}).
\begin{figure}[h]
\begin{tikzpicture}[scale=3]
\draw[decorate, decoration={markings, mark=at position 0.25 with {\arrow{>}}, mark=at position 0.75 with {\arrow{>}}}]   (0,0) to [out=30, in=150] (1,0);
\draw (0,0) to [out=30, in=150] (1,0);
\draw[decorate, decoration={markings, mark=at position 0.5 with {\arrow{>}}}] (0,0) to (0.5,0);
\draw (0,0) to (0.5,0);
\draw (0.5,0) to (1,0);
\draw[decorate, decoration={markings, mark=at position 0.5 with {\arrow{>}}}] (0.5,0) to (1,0);
\draw (0,0) to [out=-30, in=-150] (1,0);
\draw[decorate, decoration={markings, mark=at position 0.5 with {\arrow{>}}}] (0,0) to [out=-30, in=-150] (1,0);
\draw[fill] (0,0) circle [radius=0.02];
\draw[fill] (1,0) circle [radius=0.02];
\draw[fill] (0.5,0.144) circle [radius=0.02];
\draw[fill] (0.5,0) circle [radius=0.02];
\node[above] at (0.25,0.1) {1};
\node[below] at (0.25,0.01) {\tiny{1}};
\node[above] at (0.75,0.1) {2};
\node[below] at (0.75,0.01) {\tiny{2}};
\node[below] at (0.5,-0.144) {1};
\node[below] at (0,0) {$v_0$};
\node[below] at (1,0) {$v_1$};
\end{tikzpicture}
\caption{The flow $\phi$.}
\label{fig:flow}
\end{figure}
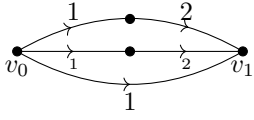

 We denote by $(z_0,z_1,z_2)$ the coordinates of $\R^{E(\Gamma)}$, where $z_i$ is associated with $e_i$ for $i=0,1,2$, and by $(x_0,y_0,x_1,y_1,z_2)$ the coordinates of $\R^{E(\Gamma^\E)}$, where $x_i$ (respectively, $y_i$) is associated with the edge over $e_i$ with flow $1$ (respectively, flow $2$) for $i=0,1$. 
  Then the cone $C_{\Gamma,\E,\phi}\subset \R^{E(\Gamma^\E)}_{\geq0}$ is given by the equations
\[
\begin{cases}
x_0+2y_0-z_2=0;\\
x_1+2y_1-z_2=0,
\end{cases}
\]
and hence the cone $K_{\Gamma,\E,\phi}\subset \R^{E(\Gamma)}$ is given by inequalities
\[
\begin{cases}
0\leq z_2-z_0\leq z_0;\\
0\leq z_2-z_1\leq z_1.
\end{cases}
\]
The cone $K_{\Gamma,\E,\phi}$ has extremal rays $(1,1,1), (1,2,2), (2,1,2), (1,1,2)$ and  the gray quadrilateral in Figure \ref{fig:fan} is a section of $K_{\Gamma,\E,\phi}$. The faces of $K_{\Gamma,\E,\phi}$ are obtained by taking all the specializations of $(\Gamma,\E,\phi)$ contracting some of the edges in Figure \ref{fig:flow}.
\end{Exa}
  
  Let $(\Gamma,v_0,\mu)$ be a degree-$d$ polarized graph with $1$ leg. Given a subdivision $\widehat{\Gamma}$ of $\Gamma$ and a degree-$d$ divisor $D_0$ on $\widehat{\Gamma}$, there is a continuous map 
\begin{align*}
\alpha_{D_0,\mu}^\trop\col \R^{E(\widehat{\Gamma})}_{\geq0}\to&  J^\trop_{\Gamma,\mu}\\
          (x_e)_{e\in E(\widehat{\Gamma})} \mapsto& (X,\D'),
\end{align*}
where $X$ is the tropical curve with model $\widehat{\Gamma}/\widehat{S}$, with 
\[
\widehat{S}:=\{e\in E(\widehat\Gamma) ; x_e=0\},
\]
 and lengths $x_e$, for $e\in E(\widehat{\Gamma})\setminus \wh S$,  and $\D'$ is the unique $(v_0,\mu)$-quasistable divisor on $X$ equivalent to $\D_0$ (see Theorem \ref{thm:quasistable}). Here, as usual, we let $\D_0$ be the divisor on $X$ induced by $\iota_*(D_0)$, where $\iota\col \widehat{\Gamma}\to \widehat{\Gamma}/\widehat{S}$ is the natural specialization. Note that a model for $X$ is also $\Gamma/S$, where 
\[
S:=\{e\in E(\Gamma);x_{e'}=0\text{ for every }e'\in E(\widehat{\Gamma})\text{ over }e\},
\]
hence, in fact, $(X,\D')$ corresponds to a point in $J_{\Gamma,\mu}^\trop$.\par

 The map $\alpha_{D_0,\mu}^\trop$ is rarely a morphism of generalized cone complexes. For instance, each cone in $\Sigma_{\Gamma,D_0,\mu}$ in Example \ref{exa:fan} maps to a cone $\sigma_{(\Gamma,\E,D)}$ and their images are not all contained in the same maximal cone of $J^\trop_{\Gamma,\mu}$. However,  since $\Sigma_{\wh \Gamma,D_0,\mu}$ is a refinement of $\R^{E(\wh\Gamma)}$, there is a natural morphism of generalized cone complexes 
\[
\beta^\trop_{D_0,\mu}\col \Sigma_{\wh \Gamma,D_0,\mu}\to\R^{E(\wh\Gamma)}_{\geq0}.
\] 
Next we prove that $\alpha_{D_0,\mu}^\trop\circ\beta^\trop_{D_0,\mu}$ is in fact a morphism of generalized cone complexes. 

\begin{Thm}
\label{thm:map}
Let $(\Gamma,v_0,\mu)$ be a degree-$d$ polarized graph with $1$ leg  and $D_0$ a degree-$d$ divisor on 
 a subdivision $\widehat{\Gamma}$ of $\Gamma$. Then the map 
\[
\alpha^\trop_{D_0,\mu}\circ\beta^\trop_{D_0,\mu}\col\Sigma_{\widehat{\Gamma},D_0,\mu}\to J^{trop}_{\Gamma,\mu}.
\]
is a morphism of generalized cone complexes.
\end{Thm}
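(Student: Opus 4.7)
The plan is to apply the criterion for morphisms of generalized cone complexes recalled in Section~\ref{sec:tropmod}: it suffices to exhibit, for each maximal cone of $\Sigma_{\wh{\Gamma},D_0,\mu}$, a cone of $J^{\trop}_{\Gamma,\mu}$ through which the induced map factors by a cone morphism. By Theorem~\ref{thm:fan} combined with Proposition~\ref{prop:union}, the maximal cones of $\Sigma_{\wh{\Gamma},D_0,\mu}$ are precisely the $K_{\wh{\Gamma},\E,\phi}$ with $(\E,\phi)\in\A_{\wh{\Gamma}}(D_0)$, so I would fix such a pair. Setting $D:=D_0^{\E}+\div(\phi)$, the pair $(\E,D)$ is by definition a $(v_0,\mu)$-quasistable pseudo-divisor on $\wh{\Gamma}$. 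Since $\wh{\Gamma}^{\E}$ is a refinement of $\Gamma$, Remark~\ref{rem:subdivision} produces a canonical $(v_0,\mu)$-quasistable pseudo-divisor $(\E_0,D_\Gamma)$ on $\Gamma$, and I would take $\sigma_{(\Gamma,\E_0,D_\Gamma)}=\mathbb{R}^{E(\Gamma^{\E_0})}_{\geq 0}$ as the target cone.

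Next I would verify that the image of $K_{\wh{\Gamma},\E,\phi}$ under $\alpha^{\trop}_{D_0,\mu}$ really lies in $\sigma_{(\Gamma,\E_0,D_\Gamma)}$. Given $(x_e)\in K_{\wh{\Gamma},\E,\phi}$ with vanishing locus $\wh{S}$, the associated tropical curve $X$ has model $\wh{\Gamma}/\wh{S}$. By Remark~\ref{rem:functionflow}, the flow $\phi$ is induced on $X$ by a rational function, so the divisor $D$ descends to a divisor $\D$ on $X$ equivalent to $\D_0$; by Proposition~\ref{prop:spec}(i) this descent remains $(v_0,\mu)$-quasistable, and the uniqueness clause of Theorem~\ref{thm:quasistable} identifies $\D$ with the divisor output by $\alpha^{\trop}_{D_0,\mu}(x_e)$. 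Corollary-Definition~\ref{cor:quasiquasi} applied on $X$ then shows that $\D$ induces a quasistable pseudo-divisor on the stable model of $X$ that is a specialization of $(\Gamma,\E_0,D_\Gamma)$, so the image lands in the corresponding face of $\sigma_{(\Gamma,\E_0,D_\Gamma)}$.

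The final step is to check that the induced map is integral linear. Using the isomorphism $C_{\wh{\Gamma},\E,\phi}\cong K_{\wh{\Gamma},\E,\phi}$ of Proposition~\ref{prop:isoCK}, I would work in the coordinates $(x_e)_{e\in E(\wh{\Gamma}^{\E})}$. To each edge $\tilde e$ of $\Gamma^{\E_0}$ I would associate a canonical path $P(\tilde e)$ in $\wh{\Gamma}^{\E}$: either the full path lying over $\tilde e$ if $\tilde e\notin\E_0$, or one of the two subpaths cut by the unique exceptional vertex $v^{\ast}$ over the underlying edge with $D(v^{\ast})=-1$ if $\tilde e\in\E_0$ (existence and uniqueness of $v^{\ast}$ being guaranteed by Remark~\ref{rem:subdivision}). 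The coordinate of the image at $\tilde e$ is then $\sum_{e\in P(\tilde e)}x_e$, which is manifestly integral linear in the $x_e$.

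The step I expect to be the main obstacle is the \emph{compatibility on faces}: one must show that if $(x_e)$ lies in the relative interior of a proper face $K_{\wh{\Gamma}',\E',\phi'}$ of $K_{\wh{\Gamma},\E,\phi}$, then the pseudo-divisor $(\E_0',D_\Gamma')$ attached by the recipe above to the specialization $\Gamma'$ of $\Gamma$ is itself a specialization of $(\Gamma,\E_0,D_\Gamma)$, so that the image lands in the correct face of $\sigma_{(\Gamma,\E_0,D_\Gamma)}$ rather than in an unrelated cone of $J^{\trop}_{\Gamma,\mu}$. This amounts to tracking the simultaneous specializations of the refinement $\wh{\Gamma}^{\E}$, the flow $\phi$ and the associated quasistable divisor, and is essentially bookkeeping once the previous three steps are in place.
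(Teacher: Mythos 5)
Your proposal is correct and follows essentially the same route as the paper's proof: the target cone is obtained from $(\E,D)$ via Remark~\ref{rem:subdivision}, the cone morphism is the composite of the $C\cong K$ isomorphism of Proposition~\ref{prop:isoCK} with the map $F$ of \eqref{eq:hat} (your explicit path-summing formula is exactly a description of this composite), and the agreement with $\alpha^\trop_{D_0,\mu}\circ\beta^\trop_{D_0,\mu}$ on cone interiors is checked via the rational-function/flow correspondence of Remark~\ref{rem:functionflow} together with the uniqueness clause of Theorem~\ref{thm:quasistable}. The face-compatibility issue you flag as the main obstacle is resolved in the paper precisely as you anticipate, by rerunning the interior argument on specializations and invoking the preservation of quasistability under pushforward (Propositions~\ref{prop:spec} and~\ref{prop:quasiquasi}); the only cosmetic difference is that the paper treats arbitrary cones $K_{\wh{\Gamma}',\E,\phi}$ of the fan rather than reducing to maximal ones, which makes that bookkeeping appear automatically, and you should write $\Gamma/S$ rather than the stable model of $X$ in the local setting since stabilization plays no role for $J^\trop_{\Gamma,\mu}$.
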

\begin{proof}
We set $\alpha:=\alpha^\trop_{D_0,\mu}$ and $\beta:=\beta^\trop_{D_0,\mu}$. Let $K_{\widehat{\Gamma}',\E,\phi}$ be a cone of $\Sigma_{\widehat{\Gamma},D_0,\mu}$. We need to show that the restriction of $\alpha\circ\beta$ to $K_{\widehat{\Gamma}',\E,\phi}$ factors as the composition of the natural map $\sigma_{(\Gamma',\E',D')}\to J^{\trop}_{\Gamma,\mu}$ and a cone morphism $K_{\widehat{\Gamma}',\E,\phi}\to\sigma_{(\Gamma',\E',D')}$.\par

By the definition of $\Sigma_{\widehat{\Gamma},D_0,\mu}$ the cone $K_{\widehat{\Gamma}',\E,\phi}$ is associated to a specialization $\widehat{\iota}\col\widehat{\Gamma}\ra\widehat{\Gamma}'$.  Moreover, if we set $D:=\widehat{\iota}_*(D_0)^\E+\div(\phi)$, we know that the pseudo-divisor $(\E,D)$ on $\widehat{\Gamma}'$  is $(\wh \iota(v_0),\wh \iota_*(\widehat{\mu}))$-quasistable. Let $\iota\col\Gamma\ra\Gamma'$ be the specialization of $\Gamma$ such that an edge $e\in E(\Gamma)$ is contracted by $\iota$ if and only if all edges in $\widehat{\Gamma}$ over $e$ are contracted by $\widehat{\iota}$. Since the $\E$-subdivision $\widehat{\Gamma}'^\E$ of $\wh{\Gamma}'$ is also a refinement of $\Gamma'$, we can consider the $(\iota(v_0),\iota_*(\mu))$-quasistable pseudo-divisor $(\E',D')$ on $\Gamma'$ induced by $(\E,D)$ as described in  Remark \ref{rem:subdivision}.\par

Let us prove that the restriction of $\alpha\circ\beta$   to $K_{\widehat{\Gamma}',\E,\phi}$ is the composition of the cone morphisms
\begin{equation}
\label{eq:KCR}
K_{\widehat{\Gamma}',\E,\phi}\to C_{\widehat{\Gamma}',\E,\phi}\subset \R^{E(\widehat{\Gamma}'^{\E})}_{\geq0}\stackrel{F}{\rightarrow} \R^{E(\Gamma'^{\E'})}_{\geq0}=\sigma_{(\Gamma',\E',D')}(\to J^\trop_{\Gamma,\mu}),
\end{equation}
where the first map is the isomorphism of Proposition \ref{prop:isoCK} and $F$ is the map  in  \eqref{eq:hat}. 

By Remark \ref{rem:functionflow} and Propositions \ref{prop:isoCK} and \ref{prop:union}, every point in $K_{\widehat{\Gamma}',\E,\phi}$ corresponds to a tropical curve $X$, and $\phi$ induces a rational function $f$ on $X$ (possibly after a specialization). If $X$ is in $K^\circ_{\widehat{\Gamma}',\E,\phi}$, then $\widehat{\Gamma}'$ and $\Gamma'$ are models for $X$ and there are divisors $\D_0$, $\D$ and $\D'$ on $X$ induced respectively by $\widehat{\iota}_*(D_0)$, $D$ and $D'$.  By construction, we obtain
\begin{equation}\label{eq:DDprime}
\D'=\D=\D_0+\div(f).
\end{equation}
 Since $(\E,D)$ is $(v_0,\mu)$-quasistable on $\widehat{\Gamma}'$, it follows from Proposition \ref{prop:quasiquasi} that $\D$, and hence $\D'$, are $(p_0,\mu)$-quasistable divisors on $X$, where $p_0\in X$ is the point corresponding to $v_0$. 
On the other hand, if $X$ is in the boundary of $K_{\widehat{\Gamma}',\E,\phi}$, then $X$ is a specialization of a curve in the interior $K^\circ_{\widehat{\Gamma}',\E,\phi}$. Since quasistability is preserved under pushforwards of specializations (see Propositions \ref{prop:spec} and \ref{prop:quasiquasi}), we can still define $\D'$, $\D$ and $\D_0$ as above and apply the same reasoning to prove that \eqref{eq:DDprime} still holds. Therefore, we conclude that
\[
\alpha\circ\beta (X)=(X,\D)=(X,\D')\in \sigma_{(\Gamma',\E',D')},
\]
which is what we wanted.
\end{proof}

\begin{Def}
Fix nonnegative integers $g$ and $n$.
 Let $\mathcal A=(a_0,\dots,a_n,m)$ be a sequence of $n+2$ integers. 
Let $\mu$ be a genus-$g$ universal  polarization of degree  $d:=m(2g-2)+\sum_{0\le i\le n}a_i$. For every genus-$g$ weighted graph $\Gamma$ with $n+1$ legs, we consider the divisor on $\Gamma$:
\begin{equation}\label{eq:DGamma}
D_{\Gamma,\mathcal A}:=m\omega_{\Gamma}+\sum_{0\le i\le n} a_i\text{leg}_\Gamma(i).
\end{equation}
where $\omega_\Gamma$ is the canonical divisor of $\Gamma$. Recall that if $\iota\col (\Gamma,\E,\phi)\to (\Gamma',\E',\phi')$ is a specialization, then there exists an induced cone morphism $K_{\Gamma',\E',\phi'}\to K_{\Gamma,\E,\phi}$.  We define the generalized cone complex
\[
M_{\mathcal A,\mu}^{\trop}:=\lim_{\longrightarrow} K_{\Gamma,\E,\phi}
\]
where $(\Gamma,\E,\phi)$ runs through all genus-$g$ stable weighted graphs $\Gamma$ with $n+1$ legs and an admissible pair $(\E,\phi)\in \A_\Gamma(D_{\Gamma,\mathcal A})$.
\end{Def}
 
There is a natural forgetful morphism of generalized cone complexes
\[
\beta^\trop_{\mathcal A,\mu}\col M_{\mathcal A,\mu}^{\trop}\to M_{g,n+1}^{\trop}
\]
given by the composition $K_{\Gamma,\E,\phi}\subset \R^{E(\Gamma)}_{\geq0}\to M_{\Gamma}^{\trop}$.
Recall the definition \eqref{eq:atrop} of universal tropical Abel map $\alpha^\trop_{\mathcal A,\mu}\col M_{g, n+1}^{\trop}\to  J^\trop_{\mu,g}$.

\begin{Thm}
\label{thm:abeltrop}
The map $\alpha^\trop_{\mathcal A,\mu}\circ \beta_{\mathcal A,\mu}^\trop\col M_{\mathcal A,\mu}^{\trop}\to J^\trop_{\mu,g}$ is a morphism of generalized cone complexes.
\end{Thm}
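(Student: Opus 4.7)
The plan is to reduce the global statement to the local situation handled by Theorem \ref{thm:map}. Following the characterization of morphisms of generalized cone complexes recalled in Section \ref{sec:tropmod}, it suffices to check that for every maximal cone $K_{\Gamma,\E,\phi}$ of $M^\trop_{\mathcal A,\mu}$, the restriction of $\alpha^\trop_{\mathcal A,\mu}\circ\beta^\trop_{\mathcal A,\mu}$ factors through a cone morphism into some cone $\sigma_{(\Gamma'',\E'',D'')}$ of $J^\trop_{\mu,g}$.

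Fix such a maximal cone, where $\Gamma$ is a stable genus-$g$ weighted graph with $n+1$ legs, $v_0=\text{leg}_\Gamma(0)$, and $(\E,\phi)\in\A_\Gamma(D_{\Gamma,\mathcal A})$. Let $\Gamma'':=\st(\Gamma,v_0)$ denote the stable reduction of $\Gamma$ viewed as a graph with a single leg, with intermediate refinement $\widehat{\Gamma''}$ and specialization $\text{red}\col\Gamma\to\widehat{\Gamma''}$. The first task is to identify the target cone: setting $D':=D_{\Gamma,\mathcal A}^\E+\div(\phi)$, admissibility ensures that $(\E,D')$ is $(v_0,\mu)$-quasistable on $\Gamma$. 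Under $\text{red}$, this pseudo-divisor pushes forward to a $(v_0,\mu)$-quasistable pseudo-divisor on $\widehat{\Gamma''}$ by Proposition \ref{prop:spec}, and then collapsing the subdivision $\widehat{\Gamma''}\to\Gamma''$ via Remark \ref{rem:subdivision} yields a $(v_0,\mu_{\Gamma''})$-quasistable pseudo-divisor $(\E'',D'')$ on the stable graph $\Gamma''$; this is the target cone $\sigma_{(\Gamma'',\E'',D'')}$.

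Next, I verify that the image of $K_{\Gamma,\E,\phi}$ lies in this cone. For $(x_e)$ in the interior $K^\circ_{\Gamma,\E,\phi}$, $\beta^\trop$ produces the tropical curve $X$ with model $\Gamma$ and lengths $x_e$. By Remark \ref{rem:functionflow}, $\phi$ is realized as the flow $\phi_f$ of a rational function $f$ on $X$ whose divisor equals $\div(\phi)$ seen on $X$. Hence the divisor on $X$ induced by $(\E,D')$ is equivalent to $m\omega_X+\sum a_ip_i$, and it is $(p_0,\mu)$-quasistable by Proposition \ref{prop:quasiquasi}. Pushing forward via $\text{red}_X\col X\to\st(X,p_0)$ preserves both the equivalence class and quasistability (again Proposition \ref{prop:spec} in its tropical incarnation); by the uniqueness clause of Theorem \ref{thm:quasistable}, this pushforward must coincide with the divisor $\D$ defining $\alpha^\trop_{\mathcal A,\mu}(X)$. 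Its combinatorial type on $\Gamma''$ is precisely $(\E'',D'')$, so the image lies in $\sigma_{(\Gamma'',\E'',D'')}$. Points on the boundary of $K_{\Gamma,\E,\phi}$ correspond to further specializations of $(X,\phi)$ and are handled by the same reasoning together with continuity, just as in the final paragraph of the proof of Theorem \ref{thm:map}.

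It remains to check that the resulting map $K_{\Gamma,\E,\phi}\to\sigma_{(\Gamma'',\E'',D'')}=\mathbb R^{E(\Gamma''^{\E''})}_{\geq0}$ is an integral linear cone morphism. For this I invoke Proposition \ref{prop:isoCK}: $K_{\Gamma,\E,\phi}$ is canonically isomorphic to $C_{\Gamma,\E,\phi}\subset\mathbb R^{E(\Gamma^\E)}$, and the desired map is the composition of this isomorphism with the coordinate map $\mathbb R^{E(\Gamma^\E)}\to\mathbb R^{E(\Gamma''^{\E''})}$ induced by the inclusion $E(\Gamma''^{\E''})\subset E(\Gamma^\E)$ coming from $\text{red}$, followed by the summing map of \eqref{eq:hat} corresponding to the refinement $\widehat{\Gamma''}\to\Gamma''$. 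Both are integral linear, so the composition is a cone morphism, exactly as in \eqref{eq:KCR}. I expect the main obstacle to be the bookkeeping with the two-step reduction $\Gamma\to\widehat{\Gamma''}\to\Gamma''$ and tracking how $\E$ behaves under both steps; the content is a combination of the local argument of Theorem \ref{thm:map} with the compatibility of quasistability under stabilization, but the sheer notational burden of keeping straight which subdivisions and specializations are in play is the subtle part.
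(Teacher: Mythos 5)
Your proposal is correct and follows essentially the same route as the paper's proof: reduce to each maximal cone $K_{\Gamma,\E,\phi}$, pass to the single-legged stabilization of $\Gamma$, identify the target cone $\sigma_{(\st(\Gamma),\E',D')}$ by pushing forward the quasistable pseudo-divisor under $\text{red}$ and desubdividing via Remark~\ref{rem:subdivision}, and then invoke the linear-algebraic structure of~\eqref{eq:KCR}. The one place where your write-up is imprecise is the linearity step: you speak of ``the coordinate map $\mathbb R^{E(\Gamma^\E)}\to\mathbb R^{E(\Gamma''^{\E''})}$ induced by the inclusion $E(\Gamma''^{\E''})\subset E(\Gamma^\E)$'' followed by the summing map, but no such inclusion exists. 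The correct chain is a projection $\mathbb R^{E(\Gamma^\E)}\to\mathbb R^{E(\widehat{\Gamma''}^{\E})}$ (coming from $E(\widehat{\Gamma''}^{\E})\subset E(\Gamma^\E)$) followed by the summing map $F\colon\mathbb R^{E(\widehat{\Gamma''}^\E)}\to\mathbb R^{E(\Gamma''^{\E''})}$ of~\eqref{eq:hat}. More importantly, to justify that this projection takes $C_{\Gamma,\E,\phi}$ onto $C_{\widehat{\Gamma''},\E,\text{red}_*\phi}$, the paper makes explicit the observation that the edges contracted by $\text{red}$ are separating, hence appear in no cycle and in no constraint~\eqref{eq:C}; this yields the clean product decomposition $K_{\Gamma,\E,\phi}=K_{\widehat{\Gamma},\E,\text{red}_{\Gamma*}(\phi)}\times\mathbb R^{E(\Gamma)\setminus E(\widehat{\Gamma})}_{\geq0}$, from which the projection onto the first factor is visibly a cone morphism. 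Your argument needs this observation (or some equivalent) to make the ``coordinate map'' legitimate; without it, the fact that dropping the coordinates of the contracted edges still yields a point satisfying the constraints for the smaller cone is unjustified. Once that lemma is inserted, the remainder of your verification---interior case via Remark~\ref{rem:functionflow}, Theorem~\ref{thm:quasistable}, Propositions~\ref{prop:spec} and~\ref{prop:quasiquasi}; boundary by specialization---matches the paper.
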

\begin{proof}
Let $\Gamma$ be a genus-$g$ stable weighted graph with $n+1$ legs. Consider the divisor $D_0:=D_{\Gamma,\mathcal A}$ on $\Gamma$ defined in \eqref{eq:DGamma}. By forgetting the last $n$ legs of $\Gamma$, we get a graph of genus $g$ with $1$ leg which, abusing notations, we still call $\Gamma$. By the universal property satisfied by the stable model $\st(\Gamma)$, there exists a unique refinement $\widehat{\Gamma}$ of $\st(\Gamma)$ endowed with a specialization $\text{red}_\Gamma\col\Gamma\ra\widehat{\Gamma}$. Recall that the edges contracted by $\text{red}_\Gamma$ are separating, hence  no cycle of $\Gamma$ contains an edge contracted by $\text{red}_\Gamma$. By the equations \eqref{eq:C} defining $C_{\Gamma,\E,\phi}$, we obtain
\[
C_{\Gamma,\E,\phi}=C_{\widehat{\Gamma},\E,\text{red}_{\Gamma*}(\phi)}\times \R^{E(\Gamma)\setminus E(\widehat{\Gamma})}_{\geq0},
\]
and hence, by Proposition \ref{prop:isoCK},
\[
K_{\Gamma,\E,\phi}=K_{\widehat{\Gamma},\E,\text{red}_{\Gamma*}(\phi)}\times \R^{E(\Gamma)\setminus E(\widehat{\Gamma})}_{\geq0}.
\]

We now follow the notations and arguments of the proof of Theorem \ref{thm:map}, in particular Equation \eqref{eq:KCR} (with $\widehat{\Gamma}$ and $\st(\Gamma)$ instead of $\widehat{\Gamma}'$ and $\Gamma'$). We have that the cone $K_{\widehat{\Gamma},\E,\text{red}_{\Gamma*}(\phi)}$ maps to the cone $\sigma_{(st(\Gamma),\E',D')}$  via a cone morphism $K_{\widehat{\Gamma},\E,\text{red}_{\Gamma*}(\phi)}\ra\sigma_{(st(\Gamma),\E',D')}$, where $(\E',D')$ is the pseudo-divisor on $\st(\Gamma)$ induced by the pseudo-divisor $(\E,\text{red}_{\Gamma*}(D_0)+\div(\phi))$ on $\widehat{\Gamma}$. Since $(\E,\text{red}_{\Gamma*}(D_0)+\div(\phi))$ is $(\iota(v_0),\iota_*(\mu))$-quasistable then so is $(\E',D')$, which means that $\sigma(\st(\Gamma),\E',D')$ is a cone of $J_{g,\mu}^\trop$. Hence $K_{\Gamma,\E,\phi}$ maps to $\sigma_{(st(\Gamma),\E',D')}$ via the cone morphism which is the composition of  the projection $K_{\Gamma,\E,\phi}\ra K_{\widehat{\Gamma},\E,\text{red}_{\Gamma,*}(\phi)}$ and the cone morphism $K_{\widehat{\Gamma},\E,\text{red}_{\Gamma*}(\phi)}\ra \sigma_{(st(\Gamma),\E',D')}$. 
\end{proof}

   \section{Toric varieties and binomial ideals}\label{sec:torbin}
	
	\subsection{Toric varieties}\label{sec:toricsec}

In this section we discuss some results and constructions in toric geometry.  
	We refer to \cite{CLS} for the basic results. \par
	A \emph{toric variety} $X$ of dimension $n$ is a normal variety with an embedding $(k^*)^n\hookrightarrow X$ of the $n$-dimensional torus $(k^*)^n$ together with an action of $(k^*)^n$ on $X$ that preserves the natural action of $(k^*)^n$ on itself. A map of toric varieties $X\to X'$ is called \emph{toric} if it respects the torus actions on both $X$ and $X'$.\par

Let $N\cong\mathbb{Z}^n$ be a lattice and $M:=\Hom(N,\mathbb{Z})$ be its dual. We let $N_\R:=N\otimes\R\cong \R^n$ and $M_\R:=M\otimes\R=\Hom(N,\R)$. Given a strictly convex rational polyhedral cone (or, simply, a cone) $\sigma$, we define 
\[
\sigma^\vee:=\{u\in M_\R;u(v)\geq0\text{ for every }v\in \sigma\}.
\]
We consider the finitely generated monoid $S_\sigma:=\sigma^\vee\cap M$. We associate to $\sigma$ the finitely generated $k$-algebra
\begin{equation}\label{eq:Asigma}
A_\sigma:=k[S_\sigma]=\bigoplus_{u\in S_\sigma}k\cdot \chi^u 
\end{equation}
where the multiplication is given by $\chi^u\cdot\chi^{v}=\chi^{u+v}$, and we define the affine toric variety $U_\sigma:=\Spec(A_\sigma)$.  If $\tau$ is a face of $\sigma$, then there is a natural inclusion $S_\sigma\subset S_\tau$ which makes the natural map $\lambda\col A_\sigma\to A_\tau$ a localization homomorphism with respect to the multiplicative system 
\begin{equation}\label{eq:multsys}
\{\chi^u; u\in S_\sigma,u(v)=0 \text{ for every } v\in\tau\}.
\end{equation}
In fact, by \cite[Proposition 1.3.16]{CLS}, we can choose an element $u\in S_\sigma$, such that $S_\tau=S_\sigma+\mathbb{Z}u$, and in this case $A_\tau=(A_\sigma)_{\chi^{u}}$. The element $u$ can be chosen as any element such that $\tau=\{v\in\sigma; u(v)=0\}$. Hence the induced morphism $U_\tau\to U_\sigma$ is an open immersion, in particular $U_\tau$ is a principal open subscheme of $U_\sigma$. More generally if $\tau\to\sigma$ is cone morphism, then there exists an induced ring homomorphism $\lambda\col A_\sigma\to A_\tau$, giving rise to a toric morphism
\begin{equation}
\label{eq:mortor}
U_\tau\to U_\sigma.
\end{equation}\par

For each fan $\Sigma$ we can associate a scheme $\Tor(\Sigma)$ , called the \emph{toric variety associated to} $\Sigma$, given by glueing the affine toric varieties $U_\sigma$ for $\sigma\in\Sigma$. The scheme $\Tor(\Sigma)$ is covered by the affine open subsets $U_\sigma$. Moreover each cone $\sigma\in \Sigma$ is associated to an orbit $O(\sigma)\subset\Tor(\Sigma)$; these orbits induce a stratification of $\Tor(\Sigma)$ dual to the natural stratification of $\Sigma$. For each cone $\sigma\in\Sigma$ there is a closed invariant subvariety  $V(\sigma):=\overline{O(\sigma)}\subset\Tor(\Sigma)$ such that $\codim(V(\sigma))=\dim(\sigma)$. In particular, every one-dimensional cone $\sigma\in\Sigma$ corresponds to a Weil divisor on $\Tor(\Sigma)$. We also note that the ideal $I(\sigma)$ of $V(\sigma)$ in $U_\sigma$ is 
\[
I(\sigma)=\langle \chi^u; u\in S_\sigma, u\notin \sigma^\perp \rangle.
\]
 Moreover, if $\eta_\sigma$ is the generic point of $V(\sigma)$, then the local ring of $\Tor(\Sigma)$ at $\eta_\sigma$ is
\begin{equation}
\label{eq:localsigma}
\O_{\Tor(\Sigma),\eta_\sigma}=\O_{U_\sigma,\eta_\sigma}=(A_\sigma)_{I(\sigma)}.
\end{equation}

 We denote by $\Sigma(1)$ the set of one-dimensional cones of $\Sigma$ or, abusing notation, the sets of its primitive generators. In particular, if $\sigma$ is a cone, we write $\sigma(1)$ for the set of its extremal rays.\par

  Let $N$ and $N'$ be lattices and $F\col N\to N'$ be a linear transformation. If $\Sigma$ and $\Sigma'$ are fans in $N_\R$ and $N'_\R$, respectively, such that for every $\sigma\in \Sigma$ there exists $\sigma'\in\Sigma'$ with $F(\sigma)\subset \sigma'$, then there is an induced toric morphism $\Tor(\Sigma)\to\Tor(\Sigma')$ locally given as in \eqref{eq:mortor}.\par

   If $A$ is a local Noetherian $k$-algebra, there is an inclusion
\[
k[z_1,\ldots,z_n]\to A[[z_1,\ldots,z_n]]
\]
which induces a morphism 
\[
S_{A,n}:=\Spec(A[[z_1,\ldots,z_n]])\to \Spec(k[z_1,\ldots,z_n])=\mathbb{A}^n_k.
\]

If $\Sigma$ is a fan in $\R^n$ whose cones are contained in $\R^n_{\geq0}$, then there is a toric morphism
$\Tor(\Sigma)\to \mathbb{A}^n_k$. 
In this case, for every local Noetherian $k$-algebra $A$, we define the scheme 
\begin{equation}\label{eq:TVA}
\Tor_{A}(\Sigma):=S_{A,n}\times_{\mathbb{A}^n_k}\Tor(\Sigma).
\end{equation}

\begin{Rem}
\label{rem:flat}
Note that $\Tor_A(\Sigma)$ is flat over $\Tor(\Sigma)$, because $A[[z_1,\ldots,z_n]]$ is a flat $k[z_1,\ldots,z_n]$-algebra and flatness is stable under base change (or, equivalently, under tensor products).
\end{Rem}

\begin{Cons}
\label{cons:xy}
We introduce a useful construction which we use in Section \ref{sec:combalg}.

Given a cone $\sigma\subset N_{\R}\cong \mathbb R^n$ and an element $\ol{u}\in S_\sigma$, define the hyperplane $H\subset N_{\R}\times \R^2$ by $\ol{u}=u_1+u_2$, where $\{u_1,u_2\}$ is the dual basis of $(\R^2)^\vee$ induced by the canonical basis of $\R^2$. Define the cone $\tau\subset H$ as 
\[
\tau:=(\sigma\times\R^2_{\geq0})\cap H.
\]
 We obtain the following diagram:
\[
\begin{tikzcd}
\tau \arrow[hook]{r}\arrow[hook]{d}& \sigma\times \R^2_{\geq0}\arrow{r}\arrow[hook]{d}& \sigma \arrow[hook]{d}\\
H \arrow[hook]{r} & N_\R\times \R^2 \arrow{r}& \R^n.
\end{tikzcd}
\]
Dualizing, we get a diagram:
\[
\begin{tikzcd}
\tau^\vee\arrow[hook]{d} &\arrow{l} \sigma^\vee\times (\R^2_{\geq0})^\vee \arrow[hook]{d}& \sigma^\vee \arrow[hook']{l}\arrow[hook]{d}\\
H^\vee &\arrow{l} (N_\R)^\vee\times (\R^2)^\vee &\arrow[hook']{l} (\R^n)^\vee.
\end{tikzcd}
\]
Now, $S_\tau=\tau^\vee\cap ((N_\R\times \mathbb{Z}^2)\cap H)^\vee$ is generated by (the image of) $S_\sigma$ and (the image of) $u_1$ and $u_2$. Since $\ol{u}=u_1+u_2$, we get 
\[
S_\tau\cong\frac{S_\sigma\oplus \mathbb{Z}_{\geq0} u_1\oplus \mathbb{Z}_{\geq0} u_2}{\ol{u}-u_1-u_2}.
\]
This implies that 
\begin{equation}\label{eq:Atau}
A_{\tau}\cong\frac{A_\sigma[x,y]}{\langle xy-\chi^{\ol{u}}\rangle},
\end{equation}
 where $x$ and $y$ are identified with $\chi^{u_1}$ and $\chi^{u_2}$.
 Moreover, if $r$ is an extremal ray of $\sigma$, then the preimage of the cone  $\langle r\rangle$ in $\tau$ under the induced map $\tau\to\sigma$, is the cone generated by $(r,0,v(r))$ and $(r,v(r),0)$. We denote this cone by $\tau'$. \par
   Let $u\in S_\sigma$ such that $r=\{v\in \sigma;u(v)=0\}$. By \cite[Proposition 1.3.16]{CLS}, we have $A_r=(A_\sigma)_{\chi^{u}}$. Since $\tau'$ is the preimage of $r$, it follows that $\tau'=\{\ol{v}\in \tau;u(\ol{v})=0\}$ (viewing $u$ in $S_\tau$). Again, by \cite[Proposition 1.3.16]{CLS}, we have $A_{\tau'}=(A_\tau)_{\chi^{u}}$. We will use this description in terms of localization in the proof of Theorem \ref{thm:mainlocal}. 
 \end{Cons}

   \subsection{Monomial and binomial ideals}

We recall and prove some results about monomial and binomial ideals.

A \emph{binomial} in a polynomial ring $k[x_1,\ldots,x_n]$ is a polynomial which has at most two terms, i.e., it is of the form $a x^\alpha+bx^\beta$ for $a,b\in k$ and $\alpha,\beta\in \mathbb{Z}^{n}_{\geq0}$. A \emph{binomial ideal} is an ideal generated by binomials. 

By \cite[Proposition 1.1.9]{CLS}, every ideal defining an affine toric variety is a binomial ideal. In other words, if $\sigma$ is a cone, there is a surjective homomorphism $k[y_1,\ldots, y_s]\to A_{\sigma}$ whose kernel is a binomial ideal.\par

\begin{Prop}
\label{prop:ES1.6}
If $B, I_1,\ldots, I_s$ are ideals in $k[x_1,\ldots,x_n]$ such that $B$ is a binomial ideal and $I_1,\ldots, I_s$ are monomial ideals, then
\begin{enumerate}
\item $(B+I_1)\cap\ldots\cap(B+I_s)$ is generated by monomials modulo $B$;
\item any monomial in $B+I_1+\ldots+I_s$ lies in the ideal $B+I_i$ for some $i$. In particular, if $m,m_1,\ldots,m_s$ are monomials  in $k[x_1,\ldots,x_n]$ and $m\in B+(m_1,\ldots,m_s)$, then $m\in B+(m_i)$ for some $i$.
\end{enumerate}
\end{Prop}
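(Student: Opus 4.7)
The plan is to exploit the standard ``separation'' property of binomial ideals: call two monomials $x^\alpha, x^\beta$ \emph{$B$-equivalent} if $x^\alpha - cx^\beta \in B$ for some nonzero scalar $c$. The key fact, which is classical for binomial ideals, is that whenever a $k$-linear combination $\sum c_\alpha x^\alpha$ of monomials lies in $B$, each partial sum indexed by a single $B$-equivalence class already lies in $B$. Establishing this (or citing it from the binomial ideals literature) is the only nontrivial ingredient; everything else is bookkeeping.

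To prove (2), I would write the monomial as $m = b + \sum_{i,k} d_{ik}\,n_{ik}$ with $b\in B$ and $n_{ik}\in I_i$ monomials, obtained by expanding the generators of each $I_i$. Applying the separation lemma to the relation $m-\sum d_{ik}\,n_{ik}\in B$ and restricting to the class of $m$ yields
\[
m - \sum_{(i,k)\,:\,n_{ik}\sim m} d_{ik}\,n_{ik}\in B.
\]
Setting aside the trivial case $m\in B$, this equation forces some $n_{i_0 k_0}$ to be $B$-equivalent to $m$ with nonzero coefficient. Writing that equivalence as $n_{i_0 k_0}=cm+b'$ with $b'\in B$ and solving for $m$ would give $m\in B+(n_{i_0 k_0})\subseteq B+I_{i_0}$.

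For (1), given $f\in J:=\bigcap_i(B+I_i)$, I would reduce $f$ modulo $B$ to a combination $\sum_k c_k\,m_k$ where the $m_k$ are chosen representatives of pairwise distinct $B$-equivalence classes. For each fixed $i$, the containment $\sum_k c_k m_k - g_i\in B$ with $g_i\in I_i$ (expanded as a $k$-linear combination of monomials of $I_i$) is subject to the separation lemma. Restricting the resulting equation to the equivalence class of an individual $m_k$ with $c_k\ne 0$, the same solve-for-$m_k$ step used in (2) shows $m_k\in B+I_i$. Since $i$ is arbitrary, $m_k\in J$, so $f$ is congruent modulo $B$ to a $k$-linear combination of monomials in $J$, which is exactly the claim.

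The main technical obstacle is the separation lemma itself, whose proof requires a Gr\"obner basis of $B$ consisting of binomials (which exists precisely because $B$ is binomial) together with a careful analysis of how monomial reductions propagate through such a basis. I would invoke this as a black box from the literature on binomial ideals; once it is available, both parts follow by the routine manipulations outlined above.
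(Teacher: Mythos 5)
Your proposal is correct, but note that the paper does not actually prove this statement: it simply cites \cite[Corollary 1.6]{ES}, so there is no argument to compare against term-by-term. Your route reconstructs the result from what you call the separation lemma --- that if a $k$-linear combination of monomials lies in a binomial ideal $B$, then the partial sum over each $B$-equivalence class already lies in $B$. This is indeed true, and the clean way to see it is the one you gesture at: a binomial ideal admits a reduced Gr\"obner basis consisting of binomials and monomials, the standard monomials form a $k$-basis of $k[x_1,\ldots,x_n]/B$, each nonzero monomial is $B$-equivalent to a unique standard monomial, and distinct standard monomials are never $B$-equivalent; hence the images of pairwise inequivalent nonzero monomials are linearly independent modulo $B$, which forces each class-wise partial sum to vanish in the quotient. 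Once that is in hand, your derivations of (2) and (1) go through: in (2) the only care needed is to treat the degenerate case $m\in B$ (the ``zero class'') separately, which you effectively do; in (1), ``reduce $f$ modulo $B$'' should be read as passing to the normal form with respect to a binomial Gr\"obner basis, whose terms then automatically lie in pairwise distinct $B$-equivalence classes, and your solve-for-$m_k$ step then gives $m_k\in B+I_i$ for each $i$. Your separation lemma is essentially a repackaging of the same Eisenbud--Sturmfels Gr\"obner machinery that underlies their Corollary 1.6 and Corollary 1.7 and Proposition 1.10 (the latter two also cited in this paper as Propositions \ref{prop:ES1.7} and \ref{prop:ES1.10}); the gain of your formulation is that it collapses the intersection-and-sum bookkeeping in both parts into a single linear-independence statement, at the cost of having to establish, or black-box, the binomial Gr\"obner basis input.
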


\begin{proof}
See \cite[Corollary 1.6]{ES}.
\end{proof}

In particular, given a cone $\sigma$, the intersection of monomial ideals in $A_\sigma$ is a monomial ideal.

\begin{Prop}
\label{prop:ES1.7}
Let $B\subset k[x_1,\ldots,x_n]$ be a binomial ideal, $m_1,\ldots,m_t$ monomials, and $f_1,\ldots, f_t$ polynomials such that $\sum_{i=1}^t f_im_i \in B$. Let $f_{i,j}$ denote the terms of $f_i$. For each term $f_{i,j}$, either $f_{i,j}m_i\in B$ or there is a term $f_{i',j'}$, distinct from $f_{i,j}$, and a scalar $a\in k$ such that $f_{i,j}m_i+af_{i',j'}m_{i'}\in B$.
\end{Prop}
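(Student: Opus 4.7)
The plan is to fix a monomial order on $k[x_1,\ldots,x_n]$ and exploit a \emph{binomial} Gröbner basis $\mathcal{G}$ of $B$. The existence of such a basis is a consequence of Buchberger's algorithm applied to binomial generators: the $S$-polynomial of two binomials is a binomial, and dividing a binomial by a binomial yields a binomial, so the algorithm stays within the class of binomials and produces $\mathcal{G}$ consisting of elements $x^\alpha-\lambda x^\beta$ with $\lambda\in k$ (allowing $\lambda=0$, i.e.\ pure monomials). The decisive structural feature is that a one-step reduction of a monomial term $cx^\gamma$ by an element of $\mathcal{G}$ either kills it ($\lambda=0$) or replaces it by the single monomial $c\lambda x^{\gamma-\alpha+\beta}$: binomial reductions never split a term into two.

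Since $\sum_{i,j}f_{i,j}m_i\in B$, its normal form modulo $\mathcal{G}$ is $0$. I would run this reduction while keeping each original term $f_{i,j}m_i$ formally separate, lifting the computation to the free $k[x_1,\ldots,x_n]$-module $F:=\bigoplus_{i,j} k[x_1,\ldots,x_n]\cdot e_{i,j}$ and letting reductions act on each summand independently: when a reduction step is applied to the monomial coefficient of $x^\gamma$ in the projected polynomial, the same $k$-linear operation is applied to the coefficient of $x^\gamma e_{i,j}$ in every summand. By the previous paragraph, every summand remains a single monomial multiple of some $e_{i,j}$ throughout, and after finitely many steps each stabilizes to $c_{i,j}x^{\gamma_{i,j}}e_{i,j}$ (possibly $c_{i,j}=0$) with $x^{\gamma_{i,j}}$ irreducible modulo $\mathcal{G}$. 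By construction $f_{i,j}m_i \equiv c_{i,j}x^{\gamma_{i,j}}\pmod{B}$ for every $(i,j)$, and projecting $e_{i,j}\mapsto 1$ produces the normal form of $\sum f_{i,j}m_i$, which is $0$.

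To conclude, fix the term $f_{i_0,j_0}$. If $c_{i_0,j_0}=0$ then $f_{i_0,j_0}m_{i_0}\in B$ directly. Otherwise the identity $\sum_{i,j}c_{i,j}x^{\gamma_{i,j}}=0$, after grouping by irreducible monomial, forces the existence of some $(i',j')\neq(i_0,j_0)$ with $x^{\gamma_{i',j'}}=x^{\gamma_{i_0,j_0}}$ and $c_{i',j'}\neq 0$. Setting $a:=-c_{i_0,j_0}/c_{i',j'}$ and combining the two congruences yields
\[
f_{i_0,j_0}m_{i_0}+a\,f_{i',j'}m_{i'}\equiv\bigl(c_{i_0,j_0}+ac_{i',j'}\bigr)x^{\gamma_{i_0,j_0}}=0\pmod{B},
\]
as required.

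The main obstacle is making the ``track each summand independently'' step rigorous: the same reduction action can simultaneously hit several summands carrying identical monomials, and one needs to verify that the bookkeeping through the free module $F$ is compatible with the actual reduction in $k[x_1,\ldots,x_n]$. This is however automatic, because every reduction is $k$-linear in the coefficient of the targeted monomial and leaves other monomials untouched, so it transports across the decomposition $F\twoheadrightarrow k[x_1,\ldots,x_n]$. The only other point requiring care is the binomial Gröbner basis claim itself, which I would justify by direct inspection of $S$-polynomials of binomials rather than appealing to a deeper classification of binomial ideals.
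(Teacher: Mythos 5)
Your proof is correct, and it is in substance the same argument used in the paper's cited reference (Eisenbud--Sturmfels, Corollary 1.7): a binomial ideal has a reduced Gr\"obner basis consisting of binomials (their Proposition 1.1), the normal form of any monomial with respect to such a basis is again a single term (possibly zero), and the normal form operator is $k$-linear, so one applies it to each $f_{i,j}m_i$ separately and reads off the cancellation pattern. The one thing worth pointing out is that the free-module bookkeeping you invoke is not needed and in fact slightly muddies the water: you do not want to tie the reduction of the individual lifts $g_{i,j}$ to the reduction of the projected polynomial $g$, because cancellation in $g$ can hide a reducible monomial that some $g_{i,j}$ still carries, leaving that summand stuck at a non-reduced term. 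The clean route is to reduce each $f_{i,j}m_i$ independently to its (unique, single-term) normal form $c_{i,j}x^{\gamma_{i,j}}$, observe that $\sum c_{i,j}x^{\gamma_{i,j}}$ is congruent to $\sum f_{i,j}m_i$ modulo $B$ and already in normal form, and then invoke the Gr\"obner basis property once to conclude that $\sum c_{i,j}x^{\gamma_{i,j}}=0$; the rest of your cancellation argument then goes through verbatim. Your justification of the existence of a binomial Gr\"obner basis (S-polynomials and one-step reductions of binomials stay binomial) is also the standard one and is fine.
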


\begin{proof}
See \cite[Corollary 1.7]{ES}.
\end{proof}

\begin{Prop}
\label{prop:ES1.10}
Let $B$ and $I$ be, respectively, a binomial and a monomial ideal ideal in the ring $k[x_1,\ldots,x_n]$. If $f\in B+I$ and $f'$ is the sum of the terms of $f$ that are not individually contained in $B+I$, then $f'\in B$. 
\end{Prop}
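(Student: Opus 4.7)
The plan is to first perform a reduction to the case where no term of $f$ lies individually in $B+I$, and then to prove that reduced statement by induction on the number of terms of $f$, using Proposition \ref{prop:ES1.7} to peel off one term at a time modulo $B$.

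For the reduction, write $f=f'+f''$, where $f''$ is the sum of those terms of $f$ that are individually in $B+I$. Then $f''\in B+I$ trivially, so $f'=f-f''\in B+I$. Thus it suffices to show that whenever $g\in B+I$ has the property that none of its terms lies individually in $B+I$, we have $g\in B$. (Applying this to $g=f'$ gives the proposition.)

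Now I would induct on the number $t$ of terms of $g$. If $t=0$ then $g=0\in B$. For $t\ge 1$, write $g=\sum_{i=1}^{t}c_i m_i$ with $c_i\in k^*$ and distinct monomials $m_i$, and fix an expression $g=b+h$ with $b\in B$ and $h=\sum_k p_k n_k\in I$, where $n_k$ are monomial generators of $I$. Then
\[
\sum_{i=1}^{t} c_i m_i \;+\; \sum_k (-p_k) n_k \;=\; b \;\in\; B.
\]
Applying Proposition \ref{prop:ES1.7} to the single-term ``polynomial'' $c_1$ multiplying $m_1$, we are in one of two cases: either $c_1 m_1\in B$, or there is another term of the sum, say $a t_0$, with $c_1 m_1+at_0\in B$ for some $a\in k$. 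The first case forces $m_1\in B\subset B+I$, contradicting the assumption on $g$. If $t_0$ is a term of some $-p_k n_k$, i.e.\ $t_0$ is a monomial multiple of $n_k\in I$, then $c_1m_1=(c_1m_1+at_0)-at_0\in B+I$, again a contradiction. Therefore $t_0=c_{i'}m_{i'}$ for some $i'\ne 1$, producing $b_1:=c_1m_1+a c_{i'}m_{i'}\in B$.

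Now consider $g-b_1\in B+I$. Its monomial support is contained in $\{m_2,\dots,m_t\}$ (the monomial $m_1$ has been eliminated, and the coefficient of $m_{i'}$ has merely been modified, possibly to $0$), so it has at most $t-1$ terms, and every term still has monomial not in $B+I$. By the inductive hypothesis $g-b_1\in B$, and hence $g=(g-b_1)+b_1\in B$. The main technical point requiring care is the correct bookkeeping when invoking Proposition \ref{prop:ES1.7}, in particular verifying that no term of the form $-p_{k,l}n_k$ can serve as a partner for $c_1 m_1$, since pairing with such a term would immediately place $c_1 m_1$ in $B+I$ and contradict the hypothesis on the terms of $g$.
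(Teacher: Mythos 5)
Your proof is correct, and it reconstructs essentially the argument behind \cite[Proposition 1.10]{ES}, which the paper cites rather than reproduces. The reduction to the case where no term of $f$ lies in $B+I$ is immediate since $B+I$ is an ideal, and the induction on the number of terms is the right engine: expressing $g=b+h$ with $b\in B$, $h=\sum_k p_k n_k\in I$, rearranging to $\sum_i c_i m_i + \sum_k(-p_k)n_k\in B$, and invoking Proposition \ref{prop:ES1.7} on the single-term coefficient $c_1$ of $m_1$. Your case analysis is complete: pairing $c_1m_1$ with a term coming from $-p_kn_k$ would place $c_1m_1\in B+I$ (since $n_k\in I$), contradicting the hypothesis, so the partner must be some $c_{i'}m_{i'}$ with $2\le i'\le t$, giving $b_1=c_1m_1+ac_{i'}m_{i'}\in B$. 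The check that $g-b_1$ still satisfies the inductive hypothesis is also right: the only new term has monomial $m_{i'}$, and $m_{i'}\notin B+I$ is unchanged by scaling.
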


\begin{proof}
See \cite[Proposition 1.10]{ES}.
\end{proof}

\begin{Cor}
\label{cor:sigmamon}
If $\sigma$ is a cone and $I$ is a monomial ideal in $A_\sigma$, then $f\in I$ if and only if each term of $f$ is in $I$.
\end{Cor}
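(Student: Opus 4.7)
The plan is to reduce to Proposition~\ref{prop:ES1.10} by presenting $A_\sigma$ as an explicit quotient of a polynomial ring by a binomial ideal. By \cite[Proposition 1.1.9]{CLS}, one can write $A_\sigma = k[y_1,\dots,y_s]/B$ with $B$ a binomial ideal, where the presentation $\pi\colon k[y_1,\dots,y_s]\to A_\sigma$ may be chosen so that $\pi(y_i)=\chi^{u_i}$ for a generating set $u_1,\dots,u_s$ of the monoid $S_\sigma$. If $I=(\chi^{w_1},\dots,\chi^{w_r})$, I would pick a monomial lift $y^{\beta_j}$ of each $\chi^{w_j}$ and set $I':=(y^{\beta_1},\dots,y^{\beta_r})$, a monomial ideal in $k[y_1,\dots,y_s]$ with $\pi(I')=I$ and $\pi^{-1}(I)=I'+B$.

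The ``if'' direction is immediate: if every individual summand $a_u\chi^u$ of $f=\sum_{u\in S_\sigma} a_u\chi^u$ lies in $I$, then so does the finite sum $f$. For the ``only if'' direction, I would take $f\in I$, lift it to some $\tilde f\in I'+B$, and expand $\tilde f=\sum_\alpha c_\alpha y^\alpha$. Applying Proposition~\ref{prop:ES1.10} to $\tilde f$ with respect to $B$ and $I'$, the sum of those terms $c_\alpha y^\alpha$ that do not themselves lie in $I'+B$ is forced into $B$, and hence vanishes after projecting to $A_\sigma$. Thus, modulo $B$, $f$ equals a sum of terms $c_\alpha y^\alpha$ each of which individually lies in $I'+B$.

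To finish, I would group these surviving terms by their image $\chi^{u(\alpha)}=\pi(y^\alpha)$ in $A_\sigma$. Uniqueness of the direct-sum decomposition $A_\sigma=\bigoplus_{u\in S_\sigma} k\cdot\chi^u$ gives $a_u=\sum_{\alpha\,:\,u(\alpha)=u} c_\alpha$ (where the sum is restricted to ``surviving'' $\alpha$), so $a_u\chi^u$ is the image of $\sum_{\alpha\,:\,u(\alpha)=u} c_\alpha y^\alpha \in I'+B$ and therefore lies in $I$, as required.

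The only delicate point is this regrouping step: distinct exponents $\alpha\ne\alpha'$ may satisfy $\pi(y^\alpha)=\pi(y^{\alpha'})=\chi^u$, so the passage between ``terms of $\tilde f$ in $k[y_1,\dots,y_s]$'' and ``terms $a_u\chi^u$ in $A_\sigma$'' is not a bijection. However, the ambiguity is precisely captured by the binomial relations in $B$, and since we only need the conclusion after reducing modulo $B$, this causes no genuine obstacle.
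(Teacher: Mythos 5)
Your proof is correct and follows the same overall strategy as the paper's: present $A_\sigma=k[y_1,\dots,y_s]/B$ with $B$ a binomial ideal, lift $f$ and $I$ to the polynomial ring, and invoke Proposition~\ref{prop:ES1.10}. The difference lies in the closing step. The paper chooses the lift $\tilde f=\sum_{m\in\Lambda}a_m y^{u_m}$ with exactly one monomial $y^{u_m}$ per term $a_m\chi^m$ of $f$; since these monomials are distinct, the surviving terms upstairs correspond bijectively to a subset of the terms of $f$, and the paper shows this subset is all of $\Lambda$ by applying Proposition~\ref{prop:ES1.7} to rule out non-surviving terms: a binomial relation among the $y^{u_m}$ would descend to $\chi^m=a\chi^{m'}$ with $m\ne m'$, contradicting the decomposition $A_\sigma=\bigoplus_u k\cdot\chi^u$. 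You instead take an arbitrary lift and handle the resulting collision problem (which you correctly flag as the delicate point) by regrouping the surviving monomials according to their image in $A_\sigma$ and appealing to the uniqueness of the $\chi^u$-expansion. Your variant avoids Proposition~\ref{prop:ES1.7} entirely, at the modest cost of the regrouping step; the paper's specific choice of lift makes the correspondence between terms of $\tilde f$ and terms of $f$ a bijection, which eliminates the regrouping but requires the extra lemma. Both arguments are sound.
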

\begin{proof}
One direction is clear. Consider $f=\sum_{m\in \Lambda} a_m\chi^m\in I$, with $a_m\in k^*$ and $\Lambda\subset S_\sigma$ a finite subset. Let $B$ be the binomial ideal of $k[y_1,\ldots, y_s]$ such that
\[
A_\sigma=\frac{k[y_1,\ldots, y_s]}{B}.
\] 
For each $m$, choose $u_m=(u_1,\ldots, u_s)\in \mathbb{Z}^s_{\geq0}$ such that $\chi^m=[y^{u_m}]$, where $y^{u_m}:=y_1^{u_1}\cdots y_s^{u_s}$. Let $I'=\langle y^{u_m}; \chi^m\in I\rangle$, so that $I=(B+I')/B$. Since $f=\sum_{m\in \Lambda} a_m [y^{u_m}]$, we have $\sum_{m\in \Lambda} a_my^{u_m}\in B+I'$. Let $\Lambda':=\{m\in \Lambda; y^{u_m}\notin B+I'\}$. By Proposition \ref{prop:ES1.10}, we have $\sum_{m\in \Lambda'} a_my^{u_m}\in B$. If $\Lambda'\neq\emptyset$, then by Proposition \ref{prop:ES1.7} for each $m\in \Lambda'$ there exists $m'\in \Lambda'\setminus\{m\}$ and a scalar $a\in k$ such that $y^{u_m}-ay^{u_{m'}}\in B$. This would imply that $\chi^m=a\chi^{m'}$, which means that $a=1$ and $m=m'$, a contradiction. Then $\Lambda'=\emptyset$ and we are done.
\end{proof}

If $P$ is a prime ideal of a ring $A$, the \emph{$n$-th symbolic power} $P^{(n)}$ is the $P$-primary component of $P^n$, or equivalenty, $P^{(n)}$ is the preimage of the localized ideal $P_P^n$ under the localization map $A\to A_P$. Note that if $P$ is the ideal of a prime Weil divisor $\Y$ on the affine scheme $\Spec(A)$, then $P^{(n)}$ is the ideal of the Weil divisor $n\Y$. 

  The next Proposition will be used in Lemma \ref{lem:Dr} to check that a Weil divisor of the form $m\mathcal{Y}$ is actually Cartier, and to compute its local equation.

\begin{Prop}
\label{prop:symbol}
Consider the ring $A=k[x,y,u]/\langle xy-u^t\rangle$ and the ideal $I=\langle y,u\rangle$ of $A$. If $m=tn$ for some positive integers $m$ and $n$, then $I^{(m)}=\langle y^n\rangle$. In particular if $B$ is a flat $A$-algebra such that $IB$ is prime, then $(IB)^{(m)}=\langle y^n\rangle B$.
\end{Prop}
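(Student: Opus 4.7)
The strategy is to identify $\langle y^n\rangle$ with the $I$-primary component of $I^m$, and then to transfer the result to $B$ via flat base change of associated primes. I would begin by analyzing the local ring $A_I$: since $A/I=k[x]$ is a domain, $I$ is a height-$1$ prime, and in $A_I$ the element $x$ is a unit, so the relation $xy=u^t$ gives $y=u^t/x\in \langle u\rangle A_I$, whence $IA_I=\langle u\rangle A_I$ and $A_I$ is a DVR with uniformizer $u$. In particular $\mathrm{ord}_I(u)=1$ and $\mathrm{ord}_I(y)=t$. The identity $x^n y^n=u^m$ together with $x^n\notin I$ then shows $y^n\in I^m A_I\cap A=I^{(m)}$, so $\langle y^n\rangle\subseteq I^{(m)}$.

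For the reverse inclusion, the plan is to prove that $\langle y^n\rangle$ is already $I$-primary and then compare inside $A_I$. Since $A$ is a hypersurface in $k[x,y,u]$, it is Cohen--Macaulay; the element $y^n$ is a non-zero-divisor, so $A/\langle y^n\rangle$ is Cohen--Macaulay of pure dimension $1$ and has no embedded associated primes. The minimal primes of $\langle y^n\rangle$ coincide with those of $\langle y\rangle$, and from $A/\langle y\rangle\cong k[x,u]/\langle u^t\rangle$ one reads off that $I=\langle y,u\rangle$ is the unique minimal prime over $\langle y\rangle$. Thus $\mathrm{Ass}(A/\langle y^n\rangle)=\{I\}$, meaning $\langle y^n\rangle$ is $I$-primary and hence contracted from its localization at $I$. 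Since $\langle y^n\rangle A_I=u^m A_I=I^m A_I$, this yields $\langle y^n\rangle=I^m A_I\cap A=I^{(m)}$.

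The ``in particular'' then follows from Matsumura's flat base change theorem for associated primes. Writing $Q:=IB$, applied to the $A$-module $A/\langle y^n\rangle=A/I^{(m)}$ the theorem gives
\[
\mathrm{Ass}_B(B/\langle y^n\rangle B)=\mathrm{Ass}_B(B/Q)=\{Q\},
\]
using that $B/Q$ is a domain, so $\langle y^n\rangle B$ is $Q$-primary and therefore contracted from $B_Q$. The inclusion $Q^{(m)}\subseteq \langle y^n\rangle B$ reduces to showing $Q^m\subseteq \langle y^n\rangle B$: each generator $y^a u^b$ of $Q^m$ (with $a+b=m=tn$) can be rewritten, via repeated application of $u^t=xy$, as $y^n$ times an element of $B$, and then $Q$-primary contraction finishes that half. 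The reverse inclusion uses $x^n y^n=u^m\in Q^m$ together with $x\notin Q$ (so that $x$ is a unit in $B_Q$) to conclude $y^n\in Q^m B_Q$, hence $\langle y^n\rangle B\subseteq Q^{(m)}$. The main technical input is the Cohen--Macaulay/primary-decomposition step in the second paragraph, upgrading the easy membership $y^n\in I^{(m)}$ to the full equality $\langle y^n\rangle=I^{(m)}$; once this is in hand the flat base change is essentially formal, modulo the (mild) check that $x$ remains outside $IB$, which is automatic once $A/I\hookrightarrow B/IB$.
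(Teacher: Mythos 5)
Your proof is correct, and it takes a genuinely different route from the paper's. The paper verifies $I^m A_I = \langle y^n\rangle A_I$ by explicitly rewriting $I^m$ as $\langle y^n\rangle\cdot J$ where $J$ is a codimension-$2$ monomial ideal not contained in $I$; in contrast, you observe that $A_I$ is a DVR with uniformizer $u$ and $\mathrm{ord}_I(y)=t$, so the identity $I^m A_I=\langle y^n\rangle A_I$ is immediate from valuation arithmetic. More significantly, the paper asserts the final contraction step --- that the preimage of $\langle y^n\rangle A_I$ in $A$ is $\langle y^n\rangle$ --- without explicit justification, whereas you supply a complete argument by showing $\langle y^n\rangle$ is $I$-primary (Cohen--Macaulay hypersurface, $y^n$ a nonzerodivisor, hence no embedded primes, and $I$ is the unique minimal prime over $\langle y\rangle$ since $A/\langle y\rangle\cong k[x,u]/\langle u^t\rangle$). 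Your approach therefore buys a cleaner conceptual argument and fills a gap the paper leaves implicit, at the cost of invoking the Cohen--Macaulay machinery. For the flat base change, the paper simply cites \cite[Tag 05G9]{stacks-project}; your unwinding via Matsumura's flat base change theorem for associated primes together with the explicit check $Q^m\subseteq\langle y^n\rangle B$ is correct but amounts to reproving a standard fact, which is fine but heavier than necessary.
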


\begin{proof}
If $t=1$, then $I=\langle y\rangle$ and the statement is clear. So, we can assume $t\geq2$. Note that $I$ is prime and $I^m=\langle y^{m-a}u^a;a=0,\ldots, m\rangle$. Write $a=qt+r$, with $0\leq r\leq t-1$. Then 
\[
I^m=\langle y^nx^n, y^{(t-1)(n-q)+n-r}x^qu^r; q=0,\ldots, n-1\text{ and }r=0,\ldots, t-1 \rangle,
\]
because $u^t=xy$ and $m=tn$. Therefore
\[
I^m=\langle y^n\rangle\langle x^n, y^{(t-1)(n-q)-r}x^qu^r; q=0,\ldots, n-1\text{ and }r=0,\ldots, t-1 \rangle.
\]
However, the ideal
\[
\langle x^n, y^{(t-1)(n-q)-r}x^qu^r; q=0,\ldots, n-1\text{ and }r=0,\ldots, t-1 \rangle
\]
contains $x^n$ and $y^{m-n}$ (for $q=r=0$), hence it has codimension $2$, and therefore cannot be contained in $I$. Localizing in $I$, we get $I^mA_I=\langle y^n\rangle A_I$, and hence the preimage of $I^mA_I$ in $A$, which is precisely $I^{(m)}$ by definition, is $\langle y^n\rangle$.\par
 The last statement of the Proposition follows from \cite[Tag 05G9]{stacks-project}.
\end{proof}

The following result will be used to check that certain ideal sheaves on a family of curves are torsion-free.

\begin{Prop}
\label{prop:Ifree}
Let $A$ be a local domain with maximal ideal $\mathfrak{m}$, residual field $F$, and $u,v$ elements in $\mathfrak{m}$. Consider the ideal $I=\langle y,u\rangle$ of the ring $R:=\frac{A[[x,y]]}{\langle xy-uv\rangle}$. Then
\[
I\otimes_R\frac{F[[x,y]]}{\langle xy\rangle}\cong \langle x,y \rangle,\;\;
\quad
I\otimes_R F[[y]]\cong F[[y]]\oplus \frac{F[[y]]}{\langle y\rangle}, \;\;
\quad
I F[[y]]=\langle y\rangle,
\]
where $\langle x,y\rangle$ is seen as an ideal in $\frac{F[[x,y]]}{\langle xy\rangle}$.
\end{Prop}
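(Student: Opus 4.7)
The plan is to derive a free $R$-module presentation of $I$, and then obtain each of the three assertions as a base change of this presentation.

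The heart of the argument is the presentation
\[
R^2 \xrightarrow{\,M\,} R^2 \xrightarrow{\,(y,u)\,} I \to 0,
\qquad
M \;=\; \begin{pmatrix} x & -u \\ -v & y \end{pmatrix}.
\]
The columns of $M$ lie in the kernel because $xy - vu = 0$ in $R$ and $-uy + yu = 0$. Conversely, given $\alpha y + \beta u = 0$ in $R$, I would lift to $A[[x,y]]$, write $\alpha y + \beta u = c(xy-uv)$, and rearrange to $(\alpha - cx)y + (\beta + cv)u = 0$ in $A[[x,y]]$. Under the assumption $u \neq 0$ (the substantive case), $y, u$ form a regular sequence in $A[[x,y]]$, since $A[[x,y]]/\langle y \rangle \cong A[[x]]$ is a domain in which $u$ is a nonzerodivisor. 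The Koszul syzygy then produces $d \in A[[x,y]]$ with $\alpha - cx = du$ and $\beta + cv = -dy$, so $(\alpha, \beta) = c(x,-v) + d(u,-y)$ modulo $\langle xy - uv \rangle$, exhibiting $(\alpha,\beta)$ as an $R$-combination of the columns of $M$.

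Given the presentation, tensoring over $R$ with $S := F[[x,y]]/\langle xy \rangle$ kills $u$ and $v$, so $M$ becomes $\mathrm{diag}(x, y)$ and
\[
I \otimes_R S \;\cong\; (S/xS) \oplus (S/yS) \;\cong\; F[[y]] \oplus F[[x]].
\]
Independently, the surjection $S^2 \to \langle x, y \rangle$, $(a,b) \mapsto ax + by$, has kernel $\langle y \rangle e_1 \oplus \langle x \rangle e_2$ (since $ax + by \in \langle xy \rangle \subset F[[x,y]]$ forces $a \in \langle y \rangle$ and $b \in \langle x \rangle$), so $\langle x, y \rangle \cong F[[x]] \oplus F[[y]]$ as well; the identification can be realized concretely by $y \otimes 1 \mapsto y$ and $u \otimes 1 \mapsto x$.

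The remaining two identities are immediate from the same presentation. Base-changing along $R \to F[[y]]$ (sending $x, u, v \mapsto 0$) turns $M$ into $\mathrm{diag}(0, y)$, yielding $I \otimes_R F[[y]] \cong F[[y]] \oplus F[[y]]/\langle y \rangle$. And $IF[[y]]$, the image of the natural evaluation $I \otimes_R F[[y]] \to F[[y]]$, is generated by the images $y, 0$ of the generators of $I$, so $IF[[y]] = \langle y \rangle$. The only substantive obstacle is pinning down the full syzygy module of $(y, u)$ over $R$ in the presentation step; once the regular sequence property of $y, u$ in $A[[x, y]]$ is invoked, the rest is formal base change.
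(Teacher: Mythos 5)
Your proof is correct and follows essentially the same route as the paper: both start from the two-generator presentation $R^2\xrightarrow{M}R^2\xrightarrow{(y,u)}I\to 0$ with syzygies $(x,-v)$ and $(u,-y)$ (the paper writes $I=(R\alpha\oplus R\beta)/\langle x\alpha-v\beta,\,u\alpha-y\beta\rangle$ and calls it "a simple computation"), and then obtain all three statements by base change. Your Koszul argument (lift to $A[[x,y]]$, use that $y,u$ is a regular sequence when $u\neq 0$) is exactly the computation the paper leaves to the reader, and you are right — and slightly more careful than the paper — to flag that $u\neq 0$ is needed, since the presentation, and indeed the claimed isomorphism $I\otimes_R F[[x,y]]/\langle xy\rangle\cong\langle x,y\rangle$, fails when $u=0$.
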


\begin{proof}
Set  $B:=\frac{F[[x,y]]}{\langle xy\rangle}$. A simple computation (using that $A$ is a domain) shows that
\[
I=\frac{A\alpha\oplus A\beta}{\langle x\alpha-v\beta,u\alpha-y\beta\rangle},
\]
then we obtain
\[
I\otimes_R\frac{F[[x,y]]}{\langle xy\rangle}=\frac{B\alpha\oplus B\beta}{\langle x\alpha,y\beta\rangle}\cong\langle x,y\rangle
\]
and
\[
I\otimes_R F[[y]]=\frac{F[[y]]\alpha\oplus F[[y]]\beta}{\langle y\beta\rangle}\cong F[[y]]\alpha \oplus F\beta\cong  F[[y]]\oplus \frac{F[[y]]}{\langle y\rangle}.
\]
The natural map $I\otimes_R F[[y]]\to F[[y]]$ takes $\alpha$ to $y$ and $\beta$ to $0$, so its image is $\langle y \rangle$. 
\end{proof}

Let $\sigma$ be a cone and $\tau$ a face of $\sigma$. Recall the homomorphism $\lambda\col A_\sigma\to A_\tau$ defined before \eqref{eq:mortor}. We need to describe the preimage of a principal monomial ideal in $A_\tau$ under $\lambda$.  Before this, we need a lemma.

\begin{Lem}
\label{lem:uv}
Let $\sigma$ be a cone and $\chi^u,\chi^v\in A_\sigma$ be monomials with $u,v\in S_\sigma$. If $\chi^u\in \langle \chi^v\rangle$, then $u-v\in S_\sigma$.
\end{Lem}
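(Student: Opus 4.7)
The plan is to unpack the hypothesis directly using the monoid-ring structure of $A_\sigma$ recorded in \eqref{eq:Asigma}. The hypothesis $\chi^u \in \langle \chi^v\rangle$ means there exists $f \in A_\sigma$ with $\chi^u = f \cdot \chi^v$. Since $A_\sigma = \bigoplus_{w \in S_\sigma} k \cdot \chi^w$ as a $k$-vector space, I would write
\[
f = \sum_{w \in \Lambda} a_w \chi^w,
\]
where $\Lambda \subset S_\sigma$ is a finite subset and $a_w \in k^*$. Multiplying by $\chi^v$ and using the rule $\chi^v \cdot \chi^w = \chi^{v+w}$ yields
\[
\chi^u = \sum_{w \in \Lambda} a_w \chi^{v+w}.
\]

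The key step is then to invoke linear independence: since the elements $\{\chi^{w'}\}_{w' \in S_\sigma}$ form a $k$-basis of $A_\sigma$, and since the map $w \mapsto v+w$ is injective on $\Lambda$, comparing coefficients forces $\Lambda$ to consist of exactly one element $w_0 \in S_\sigma$ with $a_{w_0} = 1$ and $v + w_0 = u$. In particular $u - v = w_0 \in S_\sigma$, which is the desired conclusion.

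There is no genuine obstacle here; the statement is essentially a restatement of the fact that $A_\sigma$ is a semigroup algebra and its monomials are $k$-linearly independent. The only care needed is to make sure we use the grading of $A_\sigma$ by $S_\sigma$ (rather than by $M$ or a larger monoid) so that the expression for $f$ really involves only $\chi^w$ with $w \in S_\sigma$; this is automatic from the definition \eqref{eq:Asigma}. No appeal to Propositions~\ref{prop:ES1.6}--\ref{prop:ES1.10} on binomial ideals is necessary, because we are working intrinsically in the semigroup algebra rather than in a polynomial presentation of it.
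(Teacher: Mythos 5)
Your proof is correct, and it takes a genuinely more elementary route than the paper. The paper works via a polynomial-ring presentation $A_\sigma \cong k[y_1,\ldots,y_s]/B$ with $B$ binomial, applies the Eisenbud--Sturmfels result (Proposition~\ref{prop:ES1.7}) to deduce that some relation $\chi^u - a a_j\chi^{v_j+v} = 0$ must hold, and then pins down the scalar $a a_j = 1$ by evaluating both monomials at the point $(1,\ldots,1)$ of the torus. You instead stay intrinsic to the semigroup algebra: expand $\chi^u = \sum_{w\in\Lambda} a_w \chi^{v+w}$ using \eqref{eq:Asigma}, note that $w \mapsto v+w$ is injective on $S_\sigma$ (a submonoid of the torsion-free group $M$), and compare coefficients against the $k$-basis $\{\chi^{w'}\}_{w'\in S_\sigma}$ to force $\Lambda = \{w_0\}$ with $u = v + w_0$. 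This bypasses both the binomial-ideal machinery and the torus-evaluation step, which in the paper serve only to resolve the ambiguity introduced by passing to a polynomial presentation. The paper's route is internally consistent with the adjacent results (Propositions~\ref{prop:ES1.6}--\ref{prop:ES1.10}, Corollary~\ref{cor:sigmamon}), which genuinely do need the Eisenbud--Sturmfels framework because they treat general monomial or binomial ideals; for this particular lemma, though, the grading argument you give is cleaner and sufficient.
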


\begin{proof}
Write $\chi^u=f\chi^v$, where $f=a_1\chi^{v_1}+\ldots+a_s\chi^{v_s}\in A_{\sigma}$, with $v_1,\dots,v_s\in S_\sigma$ and $a_1,\dots,a_s\in k$. By Proposition \ref{prop:ES1.7}, either $\chi^u=0$ or there exist $a\in k$ and $j\in \{1,\ldots,s\}$ such that $\chi^u-aa_j\chi^{v_j+v}=0$. Note that both $\chi^u$ and $\chi^u-aa_j\chi^{v_j+v}$ are regular functions on the torus.  Write $u=(u_1,\ldots,u_n)$. Since $\chi^u(t_1,\ldots,t_n)=t_1^{u_1}\cdots t_n^{u_n}$ is nonzero on the torus, we deduce that $\chi^u\neq0$, and hence $\chi^u/\chi^{v_j+v}=aa_j$. However 
\[
\frac{\chi^u(1,\ldots,1)}{\chi^{v_j+v}(1,\ldots,1)}=1
\]
from which we get $\chi^u=\chi^{v_j+v}$, which implies $u-v=v_j\in S_\sigma$.
\end{proof}

\begin{Prop}
\label{prop:hface}
Let $\sigma$ be a cone and $\tau$ a face of $\sigma$. Let $\lambda\col A_\sigma\to A_\tau$ be the corresponding localization homomorphism. For every $u_0\in S_\tau$, 
\[
\lambda^{-1}(\langle \chi^{u_0}\rangle)=\langle \chi^v; v\in S_\sigma\textnormal{ and }v-u_0\in S_\tau\rangle,
\]
 so, in particular $\lambda^{-1}(\langle \chi^{u_0} \rangle)$ is monomial. Moreover, $\chi^u\in \lambda^{-1}(\langle \chi^{u_0}\rangle)$ if and only if $u-u_0\in S_\tau$.
\end{Prop}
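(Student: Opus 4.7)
The plan is to prove the two inclusions separately, with the easy direction first, and to exploit the monoid-algebra structure $A_\tau = k[S_\tau]$ to peel off terms one by one in the hard direction.

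First I would handle the inclusion $\supseteq$ directly. Pick a generator $\chi^v$ with $v \in S_\sigma$ and $v-u_0 \in S_\tau$. Since $v-u_0 \in S_\tau$, the monomial $\chi^{v-u_0}$ makes sense as an element of $A_\tau$, and in $A_\tau$ we have $\lambda(\chi^v) = \chi^v = \chi^{u_0}\chi^{v-u_0} \in \langle \chi^{u_0}\rangle$.

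For the hard inclusion $\subseteq$ I would first observe that $A_\sigma$ is an integral domain (since $U_\sigma$ is irreducible) and $\chi^u$ is a nonzero element, so the localization map $\lambda \col A_\sigma \to A_\tau$ is injective. Now take $f \in \lambda^{-1}(\langle \chi^{u_0}\rangle)$ and write $f = \sum_i a_i\chi^{v_i}$ with the $v_i \in S_\sigma$ distinct and $a_i \in k^*$. By hypothesis there exists $h \in A_\tau$ with $\lambda(f) = \chi^{u_0} h$ in $A_\tau$. Since $A_\tau = k[S_\tau]$ is the monoid algebra, $h$ admits a unique expansion $h = \sum_j b_j \chi^{w_j}$ with $w_j \in S_\tau$ distinct and $b_j \in k^*$, giving $\chi^{u_0} h = \sum_j b_j \chi^{u_0 + w_j}$. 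Comparing this with $\sum_i a_i \chi^{v_i}$ in the basis $\{\chi^w\}_{w\in S_\tau}$ of $A_\tau$ (using $S_\sigma \subset S_\tau$), each $v_i$ equals some $u_0 + w_j$, so $v_i - u_0 = w_j \in S_\tau$. Therefore each term $\chi^{v_i}$ of $f$ lies among the stated generators, and $f$ itself lies in $\langle \chi^v ; v\in S_\sigma, v - u_0 \in S_\tau\rangle$. In particular $\lambda^{-1}(\langle \chi^{u_0}\rangle)$ is monomial.

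For the moreover statement, the direction "$u - u_0 \in S_\tau \Rightarrow \chi^u \in \lambda^{-1}(\langle \chi^{u_0}\rangle)$" is the special case $v = u$ of the first inclusion. Conversely, if $\chi^u$ lies in the monomial ideal on the right, then Lemma~\ref{lem:uv} yields some generator $\chi^v$ with $v \in S_\sigma$, $v-u_0 \in S_\tau$, and $u - v \in S_\sigma \subset S_\tau$; then $u - u_0 = (u-v)+(v-u_0) \in S_\tau$, as required.

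The main technical point to watch out for is the comparison-of-coefficients step: one must use that $S_\sigma \subset S_\tau$ (so the $\chi^{v_i}$ really are basis elements of $A_\tau$, not just of $A_\sigma$) and the injectivity of $\lambda$ (so that the identity $\lambda(f) = \chi^{u_0} h$ can genuinely be read off term by term without losing monomials to torsion). Once these two ingredients are in place, no further appeal to Propositions~\ref{prop:ES1.6}--\ref{prop:ES1.10} is needed, because the monoid-algebra basis of $A_\tau$ does all the work.
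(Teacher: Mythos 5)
Your proof is correct, and it takes a genuinely more elementary route than the paper. The paper proves the $\subseteq$ inclusion by first invoking localization theory (\cite[Proposition 2.2]{E}) to reduce to showing $J$ is saturated with respect to the multiplicative set $\{\chi^u ; u|_\tau = 0\}$, then reduces to monomials via Corollary~\ref{cor:sigmamon} (which itself rests on Propositions~\ref{prop:ES1.7} and~\ref{prop:ES1.10}), then applies Proposition~\ref{prop:ES1.6}(2) and Lemma~\ref{lem:uv}. You bypass all of that by simply comparing coefficients in the monoid-algebra basis $\{\chi^w\}_{w\in S_\tau}$ of $A_\tau$: since $\lambda$ is injective, $\lambda(f)=\chi^{u_0}h$ is an identity in $A_\tau$, and the distinct exponents on each side must match one-to-one, forcing every $v_i - u_0 \in S_\tau$. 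The two ingredients you flag — $S_\sigma\subset S_\tau$ so that the $\chi^{v_i}$ are honest basis elements of $A_\tau$, and injectivity of $\lambda$ — are exactly what make this work; the cancellativity of $S_\tau$ (as a submonoid of the lattice $M$) is also implicitly used to ensure the $u_0 + w_j$ are distinct, and is automatic in this setting. One small simplification you could make: for the ``moreover'' statement the appeal to Lemma~\ref{lem:uv} is both unnecessary and slightly incomplete as written (you would first need Proposition~\ref{prop:ES1.6}(2) to land $\chi^u$ inside a \emph{single} principal subideal $\langle\chi^v\rangle$ before Lemma~\ref{lem:uv} applies). It is cleaner to just run your own $\subseteq$ argument with $f=\chi^u$: the unique monomial appearing is $\chi^u$ itself, and the coefficient comparison immediately gives $u-u_0\in S_\tau$. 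With that adjustment your proof is entirely self-contained modulo the standard fact that $A_\sigma$ is a domain.
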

\begin{proof}
Recall that the homomorphism $\lambda\col A_\sigma\to A_\tau$ is the localization with respect to the multiplicative set $R:=\{ \chi^u; u|_\tau=0\}$ (see \eqref{eq:multsys}). Denote by $J$ the ideal on the right-hand side of the equation in the statement. Then $R^{-1}J=\langle \chi^{u_0}\rangle$, because if $v(r)\geq u_0(r)$ for every $r\in\tau$, then $v-u_0\in S_\tau$. So, by \cite[Proposition 2.2]{E}, it is enough to prove that if $u|_\tau=0$ and $f\chi^u\in J$ then $f\in J$. Since $J$ is monomial, by Corollary \ref{cor:sigmamon}, we can assume that $f$ is a monomial, namely $f=\chi^{v}$. Since $J$ is monomial and finitely generated, we can apply Proposition \ref{prop:ES1.6} to deduce that $\chi^{u+v}\in \langle\chi^{v'}\rangle$ for some $v'\in S_\sigma$ such that $v'-u_0\in S_\tau$. By Lemma \ref{lem:uv}, we have  $u+v-v'\in S_\sigma$. Hence it follows that $u+v-v'\in S_\tau$. We deduce that  
\[
u+v-u_0=(u+v-v')+(v'-u_0)\in S_\tau,
\]
because $S_\tau$ is a semigroup. But $u$ is invertible in $S_\tau$ because $u|_\tau=0$, hence $v-u_0\in S_\tau$, and this shows that $f\in J$. \par
  Let us prove the last statement.  By Proposition \ref{prop:ES1.6}, $\chi^u\in J$ if and only if there exists $v\in S_\sigma$ such that $v-u_0\in S_\tau$ and $\chi^u\in \langle \chi^v\rangle$. By Lemma \ref{lem:uv}, we have $u-v\in S_\sigma\subset S_\tau$. Then $u-u_0=(u-v)+(v-u_0)\in S_\tau$.
\end{proof}

\section{Geometric and algebraic properties of the cones $K_{\Gamma,\E,\phi}$}\label{sec:combalg}

In this section we study some properties of the cones  $K_{\Gamma,\E,\phi}$ introduced in Notation \ref{not:KGamma}. In particular the results of Propositions \ref{prop:Kdim} and \ref{prop:Icap} will be crucial in the resolution of the geometric Abel described in Section \ref{sec:Abelres}.

  We begin computing the dimension of $K_{\Gamma,\E,\phi}$.

\begin{Prop}
\label{prop:Kdim}
 Let $\Gamma$ be a graph, $\E$ be a nondisconnecting subset of $E(\Gamma)$ and $\phi$ be an acyclic flow on $\Gamma^\E$ such that $\div(\phi)$ has degree $-1$ on every exceptional vertex. If $\F=\{e\in E(\Gamma)\setminus\E;\phi(e)\neq 0\}$, then
\begin{equation}
\label{eq:Kdim}
\dim(K_{\Gamma,\E,\phi})= |E(\Gamma)|-|\F|+b_0(\Gamma_{\E\cup\F})-1.
\end{equation}
Moreover, $\dim(K_{\Gamma,\E,\phi})=0$ if and only if $\Gamma$ has a single vertex, and no edges. Finally, $\dim(K_{\Gamma,\E,\phi})=1$ if and only if one of the following conditions hold:
\begin{enumerate}
\item $|V(\Gamma)|=2$, $\Gamma$ has no loops, $\E=\emptyset$ and $\phi\ne0$.
\item $|V(\Gamma)|=2$, $\Gamma$ has no loops, $\E=\emptyset$, $|E(\Gamma)|=1$ and $\phi=0$.
\item $|V(\Gamma)|=|E(\Gamma)|=1$ and  $\E=\emptyset$.
\end{enumerate}
In particular every extremal ray of $K_{\Gamma,\E,\phi}$ corresponds to a specialization $(\Gamma,\E,\phi)\leadsto(\Gamma',\emptyset,\phi')$ where $\Gamma'$ and $\phi'$ satisfies  one of (1), (2), and (3).
\end{Prop}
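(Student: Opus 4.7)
The plan is to compute $\dim K_{\Gamma,\E,\phi}$ by identifying it with $C_{\Gamma,\E,\phi}$ via Proposition~\ref{prop:isoCK} and then computing the dimension of the linear span $W\subset\R^{E(\Gamma^\E)}$ of $C_{\Gamma,\E,\phi}$, i.e., the subspace cut out by the cycle equations \eqref{eq:C}. The equality $\dim C_{\Gamma,\E,\phi}=\dim W$ would follow once I produce a positive point in $W$: writing $S:=\{e\in E(\Gamma^\E)\,:\,\phi(e)=0\}$, the acyclicity of $\phi$ means that $\ora{\Gamma^\E}/S$ is an acyclic digraph, hence admits a strictly increasing vertex function $f$. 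Setting $x_e:=(f(t(e))-f(s(e)))/\phi(e)$ for $e\notin S$ and $x_e:=1$ for $e\in S$ produces a positive tuple, and around any cycle $\gamma$ the sum $\sum_e\gamma(e)\phi(e)x_e$ telescopes to $0$.

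To compute $\dim W$, I would view \eqref{eq:C} as defining a linear map $T\colon\R^{E(\Gamma^\E)}\to H_1(\Gamma,\R)^\vee$ with kernel $W$. Since the coordinates indexed by $S$ do not appear in any equation, $\rank T$ equals the dimension of the image of the restriction map $H_1(\Gamma^\E,\R)\to\R^{E(\Gamma^\E)\setminus S}$ (coordinatewise rescaled by the nonzero values of $\phi$). Its kernel consists of $1$-cycles supported on $S$, which is $H_1(H',\R)$, where $H'\subset\Gamma^\E$ is the subgraph with edge set $S$; together with $b_1(\Gamma^\E)=b_1(\Gamma)$ this gives $\dim W=|E(\Gamma^\E)|-b_1(\Gamma)+b_1(H')$. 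The hypothesis $\div(\phi)=-1$ at every exceptional vertex now enters: for each $e\in\E$ it forces at most one of the two edges over $e$ to lie in $S$, so every exceptional vertex is isolated or a leaf in $H'$, and deleting such vertices with their incident edges preserves $b_1$. The resulting graph is precisely $\Gamma_{\E\cup\F}$, so $b_1(H')=b_1(\Gamma_{\E\cup\F})$; substituting this together with the formulas $b_1(\Gamma)=|E(\Gamma)|-|V(\Gamma)|+1$ and $b_1(\Gamma_{\E\cup\F})=|E(\Gamma)|-|\E|-|\F|-|V(\Gamma)|+b_0(\Gamma_{\E\cup\F})$ yields \eqref{eq:Kdim}.

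For the remaining assertions, I would set $r:=|E(\Gamma)\setminus(\E\cup\F)|$, so that \eqref{eq:Kdim} reads $\dim K_{\Gamma,\E,\phi}=|\E|+r+b_0(\Gamma_{\E\cup\F})-1$. Enumerating the triples $(|\E|,r,b_0(\Gamma_{\E\cup\F}))$ whose sum is $1$ (respectively $2$) and discarding those incompatible with the hypotheses on $\phi$---most notably, a loop of $\Gamma$ carrying $\phi>0$ is itself a directed cycle, and a loop placed inside $\E$ cannot support an acyclic flow on $\Gamma^\E$ satisfying the divisor condition at its exceptional vertex---leaves exactly the unique zero-dimensional configuration and the three one-dimensional configurations (1)--(3), all with $\E=\emptyset$. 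The claim about extremal rays is then immediate from Theorem~\ref{thm:fan}: every extremal ray is a one-dimensional face $K_{\Gamma',\E',\phi'}$ arising from a specialization, and the classification forces $\E'=\emptyset$. I expect the dimension computation itself to be routine; the delicate part will be the finite case analysis in dimension one, in particular ruling out the two types of loop configurations mentioned above.
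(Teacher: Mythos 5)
Your proposal is correct and shares the broad structure of the paper's argument (identify $K_{\Gamma,\E,\phi}$ with $C_{\Gamma,\E,\phi}$ via Proposition~\ref{prop:isoCK}, compute the dimension of the linear span of $C_{\Gamma,\E,\phi}$, then classify the low-dimensional cases), but the dimension computation itself is carried out along a genuinely different, and arguably cleaner, route. The paper counts the number $\delta$ of independent equations in \eqref{eq:C} by fixing a spanning tree in the complement of $\E$ and producing an adapted basis $\{\gamma_e\}_{e\in\E}\cup\{\gamma_i\}$ of $H_1(\Gamma,\Z)$; you instead look at the dual map $H_1(\Gamma,\R)\to\R^{E(\Gamma^\E)}$, $\gamma\mapsto(\gamma(e)\phi(e))_e$, and compute its rank through its kernel, namely cycles supported on the zero-flow subgraph $H'$. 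Your identity $b_1(H')=b_1(\Gamma_{\E\cup\F})$, obtained by pruning the exceptional vertices (which are leaves or isolated in $H'$ because $\div(\phi)=-1$ there), then does the same bookkeeping the paper accomplishes via Proposition~\ref{prop:b0b1}. Your construction of a positive point by a topological sort of $\ora{\Gamma^\E}/S$ is also a welcome addition: it makes explicit the fact, left implicit in the paper, that $C_{\Gamma,\E,\phi}$ has full dimension inside the linear subspace cut out by \eqref{eq:C}.

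On the case analysis, the two forbidden loop configurations you list correctly rule out loops in $\F$ and the triple $(|\E|,r,b_0)=(1,0,1)$. Be aware, however, that the triple $(0,1,1)$ with $|V(\Gamma)|=2$ needs one more instance of the same idea: there is no loop in $\Gamma$ itself, but if any edge $e$ had $\phi(e)>0$, that edge would become a loop in $\ora{\Gamma^\E}/S$ after the single zero-flow edge is contracted, contradicting acyclicity. Your observation that ``a loop carrying $\phi>0$ is a directed cycle'' must therefore be applied inside the contracted digraph that the definition of acyclicity is actually about, not only to $\Gamma$. This is precisely the step the paper spells out at this point, and it is the delicate part you anticipated in your final sentence.
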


\begin{proof} 
By Proposition \ref{prop:isoCK}, $\dim(K_{\Gamma,\E,\phi})=\dim(C_{\Gamma,\E,\phi})$. We write
\[
\dim(C_{\Gamma,\E,\phi})=|E(\Gamma)|+|\E|-\delta,
\]
 where $\delta$ is the number of independent equations in \eqref{eq:C}. Since $\E$ is nondisconnecting, for each $e\in \E$ we can choose a cycle $\gamma_e$ of $\Gamma$ such that $|\E|\cap \gamma_e=\{e\}$. Choose a basis $\gamma_1,\ldots, \gamma_b$ of $H_1(\Gamma_\E,\mathbb{Z})$, where $b=b_1(\Gamma_\E)$. It is clear that the collection $\{\gamma_e\}_{e\in\E}\cup\{\gamma_i\}_{i=1,\ldots,b}$ is a basis of $H_1(\Gamma,\mathbb{Z})$.\par
   Since $\div(\phi)(v_e)=-1$ for every exceptional vertex $v_e$ with $e\in \E$, the flow $\phi$ does not vanish on at least one edge of $\Gamma^\E$ over $e$, so the equations induced by $\{\gamma_e\}_{e\in \E}$ are independent, and also independent of the equations induced by $\{\gamma_i\}_{i=1\ldots,b}$. \par
	If $\phi$ vanishes nowhere on $E(\Gamma_\E)$, then  $\delta-|\E|=b$. In fact, we can argue as before, choosing distinct edges $e_1,\ldots, e_b$ and cycles $\gamma_1,\ldots, \gamma_b$ such that $e_j\in \gamma_i$ if and only if $i=j$. More generally, if we let $\F_0=\{e\in E(\Gamma)\setminus\E;\phi(e)=0\}$, the set of edges of $\Gamma_\E$ over which $\phi$ vanishes, then $\delta-|\E|=b_1(\Gamma_\E/\F_0)$. In fact we can choose a basis of $H_1(\Gamma_\E/\F_0,\mathbb{Z})$, extend it to a set $A$ of cycles of $\Gamma_\E$ (adding edges of $\F_0$) and complete $A$ to a basis of $H_1(\Gamma_\E,\mathbb{Z})$ by adding a set $B$ of cycles whose edges are contained in $\F_0$. The equations induced by the cycles in $A$ are independent (cf. the case where $\phi$ is non-vanishing), while the equation induced by each cycle in $B$ is trivial.\par
	So we deduce that
\begin{align*}
\dim(K_{\Gamma,\E,\phi})&=|E(\Gamma)|+|\E|-\delta\\
                        &=|E(\Gamma)|+|\E|-(|\E|+b_1(\Gamma_E/\F_0))\\
												&=|E(\Gamma)|-|\F|+b_0(\Gamma_{\E\cup\F})-1,
\end{align*}												
where, since $E(\Gamma_\E)=\F\coprod \F_0$, the last equality follows from Propostion \ref{prop:b0b1}.

	If $\dim(K_{\Gamma,\E,\phi})=0$, then $(|E(\Gamma)|-|\F|)+(b_0(\Gamma_{\E\cup\F})-1)=0$, which means that $\F=E(\Gamma)$ and $b_0(\Gamma_{\E\cup\F})=1$. Since $\F=E(\Gamma)$, we get $b_0(\Gamma_{\E\cup\F})=|V(\Gamma)|$, hence $\Gamma$ has a single vertex. Since $\phi$ is acyclic, no loops can be in $\F$, so $\Gamma$ has no loops.

	If $\dim(K_{\Gamma,\E,\phi})=1$, then, by the same argument above, either $\F=E(\Gamma)$ and $|V(\Gamma)|=b_0(\Gamma_{\E\cup\F})=2$, or $|E(\Gamma)\setminus\F|=1$ and $b_0(\Gamma_{\E\cup\F})=1$.\par
	In the first case, $\Gamma$ has two vertex and, since $\F=E(\Gamma)$ and $\F$ contains no loops, $\Gamma$ has no loops. Note that $\E=\emptyset$, because $\E\subset E(\Gamma)\setminus \F$, and $\phi\neq0$.\par
  In the second case, write $E(\Gamma)\setminus \F=\{e_0\}$. Note that $\E=\emptyset$, otherwise $e_0\in \E$ and $|V(\Gamma)|=b_0(\Gamma_{\E\cup\F})=1$, implying that $e_0$ is a loop, which contradicts that, since $\phi$ is acyclic, $\E$ contains no loops. Since $b_0(\Gamma_{\F})=1$, the graph $\Gamma_\F$ is connected. Since $E(\Gamma_\F)=\{e_0\}$, it follows that $\Gamma_\F$, and so $\Gamma$, have at most two vertices. \par
	If $\Gamma$ has two vertices, then $e_0$ must be an edge connecting them. Note that $\Gamma$ has no loops, because $\F$ has none. Moreover, since $\phi(e_0)=0$ and $\phi$ is acyclic, it follows that $\phi$ must vanish on every edge of $\Gamma/\{e_0\}$, hence $\F=\emptyset$ and $E(\Gamma)=\{e_0\}$.\par
	If $\Gamma$ has one vertex, then $e_0$ is a loop. Since $\F$ contains no loops, then $\F=\emptyset$ and hence $E(\Gamma)=\{e_0\}$.\par
  The ``if'' parts of the statement easily follow from equation \eqref{eq:Kdim}.
\end{proof}

From now on we fix a graph $\Gamma$, a nondisconnecting subset $\E$ of $E(\Gamma)$ and an acyclic flow $\phi$ on $\Gamma^\E$ such that $\div(\phi)$ has degree $-1$ on every exceptional vertex $v_e$, $e\in \E$. 

\begin{Not}
\label{not:eset}
Consider an edge $e\in\E$. Since $\div(\phi)(v_e)=-1$, the flow $\phi$ induces an orientation of $e$. We denote by $e^s$ and $e^t$ the edges of $E(\Gamma^\E)$ over $e$, such that $t(e^s)=s(e^t)$. Note that $e^s$ and $e^t$ inherit the orientation of $e$, and
\begin{equation}\label{eq:plus1}
\phi(e^t)=\phi(e^s)+1. 
\end{equation}
\end{Not}

We will also fix an edge $e_0\in\E$.  Fix a spanning tree $T$ of $\Gamma$ in the complement of $\E$. There exists a unique cycle $\gamma$ of $\Gamma$ containing $e_0$ and with all other edges contained in $T$. The cycle $\gamma$ induces a cycle on $\Gamma^\E$, which, abusing notation, we will still call $\gamma$. We assume that $\gamma(e_0^s)=\gamma(e_0^t)=1$ (recall equation \eqref{eq:gamma}). We define $u_{e_0}', u_{e_0}''\in (\R^{E(\Gamma)})^\vee$ as the linear functionals acting on a vector $e$ of the base $E(\Gamma)$ of $\R^{E(\Gamma)}$, respectively, via
\begin{equation}
\label{eq:u'u''}
u_{e_0}'(e):=\begin{cases}
             0, &\text{ if } e\notin T\cup\{e_0\};\\
	   -\gamma(e)\phi(e), &\text{ if } e\in T;\\
	   -\phi(e_0^s),&\text{ if } e=e_0;
						\end{cases} 
\end{equation}
and
\begin{equation}
\label{eq:u'u''bis}
u_{e_0}''(e):=\begin{cases}
             0, &\text{ if } e\notin T\cup\{e_0\};\\
             \gamma(e)\phi(e), &\text{ if } e\in T;\\
	   \phi(e_0^t),&\text{ if } e=e_0.
						\end{cases}
\end{equation}
	
\begin{Lem}
\label{lem:ue0}
We have $u_{e_0}',u_{e_0}''\in K_{\Gamma,\E,\phi}^\vee$ and $u_{e_0}'+u_{e_0}''=e_0^\vee$. If $r=K_{\Gamma',\emptyset,\phi'}$ is an extremal ray of $K_{\Gamma,\E,\phi}$ and $\iota\col\Gamma^\E\to \Gamma'$ is the induced specialization, then either $u_{e_0}'(r)=0$ or $u''_{e_0}(r)=0$. Moreover, the following properties hold
\begin{enumerate}
\item if $e_0^t$ is contracted by $\iota$, then $u_{e_0}'(r)=0$;
\item if $e_0^s$ is contracted by $\iota$, then $u_{e_0}''(r)=0$.
\end{enumerate}
In particular, if $u_{e_0}'(r)=u''_{e_0}(r)=0$, then $e_0$ is contracted in the induced specialization $\Gamma\to \Gamma'$.
\end{Lem}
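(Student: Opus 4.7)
The strategy is to identify $u'_{e_0}$ and $u''_{e_0}$, pulled back along the isomorphism $C_{\Gamma,\E,\phi}\to K_{\Gamma,\E,\phi}$ of Proposition \ref{prop:isoCK}, with the coordinate functions $x_{e_0^t}$ and $x_{e_0^s}$ on $\R^{E(\Gamma^\E)}$. The identity $u'_{e_0}+u''_{e_0}=e_0^\vee$ follows directly from \eqref{eq:u'u''}, \eqref{eq:u'u''bis}, and \eqref{eq:plus1}. For any $y=(y_e)\in K_{\Gamma,\E,\phi}$ with unique preimage $(x_{e'})\in C_{\Gamma,\E,\phi}$, using that the cycle $\gamma$ in $\Gamma^\E$ is supported on $T\cup\{e_0^s,e_0^t\}$, equation \eqref{eq:C} reads $\phi(e_0^s)x_{e_0^s}+\phi(e_0^t)x_{e_0^t}+\sum_{e\in T}\gamma(e)\phi(e)x_e=0$. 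Substituting $y_{e_0}=x_{e_0^s}+x_{e_0^t}$ and $y_e=x_e$ for $e\in T$ into $u'_{e_0}(y)$, using this cycle equation to eliminate $\sum_{e\in T}\gamma(e)\phi(e)x_e$, and invoking \eqref{eq:plus1}, one finds $u'_{e_0}(y)=(\phi(e_0^t)-\phi(e_0^s))x_{e_0^t}=x_{e_0^t}$, and symmetrically $u''_{e_0}(y)=x_{e_0^s}$. Hence $u'_{e_0},u''_{e_0}\in K_{\Gamma,\E,\phi}^\vee$, and assertions (1) and (2) follow immediately: if $e_0^t$ is contracted by $\iota$ then $x_{e_0^t}$ vanishes identically on $r$, and symmetrically for $e_0^s$.

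The main obstacle is showing that at least one of $u'_{e_0}(r)$, $u''_{e_0}(r)$ vanishes, which via the coordinate interpretation reduces to showing that $e_0^s$ and $e_0^t$ cannot both lie in $E(\Gamma')$. Since $\dim r=1$, Proposition \ref{prop:Kdim} places $(\Gamma',\phi')$ in one of cases (1), (2), (3). In cases (2) and (3), $|E(\Gamma')|=1$, so the claim is automatic. In case (1), $|V(\Gamma')|=2$ and $\Gamma'$ has no loops. If $e_0^s,e_0^t$ were both in $E(\Gamma')$, then $\iota(v_{e_0})$ would have to differ from both $\iota(s(e_0))$ and $\iota(t(e_0))$ (otherwise $e_0^s$ or $e_0^t$ would be a loop in $\Gamma'$), so with only two vertices available we must have $\iota(s(e_0))=\iota(t(e_0))$, giving $e_0^s,e_0^t$ opposite orientations in $\Gamma'$. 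Now the dimension formula in case (1) leaves two sub-cases: $\F'=E(\Gamma')$, or exactly one edge of $\Gamma'$ has zero flow. The latter is forbidden by acyclicity of $\phi'$, since contracting the unique zero-flow edge would identify the two vertices and turn every remaining edge into a loop, creating directed cycles. Hence $\F'=E(\Gamma')$, and then acyclicity of $\ora{\Gamma'}$ itself forces every edge to share a common orientation between the two vertices, contradicting the opposite orientations of $e_0^s,e_0^t$.

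Finally, suppose $u'_{e_0}(r)=u''_{e_0}(r)=0$, so that $x_{e_0^s}$ and $x_{e_0^t}$ both vanish identically on $r$. In each of cases (1) (with $\F'=E(\Gamma')$), (2), and (3), the ray is generated by a vector whose coordinates $x_e$, for $e\in E(\Gamma')$, are all strictly positive; in case (1), this follows because the cycle equations give $\phi'(f_1)x_{f_1}=\phi'(f_i)x_{f_i}$ with all $\phi'(f_i)>0$. Therefore $e_0^s,e_0^t\notin E(\Gamma')$, i.e., both are contracted by $\iota$, so $e_0$ is contracted in the induced specialization $\Gamma\to\Gamma'$.
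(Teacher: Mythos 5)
Your proof is correct, and the route is genuinely different from the paper's. The paper proves the ``either--or'' claim by a brute-force case analysis on the shape of the extremal ray $r=K_{\Gamma',\emptyset,\phi'}$ (one vertex plus a loop; two vertices and a separating edge; two vertices and an explicit ray $\langle e_1+\tfrac{\phi'(e_1)}{\phi'(e_2)}e_2+\cdots\rangle$, further split according to whether $\gamma$ meets $\Gamma'$ inside or outside $T$), and then deduces $u'_{e_0},u''_{e_0}\in K_{\Gamma,\E,\phi}^\vee$ as a formal consequence. Your key observation --- that pulling $u'_{e_0}$ and $u''_{e_0}$ back along the isomorphism $C_{\Gamma,\E,\phi}\to K_{\Gamma,\E,\phi}$ of Proposition~\ref{prop:isoCK} simply yields the coordinate functionals $x_{e_0^t}$ and $x_{e_0^s}$ --- is the real content the paper's case analysis is chasing, and it is verified correctly: the cycle relation for $\gamma$ (supported on $T\cup\{e_0^s,e_0^t\}$) together with $\phi(e_0^t)=\phi(e_0^s)+1$ collapses the long expressions in \eqref{eq:u'u''} and \eqref{eq:u'u''bis} to a single coordinate. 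With this interpretation, nonnegativity, the identity $u'_{e_0}+u''_{e_0}=e_0^\vee$, items (1)--(2), and the final implication all become immediate (an edge is contracted by $\iota$ precisely when its $x$-coordinate vanishes on the ray), and the remaining ``either--or'' assertion reduces to the purely combinatorial fact that $e_0^s$ and $e_0^t$ cannot both survive in $\Gamma'$, which you establish from Proposition~\ref{prop:Kdim} and acyclicity. Your approach is more conceptual and shorter; the paper's is more elementary in the sense that it never isolates what $u'_{e_0},u''_{e_0}$ ``are'' on the $C$-side. One small remark: in your final paragraph you re-derive positivity of $x_e$, $e\in E(\Gamma')$, case by case, but this is automatic from the fact that the interior of the face $r$ corresponds to $C^\circ_{\Gamma',\emptyset,\phi'}\subset\R^{E(\Gamma')}_{>0}$ under the isomorphism, so the case split there is unnecessary.
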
	

\begin{proof}
Write $u':=u'_{e_0}$ and $u''=u''_{e_0}$. The fact that $u'+u''=e_0^\vee$ comes from $\phi(e_0^t)=\phi(e_0^s)+1$, see \eqref{eq:plus1}. Since $e_0^\vee\in K_{\Gamma,\E,\phi}^\vee$ and $u'(r)+u''(r)=e_0^\vee(r)$, the property $u',u''\in K_{\Gamma,\E,\phi}^\vee$ follows once we show that for every extremal $r$ of $K_{\Gamma,\E,\phi}$, either $u'(r)=0$ or $u''(r)=0$. Indeed, if this holds, then for every extremal ray $r$ of $K_{\Gamma,\E,\phi}$ either $u'(r)=0$ or 
\[
u'(r)=e_0^\vee(r)-u''(r)=e_0^\vee(r)\geq0.
\]
In any case, $u'(r)\geq0$, hence $u'\in K_{\Gamma,\E,\phi}^\vee$, and similarly $u''\in K_{\Gamma,\E,\phi}^\vee$.

Let $r$ be an extremal ray as in the statement.
 By Proposition \ref{prop:Kdim}, either $\Gamma'$ has two vertices and no loops or $\Gamma'$ has one vertex and a single loop. In the latter case, if $e$ is the single loop of $\Gamma'$, then $r=\langle e\rangle$ and, since $\phi'(e)=0$ (recall that $\phi'$ is acyclic),   one of the following cases hold
\begin{itemize}
\item $e\notin T\cup\{e_0\}$, then $e_0^s,e_0^t$ are contracted by $\iota$ and $u'(e)=u''(e)=0$;
\item $e\in T$, then $e_0^s,e_0^t$ are contracted by $\iota$ and $\phi(e)=\phi'(e)=0$, hence $u'(e)=u''(e)=0$;
\item $e=e_0$, then $\phi'(e)$ is equal to either $\phi(e_0^s)$ or $\phi(e_0^t)$ but, since $\phi(e_0^t)>0$, the edge $e_0^t$ is contracted by $\iota$ and $\phi(e_0^s)=\phi'(e)=0$ and hence $u'(e)=0$.
\end{itemize}

  Assume now that $\Gamma'$ has two vertices and no loops. If $\Gamma'$ has a single edge $e$, then $e$ is a separating edge, hence $e\neq e_0$ and $\gamma(e)=0$, from which we deduce that $e_0^s$ and $e_0^t$ are contracted by $\iota$ and  $u'(e)=u''(e)=0$. So, by Proposition \ref{prop:Kdim}, we can assume that $\Gamma'$ has at least two edges and $\phi'$ is nonzero. 

Now, let $e_1,\ldots, e_k$ be the edges of $\Gamma'$.  By \eqref{eq:C}, we can write
	\[
	r=\left\langle e_1+\frac{\phi'(e_1)}{\phi'(e_2)}e_2+\ldots\frac{\phi'(e_1)}{\phi'(e_k)}e_k\right\rangle,
	\]
(recall we can view $e_1,\ldots, e_k$ as edges of $\Gamma^\E$ as well). If the cycle $\gamma$ does not contain any edge $e_1,\ldots, e_k$ (so, in particular, $e_0\neq e_i$ for every $i=1,\ldots,k$), then $e_0^s$ and $e_0^t$ are contracted by $\iota$ and $\gamma(e_i)=0$, and hence $u'(r)=u''(r)=0$. Otherwise, we can assume that $\gamma$ contains precisely the edges $e_1$ and $e_2$. We compute:
\[
u'(r)=u'(e_1)+\frac{\phi'(e_1)}{\phi'(e_2)}u'(e_2)\;
\text{ and } 
\; u''(r)=u''(e_1)+\frac{\phi'(e_1)}{\phi'(e_2)}u''(e_2).
\]
We now have two cases to check. In the first case, $e_1,e_2\in T$. Then $e_1,e_2\neq e_0$, hence $e_0^s$ and $e_0^t$ are contracted by $\iota$. Moreover, $\phi'(e_i)=\phi(e_i)$ for $i=1,2$ and, since $\phi'$ is acyclic, we have $\gamma(e_1)=-\gamma(e_2)$. We deduce that 
\[
u''(r)=-u'(r)=\gamma(e_1)\phi(e_1)+\frac{\phi(e_1)}{\phi(e_2)}\gamma(e_2)\phi(e_2)=\phi(e_1)(\gamma(e_1)+\gamma(e_2))=0.
\]
In the second case, $e_1\in T$ and $e_2\not\in T$. Note that either $e_0^s$ or $e_0^t$ must be contracted by $\iota$, because $r=K_{\Gamma',\emptyset,\phi'}$. Moreover, either $e_2=e_0^s$ (if $e_0^t$ is contracted by $\iota$) or $e_2=e_0^t$ (if $e_0^s$ is contracted by $\iota$). Then $\gamma(e_1)=-1$ (because $\gamma(e_0)=1$) and $\phi'(e_1)=\phi(e_1)$. Hence
\begin{align*}
u'(r)=u'(e_1)+\frac{\phi'(e_1)}{\phi'(e_0)}u'(e_0)=\phi(e_1)-\frac{\phi(e_1)\phi(e_0^s)}{\phi'(e_0)}\\
u''(r)=u''(e_1)+\frac{\phi'(e_1)}{\phi'(e_0)}u''(e_0)=-\phi(e_1)+\frac{\phi(e_1)\phi(e_0^t)}{\phi'(e_0)}.
\end{align*}
If $e_0^t$ is contracted by $\iota$, then $\phi'(e_0)=\phi(e_0^s)$ and hence $u'(r)=0$. If $e_0^s$ is contracted by $\iota$, then $\phi'(e_0)=\phi(e_0^t)$ and hence $u''(r)=0$.\par
 Finally, if $u'(r)=u''(r)=0$, then $e_0^\vee(r)=u'(r)+u''(r)=0$, and the last statement follows.
\end{proof}

\begin{Rem}
\label{rem:uinvertible}
There are rays $r_1$ and $r_2$ such that $u_{e_0}'(r_1)>0$ and $u''_{e_0}(r_2)>0$, hence neither $u'_{e_0}$ or $u''_{e_0}$ are invertible in $S_{K_{\Gamma,\E,\phi}}$. Indeed, for $j=1,2$, let us find specializations $\iota_j\col(\Gamma,\E,\phi)\to (\Gamma_j,\E_j,\phi_j)$ with $\phi_j$ acyclic, such that $\iota_1$ contracts $e_0^s$ but does not contract $e_0^t$, and vice-versa for $\iota_2$.
The specialization $\iota_1$ is the one contracting $e_0^s$, since $\phi(e_0^t)>0$, then $\phi_1$ will remain acyclic. If $\phi(e_0^s)>0$, the specialization $\iota_2$ is given similarly, contracting $e_0^t$, while if $\phi(e_0^s)=0$, we can just contract all the edges of $\Gamma^\E$ but $e_0^s$.
\end{Rem}

\begin{Lem}
\label{lem:uaubis}
 Let $u\in K_{\Gamma,\E,\phi}^\vee$ be such that $u(r)\geq e_0^\vee(r)$ for every extremal ray $r$ of $K_{\Gamma,\E,\phi}$ such that $u_{e_0}'(r)=0$. Then $u-u_{e_0}''\in K_{\Gamma,\E,\phi}^\vee$.
\end{Lem}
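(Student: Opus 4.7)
The plan is straightforward and rests on combining Lemma \ref{lem:ue0} with the dichotomy it provides for extremal rays. Since $K_{\Gamma,\E,\phi}$ is a (rational polyhedral) cone, it is the convex hull of its extremal rays, so to verify $u - u_{e_0}'' \in K_{\Gamma,\E,\phi}^\vee$ it suffices to check that $(u - u_{e_0}'')(r) \geq 0$ for every extremal ray $r$ of $K_{\Gamma,\E,\phi}$.

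By Lemma \ref{lem:ue0}, for every such extremal ray $r$ we have either $u_{e_0}'(r) = 0$ or $u_{e_0}''(r) = 0$, and in both cases $u_{e_0}'(r) + u_{e_0}''(r) = e_0^\vee(r)$. I would split the verification into these two cases. In the case $u_{e_0}''(r) = 0$, the inequality $(u - u_{e_0}'')(r) = u(r) \geq 0$ follows immediately from the hypothesis $u \in K_{\Gamma,\E,\phi}^\vee$. In the case $u_{e_0}'(r) = 0$, we get $u_{e_0}''(r) = e_0^\vee(r)$, hence $(u - u_{e_0}'')(r) = u(r) - e_0^\vee(r)$, which is nonnegative by the standing assumption of the lemma.

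There is essentially no obstacle here: the only subtle point is to make sure that the two cases covered by Lemma \ref{lem:ue0} are not mutually exclusive but are jointly exhaustive, so that every extremal ray falls into at least one of them, and in each case we get the required nonnegativity from a different hypothesis. Since both cases yield $(u-u_{e_0}'')(r) \geq 0$, this proves $u - u_{e_0}'' \in K_{\Gamma,\E,\phi}^\vee$ and the lemma follows.
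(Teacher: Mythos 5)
Your proof is correct and follows essentially the same reasoning as the paper: both reduce to checking the inequality on extremal rays and then invoke the dichotomy from Lemma \ref{lem:ue0} together with the identity $u_{e_0}'+u_{e_0}''=e_0^\vee$. The only cosmetic difference is that the paper splits on whether $u_{e_0}''(r)$ vanishes while you split on which of $u_{e_0}'(r)$, $u_{e_0}''(r)$ vanishes; the logic is the same.
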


\begin{proof}
Let $r$ be an extremal ray $r$ of $K_{\Gamma,\E,\phi}$.
It suffices that $u(r)\geq u_{e_0}''(r)$. If $u_{e_0}''(r)=0$, then we are done because $u\in K^\vee_{\Gamma,\E,\phi}$ (hence $u(r)\geq0$). If $u_{e_0}''(r)\neq0$ then Lemma \ref{lem:ue0} implies $u_{e_0}'(r)=0$ and $u_{e_0}''(r)=e_0^\vee(r)$, concluding the proof.
\end{proof}

Recall that $\Gamma$, $\E$, $\phi$ and $e_0\in E(\Gamma)$ are fixed. Let $\tau$ be the cone obtained from $K_{\Gamma,\E,\phi}$ and $e_0^\vee$ as in Construction \ref{cons:xy} (see also \eqref{eq:Atau}). Note that $\tau$ depends on $\Gamma$, $\E$, $\phi$ and  $e_0$, which will be omitted in the notation. Recall \eqref{eq:Asigma}. We see that
\begin{equation}
\label{eq:AtauK}
A_\tau=\frac{A_{K_{\Gamma,\E,\phi}}[x_{e_0},y_{e_0}]}{\langle x_{e_0}y_{e_0}-\chi^{e_0^\vee}\rangle},
\end{equation}
where $x_{e_0},y_{e_0}$ are variables. We can view $A_\tau$ as the datum of a family of nodal curves over $\Spec(A_{K_{\Gamma,\E,\phi}})$ locally around a node of a fiber (corresponding to $e_0$). \par
   Let $r$ be an extremal ray in $K_{\Gamma,\E,\phi}(1)$. Let $\tau'$ be the face of $\tau$ over $r$, i.e., the preimage of $\langle r\rangle $ under the map $\tau\to K_{\Gamma,\E,\phi}$. There is a localization homomorphism $\lambda\col A_\tau\to A_{\tau'}$, and we define 
\begin{equation}
\label{eq:defIer}
I_{e_0,r}:=\lambda^{-1}(\langle y_{e_0}\rangle).
\end{equation}
 The $n$-th symbolic power of $I_{e_0,r}$ is given by 
\begin{equation}
\label{eq:defIe0rn}
I^{(n)}_{e_0,r}=\lambda^{-1}(\langle y_{e_0}^n\rangle). 
\end{equation}

Recall that we set $N\cong\mathbb Z^n$ and $M=\text{Hom}(N,\mathbb Z)$.

\begin{Lem}
\label{lem:Ie0}
The $n$-th symbolic  power $I_{e_0,r}^{(n)}$ of the ideal $I_{e_0,r}$ of $A_\tau$ is monomial and a monomial $x_{e_0}^ay_{e_0}^b\chi^u$ is in $I_{e_0,r}^{(n)}$ if and only if $u(r)\geq (n-b)e_0^\vee(r)$. In particular 
\[
I^{(n)}_{e_0,r}=\langle y_{e_0}^b\chi^u; u\in K^\vee_{\Gamma,\E,\phi}\cap M\text{ with }u(r)\geq (n-b)e_0^\vee(r)\rangle.
\]
\end{Lem}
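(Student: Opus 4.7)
The plan is to apply Proposition \ref{prop:hface} directly to the face inclusion $\tau' \prec \tau$ of Construction \ref{cons:xy}, with $u_0 := n u_2 \in S_{\tau'}$, where $u_1, u_2$ denote the dual basis of $(\R^2)^\vee$ so that $\chi^{u_1} = x_{e_0}$ and $\chi^{u_2} = y_{e_0}$. Since $I_{e_0,r}^{(n)} = \lambda^{-1}(\langle y_{e_0}^n\rangle) = \lambda^{-1}(\langle \chi^{n u_2}\rangle)$, Proposition \ref{prop:hface} immediately yields that $I_{e_0,r}^{(n)}$ is monomial and characterizes its monomials as the $\chi^w$ with $w - n u_2 \in S_{\tau'}$, so both conclusions of the lemma will reduce to translating this last condition.

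The key step is to make this translation explicit. By Construction \ref{cons:xy}, $\tau'$ is the two-dimensional cone spanned by $v_1 := (r, 0, e_0^\vee(r))$ and $v_2 := (r, e_0^\vee(r), 0)$, so $w - n u_2 \in S_{\tau'}$ is equivalent to the pair of inequalities $(w - n u_2)(v_i) \geq 0$ for $i = 1, 2$. Writing an arbitrary monomial of $A_\tau$ in the form $x_{e_0}^a y_{e_0}^b \chi^u$ with $u \in S_{K_{\Gamma,\E,\phi}}$, i.e.\ $w = a u_1 + b u_2 + u$, a direct evaluation gives
\[
(w - n u_2)(v_1) = u(r) - (n - b)\,e_0^\vee(r), \qquad (w - n u_2)(v_2) = u(r) + a\,e_0^\vee(r).
\]
Since $a \geq 0$ and both $u$ and $e_0^\vee$ lie in $K_{\Gamma,\E,\phi}^\vee$, the second inequality is automatic, so the only effective constraint is $u(r) \geq (n - b)\,e_0^\vee(r)$, proving the main equivalence.

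For the generating-set description in the ``In particular'' part, I would use the relation $x_{e_0} y_{e_0} = \chi^{e_0^\vee}$ from \eqref{eq:AtauK} to reduce any monomial $x_{e_0}^a y_{e_0}^b \chi^u$ to one with $a = 0$ or $b = 0$: rewriting $x_{e_0}^a y_{e_0}^b \chi^u = x_{e_0}^{a-1} y_{e_0}^{b-1} \chi^{u + e_0^\vee}$ and checking that the inequality $u(r) \geq (n-b)\,e_0^\vee(r)$ is preserved allows iteration. The $b = 0$ case is handled by observing that $x_{e_0}^a \chi^u$ is the $A_\tau$-multiple $x_{e_0}^a \cdot \chi^u$ of the generator $\chi^u = y_{e_0}^0 \chi^u$, which satisfies $u(r) \geq n\,e_0^\vee(r)$. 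The only potential obstacle I anticipate is bookkeeping these reductions cleanly; conceptually, all the heavy lifting is done by Proposition \ref{prop:hface} combined with the explicit description of the generators of $\tau'$ in Construction \ref{cons:xy}.
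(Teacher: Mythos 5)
Your proposal is correct and follows essentially the same route as the paper's proof: apply Proposition \ref{prop:hface} to the face inclusion $\tau'\prec\tau$ with $u_0=nu_2$, write an arbitrary element of $S_\tau$ as $au_1+bu_2+u$ with $u\in S_{K_{\Gamma,\E,\phi}}$, evaluate against the two generators $(r,0,e_0^\vee(r))$ and $(r,e_0^\vee(r),0)$ of $\tau'$, and observe that the second inequality is automatic. The only stylistic difference is that your treatment of the ``In particular'' part is more elaborate than necessary: once you know the ideal is monomial and its monomials are exactly those $x_{e_0}^a y_{e_0}^b\chi^u$ with $u(r)\geq(n-b)e_0^\vee(r)$, each such monomial is visibly $x_{e_0}^a$ times the listed generator $y_{e_0}^b\chi^u$, so no iterated rewriting via $x_{e_0}y_{e_0}=\chi^{e_0^\vee}$ is needed.
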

\begin{proof}
The statement follows from Proposition \ref{prop:hface}: we need only to translate the notation. Keep the notation in Construction \ref{cons:xy}. Set $x:=x_{e_0}$ and $y:=y_{e_0}$. Since $y=\chi^{v_2}$, we can apply Proposition \ref{prop:hface} and get
\[
I^{(n)}_{e_0,r}=\langle \chi^v; v\in S_\tau\text{ such that }v-nv_2\in S_{\tau'} \rangle.
\]
Every $v\in S_\tau$ can be written as $v=u+av_1+bv_2$, with $u\in S_\sigma$ and $a,b$ nonnegative integers, which means that $\chi^v=\chi^ux^ay^b$ and, vice versa, every monomial $\chi^ux^ay^b$ is equal to $\chi^v$ for some $v=u+av_1+bv_2$. However, the condition $v-nv_2\in S_{\tau'}$ is equivalent to 
\[
(v-nv_2)(r,0,e_0^\vee(r))\geq0\quad\text{ and }\quad(v-nv_2)(r,e_0^\vee(r),0)\geq0.
\]
The first condition is the same as $u(r)+(b-n)e_0^\vee(r)\geq0$ which gives precisely our statement, while the second condition is $u(r)+ae_0^\vee(r)\geq0$ which is always true.
\end{proof}

In the proof of Lemma \ref{lem:Dr}, the previous result will translate to the following geometric property. First, recall the open immersion $U_{\tau'}=\Spec(A_{\tau'})\to \Spec(A_\tau)=U_\tau$ in \eqref{eq:mortor}. If $\Y$ is the Cartier divisor on $U_{\tau'}$ given by the ideal $\langle y_{e_0}\rangle$ then the ideal of the closure $\ol{n\Y}$ in $U_\tau$ of the Cartier divisor $n\Y$ is the ideal $I_{e_0,r}^{(n)}$.\par

The following is a key result used in the proof of Theorem \ref{thm:mainlocal}.
Recall  \eqref{eq:defIer} and the notation in Section \ref{sec:toricsec}. 

\begin{Prop}
\label{prop:Icap}
 Let $\Gamma$ be a graph, $\E$ a nondisconnecting subset of $E(\Gamma)$ and $\phi$ an acyclic flow on $\Gamma^\E$ such that $\div(\phi)$ has degree $-1$ on every exceptional vertex. Let $e_0$ be an edge in $E(\Gamma)$. If $e_0\in \E$, then
\[
\left(\underset{u_{e_0}'(r)=0}{\bigcap_{r\in K_{\Gamma,\E,\phi}(1)}} I_{e_0,r}^{(\phi(e_0^t))}\right)\cap\left(\underset{u_{e_0}''(r)=0}{\bigcap_{r\in K_{\Gamma,\E,\phi}(1)}} I_{e_0,r}^{(\phi(e_0^s))}\right)=\left\langle y_{e_0}\right\rangle^{\phi(e_0^s)}\left\langle y_{e_0},\chi^{u_{e_0}''}\right\rangle.
\]
Otherwise, if $e_0\not\in\E$, then
\[
\left({\bigcap_{r\in K_{\Gamma,\E,\phi}(1)}} I_{e_0,r}^{(\phi(e_0))}\right)=\left\langle y_{e_0}\right\rangle^{\phi(e_0)}.
\]
\end{Prop}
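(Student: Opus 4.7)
The plan is to show both sides are monomial ideals of $A_\tau$ and then compare them monomial by monomial. The LHS is monomial because each symbolic power $I_{e_0,r}^{(n)}$ is monomial by Proposition \ref{prop:hface} (via the description \eqref{eq:defIe0rn}) and the intersection of monomial ideals is monomial by Proposition \ref{prop:ES1.6}; the RHS is manifestly monomial. Using the presentation \eqref{eq:AtauK} together with the relation $x_{e_0}y_{e_0}=\chi^{e_0^\vee}$ (and $e_0^\vee=u'_{e_0}+u''_{e_0}$ when $e_0\in\E$), every monomial of $A_\tau$ admits a canonical form $x_{e_0}^{a}y_{e_0}^{b}\chi^{u}$ with $u\in S_{K_{\Gamma,\E,\phi}}$ and either $a=0$ or $b=0$; by Lemma \ref{lem:Ie0} such a monomial belongs to $I_{e_0,r}^{(n)}$ precisely when $u(r)\ge (n-b)e_0^\vee(r)$.

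For the inclusion RHS $\subseteq$ LHS in the case $e_0\in\E$, I would verify that the two generators $y_{e_0}^{\phi(e_0^t)}$ and $y_{e_0}^{\phi(e_0^s)}\chi^{u''_{e_0}}$ satisfy the inequality above on every extremal ray. This reduces to Lemma \ref{lem:ue0}: on each extremal ray $r$, exactly one of $u'_{e_0}(r),u''_{e_0}(r)$ vanishes and the other equals $e_0^\vee(r)$, so the inequalities follow from $\phi(e_0^t)=\phi(e_0^s)+1$. An analogous (and simpler) check handles the case $e_0\notin\E$, in which the unique generator is $y_{e_0}^{\phi(e_0)}$.

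The substantive direction is LHS $\subseteq$ RHS. I take a canonical-form monomial $m=x_{e_0}^{a}y_{e_0}^{b}\chi^{u}$ in LHS. If $b\ge\phi(e_0^t)$ (respectively $b\ge\phi(e_0)$ in the second case) there is nothing to do. Otherwise, for $e_0\in\E$ I set
\[
u_1:=u-(\phi(e_0^s)-b)u'_{e_0}-(\phi(e_0^t)-b)u''_{e_0},
\]
and for $e_0\notin\E$ I set $u_1:=u-(\phi(e_0)-b)e_0^\vee$. The defining inequalities of the LHS, combined with the dichotomy from Lemma \ref{lem:ue0}, translate into $u_1(r)\ge 0$ on every extremal ray, so $u_1\in K_{\Gamma,\E,\phi}^\vee$. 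Applying the identity $\chi^{u'_{e_0}}\chi^{u''_{e_0}}=x_{e_0}y_{e_0}$ (respectively $\chi^{e_0^\vee}=x_{e_0}y_{e_0}$) collapses the contribution of $u'_{e_0},u''_{e_0}$ into powers of $x_{e_0}y_{e_0}$; after reducing to canonical form one obtains
\[
m=\bigl(y_{e_0}^{\phi(e_0^s)}\chi^{u''_{e_0}}\bigr)\cdot\bigl(x_{e_0}^{a+\phi(e_0^s)-b}\chi^{u_1}\bigr)
\]
(and $m=y_{e_0}^{\phi(e_0)}\cdot\bigl(x_{e_0}^{a+\phi(e_0)-b}\chi^{u_1}\bigr)$ in the second case), exhibiting $m$ as an element of the RHS.

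The main obstacle is verifying that $u_1$ lands in $K_{\Gamma,\E,\phi}^\vee$: on rays where $u'_{e_0}(r)=0$ the term involving $u'_{e_0}$ in the definition of $u_1$ drops out and the first LHS inequality gives $u_1(r)\ge 0$, while on rays where $u''_{e_0}(r)=0$ the second LHS inequality delivers it. This clean separation only works because Lemma \ref{lem:ue0} guarantees that these two classes of rays together exhaust $K_{\Gamma,\E,\phi}(1)$.
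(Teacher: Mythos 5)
Your proof is correct and takes essentially the same route as the paper's: reduce to monomials via Propositions \ref{prop:ES1.6} and \ref{prop:hface}, translate membership into inequalities on extremal rays via Lemma \ref{lem:Ie0}, and in the hard direction exhibit an explicit factorization after showing a shifted functional lies in $K_{\Gamma,\E,\phi}^\vee$. The only stylistic difference is that you verify $u_1 = u - (\phi(e_0^s)-b)u'_{e_0} - (\phi(e_0^t)-b)u''_{e_0} \in K^\vee_{\Gamma,\E,\phi}$ directly by the ray dichotomy from Lemma \ref{lem:ue0}, whereas the paper first shows $u-(\phi(e_0^s)-b)e_0^\vee \in K^\vee$ and then invokes Lemma \ref{lem:uaubis}; since $u_1 = (u-(\phi(e_0^s)-b)e_0^\vee) - u''_{e_0}$, these are the same element and you have simply inlined the lemma.
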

\begin{proof}

 First assume that $e_0\in \E$. We set $n:=\phi(e_0^s)$, so that $\phi(e_0^t)=n+1$,  $K:=K_{\Gamma,\E,\phi}$, $x:=x_{e_0}$, and $y:=y_{e_0}$. We  denote by $J$ (respectively, $J'$) the ideal on the left-hand side (respectively, right-hand side) of the equation in the statement. By Proposition \ref{prop:ES1.6}, the intersection of monomial ideals  in toric rings are monomial ideals, hence $J=J'$ once we prove the equality at the level of monomials. \par

 We first prove that $J\subset J'$. Let $x^ay^b\chi^u$ be a monomial in $J$, where $a,b$ are nonnegative integers and $u\in K^\vee$. If $b\geq n+1$, then we are done. So we reduce to the case $b\leq n$.  By Lemma \ref{lem:Ie0}, we have $x^ay^b\chi^u\in J$ if and only if
\begin{equation}
\label{eq:ur}
\begin{cases}
u(r)\geq(n-b+1)e_0^\vee(r), & \forall \; r\in  K(1) \text{ such that } u_{e_0}'(r)=0,\\
u(r)\geq(n-b)e_0^\vee(r),  & \forall\; r\in  K(1) \text{ such that } u_{e_0}''(r)=0.
\end{cases}
\end{equation}
This implies that $u-(n-b)e_0^\vee\in K^\vee$ because, by Lemma \ref{lem:ue0}, it is nonnegative over all extremal rays $r\in K(1)$. Moreover the hypotheses in Lemma \ref{lem:uaubis} are satisfied, hence $u-(n-b)e_0^\vee-u''\in K^\vee$. However, $\chi^{e_0^\vee}=xy$, hence 
\[
y^b\chi^u=y^b(xy)^{n-b}\chi^{u''}\chi^{u-(n-b)e_0^\vee-u''}\in J'.
\]
  The other inclusion $J'\subset J$ follows from the fact that $y^n\chi^{u''}$ satisfies  \eqref{eq:ur}.\par
	Assume now that $e_0\notin \E$ and set $n:=\phi(e_0)$. Following through the proof of the previous case, the only difference appears in \eqref{eq:ur} which becomes
\[
u(r)\geq(n-b)e_0^\vee(r), \forall\; r\in K_{\Gamma,\E,\phi}(1).
\]
Hence $u-(n-b)e_0^\vee\in K_{\Gamma,\E,\phi}^\vee$ and we get
\[
y^b\chi^u=y^b(xy)^{n-b}\chi^{u-(n-b)e_0^\vee}\in J'.
\]
The other inclusion $J'\subset J$ follows from the fact that $y^n\in I_{e_0,r}^{(n)}$ for every $r\in K_{\Gamma,\E,\phi}(1)$.
	\end{proof}

\begin{Exa}
\label{exa:ideal}
Maintain the notation of Example \ref{exa:fan}. We let $(\Gamma,\E,\phi)$ be the triple depicted in Figure \ref{fig:flow}. By Example \ref{exa:fan}, 
\[
K_{\Gamma,\E,\phi}=\langle (1,1,1),(1,2,2),(2,1,2),(1,1,2)\rangle\subset \R^3_{\geq0}.
\] 
Using \cite[Sagemath]{SM} we compute
\[
K^\vee_{\Gamma,\E,\phi}=\langle (0,-1,1),(2,0,-1),(0,2,-1),(-1,0,1)\rangle\subset (\R^3)^\vee
\]
and
\begin{equation}\label{eq:SKGamma}
S_{K_{\Gamma,\E,\phi}}=\langle (0,-1,1),(2,0,-1),(0,2,-1),(-1,0,1),(1,1,-1)\rangle\subset (\R^3)^\vee.
\end{equation}
Each extremal ray of $K_{\Gamma,\E,\phi}$ corresponds to one of the flows in Figure \ref{fig:fray} which are of the type predicted by Proposition \ref{prop:Kdim}.
\begin{figure}[h]
\begin{tikzpicture}[scale=2.3]
\begin{scope}
\draw[decorate, decoration={markings, mark=at position 0.5 with {\arrow{>}}}]   (0,0) to [out=30, in=150] (1,0);
\draw (0,0) to [out=30, in=150] (1,0);
\draw[decorate, decoration={markings, mark=at position 0.5 with {\arrow{>}}}] (0,0) to (1,0);
\draw (0,0) to (1,0);
\draw (0,0) to [out=-30, in=-150] (1,0);
\draw[decorate, decoration={markings, mark=at position 0.5 with {\arrow{>}}}] (0,0) to [out=-30, in=-150] (1,0);
\draw[fill] (0,0) circle [radius=0.02];
\draw[fill] (1,0) circle [radius=0.02];
\node[above] at (0.5,0.16) {\tiny 1};
\node[above] at (0.5,-0.02) {\tiny 1};
\node[below] at (0.5,-0.144) {\tiny 1};
\node[below] at (0,0) {$v_0$};
\node[below] at (1,0) {$v_1$};
\end{scope}
\begin{scope}[shift={(1.3,0)}]
\draw[decorate, decoration={markings, mark=at position 0.5 with {\arrow{>}}}]   (0,0) to [out=30, in=150] (1,0);
\draw (0,0) to [out=30, in=150] (1,0);
\draw[decorate, decoration={markings, mark=at position 0.5 with {\arrow{>}}}] (0,0) to (1,0);
\draw (0,0) to (1,0);
\draw (0,0) to [out=-30, in=-150] (1,0);
\draw[decorate, decoration={markings, mark=at position 0.5 with {\arrow{>}}}] (0,0) to [out=-30, in=-150] (1,0);
\draw[fill] (0,0) circle [radius=0.02];
\draw[fill] (1,0) circle [radius=0.02];
\node[above] at (0.5,0.16) {\tiny 2};
\node[above] at (0.5,-0.02) {\tiny{1}};
\node[below] at (0.5,-0.144) {\tiny 1};
\node[below] at (0,0) {$v_0$};
\node[below] at (1,0) {$v_1$};
\end{scope}
\begin{scope}[shift={(2.6,0)}]
\draw[decorate, decoration={markings, mark=at position 0.5 with {\arrow{>}}}]   (0,0) to [out=30, in=150] (1,0);
\draw (0,0) to [out=30, in=150] (1,0);
\draw[decorate, decoration={markings, mark=at position 0.5 with {\arrow{>}}}] (0,0) to (1,0);
\draw (0,0) to (1,0);
\draw (0,0) to [out=-30, in=-150] (1,0);
\draw[decorate, decoration={markings, mark=at position 0.5 with {\arrow{>}}}] (0,0) to [out=-30, in=-150] (1,0);
\draw[fill] (0,0) circle [radius=0.02];
\draw[fill] (1,0) circle [radius=0.02];
\node[above] at (0.5,0.16) {\tiny 1};
\node[above] at (0.5,-0.02) {\tiny{2}};
\node[below] at (0.5,-0.144) {\tiny 1};
\node[below] at (0,0) {$v_0$};
\node[below] at (1,0) {$v_1$};
\end{scope}
\begin{scope}[shift={(3.9,0)}]
\draw[decorate, decoration={markings, mark=at position 0.5 with {\arrow{>}}}]   (0,0) to [out=30, in=150] (1,0);
\draw (0,0) to [out=30, in=150] (1,0);
\draw[decorate, decoration={markings, mark=at position 0.5 with {\arrow{>}}}] (0,0) to (1,0);
\draw (0,0) to (1,0);
\draw (0,0) to [out=-30, in=-150] (1,0);
\draw[decorate, decoration={markings, mark=at position 0.5 with {\arrow{>}}}] (0,0) to [out=-30, in=-150] (1,0);
\draw[fill] (0,0) circle [radius=0.02];
\draw[fill] (1,0) circle [radius=0.02];
\node[above] at (0.5,0.16) {\tiny 2};
\node[above] at (0.5,-0.02) {\tiny{2}};
\node[below] at (0.5,-0.144) {\tiny 1};
\node[below] at (0,0) {$v_0$};
\node[below] at (1,0) {$v_1$};
\end{scope}
\end{tikzpicture}
\caption{Specializations of $(\Gamma,\E,\phi)$}
\label{fig:fray}
\end{figure}
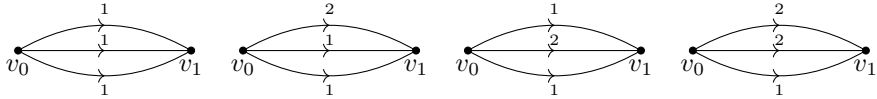

We compute the linear functionals $u'_{e_i},u''_{e_i}\in(\R^3)^\vee$ for $i=0,1$. It is clear that  $T=\{e_2\}$ is the unique spanning tree in the complement of $\E$. Hence 
\[
u'_{e_0}=(-1,0,1), \; u''_{e_0}=(2,0,-1), \; u'_{e_1}=(0,-1,1), \; u''_{e_1}=(0,2,-1).
\]
 These are precisely the extremal rays of $K_{\Gamma,\E,\phi}^\vee$ (this could not happen in general). As stated by Lemma \ref{lem:ue0}, we have $u'_{e_0}+u''_{e_0}=(1,0,0)$ and $u'_{e_1}+u''_{e_1}=(0,1,0)$. It is easy to check that for every extremal ray of $K_{\Gamma,\E,\phi}$, either $u'_{e_i}$ or $u''_{e_i}$ vanishes on it, for $i=0,1$. Moreover, $u'_{e_i}$ and $u''_{e_i}$ are the normal vectors of opposing faces of $K_{\Gamma,\E,\phi}$, for $i=0,1$. In fact, Lemma \ref{lem:ue0} shows that this is always the case, i.e., there are opposing faces (not necessarily  of the same dimension, and whose intersection is not necessarily 0-dimensional). These faces correspond precisely to the two specializations of $(\Gamma,\E,\phi)\to (\Gamma,\E\setminus\{e_0\},\phi')$.  \par
  Now we describe the algebra $A_{K_{\Gamma,\E,\phi}}$. By \eqref{eq:SKGamma}, we can write 
	\[
	A_{K_{\Gamma,\E,\phi}}=k[\chi^{(0,-1,1)},\chi^{(2,0,-1)},\chi^{(0,2,-1)},\chi^{(-1,0,1)},\chi^{(1,1,-1)}]
	\]
	or, equivalently (again, using SageMath),
	\[
	A_{K_{\Gamma,\E,\phi}}=\frac{k[z_0,z_1,z_2,z_3,z_4]}{\langle z_0z_4-z_1z_3,z_4^2-z_1z_2,z_3z_4-z_0z_2,z_1z_3^2-z_0^2z_2\rangle}.
	\]
	We also have $\chi^{e_0^\vee}=\chi^{(1,0,0)}=z_0z_4=z_1z_3$ and $\chi^{e_1^\vee}=\chi^{(0,1,0)}=z_0z_2=z_3z_4$.

	 Let us concentrate on the edge $e_0$. Let $r_1=(1,1,1)$, $r_2=(2,1,2)$, $r_3=(1,2,2)$ and $r_4=(1,1,2)$. Note that $u'_{e_0}(r_i)=0$ if and only if $i=1,2$, and $u''_{e_0}(r_i)=0$ if and only if $i=3,4$. Moreover,
\begin{align*}
A:=&\frac{A_{K_{\Gamma,\E,\phi}}[x,y]}{\langle xy-\chi^{(1,0,0)}\rangle}\\
 =&\frac{k[z_0,z_1,z_2,z_3,z_4,x,y]}{\langle z_0z_4-z_1z_3,z_4^2-z_1z_2,z_3z_4-z_0z_2,z_1z_3^2-z_0^2z_2,xy-z_0z_4\rangle}.
\end{align*}
Let us describe the localization map $h_i\col A\to A_{r_i}$ for each $r_i$. The ring $A_{r_i}$ is the localization of $A$ on the multiplicative system $R_i=\{\chi^u; u(r_i)=0\}$, for $i=1,2,3,4$. More precisely,
\begin{align*}
R_1=&\{ z_0^az_3^b; a,b\in \mathbb{Z}_{\geq0}\}&
R_2=&\{ z_2^az_3^b; a,b\in \mathbb{Z}_{\geq0}\}\\
R_3=&\{ z_0^az_1^b; a,b\in \mathbb{Z}_{\geq0}\}&
R_4=&\{ z_1^az_2^bz_4^c; a,b,c\in \mathbb{Z}_{\geq0}\}.
\end{align*}
Then, after eliminating some variables, we get
\begin{align*}
A_{r_1}=&\frac{k[z_0^{\pm1},z_3^{\pm1},z_4,x,y]}{\langle xy-z_0z_4\rangle}&
A_{r_2}=&\frac{k[z_2^{\pm1},z_3^{\pm1},z_0,x,y]}{\langle xy-z_0^2z_2z_3^{-1}\rangle}\\
A_{r_3}=&\frac{k[z_0^{\pm1},z_1^{\pm1},z_4,x,y]}{\langle xy-z_0z_4\rangle}&
A_{r_4}=&\frac{k[z_1^{\pm1},z_4^{\pm1},z_0,x,y]}{\langle xy-z_0z_4\rangle}.
\end{align*}
Define $I_1:=I_{e_0,r_1}^{(2)}=h_1^{-1}(y^2)$, $I_2:=I_{e_0,r_2}^{(2)}=h_2^{-1}(y^2)$, $I_3:=I_{e_0,r_3}^{(1)}=h_3^{-1}(y)$ and $I_4:=I_{e_0,r_4}^{(1)}=h_4^{-1}(y)$. 
Using \cite[CoCoA]{CoCoA} we get, as predicted by Lemma \ref{lem:Ie0},
\begin{align*}
I_1=&\langle y^2, yz_1,yz_2,yz_4,z_1^2,z_1z_4,z_4^2,z_2z_4,z_2^2\rangle\\
I_2=&\langle y^2, yz_0z_4,  yz_1, yz_4^2, yz_0^2, z_0^2z_4^2, z_0z_1z_4, z_0z_4^3, z_1^2, z_1z_4^2, z_4^4, z_0^2z_1, z_0^3z_4, z_0^4\rangle\\
I_3=&\langle y, z_2, z_3, z_4\rangle\\
I_4=&\langle y,z_0,z_3 \rangle
\end{align*}
and, as predicted by Proposition \ref{prop:Icap}, 
\[
I_1\cap I_2\cap I_3\cap I_4=\langle y^2, yz_1\rangle=\langle y\rangle\langle y,\chi^{u''_{e_0}}\rangle.\]
\end{Exa}

\section{The resolution of the geometric Abel map}\label{sec:Abelres}

\subsection{Compactified Jacobians and Abel maps}\label{sec:compjac}
Let $k$ be an algebraically closed field. 
 Let $(X,p_0,\ldots,p_n)$ be a pointed nodal curve defined over $k$. The \emph{dual graph} 
$\Gamma_X$ of $(X,p_0,\ldots, p_n)$ is the usual weighted graph with $n+1$ legs, where $V(\Gamma_X)$ is the set of irreducible components of $X$, the vertex-set $E(\Gamma_X)$ is the set of nodes of $X$, the weight of a vertex of $\Gamma_X$ is the genus of the normalization of the corresponding component, and $\text{leg}_{\Gamma_X}(i)$ is the vertex corresponding to the component containing $p_i$, for $i=0,\ldots, n$. We say that $(X,p_0,\ldots, p_n)$ is \emph{stable} if so is $\Gamma_X$.\par 

 The \emph{Jacobian} $\J(X)$ of $X$ is the scheme parametrizing the equivalence classes of invertible sheaves on $X$.  Given an integer $d$, the \emph{degree-$d$ Jacobian} $\J_d(X)$ of $X$ is the subscheme of $\J(X)$ parametrizing the equivalence classes of invertible sheaves of degree $d$ on $X$. In general, $\J_d(X)$  is neither proper nor of finite type. A better behaved parameter space is obtained by resorting to torsion-free  rank-$1$ sheaves and to stability conditions.\par

 Recall that a coherent sheaf $I$ on $X$ is \emph{torsion-free} if it has no embedded components, \emph{rank-1} if it is invertible on a dense open subset of $C$, and \emph{simple} if $\text{Hom}(I,I)=k$. The \emph{degree} of $I$ is $\deg I=\chi(I)-\chi(\O_X)$. There exists a scheme $\mathcal{S}pl_d(X)$ parametrizing simple torsion-free rank-$1$ sheaves of degree $d$ on $X$ (see \cite{AK}). Recall that  $\mathcal{S}pl_d(X)$   satisfies the existence part of the valuative criterion  and is connected  but, in general, not separated and only locally of finite type. To deal with a 
manageable piece of it, one can consider polarizations. \par

  We denote by $\{X_v;v\in V(\Gamma_X)\}$ the set of irreducible components of $X$. Let $d$ be an integer. A \emph{polarization of degree $d$ on $X$} is any function $\mu\col V(\Gamma_X)\ra \mathbb Q$ such that $\sum_{v\in V(\Gamma_X)}\mu(v)=d$.

\begin{Rem}
\label{rem:polgf}
The datum of a  degree-$d$ polarization on $X$ is equivalent to the datum of a  degree-$d$ polarization on $\Gamma_X$.
\end{Rem}

Let $I$ be a torsion-free rank-$1$  sheaf on $X$ of degree $d$. We define a pseudo-divisor $(\E_I,D_I)$ on $\Gamma_X$ as follows. The set $\E_I$ is precisely the set of edges corresponding to nodes where $I$ is not locally free and, for every $v\in V(\Gamma^\E)$, we set
\[
D_I(v)=\begin{cases}
        \deg(I|_{X_v}),&\text{ if $v\in V(\Gamma_X)$};\\
				-1,&\text{ if $v$ is exceptional}.
				\end{cases}
\]
We call $(\E_I,D_I)$ the \emph{multidegree} of $I$. Given a degree-$d$ polarization $\mu$ on $X$, we say that $I$ is \emph{$(p_0,\mu)$-quasistable} if so is $(\E_I,D_I)$. \par

The above notions naturally extend to families. 
 Let $\pi\col\X\to T$ be a family of pointed nodal curves with section $\Delta\col T\to\X$. 
A polarization $\mu$ on $\X$ is the datum of polarizations on the fibers of $\pi$ that are compatible with specializations. A coherent sheaf $\I$ over $\X$ is \emph{$(\Delta,\mu)$-quasistable} if, for any closed point $t\in T$, the restriction of $\I$ to the fiber $\pi^{-1}(t)$ is a torsion-free rank-$1$ and $(\Delta(t),\mu)$-quasistable sheaf. There is an algebraic space $\ol{\J}_{\pi,\mu}$ parametrizing $(\Delta,\mu)$-quasistable sheaves over $\X$. This algebraic space is proper and of finite type   (\cite[Theorems A and B]{Es01}) and it represents the contravariant functor $\mathbf{J}_{\pi,\mu}$ from the category of locally Noetherian $T$-schemes to sets, defined on a $T$-scheme $B$ by
\[
\mathbf{J}_{\pi,\mu}(B):=\{(\Delta_B,\mu_B)\text{-quasistable sheaves over } \X\times_T B\stackrel{\pi_B}\lra B\}/\sim
\]
where $\Delta_B$ and $\mu_B$ are the pullbacks to $\X\times_T B$ of the section $\Delta$ and the polarization $\mu$, and where $\sim$ is the equivalence relation given by $\I_1\sim \I_2$ if and only if there exists an invertible sheaf $\L$ on $B$ such that $\I_1\cong \I_2\otimes \pi_B^*\L$. \par

  The construction  of the relative Jacobian  can be extended to the universal setting. Let $\overline{\M}_{g,n}$ be the Deligne-Mumford stack parametrizing stable $n$-pointed genus-$g$ curves, and let $\M_{g,n}$ be its open locus. Let $\overline{\M}_{g,n+1}\to\overline{\M}_{g,n}$ be the universal family over $\overline{\M}_{g,n}$. Let $\J_{d,g,n}\ra\M_{g,n}$ be the universal degree-$d$ Jacobian parametrizing invertible sheaves of degree $d$ on smooth fibers of $\overline{\M}_{g,n+1}\to\overline{\M}_{g,n}$. 

For each universal degree-$d$ polarization $\mu$ over $\overline{\M}_{g,n+1}\to\overline{\M}_{g,n}$, there is  a proper and separated Deligne-Mumford stack $\ol{\J}_{\mu,g,n}$ over $\overline{\M}_{g,n}$ containing $\J_{d,g,n}$ as an open dense subset. For every scheme $S$,  
\[
\ol{\J}_{\mu,g,n}(S)=\frac{\left\{(\pi,\Delta,\I);\begin{array}{l} \pi\col \X\to S\text{ is a family of stable $n$-pointed genus-$g$ curves,}\\ \I \text{ is a $(\Delta,\mu)$-quasistable torsion-free rank-1 sheaf on $\X$}\end{array}\right\}}{\sim}
\] 
where $(\pi_1,\Delta_1,\I_1)\sim(\pi_2,\Delta_2,\I_2)$ if there exists an $S$-isomorphism $f\col \X_1\to \X_2$ and an invertible sheaf $\L$ on $S$, such that $\Delta_2=f\circ\Delta_1$ and $\I_1\cong f^*\I_2\otimes\pi_1^*\L$. We refer to \cite[Theorems A and B]{M15} and \cite[Corollary 4.4 and Remark 4.6]{KP} for more details on the stack $\ol{\J}_{\mu,g,n}$. 
	
	Given a family of $n+1$-pointed nodal curves $\pi\col \X\to T$, there exists a family of stable curves $\pi_{\st} \col \X_{\st}\to T$, called the \emph{stable reduction} of $\X$. This family is endowed with a morphism $\red\col \X\to \X_{\st}$ contracting chains of  rational curves. If $\mu$ is a polarization of $\X_{\st}$, then $\mu$ induces a polarization $\mu'$ on $\X$ (as in Remark \ref{rem:subdivision}). Then, we deduce the existence of a natural morphism (see \cite[Theorem 4.1 ]{EP}):
\begin{align}\label{eq:pipist}
\text{red}\col\overline{\J}_{\pi,\mu'}\ra& \overline{\J}_{\pi_{\st}, \mu}\\
  \I\mapsto & \red_*(\I)\nonumber.
\end{align}
	
  Let $\pi\col\X\to T$ be a pointed family of nodal curves with smooth generic fiber and $T$ irreducible. Let $\L$ be a degree-$d$ invertible sheaf on $\X$. For every degree-$d$ polarization $\mu$ on $\X$, there is a rational map 
\begin{align}\label{eq:abelmap}
\alpha_{\L,\mu}\col T\dasharrow &\ol{\J}_{\pi,\mu}\\
       t \mapsto& [\L|_{X_t}]\nonumber.
\end{align}

Note that $\alpha_\L$ is defined whenever $\L|_{X_t}$ is $(\Delta(t),\mu)$-quasistable. In particular,  $\alpha_{\L,\mu}$ is defined at every $t\in T$ such that $\pi^{-1}(t)$ is smooth. We call $\alpha_{\L,\mu}$ the \emph{Abel map relative to $\pi$ and $\L$} or, if no confusion may arise, simply the \emph{Abel map}.

  \subsection{Resolving the local Abel map}

In this section we construct a local resolution of the Abel map. The key ingredient is the  refinement map $\beta_\Gamma^\trop$ appearing in the resolution of the tropical Abel map $\alpha_{D_0}^\trop$, see Theorem \ref{thm:map}. The fan $\Sigma_{\Gamma,D_0,\mu}$ in Theorem \ref{thm:fan} will give rise to the toric blowup resolving the local Abel map.

Throughout the section we let $(X,p_0)$ be a pointed nodal curve and $(\Gamma,v_0)$ its dual graph. We will write $N_e$ for the node of $X$ corresponding to an edge $e\in E(\Gamma)$. We let $A$ be a local Noetherian $k$-algebra. We set $T:=\Spec A[[t_e]]_{e\in E(\Gamma)}$ and we let $0$ be the closed point of $T$. We fix a family  $\pi\col\X\to T$ of pointed curves with section $\Delta\col T\ra \X$ such that 
\begin{itemize}
\item[(1)] $T$ is integral;
\item[(2)] $\pi^{-1}(0)\cong X$ and $\Delta(0)=p_0$;
\item[(3)] the nodes of the central fiber $X$ can be smoothed independently, i.e., for each node $N_{e_0}$ of $X$ there are local \'etale coordinates $x_{e_0},y_{e_0}$ around $N_{e_0}\in \X$ such that
\begin{equation}
\label{eq:locnode}
\widehat{\O}_{\X,N_{e_0}}\cong\frac{\widehat{A}[[x_{e_0},y_{e_0},t_e]]_{e\in E(\Gamma)}}{(x_{e_0}y_{e_0}-t_{e_0})}.
\end{equation}
\end{itemize}
The prototype family to have in mind is the versal family over the versal deformation space of an $(n+1)$-pointed nodal curve.\par

  Let $\L$ be a degree-$d$ invertible sheaf on $\X/T$ and $\mu$ a degree-$d$ polarization on $\Gamma$. As explained in Remark \ref{rem:polgf}, the polarization $\mu$ induces a unique polarization $\mu$ on $X$ and hence on $\X$. Indeed, the dual graph of every fiber of $\X$ is given by a specialization $\iota$ of $\Gamma$, and the polarization on this fiber is simply $\iota_*(\mu)$. Consider the relative Jacobian $\ol{\J}_{\pi,\mu}$ parametrizing $(\Delta,\mu)$-quasistable torsion-free rank-$1$ sheaves on the fibers of $\pi$. 
The fibers of $\pi$ are smooth precisely over the locus in $T$ where $t_e\neq0$ for every $e\in E(\Gamma)$; this locus is a non-empty open subset of $T$. Therefore, the Abel map defined  in \eqref{eq:abelmap}: 
\[
\alpha_{\L,\mu}\col T\dasharrow\ol{\J}_{\pi,\mu},
\]
 is defined over this open subset and takes a point $\eta\in T$ to $[\L|_{\pi^{-1}(\eta)}]$ whenever the fiber $\pi^{-1}(\eta)$ is smooth.
	 Let $D_0$ be the multidegree of $\L|_X$. We consider 
\[
T_{\L,\mu}=\Tor_{A}(\Sigma_{\Gamma,D_0,\mu})
\quad\text{ and }\quad
\X_{\L,\mu}=\X\times_T T_{\L,\mu},
\]
where $\Sigma_{\Gamma,D_0,\mu}$ is the fan \eqref{def:fan} and $\Tor_A(-)$ is the toric variety \eqref{eq:TVA}. We denote by
\[
\beta_{\L,\mu}\col T_{\L,\mu}\ra T
\quad \text{ and } \quad 
\pi_{\L,\mu}\col \X_{\L,\mu}\ra T_{\L,\mu}
\]
 the natural maps.

	The goal of this section is to prove the following result.

\begin{Thm}
\label{thm:mainlocal}
	The rational map $\alpha_{\L,\mu}\circ\beta_{\L,\mu}\col T_{\L,\mu}\dashrightarrow \ol{\J}_{\pi,\mu}$ is defined everywhere, and, as a morphism of schemes, it is finite. In particular $T_{\L,\mu}$ is the normalization of the closure of the image of $\alpha_{\L,\mu}$.
\end{Thm}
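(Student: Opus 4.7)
The plan is to construct, locally on each affine chart $U_K\subset T_{\L,\mu}$ corresponding to a cone $K=K_{\Gamma',\E',\phi'}\in\Sigma_{\Gamma,D_0,\mu}$, an ideal subsheaf of $\L_{T_{\L,\mu}}:=\beta_{\L,\mu}^*\L$ whose restriction to every fiber of $\pi_{\L,\mu}$ is torsion-free of rank $1$ and $(\Delta,\mu)$-quasistable, with multidegree the pseudo-divisor $(\E',D_0+\div(\phi'))$ predicted by the stratum containing that fiber. The universal property of $\ol{\J}_{\pi,\mu}$ recalled in Section \ref{sec:compjac} will then supply the morphism $T_{\L,\mu}\to\ol{\J}_{\pi,\mu}$, which one must identify with $\alpha_{\L,\mu}\circ\beta_{\L,\mu}$ on the smooth locus.

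The local construction is dictated by the flow. At a node $N_{e_0}$ with $e_0\in\E'$, in the coordinates of \eqref{eq:locnode}, I would take the local ideal to be $\langle y_{e_0}\rangle^{\phi'(e_0^s)}\langle y_{e_0},\chi^{u''_{e_0}}\rangle$, with $u''_{e_0}$ as in \eqref{eq:u'u''bis}, and the simpler $\langle y_{e_0}\rangle^{\phi'(e_0)}$ at nodes with $e_0\notin\E'$. Proposition \ref{prop:Icap} writes this ideal as an intersection of symbolic powers $I_{e_0,r}^{(n)}$ over the extremal rays of $K$. Since each $I_{e_0,r}^{(n)}$ is defined as the preimage of $\langle y_{e_0}^n\rangle$ under the toric localization, and preimages under localizations pull back functorially along face inclusions, this intersection description shows directly that the ideals prescribed on different cones agree on overlaps and glue to a global ideal subsheaf $\I\subset \L_{T_{\L,\mu}}$.

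The fiberwise properties of $\I$ are verified node-by-node: torsion-freeness and rank-$1$ness at each fiber $\pi_{\L,\mu}^{-1}(t)$ are local, and reduce via the explicit local form \eqref{eq:locnode} and the flatness of Remark \ref{rem:flat} to the computation of Proposition \ref{prop:Ifree}. The multidegree on a fiber over the open stratum of $U_K$ is exactly $(\E',D_0+\div(\phi'))$, which is $(v_0,\mu)$-quasistable by admissibility of $(\E',\phi')$; quasistability persists on boundary strata by Proposition \ref{prop:spec}. Over the open cone $\R^{E(\Gamma)}_{>0}\in\Sigma_{\Gamma,D_0,\mu}$, all $\chi^{e^\vee}$ are invertible, so $\I$ equals $\L_{T_{\L,\mu}}$ there, and the induced morphism restricts to $\alpha_{\L,\mu}$ over $T^{\mathrm{sm}}\subset T$.

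For finiteness, $\beta_{\L,\mu}$ is proper (a toric morphism induced by a refinement) and $\ol{\J}_{\pi,\mu}\to T$ is separated, so the composition $T_{\L,\mu}\to\ol{\J}_{\pi,\mu}$ is proper. Since $\alpha_{\L,\mu}$ is a section of $\ol{\J}_{\pi,\mu}\to T$ on $T^{\mathrm{sm}}$, it is injective there, so the composition is birational onto its image. Because $T_{\L,\mu}$ is normal, Zariski's main theorem identifies it with the normalization of the closure of the image and yields finiteness at once. The principal technical obstacle I anticipate is the gluing step: one must reconcile the sign conventions relating the flow's orientation at $e_0$ (i.e.\ which branch of the node is cut out by $y_{e_0}$) with the definitions of $u'_{e_0}$ and $u''_{e_0}$, and extract from Proposition \ref{prop:Icap} an intrinsic ideal sheaf on $\X_{\L,\mu}$ independent of these choices; this is the algebraic content of the combinatorial fact that $\Sigma_{\Gamma,D_0,\mu}$ is a fan refining $\R^{E(\Gamma)}_{\geq0}$.
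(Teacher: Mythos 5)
Your construction of the quasistable sheaf is essentially the paper's, reorganized. The paper first defines, for each extremal ray $r$ of $\sigma$, a Cartier divisor $\Y_r$ on $\X_r$ (Lemma \ref{lem:Dr}), then sets $\Y:=\bigcup_{r\in\sigma(1)}\overline{\Y_r}$ on $\X_\sigma$ and deduces its local ideal from Proposition \ref{prop:Icap}. You instead prescribe the local ideal directly and appeal to the intersection-of-symbolic-powers formula to glue. This is a viable route, but your gluing argument is only sketched: you would need to show that for a common face $\tau\prec\sigma$, the contributions $I_{e_0,r}^{(n_r)}$ for rays $r\notin\tau(1)$ localize to the unit ideal on $U_\tau$, and that the powers $n_r$ are intrinsic to $r$ (which is exactly the content of Lemma \ref{lem:Dr}: $n_r=\phi'(e_0)$ for the flow $\phi'$ attached to the ray). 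The paper's presentation sidesteps this by constructing the Weil divisor globally: $\overline{\Y_r}$ depends only on the ray, so the union glues trivially. You flag the gluing as a technical obstacle, which is fair, but the ingredients needed to resolve it are precisely what Lemma \ref{lem:Dr} packages, and you don't invoke it.

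The genuine gap is in the finiteness argument. You argue: the composition is proper (correct); it is birational onto its image because $\alpha_{\L,\mu}$ is a section over the smooth locus (correct); and then "because $T_{\L,\mu}$ is normal, Zariski's main theorem \ldots yields finiteness at once." This is false as stated. A proper birational morphism from a normal scheme onto an integral target need not be finite --- the blowup of a smooth surface at a point is proper, birational, has normal source, but has a one-dimensional fiber. Zariski's main theorem does not produce quasi-finiteness; it takes quasi-finiteness (or zero-dimensional fibers) as an input. The paper supplies precisely this missing step: it shows that $\alpha\circ\beta$ is quasi-finite by combining Chevalley's upper semicontinuity (which localizes the problem to the closed fiber over $0\in T$, i.e.\ to the orbits $O(\sigma)$) with a multidegree argument: a positive-dimensional fiber $W$ inside an orbit $O(\sigma)$ would have closure meeting a strictly smaller orbit $O(\tau)$, forcing the pseudo-divisors $(\E,\div(\phi))$ and $(\E',\div(\phi'))$ to coincide, hence $\sigma=\tau$, a contradiction. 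Your proposal has no substitute for this step. Once quasi-finiteness is established, proper plus quasi-finite gives finite, and only then does normality of $T_{\L,\mu}$ let you identify it with the normalization of the image closure.
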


We need some auxiliary results to prove Theorem \ref{thm:mainlocal}.
	As explained in Section \ref{sec:toricsec}, for every $\sigma\in \Sigma_{\Gamma,D_0,\mu}$ there exists an open immersion
\[
T_{\sigma}:=\Tor_{A}(\sigma)\to T_{\L,\mu}.
\]
  For consistency with the notation for toric varieties, we write $\chi^{e^\vee}:=t_e$ for every $e\in E(\Gamma)$. Recall that the map $f_\sigma\col T_\sigma\to T$ is induced by the ring homomorphism
\begin{equation}
\label{eq:fsigma}
f_\sigma^{\#}\col A[[\chi^{e^\vee}]]_{e\in E(\Gamma)}\to B_\sigma:=A[[\chi^{e^\vee}]]_{e\in E(\Gamma)}\otimes_{k[\chi^{e^\vee}]_{e\in E(\Gamma)}}k[S_{\sigma}].
\end{equation}
If $\chi^u\in k[\chi^{e^\vee}]_{e\in E(\Gamma)}$, then $\chi^u\otimes 1=1\otimes \chi^u$: we abuse notation and write $\chi^u$ for $\chi^u\otimes 1$. We note that $B_{\sigma}$ is a flat $A_\sigma=k[S_\sigma]$-algebra. Let $V(\sigma)\subset T_{\L,\mu}$ be the invariant subvariety of $T_{\L,\mu}$ associated to $\sigma$.
  Let $\eta_\sigma$ be the generic point of $V(\sigma)$ (note that $\eta_\sigma\in T_\sigma$). We define 
\begin{equation}
\label{eq:Xsigma}
\X_\sigma= \X\times_T T_\sigma
\quad \text{ and }\quad 
X_{\sigma}=\pi^{-1}_{\L,\mu}(\eta_\sigma).
\end{equation}

\begin{Lem}
\label{lem:sigmadual}
Let $\sigma\in \Sigma_{\Gamma,D_0,\mu}$ and $\Gamma'$ be the graph such that $\sigma=K_{\Gamma',\E,\phi}$. Then  $\Gamma'$ is the dual graph of $X_{\sigma}$.
\end{Lem}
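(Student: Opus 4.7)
My plan is to determine the dual graph of $X_\sigma$ by identifying which nodes of the central fiber $X$ are smoothed in the fiber over $\eta_\sigma$. Since smoothing the node $N_e$ corresponds to contracting the edge $e$ in the dual graph (and merging the incident components with the appropriate weight adjustment), and since by the local equations \eqref{eq:locnode} the node $N_e$ is smoothed at $\eta_\sigma$ if and only if the parameter $t_e=\chi^{e^\vee}$ is invertible in $\O_{T_{\L,\mu},\eta_\sigma}$, the task reduces to identifying this local ring and determining for which $e\in E(\Gamma)$ the element $\chi^{e^\vee}$ is a unit.

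The construction of $T_{\L,\mu}$ in \eqref{eq:TVA} together with the computation \eqref{eq:localsigma} yields $\O_{T_{\L,\mu},\eta_\sigma}\cong (B_\sigma)_{I(\sigma)B_\sigma}$, where $B_\sigma$ is the flat $A_\sigma$-algebra in \eqref{eq:fsigma} and $I(\sigma)=\langle \chi^u \mid u\in S_\sigma,\, u\notin\sigma^\perp\rangle$. Hence $\chi^{e^\vee}$ is a unit at $\eta_\sigma$ precisely when $e^\vee\in\sigma^\perp$, i.e., when $e^\vee$ vanishes identically on the cone $\sigma=K_{\Gamma',\E,\phi}$.

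Next I determine which $e^\vee$ vanish on $\sigma$. Via the inclusion $E(\Gamma')\subset E(\Gamma)$ induced by the specialization $\iota\col\Gamma\to\Gamma'$ giving rise to $\sigma$, the cone $K_{\Gamma',\E,\phi}$ is contained in the coordinate subspace $\R^{E(\Gamma')}_{\ge 0}\subset\R^{E(\Gamma)}_{\ge 0}$, so $e^\vee\equiv 0$ on $\sigma$ for every $e\in E(\Gamma)\setminus E(\Gamma')$. Conversely, for each $e\in E(\Gamma')$, the relative interior $K^\circ_{\Gamma',\E,\phi}$---which is nonempty, being the image under the map $F$ of \eqref{eq:hat} of the nonempty open cone $C^\circ_{\Gamma',\E,\phi}\subset\R^{E(\Gamma'^\E)}_{>0}$---is contained in $\R^{E(\Gamma')}_{>0}$, so $e^\vee$ is strictly positive somewhere on $\sigma$ and therefore not in $\sigma^\perp$.

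Combining these, the nodes of $X$ that get smoothed at $\eta_\sigma$ are exactly the $N_e$ with $e\in E(\Gamma)\setminus E(\Gamma')$, i.e., precisely the edges contracted by $\iota$. Therefore the dual graph of $X_\sigma$ equals $\Gamma/(E(\Gamma)\setminus E(\Gamma'))=\Gamma'$, as claimed. The only technical point requiring real care in the write-up is justifying the nonemptiness of $K^\circ_{\Gamma',\E,\phi}$ (so that $e^\vee$ is guaranteed to take strictly positive values for every $e\in E(\Gamma')$); this is immediate from the description of $C_{\Gamma',\E,\phi}$ together with Remark \ref{rem:functionflow}, so no substantial obstacle remains.
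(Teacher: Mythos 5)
Your proof is correct and follows essentially the same strategy as the paper's: both arguments reduce the question to deciding when $\chi^{e^\vee}$ is invertible along $V(\sigma)$, and both derive the answer from the observation that $K^\circ_{\Gamma',\E,\phi}\subset\R^{E(\Gamma')}_{>0}$, hence $e^\vee(\sigma)\ne 0$ exactly when $e\in E(\Gamma')$. The only cosmetic difference is that you phrase the invertibility criterion directly in the local ring $\O_{T_{\L,\mu},\eta_\sigma}\cong(B_\sigma)_{I(\sigma)B_\sigma}$ whereas the paper splits the argument into two steps (invertibility in $B_\sigma$, then containment of $V(\sigma)$ in the locus $t_e=0$), but the content is the same.
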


\begin{proof}
Let $\Gamma\to\Gamma'$ be the specialization corresponding to $\sigma$. Note that $e^\vee(\sigma)\neq0$ if and only if $e\in E(\Gamma')$, while $e^\vee(\sigma)=0$ in the other cases (recall that $K_{\Gamma',\E,\phi}^\circ\subset \R^{E(\Gamma')}_{>0}$). Hence  $f_\sigma^{\#}(t_e)=f_\sigma^{\#}(\chi^{e^\vee})$ is invertible in $B_\sigma$ if and only if $e\notin E(\Gamma')$. This implies that the image of $T_\sigma$ via $f_\sigma$  meets the locus $t_e=0$ in $T$ if and only if $e\in E(\Gamma')$. Moreover, the locus $V(\sigma)\cap T_\sigma$ is given by the ideal $\langle \chi^u; u(\sigma)\neq0\rangle$, which contains $\chi^{e^\vee}$ for every $e\in E(\Gamma')$ because $e^\vee(\sigma)\neq0$ for any such $e$. In particular, the image of $V(\sigma)$ is contained in the locus $t_e=0$ for every $e\in E(\Gamma')$. This implies that the nodes of $X_\sigma$ are precisely the ones associated to the edges of $\Gamma'$.
\end{proof}
If $\sigma=K_{\Gamma',\E,\phi}$ is as in Lemma \ref{lem:sigmadual} and $e\in E(\Gamma')$, we denote as usual by $N_{e,\sigma}$ the node of $X_{\sigma}$ corresponding to $e$. 
  By Equations \eqref{eq:locnode} and \eqref{eq:Xsigma}, we have the following description of the completion of the local ring of $\X_{\sigma}$ at $N_{e,\sigma}$:
\begin{equation}
\label{eq:OXN}
\widehat{\O}_{\X_\sigma,N_{e,\sigma}}=\frac{\widehat{\O}_{T_\sigma,\eta_\sigma}[[x_e,y_e]]}{\langle x_ey_e-\chi^{e^\vee}\rangle}.
\end{equation}
 Let $\tau$ and $A_\tau$ be as in Equation \eqref{eq:AtauK} (also recall Construction \ref{cons:xy}). Note that $\widehat{\O}_{\X_\sigma,N_{e,\sigma}}$ is a flat $A_\tau$-algebra: indeed, $B_\sigma$ is a flat $A_\sigma$-algebra, and localizations and completions are flat, hence $\widehat{\O}_{T_\sigma,\eta_\sigma}$ is a flat $A_\sigma$-algebra.\par

	Let $r=(r_e)_{e\in E(\Gamma)}$  be a ray of $\Sigma_{\Gamma,D_0,\mu}(1)$ (we will abuse notation and write $r$ for both the ray and the cone associated to it) and $V(r)$ be the invariant subvariety of $T_{\L,\mu}$ associated to $r$. Then $V(r)\cap T_r$ is a Cartier divisor of $T_r$, since $T_r$ is smooth. By Proposition \ref{prop:Kdim}, the ray $r$ induces a specialization $\iota\col\Gamma\to \Gamma'$ where either $\Gamma'$ has a single vertex and a single loop, or two vertices and no loops; moreover, in the latter case, there is an acyclic flow $\phi$ on $\Gamma'$  such that $\iota_*(D_0)+\div(\phi)$ is $(\iota(v_0),\iota_*(\mu))$-quasistable and, by \eqref{eq:C}, for every $e,e'\in E(\Gamma')$,
\begin{equation}
\label{eq:phire}
\phi(e)r_e=\phi(e')r_{e'}.
\end{equation}
 By Lemma \ref{lem:sigmadual}, the dual graph of $X_r$ is $\Gamma'$. Moreover, if  $u_0(r)=1$ for some $u_0\in S_{r}$, then every $v\in S_{r}$ can be written as 
\[
v=v(r)u_0+(v-v(r)u_0), \text{ where } \pm(v-v(r)u_0)\in S_{r}.
\]
  This means that we can view $\chi^{u_0}$ as a local parameter for $T_r$, i.e., every $\chi^{v}$ is equal to $(\chi^{u_0})^{v(r)}$, up to multiplication by an invertible. In particular, for every $e\in E(\Gamma)$, 
\[
f_r^{\#}(\chi^{e^\vee})=\chi^u\chi^{e^{\vee}(r)u_0}
\]
 for some $u\in S_{r}$ such that $u(r)=0$ (recall \eqref{eq:fsigma}). This, together with Lemma \ref{lem:sigmadual}, implies that the induced family
 \
\[
\pi_r\col\X_r=\mathcal{X}\times_T T_r\to T_r
\]
 has fibers which are either smooth or have dual graph $\Gamma'$. More precisely, a fiber of $\pi_r$ is smooth over the locus in $T_r$ where $\chi^{u_0}\neq0$, while it has dual graph $\Gamma'$ over the locus in $T_r$ where $\chi^{u_0}=0$, and the last locus is precisely $V(r)\cap T_r$. \par
If $\sigma\in \Sigma_{\Gamma,D_0,\mu}$ and $r$ is a ray of $\sigma$, we have a Cartesian diagram
\[
\begin{tikzcd}
\X_r\arrow{r}\arrow{d}{\pi_r} &\X_\sigma\arrow{d}{\pi_\sigma}\\
T_r\arrow{r} & T_\sigma.
\end{tikzcd}
\]
  The open immersion $T_r\to T_\sigma$, is induced by the localization $A_\sigma\to A_r$. In fact, $T_r$ is a principal open subscheme of $T_\sigma$ induced by a regular function $\chi^{u_1}$ (see the last paragraph of Construction \ref{cons:xy}). Therefore, we can enrich the previous  diagram as follows:
\[
\begin{tikzcd}
 \Spec((\widehat{\O}_{\X_\sigma,N_{e,\sigma}})_{\chi^{u_1}})\arrow{rr}\arrow{dd}\arrow{rd}&&  \Spec(\widehat{\O}_{\X_\sigma, N_{e,\sigma}})\arrow{dd}\arrow{rd} &\\
& \X_r\arrow[rr, crossing over] &&\X_{\sigma}\arrow{dd}\\
\Spec((\widehat{\O}_{T_\sigma,\eta_\sigma})_{\chi^{u_1}})\arrow[rr]\arrow{rd} &&\Spec(\widehat{\O}_{T_\sigma,\eta_\sigma})\arrow{rd} &\\
&T_r \arrow[from=uu, crossing over]  \arrow{rr}&&T_\sigma\\
\end{tikzcd}
\]
where every face of the cube is Cartesian, except the left and right faces.

In figure \ref{fig:XrXsigma} below, we illustrate  all the revelant loci playing a role in what follows.
\begin{figure}[ht]
\begin{overpic}{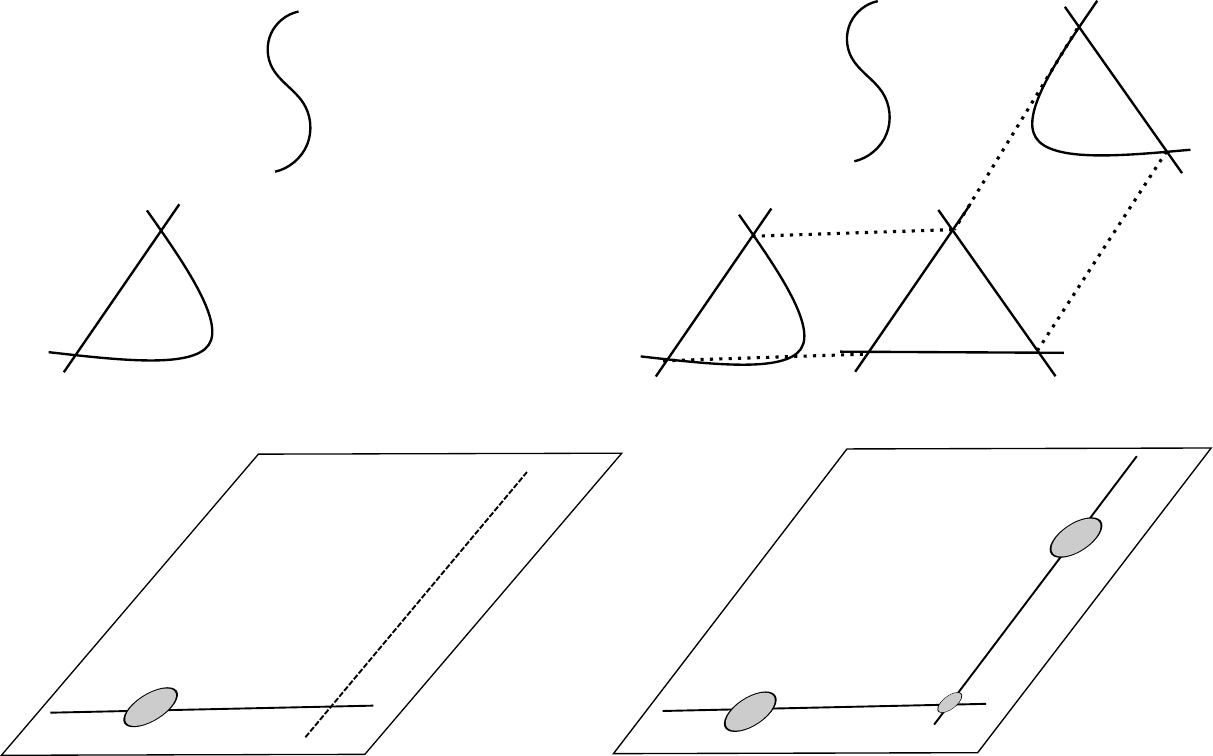}
\put(23,15){\huge$T_{r}$}
\put(70,15){\huge$T_\sigma$}
\put(8,6){$\eta_{r}$}
\put(58,6){$\eta_{r}$}
\put(16,5){\tiny$V(r)\cap T_{r_1}$}
\put(68,5){\small$V(r)$}
\put(78,2){$\eta_\sigma$}
\put(77,12){\small$V(r')$}
\put(85,20){$\eta_{r'}$}
\put(13,30){$X_{r}$}
\put(60,29.5){$X_{r}$}
\put(79,30.5){$X_\sigma$}
\put(89,47){$X_{r'}$}
\put(80,43){\tiny$N_{e_0,\sigma}$}
\put(91,60){\tiny$N_{e_0,r'}$}
\put(55,43){\tiny$N_{e_0,r}$}
\put(6,42.5){\tiny$N_{e_0,r}$}
\put(72,31){\tiny$N_{e_1,\sigma}$}
\put(50,35){\tiny$N_{e_1,r}$}
\put(1,35){\tiny$N_{e_1,r}$}
\put(87,31){\tiny$N_{e_2,\sigma}$}
\put(96,51){\tiny$N_{e_2,r'}$}
\end{overpic}
\caption{The Cartesian diagram induced by $\X_{r}\to \X_\sigma$.}
\label{fig:XrXsigma}
\end{figure}

Let $\Delta_r$ be the section of $\pi_r\col \X_r\ra T_r$ obtained by the pullback of the section $\Delta$ of $\pi\col \X\ra T$ and denote by 
\begin{equation}\label{eq:agar}
h_r\col\X_r=\X\times_T T_r\longrightarrow \X
\end{equation}
 the first projection.  

We write $I_{\Y|\X}$ for the ideal sheaf of a divisor $\Y$ of a scheme $\X$. A sheaf of ideals over $\X_r$, respectively over $\X_\sigma$, pullback to a sheaf of ideals over $\Spec((\widehat{\O}_{\X_\sigma,N_{e,\sigma}})_{\chi^{u_1}})$, respectively over $\Spec(\widehat{\O}_{\X_\sigma,N_{e,\sigma}})$. We will identify the last sheaf of ideals with an ideal of $(\widehat{\O}_{\X_\sigma,N_{e,\sigma}})_{\chi^{u_1}}$, respectively of $\widehat{\O}_{\X_\sigma,N_{e,\sigma}}$. We will write $I_{\Y|\X_r}(\widehat{\O}_{\X_\sigma,N_{e,\sigma}})_{\chi^{u_1}}$ to denote the ideal in $(\widehat{\O}_{\X_\sigma,N_{e,\sigma}})_{\chi^{u_1}}$ corresponding to the sheaf $I_{\Y|\X_r}$.

 For every $e\in E(\Gamma')$, the local equation of $\X_r$ in $(\widehat{\O}_{\X_\sigma, N_{e,\sigma}})_{\chi^{u_1}}$,  is $x_ey_e=\chi^u(\chi^{u_0})^{e^\vee(r)}$, where $u=e^\vee-e^\vee(r)u_0$ or, since $\chi^u$ is invertible, 
\begin{equation}
\label{eq:xyu0}
(x_e\chi^{-u})y_e=(\chi^{u_0})^{e^\vee(r)}.
\end{equation}
More precisely, we have (recall equation \eqref{eq:OXN}):
\begin{equation}\label{eq:xyu0bis}
(\widehat{\O}_{\X_\sigma,N_{e,\sigma}})_{\chi^{u_1}}\cong\frac{(\widehat{\O}_{T_\sigma,\eta_\sigma})_{\chi^{u_1}}[[x_e,y_e]]}{\langle x_ey_e-\chi^u(\chi^{u_0})^{e^\vee(r)}\rangle}.
\end{equation}

If $\Gamma'$ has a single vertex, then the fibers of $\X_r\to T_r$ are all irreducible.  If $\Gamma'$ has two vertices $v_0$ and $v_1$, then 
\[
\pi_r^{-1}(V(r)\cap T_r)=\Y_{v_0}\cup \Y_{v_1}
\]
 where $\Y_{v_0}$ (respectively, $\Y_{v_1}$) is the component corresponding to the vertex $v_0$ (respectively, $v_1$). 	 \par
	Finally, recall the definitions of $\tau$ in Construction \ref{cons:xy}, of $A_\tau$ in  \eqref{eq:AtauK}, and of $I_{e,r}$ in \eqref{eq:defIer}. Also, recall that $\widehat{\O}_{X_\sigma,N_{e,\sigma}}$ is a flat $A_\tau$-algebra.

\begin{Lem}
\label{lem:Dr}
Let $\sigma$ be a cone in $\Sigma_{\Gamma,D_0,\mu}$ and $r$ a ray of $\sigma$ associated to a cone $K_{\Gamma',\emptyset,\phi}$. Up to the pullback of a Cartier divisor of $T_r$,  there is an effective Cartier divisor $\Y_r$ on $\X_r$ such that $h_r^*\L|_{\X_r}(-\Y_r)$ is $(\Delta_r,\mu)$-quasistable and whose ideal sheaf satisfies
\[
I_{\Y_r|\X_r}(\widehat{\O}_{\X_r,N_{e,\sigma}})_{\chi^{u_1}}=\langle y_e^{\phi(e)}\rangle.
\]
 Moreover, if $\overline{\Y}_r$ is the closure of $\Y_r$ in $\X_\sigma$, then 
\[
I_{\overline{\Y}_r|\X_\sigma}\widehat{\O}_{\X_{\sigma},N_{e,\sigma}}=I^{(\phi(e))}_{e,r}\widehat{\O}_{\X_{\sigma},N_{e,\sigma}}.
\]
\end{Lem}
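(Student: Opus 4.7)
The plan is to construct $\Y_r$ via a case analysis based on Proposition~\ref{prop:Kdim}, and then verify the two claimed properties separately. By that proposition, the ray $r$ corresponds to $(\Gamma',\emptyset,\phi)$ with $\Gamma'$ having either a single vertex with a single loop or two vertices $v_0,v_1$ connected by edges with no loops. In the cases where $\phi$ vanishes identically, I would take $\Y_r=0$: the local equation $\langle y_e^0\rangle$ is trivial, the closure identity holds automatically, and the quasistability of $h_r^*\L|_{\X_r}$ follows from $(\emptyset,0)\in\A_{\Gamma'}(\iota_*D_0)$, which is the pushforward of the admissibility of $(\E,\phi)$ on $\Gamma$ under the specialization $\iota$ (noted just before Proposition~\ref{prop:union}).

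The substantive case is $\Gamma'$ with two vertices and $\phi\neq 0$. I would set $\Y_r:=k\Y_{v_1}$ as a Weil divisor, where $\Y_{v_1}\subset\X_r$ is the closure of the $v_1$-component of $X_r$ and $k:=\phi(e)r_e$ is independent of $e\in E(\Gamma')$ by~\eqref{eq:phire}. The Cartier-ness check uses the local description~\eqref{eq:xyu0bis}: since $\X_r$ is normal (the local ring is of $A_{r_e-1}$-type), a valuation computation at the generic point of $\Y_{v_1}$ shows that the principal ideal $\langle y_e\rangle$ cuts out $r_e\Y_{v_1}$ as a Weil divisor, whence $\langle y_e^{\phi(e)}\rangle$ cuts out $\phi(e)r_e\Y_{v_1}=\Y_r$, delivering simultaneously the Cartier property and the asserted local equation. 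Any global ambiguity between different nodes is absorbed by the clause ``up to the pullback of a Cartier divisor of $T_r$''.

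To verify $(\Delta_r,\mu)$-quasistability, I would compute the multidegree of $h_r^*\L|_{\X_r}(-\Y_r)$ on $X_r$ via intersection numbers: on $C_{v_0}$, the local generator $y_e^{\phi(e)}$ restricts to a $\phi(e)$-th power of a uniformizer, contributing $-\sum_e\phi(e)$; on $C_{v_1}$, the relation $\O(\Y_r)|_{X_r}=\O(-k\Y_{v_0})|_{X_r}$, valid because $k\Y_{v_0}+k\Y_{v_1}$ is pulled back from $T_r$ and restricts trivially to any fiber, reduces the computation to the symmetric one with $x_e$ in place of $y_e$, yielding $+\sum_e\phi(e)$. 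The resulting multidegree equals $\iota_*D_0+\div(\phi)$, which is $(\iota(v_0),\iota_*(\mu))$-quasistable by admissibility of the pushforward pair, and Proposition~\ref{prop:quasiquasi} then upgrades this statement to quasistability of the invertible sheaf itself.

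Finally, for the closure equality, I would use that $\overline{\Y}_r\cap\X_r=\Y_r$, so $I_{\overline{\Y}_r|\X_\sigma}\widehat{\O}_{\X_\sigma,N_{e,\sigma}}$ is precisely the preimage of $\langle y_e^{\phi(e)}\rangle$ under the localization $\widehat{\O}_{\X_\sigma,N_{e,\sigma}}\to\widehat{\O}_{\X_r,N_{e,\sigma}}=(\widehat{\O}_{\X_\sigma,N_{e,\sigma}})_{\chi^{u_1}}$. By the flatness of $\widehat{\O}_{\X_\sigma,N_{e,\sigma}}$ over $A_\tau$ (Remark~\ref{rem:flat}), preimages of ideals under localization commute with this extension of scalars along $A_\tau\to A_{\tau'}$, and the conclusion follows from the defining identity $I_{e,r}^{(\phi(e))}=\lambda^{-1}(\langle y_e^{\phi(e)}\rangle)$ in~\eqref{eq:defIe0rn}. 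The main obstacle I anticipate is the Cartier-ness check together with the careful sign/convention bookkeeping in the degree computation, both of which hinge on the combinatorial constancy $\phi(e)r_e=k$ coming from~\eqref{eq:phire}.
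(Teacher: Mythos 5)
Your overall plan mirrors the paper's proof quite closely: the same case split based on Proposition~\ref{prop:Kdim} (taking $\Y_r=0$ when $\phi\equiv0$), the same definition $\Y_r=k\Y_v$ with $k=\phi(e)r_e$ in the nontrivial case, the same multidegree computation on the fiber showing it equals $\div(\phi)$, and the same closure argument via preimages under localization combined with flatness over $A_\tau$. The one genuinely different ingredient is how you establish the local equation $I_{\Y_r|\X_r}=\langle y_e^{\phi(e)}\rangle$ and hence Cartier-ness. You propose a valuation argument at the generic point of $\Y_v$ together with the fact that in a normal Noetherian domain a principal ideal is the ideal sheaf of its divisor of zeros. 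The paper instead uses Proposition~\ref{prop:symbol}: a direct computation of the symbolic power $I^{(m)}=\langle y^n\rangle$ in the model ring $k[x,y,u]/\langle xy-u^t\rangle$, transported along the explicitly constructed flat homomorphism $k[x,y,t]/\langle xy-t^{r_e}\rangle\to(\widehat{\O}_{\X_\sigma,N_{e,\sigma}})_{\chi^{u_1}}$, whose flatness needs only that $(\widehat{\O}_{T_\sigma,\eta_\sigma})_{\chi^{u_1}}$ be torsion-free over $k[t]$. The trade-off is this: your route is shorter and more geometric, and it is valid whenever $\X_r$ is normal, which holds in the universal application because $A$ is then regular. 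But the lemma is stated for any local Noetherian $k$-algebra $A$ subject only to $T$ being integral, and for such $A$ the scheme $\X_r$ need not be normal, so the implication ``$\operatorname{div}(y_e^{\phi(e)})=\Y_r$ implies $\langle y_e^{\phi(e)}\rangle=I_{\Y_r}$'' can fail; the paper's flat-base-change route avoids this entirely. (Also a small bookkeeping point: the parenthetical ``the local ring is of $A_{r_e-1}$-type'' glosses over the ambient $B'$-variables; what you actually need is that $B'[[x,y]]/\langle xy-b\,t^{r_e}\rangle$ satisfies R1+S2, which again requires assumptions on $B'$.) Finally, the appeal to Proposition~\ref{prop:quasiquasi} at the end is off-target — that result compares quasistability on tropical curves and graphs; what actually lets you pass from quasistability of the multidegree on the generic fiber to $(\Delta_r,\mu)$-quasistability of the sheaf is the definition in Section~\ref{sec:compjac} together with the openness of the quasistability condition, as the paper cites via \cite[Proposition~34]{Es01}.
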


\begin{proof}

If $\Gamma'$ has a single vertex or $\phi=0$, then we define $\Y_r=0$, and we are done.\par
If $\Gamma'$ has two vertices, we define $\Y_r=m\Y_{v}$, where $m=\phi(e)e^{\vee}(r)$ and $v$ is the target of the edges in $\Gamma'$, with orientation induced by $\phi$. Note that $m$ and $v$ are well defined because all the edges of $\Gamma'$ have the same target ($\phi$ is acyclic), and $\phi(e)e^{\vee}(r)$ is independent of $e$ (by  \eqref{eq:phire}).\par
 Let us find the equations for $\Y_r$ locally around $N_{e_0,\sigma}$. By \eqref{eq:OXN}, the completion of the local ring of $\X_\sigma$ at a node $N_{e_0,\sigma}$ of the fiber $X_\sigma$ is
\[
\widehat{\O}_{\X_\sigma,N_{e_0,\sigma}}\cong\frac{\widehat{\O}_{T_\sigma,\eta_\sigma}[[x_{e_0},y_{e_0}]]}{\langle x_{e_0}y_{e_0}-\chi^{e_0^\vee}\rangle}.
\]
Localizing in $\chi^{u_1}$, we get
\[
(\widehat{\O}_{\X_\sigma,N_{e_0,\sigma}})_{\chi^{u_1}}\cong\frac{(\widehat{\O}_{T_\sigma,\eta_\sigma})_{\chi^{u_1}}[[x_{e_0},y_{e_0}]]}{\langle x_{e_0}y_{e_0}-\chi^{e_0^\vee}\rangle}.
\]

The Weil divisor $\Y_v$ is given locally in $(\widehat{\O}_{\X_\sigma,N_{e_0,\sigma}})_{\chi^{u_1}}$ by 
\[
I_{\Y_v|\X_r}(\widehat{\O}_{\X_\sigma,N_{e_0,\sigma}})_{\chi^{u_1}}=\langle y_{e_0},\chi^{u_0}\rangle
\]
(recall that $V(r)\cap T_r$ has equation $\langle \chi^{u_0}\rangle$ in $T_r$). The quotient
\[
\frac{(\widehat{\O}_{\X_\sigma,N_{e_0,\sigma}})_{\chi^{u_1}}}{I_{\Y_v|\X_r}(\widehat{\O}_{\X_\sigma,N_{e_0,\sigma}})_{\chi^{u_1}}}=\frac{(\widehat{\O}_{T_\sigma,\eta_\sigma})_{\chi^{u_1}}[[x_e]]}{\langle \chi^{u_0}\rangle}
\]
is a domain, because $V(r)\cap T_r$ is irreducible and smooth, then $I_{\Y_v|\X_r}(\widehat{\O}_{\X_\sigma,N_{e_0,\sigma}})_{\chi^{u_1}}$ is a prime ideal. Hence the ideal of $\Y_r$ in $(\widehat{\O}_{\X_\sigma,N_{e_0,\sigma}})_{\chi^{u_1}}$ is given by the $m$-th symbolic power of $\langle y_{e_0},\chi^{u_0}\rangle$. Since $(\O_{T_\sigma,\eta_\sigma})_{\chi^{u_1}}$ is  torsion free, and hence flat as a $k[t]$-module via the homomorphism taking $t$ to $\chi^{u_0}$, the homomorphism (recall \eqref{eq:xyu0}):
\begin{align*}
\frac{k[x,y,t]}{\langle xy-t^{e_0^\vee(r)}\rangle}&\to (\widehat{\O}_{\X_\sigma,N_{e_0,\sigma}})_{\chi^{u_1}}\\
t&\mapsto \chi^{u_0}\\
x&\mapsto x_{e_0}\chi^{-u}\\
y&\mapsto y_{e_0}
\end{align*}
is flat, because flatness is preserved under base change and completion. By Proposition \ref{prop:symbol},
\begin{equation}
\label{eq:IYXr}
I_{\Y_r|\X_r}(\widehat{\O}_{\X_\sigma,N_{e_0,\sigma}})_{\chi^{u_1}}=\langle y_{e_0}^{\phi(e_0)}\rangle\subset (\widehat{\O}_{\X_\sigma,N_{e_0,\sigma}})_{\chi^{u_1}}.
\end{equation}
Note that $Y_r$ is given by a unique equation locally around $N_{e_0,\sigma}$. To prove that $\Y_r$ is Cartier, it is enough to repeat the argument above, substituting $(\wh{\O}_{T_\sigma,\eta_\sigma})_{\chi^{u_1}}$ by $\wh{\O}_{T_\sigma, p}$ for every point $p\in V(r)\cap T_r$. This finishes the proof of the first equality of the lemma and the fact that $\Y_r$ is Cartier.\par
 Now, we prove the second equality of the statement. We know that
\begin{equation}
\label{eq:IYXsigma}
I_{\ol{\Y}_{r}|\X_{\sigma}}\widehat{\O}_{\X_\sigma,N_{e_0,\sigma}}=(I_{\Y_r|\X_r}(\widehat{\O}_{\X_\sigma,N_{e_0,\sigma}})_{\chi^{u_1}})\cap \widehat{\O}_{\X_\sigma,N_{e_0,\sigma}}.
\end{equation}
 Recall \eqref{eq:AtauK}, \eqref{eq:defIer}, \eqref{eq:defIe0rn} and Construction \ref{cons:xy}. Since $\widehat{\O}_{\X_\sigma,N_{e_0,\sigma}}$ is a flat $A_\tau$-algebra and $A_{\tau'}=(A_\tau)_{\chi^{u_1}}$ we have a commutative diagram
\begin{equation}
\label{eq:diaAtau}
\begin{tikzcd}
\widehat{\O}_{\X_\sigma, N_{e_0,\sigma}}\ar{r} & (\widehat{\O}_{\X_\sigma,N_{e_0,\sigma}})_{\chi^{u_1}}\\
A_\tau \ar{u}\ar{r} & A_{\tau'}\ar{u}
\end{tikzcd}
\end{equation}
By the definition \eqref{eq:defIe0rn} of $I_{e_0,r}^{(n)}$, and by \cite[Proposition 2.2]{E}, the map 
\[
\frac{A_{\tau}}{I_{e_0,r}^{(\phi(e_0))}}\stackrel{\cdot \chi^{u_1}}{\longrightarrow}\frac{A_{\tau}}{I_{e_0,r}^{(\phi(e_0))}}
\]
is injective. Since $\widehat{\O}_{\X_\sigma,N_{e_0,\sigma}}$ is flat over $A_\tau$, we have that
\[
\frac{\widehat{\O}_{\X_\sigma,N_{e_0,\sigma}}}{I_{e_0,r}^{(\phi(e_0))}\widehat{\O}_{\X_\sigma,N_{e_0,\sigma}}}\stackrel{\cdot \chi^{u_1}}{\longrightarrow}\frac{\widehat{\O}_{\X_\sigma,N_{e_0,\sigma}}}{I_{e_0,r}^{(\phi(e_0))}\widehat{\O}_{\X_\sigma,N_{e_0,\sigma}}}
\]
is also injective. We deduce
\begin{align*}
I_{e_0,r}^{(\phi(e_0))}\widehat{\O}_{\X_\sigma,N_{e_0,\sigma}}&=(I_{e_0,r}^{(\phi(e_0))}(\widehat{\O}_{\X_\sigma,N_{e_0,\sigma}})_{\chi^{u_1}})\cap \widehat{\O}_{\X_\sigma,N_{e_0,\sigma}} &\text{ by \cite[Prop 2.2]{E} }\\
&=((I_{e_0,r}^{(\phi(e_0))}A_{\tau'})(\widehat{\O}_{\X_\sigma,N_{e_0,\sigma}})_{\chi^{u_1}})\cap \widehat{\O}_{\X_\sigma,N_{e_0,\sigma}}& \text{ by \eqref{eq:diaAtau} }\\
&=(\langle y_{e_0}^{\phi(e_0)}\rangle(\widehat{\O}_{\X_\sigma,N_{e_0,\sigma}})_{\chi^{u_1}})\cap \widehat{\O}_{\X_\sigma,N_{e_0,\sigma}}& \text{ by \eqref{eq:defIe0rn} }\\
&=(I_{\Y_r|\X_r}(\widehat{\O}_{\X_\sigma,N_{e_0,\sigma}})_{\chi^{u_1}})\cap \widehat{\O}_{\X_\sigma,N_{e_0,\sigma}}&\text{ by \eqref{eq:IYXr} }\\
&=I_{\ol{\Y}_r,\X_\sigma}\widehat{\O}_{\X_\sigma,N_{e_0,\sigma}}&\text{ by \eqref{eq:IYXsigma}},
\end{align*}
concluding the proof of the second equality of the statement.

 It remains to show that $h_r^*\L|_{\X_r}(-\Y_r)$ is $(\Delta_r,\mu)$-quasistable. By the local equations \eqref{eq:IYXr} of $\Y_r$, the multidegree of $I_{\Y_r|\X_r}\otimes \O_{X_r}$ is precisely $\div(\phi)$. This means that the multidegree of $h_r^*\L|_{\X_r}(-\Y_r)\otimes\O_{X_r}$ is $D_0+\div(\phi)$, which is $(v_0,\mu)$-quasistable by the construction of $\phi$. Since quasistability is an open numerical condition (see \cite[Proposition 34]{Es01}), the sheaf $h_r^*\L|_{\X_r}(-\Y_r)$ is $(\Delta_r,\mu)$-quasistable. 
\end{proof}

We are now ready to prove Theorem \ref{thm:mainlocal}.

\begin{proof}[Proof of Theorem \ref{thm:mainlocal}]
Set $\alpha:=\alpha_{\L,\mu}$ and $\beta:=\beta_{\L,\mu}$. First note that, by  \eqref{def:fan}, $T_{\L,\mu}$ is covered by open subsets of the form $\Tor_A(\sigma)$, where $\sigma\in \Sigma_{\Gamma,\D_0,\mu}$ is a maximal cone, i.e., $\sigma=K_{\Gamma,\E,\phi}$ and $D_0+\div(\phi)$ is $(v_0,\mu)$-quasistable. It suffices  to show that $\alpha\circ\beta|_{\Tor_A(\sigma)}$ is a morphism.\par

 Fix $\sigma=K_{\Gamma,\E,\phi}$, for some $\E$, $\phi$.  For each extremal ray $r\in \sigma$ there is an open immersion $T_r\to T_\sigma$ (recall that $T_\sigma=\Tor_A(\sigma)$ and $T_r=\Tor_A(r)$). Recall \eqref{eq:agar}. By Lemma \ref{lem:Dr}, there is a Cartier divisor $\Y_r\subset \X_r$ such that $h_r^*\L|_{\X_r}(-\Y_r)$ is $(\Delta, \mu)$-quasistable.
 Let $\ol{\Y_r}$ be the closure of $\Y_r$ in $\X_\sigma=\X\times_{T}T_\sigma$, and define $\Y$ as the Weil divisor of $\X_\sigma$:
\begin{equation}
\label{eq:YX}
\Y:=\bigcup_{r\in \sigma(1)}\overline{\Y_r}.
\end{equation}

Recall that $e_0^s$ and $e_0^t$ denote the edges of $\Gamma'^\E$ over an edge $e_0\in\E$, and the definition \eqref{eq:u'u''} of $u_{e_0}',u_{e_0}''$. If an extremal ray $r\in \sigma(1)$ is associated to the cone $K_{\Gamma',\emptyset,\phi'}$ and $e_0^s$ (respectively, $e_0^t$) is contracted in the specialization $\Gamma^\E\to \Gamma'$, then, by Lemma \ref{lem:ue0}, we have $u'_{e_0}(r)=0$  (respectively, $u''_{e_0}(r)=0$); in this case, $\phi'(e_0)=\phi(e_0^t)$ (respectively, $\phi'(e_0)=\phi(e_0^s)$). Moreover, if $e_0\in \E$, then either $e_0^s$ or $e_0^t$ is contracted.\par
   By Lemma \ref{lem:Dr}, the divisor $\overline{\Y_r}$ is given, locally around the point $N_{e_0,\sigma}\in \X_\sigma$, by the ideal:
\begin{itemize}
\item $I_{e_0,r}^{(\phi(e_0))}\widehat{\O}_{\X_\sigma,N_{e_0,\sigma}}$, if $e_0\notin\E$;
\item $I_{e_0,r}^{(\phi(e_0^t))}\widehat{\O}_{\X_\sigma,N_{e_0,\sigma}}$, if $e_0\in \E$ and $u'_{e_0}(r)=0$;
\item $I_{e_0,r}^{(\phi(e_0^s))}\widehat{\O}_{\X_\sigma,N_{e_0,\sigma}}$, if $e_0\in \E$ and $u''_{e_0}(r)=0$.
\end{itemize}
If $u'_{e_0}(r)=u''_{e_0}(r)=0$, then, by Lemma \ref{lem:ue0}, $e_0$ is contracted in the specialization $\Gamma\to\Gamma'$, and in this case $e_0$ is not and edge of $\Gamma'$, which means, by Lemma \ref{lem:sigmadual}, that the node $N_{e_0}$ of $X$ is smoothed in $X_r$, and $I_{e_0,r}^{(\phi(e_0^t))}=I_{e_0,r}^{(\phi(e_0^s))}=A_\tau$.\par
  Therefore, by Proposition \ref{prop:Icap} and the fact that $\widehat{\O}_{\X_\sigma,N_{e_0,\sigma}}$ is a flat $A_\tau$-algebra, the divisor $\Y$ is given locally around the point $N_{e_0,\sigma}$ by the ideal:
\begin{itemize}
\item $\langle y_{e_0}\rangle^{\phi(e_0)}\widehat{\O}_{\X_\sigma,N_{e_0,\sigma}}$, if $e_0\notin\E$;
\item $\langle y_{e_0}\rangle^{\phi(e_0^s)}\left\langle y_{e_0},\chi^{u_{e_0}''}\right\rangle\widehat{\O}_{\X_\sigma,N_{e_0,\sigma}}$, if $e_0\in\E$.
\end{itemize}

We claim that $\L_\sigma:=h_\sigma^*\L\otimes\I_{\Y|\X_\sigma}$ is a $(\Delta,\mu)$-quasistable torsion-free rank-1 sheaf, where $h_\sigma\col \X_\sigma\ra \X$ is the first projection.  Recall \eqref{eq:Xsigma}. It is enough that $\L_{\sigma}\otimes \O_{X_\sigma}$ is $(\Delta(\eta_\sigma),\mu)$-quasistable, because quasistability is an open numerical condition and the multidegree of $\L_\sigma$ on any fiber is the specialization of the multidegree of $\L_\sigma$ on the fiber $X_\sigma$. By equation \eqref{eq:locnode} and Lemma \ref{lem:ue0}, 
\[
\widehat{\O}_{\X_\sigma,N_{e_0,\sigma}}\cong\frac{\widehat{\O}_{T_\sigma,\eta_\sigma}[[x_{e_0},y_{e_0}]]}{\langle x_{e_0}y_{e_0}-\chi^{e_0^\vee}\rangle}=\frac{\widehat{\O}_{T_\sigma,\eta_\sigma}[[x_{e_0},y_{e_0}]]}{\langle x_{e_0}y_{e_0}-\chi^{u'_{e_0}}\chi^{u''_{e_0}}\rangle}.
\]
By Remark \ref{rem:uinvertible}, we have that both $\chi^{u'_{e_0}}$ and $\chi^{u''_{e_0}}$ are in the maximal ideal of $\widehat{\O}_{T_\sigma,\eta_\sigma}$. It follows from Proposition \ref{prop:Ifree} that $\I_{\Y|\X_\sigma}\otimes \O_{X_\sigma}$ is a torsion-free rank-$1$ sheaf. We now show that, locally around the node $N_{e_0,\sigma}$, the following properties hold:
\begin{enumerate}
\item  If $e_0\notin\E$, then $\deg(\I_{\Y|\X_\sigma}\otimes \O_{X_\sigma^s})=-\phi(e_0)$, and so $\deg(\I_{\Y|\X_\sigma}\otimes \O_{X_\sigma^t})=\phi(e_0)$;
\item  if $e_0\in\E$, then $\deg(\I_{\Y|\X_\sigma}\otimes \O_{X_\sigma^s})=-\phi(e_0^s)$, and so $\deg(\I_{\Y|\X_\sigma}\otimes \O_{X_\sigma^t})=\phi(e_0^s)+1=\phi(e_0^t)$,
\end{enumerate}
where $X_\sigma^s$ and $X_\sigma^t$ are, respectively, the components of $X$ corresponding to the source and the target of $e$ (if $\phi(e)=0$ then $\I_{\Y|\X_\sigma}\otimes \O_{X_\sigma}=\langle 1\rangle$ locally around $N_{e,\sigma}$, hence there is no ambiguity in the choice of the source and target).  Indeed, (1) follows from the fact that, locally around $N_{e_0,\sigma}$, we have $\I_{\Y|\X_\sigma}=\langle y_{e_0}\rangle^{\phi(e_0)}$. For (2), note that Proposition \ref{prop:Ifree} implies the existence of an exact sequence
\[
0\to \O_{N_{e_0,\sigma}}\to \I_{\Y|\X_\sigma}\otimes \O_{X_\sigma^s}\to \I_{\Y|\X_\sigma}\cdot\O_{X_\sigma^s}\to 0
\]
where, locally around $N_{e_0,\sigma}$, we have $\I_{\Y|\X_\sigma}\cdot\O_{X_\sigma^s}=\langle y_{e_0}\rangle^{\phi(e_0^s)+1}$. Hence we get 
\[
\deg(\I_{\Y|\X_\sigma}\otimes\O_{X_\sigma^s})=-(\phi(e_0^s)+1)+1=-\phi(e_0^s).
\] 
Therefore, when restricted to $X_\sigma$, the ideal sheaf $\I_{\Y|\X_\sigma}$ has multidegree precisely $\div(\phi)$. This means that $\L_\sigma\otimes\O_{X_\sigma}$ has multidegree $D_0+\div(\phi)$ and hence it is $(\Delta(\eta_\sigma),\mu)$-quasistable. This completes the proof of  the claim. \par

We deduce from the claim that $\alpha\circ\beta$ is a morphism, so  there remains to prove that $\alpha\circ\beta$ is finite. Note that $\alpha\circ\beta$ is proper, because $\beta$ and $\ol{\J}_{\pi,\mu}\to T$ are proper. Then it suffices  that $\alpha\circ\beta$ is quasi-finite (i.e., it has zero dimensional fibers).\par
		Let $Z$ be the locus in $\ol{\J}_{\pi,\mu}$ where the fibers of $\alpha\circ\beta$ are positive-dimensional. By Chevalley's upper semi-continuity theorem, $Z$ is closed. Since $\ol{\J}_{\pi,\mu}\to T$ is proper, the image of $Z$ in $T$ must contain the closed point $0$ or be empty. \par
		 Let $\sigma:=K_{\Gamma,\E,\phi}$ be a cone in $\Sigma_{\Gamma,D_0,\mu}$; in particular, $\sigma\cap \R^{E(\Gamma)}_{>0}\neq\emptyset$. If $O(\sigma)$ is the orbit in $T_{\L,\mu}$ associated to $\sigma$, then $\beta(O(\sigma))=0$. Moreover, $V(\sigma)$ is proper by \cite[Proposition 3.2.7]{CLS}. Let $W$ be a fiber of the map 
\[
(\alpha\circ\beta)|_{O(\sigma)}\col O(\sigma)\to \ol{\J}_{\pi,\mu}.
\]

Let us prove that $\dim(W)=0$. By contradiction, assume that $\dim(W)\geq 1$. Then the closure $\ol{W}$ of $W$ in $V(\sigma)$ cannot be contained in $O(\sigma)$, since $O(\sigma)$ is a torus. Hence $\ol W$ must intersect another orbit $O(\tau)$, with $\sigma \prec \tau$ and $\sigma\ne \tau$. Since $\alpha\circ\beta(\ol{W})=\alpha\circ\beta(W)$, the multidegree of the sheaves $\I_{\Y|\X_\sigma}\otimes \O_{X_\sigma}$ and $\I_{Y|\X_\tau}\otimes \O_{X_\tau}$ are equal. However, $\sigma\prec\tau$ corresponds to a specialization $(\Gamma,\E',\phi')\to (\Gamma,\E,\phi)$ (via Proposition \ref{prop:union}).		
But the multidegrees of  $\I_{\Y|\X_\sigma}\otimes \O_{X_\sigma}$ and $\I_{Y|\X_\tau}\otimes \O_{X_\tau}$  are, respectively, $(\E,\div(\phi))$ and $(\E',\div(\phi'))$. Hence  $\E=\E'$, which implies $\phi=\phi'$ and $\sigma=\tau$, contradicting that $\sigma\ne\tau$.\par

		The preimage $\beta^{-1}(0)$ of the closed point is the union of a finite number of orbits. More precisely, it is the union of the orbits $O(\sigma)$, for every $\sigma\in \Sigma_{\Gamma,D_0,\mu}$ such that $\sigma\cap \R^{E(\Gamma)}_{>0}\neq \emptyset$, and all these cones are of the form $\sigma=K_{\Gamma,\E,\phi}$. By the argument in the previous paragraph, the closed point $0$ of $T$ cannot be in the image of $Z$. Hence $Z$ is empty, so we deduce that $\alpha\circ\beta$ is quasi-finite, and hence finite.
\end{proof}

Let $\st(\Gamma)$ be the stabilization of $\Gamma$ (recall the definition before Example \ref{exa:stgraph}). Let $\mu$ be a polarization on $\st(\Gamma)$. We can define an induced polarization $\mu'$ on $\Gamma$ by 
\[
\mu'(v)=\begin{cases}
         \mu(v),&\text{ if $v \in V(\st(\Gamma))$;}\\
	0,       &\text{ otherwise}.
					\end{cases}
\]		
We can consider the stable reduction  $\pi_{\st}\col \X_{\st}\to T$ of $\pi\col \X\ra T$ and the natural morphisms $\text{red}\col \X\ra \X_{st}$ and  
$\text{red}\col\ol{\J}_{\pi,\mu'}\to \ol{\J}_{\pi_{\st},\mu}$ described in \eqref{eq:pipist}. 

\begin{Cor}
Let $\mu$ be a degree-$d$ polarization on $\st(\Gamma)$, $\L$ a degree-$d$ invertible sheaf on $\X$ and $\alpha_{\L,\mu}\col T\dashrightarrow\ol{\J}_{\pi_{\st},\mu}$ the rational map induced by $\L$. Then the rational map $\alpha_{\L,\mu}\circ\beta_{\L,\mu'}\col T_{\L,\mu'}\dashrightarrow \ol{\J}_{\pi_{\st},\mu}$ is defined everywhere, i.e., it is a morphism of schemes.
\end{Cor}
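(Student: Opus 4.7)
The plan is to deduce the corollary from Theorem \ref{thm:mainlocal} by composing with the pushforward morphism along the stable reduction. Concretely, I would apply Theorem \ref{thm:mainlocal} to the family $\pi\col \X\to T$ with the polarization $\mu'$ on $\Gamma$ and the invertible sheaf $\L$: this gives that the rational map
\[
\alpha_{\L,\mu'}\circ\beta_{\L,\mu'}\col T_{\L,\mu'}\longrightarrow \ol{\J}_{\pi,\mu'}
\]
is a morphism of schemes (in fact finite). Composing it with the pushforward morphism $\red\col \ol{\J}_{\pi,\mu'}\to \ol{\J}_{\pi_{\st},\mu}$ from \eqref{eq:pipist} produces a genuine morphism $T_{\L,\mu'}\to \ol{\J}_{\pi_{\st},\mu}$ defined on all of $T_{\L,\mu'}$.

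The remaining step is to identify this morphism with the rational map $\alpha_{\L,\mu}\circ\beta_{\L,\mu'}$. Over the open subset $U\subset T$ where the fibers of $\pi$ are smooth, the stable reduction $\red\col \X\to \X_{\st}$ is an isomorphism, so by construction of $\red\col \ol{\J}_{\pi,\mu'}\to \ol{\J}_{\pi_{\st},\mu}$ one has $\red\circ\alpha_{\L,\mu'}=\alpha_{\L,\mu}$ as rational maps on $U$. Pulling back via $\beta_{\L,\mu'}$ to the preimage $\beta_{\L,\mu'}^{-1}(U)$, which is open and dense in $T_{\L,\mu'}$ since $\beta_{\L,\mu'}$ is a toric modification restricting to an isomorphism over the torus, we conclude that
\[
\red\circ\alpha_{\L,\mu'}\circ\beta_{\L,\mu'} \;=\; \alpha_{\L,\mu}\circ\beta_{\L,\mu'}
\]
as rational maps on $T_{\L,\mu'}$. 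Since $\ol{\J}_{\pi_{\st},\mu}$ is separated, the morphism on the left extends the rational map on the right to all of $T_{\L,\mu'}$.

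I expect no substantive obstacle here: the heavy lifting has already been done in Theorem \ref{thm:mainlocal} and in the construction of the pushforward morphism between the two compactified Jacobians in \eqref{eq:pipist}. The only point requiring a little care is the compatibility statement $\red\circ\alpha_{\L,\mu'}=\alpha_{\L,\mu}$, which however is immediate from the fact that $\red$ is an isomorphism on the smooth locus and that the polarization $\mu'$ was defined precisely as the pullback of $\mu$ to the exceptional components (where it is $0$), so that pushforward of quasistable sheaves along $\red$ acts as the identity over the smooth locus.
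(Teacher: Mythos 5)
Your proof is correct and follows exactly the route the paper sets up but leaves implicit: the paper introduces $\mu'$ and the pushforward morphism $\red\col\ol{\J}_{\pi,\mu'}\to\ol{\J}_{\pi_{\st},\mu}$ immediately before stating the corollary precisely so that one can compose $\red$ with the finite morphism produced by Theorem \ref{thm:mainlocal} applied to $(\L,\mu')$. Your check that the composed morphism agrees with $\alpha_{\L,\mu}\circ\beta_{\L,\mu'}$ over the smooth locus, together with normality of $T_{\L,\mu'}$ and separatedness of the target, is exactly the right way to finish the argument.
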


\begin{Exa}
\label{exa:divisor}
Keep the notation in Examples \ref{exa:fan} and \ref{exa:ideal} and define $\sigma:=K_{\Gamma,\E,\phi}$. Let $X$ be a curve with dual graph $\Gamma$ and $\pi\col \X\to T=\Spec(k[[t_0,t_1,t_2]])$ be a family of curves with central fiber $X$. Let $\L$ be an invertible sheaf on $\X$ with multidegree $(4,-4)$. Assume that the local equation of $\X$ around the node $N_i$ of $X$ is $x_iy_i=t_i$.\par
  By Example \ref{exa:ideal}, $r_2=(2,1,2)$ and 
\[
(\widehat{\O}_{\X_{r_2}, N_{e_i,\sigma}})_{z_2z_3}\cong\frac{k[[z_2^{\pm1},z_3^{\pm1},z_0,x_i,y_i]]}{\langle x_iy_i-t_i\rangle}
\]
where $z_0=\chi^{(0,-1,1)}=t_1^{-1}t_2$, $z_2=\chi^{(0,2,-1)}=t_1^2t_2^{-1}$ and $z_3=\chi^{(-1,0,1)}=t_0^{-1}t_2$. Hence $t_0=z_0^2z_2z_3^{-1}$, $t_1=z_0z_2$ and $t_2=z_0^2z_2$. This means that $z_0$ is a local parameter of $T_r$ and the local equations of $\X_r$ around the nodes of $X_r$ are $x_1y_1=z_0^2$, $x_2y_2=z_0$, $x_3y_3=z_0^2$ (up to invertibles). Note that, as expected by equation \eqref{eq:xyu0}, the exponents  of $z_0$ are precisely the coordinates of $r_2$. Let $\Y_{v_1}$ be the Weil divisor in $\X_r$, where ${v_1}\in V(\Gamma)$ is the target of the edges in $\Gamma$. The divisor $\Y_{v_1}$ is given locally around the nodes $N_{e_0,r_2}$, $N_{e_1,r_2}$, $N_{e_2,r_2}$, respectively, by $\langle y_1,z_0\rangle$, $\langle y_2\rangle$, $\langle y_3, z_0\rangle$. Since the flow associated to $r_2$ is $\phi_2:=(1,2,1)$ (see Figure \ref{fig:fray}), it is clear that $\phi_2(e_i)e_i^\vee(r_2)=2$. Hence $\Y_{r_2}=2\Y_{v_1}$ is given locally around the nodes $N_{e_0,r_2}$, $N_{e_1,r_2}$, $N_{e_2,r_2}$, respectively, as $\langle y_1\rangle$, $\langle y_2^2\rangle $, $\langle y_3 \rangle$. Then the ideal sheaf $\I_{\Y_{r_2}|\X_r}$, when restricted to $X$, has multidegree precisely $(-4,4)=\div(\phi_2)$.\par

  Analogously, we can find the local equations for the ideal sheaves $\I_{\Y_{r_1}|\X_{r_1}}$, $\I_{\Y_{r_3}|\X_{r_3}}$, $\I_{\Y_{r_4}|\X_{r_4}}$ which, locally around the nodes $N_{e_0,r_1}$, $N_{e_0,r_3}$ and $N_{e_0, r_4}$, are given by the ideals $\langle y_0 \rangle$, $\langle y_0^2\rangle$ and $\langle y_0^2\rangle$, respectively. Then, the ideal sheaves $I_{\overline{\Y}_{r_i}|\X_{\sigma}}$ locally around $N_{e_0,\sigma}$ are $I_i$, where $I_i$ are defined in Example \ref{exa:ideal}. Hence we get that $\I_{\Y|\X_{\sigma}}$ is given by $\langle y\rangle \langle y,z_1\rangle$ locally around $N_{e_0,\sigma}$ (as in the proof of Theorem \ref{thm:mainlocal}).\par

  Restricting to $X_\sigma$, the sheaf $\I_{\Y}\otimes \O_{X_\sigma}$ is torsion-free rank-$1$ (by Proposition \ref{prop:Ifree}); it is locally free around $N_{e_2,\sigma}$ but not around $N_{e_0,\sigma}$ and $N_{e_1,\sigma}$. By Lemma \ref{lem:sigmadual}, we have that $X_{\sigma}$ has two components, which we denote by $X_{v_0}$ and $X_{v_1}$, where $v_0$ and $v_1$ are the vertices of $\Gamma$ with $v_0$ been the source of the edges. By Proposition \ref{prop:Ifree}, the restriction of $\I_{\Y|\X_{\sigma}}$ to $X_{v_0}$ is 
\[
\I_{\Y|\X_\sigma}\otimes \O_{X_{v_0}}=\O_{X_{v_0}}(-2N_{e_0,\sigma}-2N_{e_1,\sigma}-N_{e_2,\sigma})\oplus \O_{N_{e_0,\sigma}}\oplus \O_{N_{e_1,\sigma}},
\]
whereas
\[
\I_{\Y|\X_{\sigma}}\otimes\O_{X_{v_1}}=\O_{X_{v_1}}(N_{e_0,\sigma}+N_{e_1,\sigma}+N_{e_2,\sigma})\oplus \O_{N_{e_0,\sigma}}\oplus \O_{N_{e_1,\sigma}}.
\]
Hence, the multidegree of $\I_{\Y|\X_\sigma}\otimes\O_{X_\sigma}$ is precisely $(\{e_0,e_1\},\div(\phi))$ which means that $h_\sigma^*\L\otimes \I_{Y|\X_\sigma}\otimes\O_{X_\sigma}$ has multidegree $(\{e_0,e_1\},D)$ with $D(v_0)=D(v_1)=1$ and $D(v_{e_0})=D(v_{e_1})=-1$, hence it is $(\Delta,\mu)$-quasistable.
\end{Exa}

  \subsection{Resolving the universal Abel map}\label{sec:abelgeo}
Fix nonnegative integers $g$ and $n$. Let $\mathcal A=(a_0,\ldots,a_n,m)$ be a sequence of $n+2$ integers. Set $d:=\sum_{i=0}^na_i+m(2g-2)$. Let $\mu$ be a universal degree-$d$ polarization. 

Let $\pi\col\C_{g,n+1}\to\overline{\M}_{g,n+1}$ be the universal family and define 
\[
\L:=\omega_{\pi}^m\otimes\O_{\C_{g,n+1}}\left(\sum_{i=0}^n a_i\text{Im}(\Delta_i)\right),
\]
 where $\Delta_i$ are the canonical sections of $\pi$. Consider the chain of maps
\[
\alpha_{\mathcal A,\mu}\col \overline{\M}_{g,n+1}\stackrel{\alpha_{\L,\mu}}{\dashrightarrow} \overline{\J}_{\mu,g,n+1}\stackrel{\text{red}}{\longrightarrow}\ol{\J}_{\mu,g}
\]
where $\alpha_{\L,\mu}$ is the rational Abel map relative to $\pi$ and $\L$ (usually called \emph{Abel section}), and $\text{red}$ is the map defined in \eqref{eq:pipist}. 

Let $\overline{\M}_{\mathcal A,\mu}$ be the normalization of the closure of the image of the Abel section $\alpha_{\mathcal L,\mu}$. Since closed substacks and normalizations of Deligne-Mumford stacks are Deligne-Mumford  (see \cite[Examples 8.12 and 8.13, Chapter XII]{ACG}), the stack $\overline{\M}_{\mathcal A,\mu}$ is Deligne-Mumford. We can consider the induced birational morphism 
\[
\beta_{\mathcal A,\mu} \col \ol{\M}_{\mathcal A,\mu}\ra \ol{\M}_{g,n+1}.
\]

Consider the \'etale covering of $\overline{\M}_{g,n+1}$ given by 
\[
 T_X:=\Spec(\widehat{\O}_{\overline{\M}_{g,n+1},[X]})\stackrel{f_X}{\longrightarrow}\overline{\M}_{g,n+1}
\]
 for $[X]$ running over the points $[X]\in\overline{\M}_{g,n+1}$. Let 
\[
\pi_X\col T_X\times_{\overline{\M}_{g,n+1}} \C_{g,n+1}\to T_X 
\;\;
\text{ and }
\;\;
g_X\col T_X\times_{\overline{\M}_{g,n+1}} \C_{g,n+1}\to \C_{g,n+1}
\]
be the first projection and the second projection, respectively, and consider the invertible sheaf on the family $\pi_X$ given by 
\[
\L_X:=g_X^*\L.
\] 
We have a natural isomorphism
\begin{equation}
\label{eq:isolocalring}
\widehat{\O}_{\overline{\M}_{g,n+1},[X]}\cong A[[t_e]]_{e\in E(\Gamma_X)},
\end{equation}
where $A$ is a local $k$-algebra. Hence we can apply Theorem \ref{thm:mainlocal} to construct a blowup $\beta_X\col T_{\L_X,\mu}\to T_X$ such that 
\[
\alpha_{\mathcal{L}_X,\mu}\circ \beta_X\col T_{\L_X,\mu}\to \overline{\J}_{\pi_X,\mu}= \overline{\J}_{\mu,g,n+1}\times_{\overline{M}_{g,n+1}} T_X
\]
is a finite morphism. Here, $\alpha_{\L_X,\mu}$ is the Abel map relative to $\pi_X$ and $\L_X$. Hence  $T_{\L_X,\mu}$ is the normalization of the closure of the image of $T_X\dashrightarrow \J_{\pi_X,\mu}$ because $T_{\L_X,\mu}$ is toric and hence normal.

\begin{Thm}
\label{thm:mainglobal}
The collection $T_{\L_X,\mu}$ form an \'etale atlas for $\overline{\M}_{\mathcal A,\mu}$ and the rational map $\alpha_{\mathcal L,\mu}\circ\beta_{\mathcal A,\mu}\col \overline{\M}_{\mathcal A,\mu}\dashrightarrow\overline{\J}_{\mu,g,n+1}$ is defined everywhere, i.e., it is a morphism of Deligne-Mumford stacks. This morphism is \'etale locally presented as the morphism of schemes $T_{\L_X,\mu}\to \overline{\J}_{\pi_X,\mu}$. In particular,  the rational map
\[
\alpha_{\mathcal{A},\mu}\circ \beta_{\mathcal A,\mu}\col \ol{\M}_{\mathcal{A},\mu}\dashrightarrow \ol{\J}_{\mu,g}. 
\]
is defined everywhere, i.e., it is a morphism of Deligne-Mumford stacks. 
\end{Thm}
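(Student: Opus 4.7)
The plan is to transfer the local resolution of Theorem \ref{thm:mainlocal} to the \'etale atlas $\{T_X\}$ and show that it descends to a resolution of the universal Abel map on the global stack $\ol{\M}_{\mathcal A,\mu}$.

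First, I would establish the key local identification: for each $[X]\in\ol{\M}_{g,n+1}$, the toric family $T_{\L_X,\mu}$ is canonically isomorphic to the base change $T_X\times_{\ol{\M}_{g,n+1}}\ol{\M}_{\mathcal A,\mu}$. By Theorem \ref{thm:mainlocal}, the composition $\alpha_{\L_X,\mu}\circ\beta_X\col T_{\L_X,\mu}\to\ol{\J}_{\pi_X,\mu}$ is finite, and $T_{\L_X,\mu}$ is normal (being locally of the form \eqref{eq:TVA} for the fan $\Sigma_{\Gamma_X,D_0,\mu}$, and toric varieties are normal, with normality preserved under the base change \eqref{eq:TVA} since $A[[z_1,\dots,z_n]]$ is regular). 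Moreover, $T_{\L_X,\mu}$ is birational onto its image in $\ol{\J}_{\pi_X,\mu}$, because $\beta_X$ is a blowup and $\alpha_{\L_X,\mu}$ is generically an embedding. Thus $T_{\L_X,\mu}$ is the normalization of the closure of the image of the rational Abel map restricted to $T_X$. Since $\ol{\M}_{\mathcal A,\mu}$ is by definition the normalization of the closure of the image of $\alpha_{\mathcal L,\mu}$, and normalization of a closure commutes with the formally \'etale base change $T_X\to\ol{\M}_{g,n+1}$, the canonical identification follows.

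Second, once this identification is in hand, the \'etale atlas $\{T_X\}$ of $\ol{\M}_{g,n+1}$ immediately yields an \'etale atlas $\{T_{\L_X,\mu}\}$ of $\ol{\M}_{\mathcal A,\mu}$, with transition data induced by the Aut-action compatibility (note that the fan $\Sigma_{\Gamma_X,D_0,\mu}$ in Definition \eqref{def:fan} depends only on the combinatorial data $(\Gamma_X,v_0,\mu,D_0)$, so it is automatically $\Aut(X,p_0,\dots,p_n)$-equivariant). Then the rational map $\alpha_{\mathcal L,\mu}\circ\beta_{\mathcal A,\mu}$ pulled back via $T_{\L_X,\mu}\to\ol{\M}_{\mathcal A,\mu}$ agrees with $\alpha_{\L_X,\mu}\circ\beta_X$, which is everywhere defined by Theorem \ref{thm:mainlocal}. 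Since being a morphism of stacks is \'etale-local on the source, the pullback of $\alpha_{\mathcal L,\mu}\circ\beta_{\mathcal A,\mu}$ to $\ol{\M}_{\mathcal A,\mu}$ is a morphism to $\ol{\J}_{\mu,g,n+1}$. For the final statement, the composition with the reduction map $\red\col\ol{\J}_{\mu,g,n+1}\to\ol{\J}_{\mu,g}$ from \eqref{eq:pipist} is itself a morphism, yielding the desired resolution $\alpha_{\mathcal A,\mu}\circ\beta_{\mathcal A,\mu}\col\ol{\M}_{\mathcal A,\mu}\to\ol{\J}_{\mu,g}$.

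The main obstacle will be Step 1, namely the descent/compatibility of the local toric resolutions. The subtlety is that the ``\'etale covering'' $T_X\to\ol{\M}_{g,n+1}$ is given via the formal completion $\Spec(\widehat{\O}_{\ol{\M}_{g,n+1},[X]})$ rather than a genuine \'etale morphism of finite type; one must verify that normalizations of closures commute with these formal (pro-)\'etale base changes. This should follow from \cite{stacks-project} (flatness of completions at closed points of Noetherian schemes and the fact that normalization commutes with flat base change with geometrically normal fibers), together with the fact that the fan $\Sigma_{\Gamma_X,D_0,\mu}$ is canonically determined by $[X]$ so that distinct choices of \'etale representatives produce canonically isomorphic blowups. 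Granted these two points, the patching is automatic and the theorem reduces entirely to the local result.
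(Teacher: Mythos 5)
Your proof is correct and follows the same route as the paper's, which simply invokes that normalization commutes with \'etale base change and then composes with the reduction map $\red$ from \eqref{eq:pipist}. Your elaboration of the subtleties — the $\Aut$-equivariance of the fan $\Sigma_{\Gamma_X,D_0,\mu}$ and the caveat that $T_X=\Spec(\widehat{\O}_{\ol{\M}_{g,n+1},[X]})$ is only formally (pro-)\'etale rather than \'etale of finite type, so one must check compatibility of normalization with this base change — makes explicit some points the paper leaves implicit, but does not change the argument.
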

\begin{proof}
The result follows by the fact that the normalization commutes with \'etale base change, and by taking the composition with the map $\red\col \ol{\J}_{\mu,g,n+1}\to \ol{\J}_{\mu,g}$ in \eqref{eq:pipist}.
\end{proof}

\begin{Rem}
\label{rem:universal}
Recall that $\overline{\J}_{\mu,g,n+1}$ is functorial. Therefore, the morphism $\alpha_{\mathcal{A},\mu}\circ\beta_{\mathcal A,\mu}$ is induced by a  universal $(\Delta,\mu)$-quasistable torsion-free rank-$1$ sheaf $\I$ on the family $\pi\col\overline{\M}_{\mathcal{A},\mu}\times_{\overline{\M}_{g,n+1}}\ol{\M}_{g,n+2}\ra \ol{\M}_{\mathcal A,\mu}$. Moreover the pullback of $\I$ to $T_{\L_X,\mu}\times_{\ol{\M}_{g,n+1}}\ol{\M}_{g,n+2}$ is isomorphic to  $\L_X\otimes \I_{\Y|\X}$ (recall Equation \eqref{eq:YX}), up to tensoring with the pullback of an invertible sheaf on $T_{\L_X,\mu}$.
\end{Rem}

\begin{Rem}
Let $\mu$ be a genus-$g$ degree-$d$ canonical polarization. Consider a collection $\mathcal{A}=(a_0,\ldots,a_n,m)$ such that $d=m(2g-2)+\sum_{i=0}^n a_i$. There exists a natural diagram
\[
\begin{tikzcd}
\ol{\J}_{\mu,g} \arrow{r}\arrow{d}& \ol{\J}_{d,g}\arrow{d}\\
\ol{\M}_{g,1}\arrow{r} &\ol{\M}_g
\end{tikzcd}
\]
where $\ol{\J}_{d,g}$ is Caporaso's compactified Jacobian  (see \cite[Lemma 6.2]{AP2}). Hence we get an induced Abel map $\ol{\M}_{\mathcal{A},\mu}\to \ol{\J}_{d,g}$.
\end{Rem}

\section{The double ramification cycle and further problems}\label{sec:app}

\subsection{The double ramification cycle}\label{sec:double}
Fix nonnegative integers $g$ and $n$.
Let $\mathcal A=(a_0,\ldots, a_n,m)$ be a sequence of $n+2$ integers such that $\sum_{0\le i\le n} a_i+m(2g-2)=0$ and let $\mu$ be the canonical degree-$0$ polarization.

By Theorem \ref{thm:mainglobal}, we can consider the following commutative diagram:
\[
\begin{tikzcd}
&&\overline{\J}_{\mu,g}\arrow{d}& \Z_{\mu,g}\arrow{l}\\
\overline{\M}_{\mathcal A,\mu}\arrow{r}{\beta_{\mathcal A}}\arrow[bend left=15]{rru}{\alpha_{\mathcal A}\circ \beta_{\mathcal A}}&\overline{\M}_{g,n+1}\arrow{r}\arrow[dashed]{ur}{\alpha_{\mathcal A}}&\overline{\M}_{g,1}\arrow{ur}{\mathcal{\O}}
\end{tikzcd}
\]
where $\Z_{\mu,g}$ is the image of the zero section $\O$ of $\ol{\J}_{\mu,g}\to \overline{\M}_{g,1}$. 
 The \emph{double ramification cycle} is the cycle
\[
DRC_{\mathcal A}:=\beta_{\mathcal A*}(\alpha_{\mathcal A}\circ\beta_{\mathcal A})^*([\Z_{\mu,g}]))\in \text{Chow}^g(\overline{\M}_{g,n+1}).
\]

By \cite[Theorem 1.3]{H}, $DRC_{\mathcal A}$ coincides with the double ramification cycle as defined in \cite{L1,L2, GV} and computed in \cite{JPPZ}.
As in Remark \ref{rem:universal}, let $\I$ be the universal sheaf over the family 
\[
\pi\col\overline{\M}_{\mathcal A,\mu}\times_{\overline{\M}_{g,n+1}}\overline{\M}_{g,n+2}\ra\ol{\M}_{\mathcal A,\mu}.
\]

\begin{Thm}\label{thm:drc}
We have 
\[
DRC_{\mathcal A}=\beta_{\mathcal A*}(c_g(R^1\pi_*(\I))).
\]
\end{Thm}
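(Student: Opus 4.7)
The plan is a direct application of Dudin's strategy from \cite{D}, made available by the morphism status of $\alpha_{\mathcal A}\circ\beta_{\mathcal A}$ established in Theorem \ref{thm:mainglobal}. The argument has three essential steps.

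First, by Remark \ref{rem:universal}, the morphism $\alpha_{\mathcal A}\circ\beta_{\mathcal A}\col \overline{\M}_{\mathcal A,\mu}\to \overline{\J}_{\mu,g}$ is the classifying morphism for the universal $(\Delta,\mu)$-quasistable torsion-free rank-$1$ sheaf $\I$ of degree $0$ on the family $\pi$. Since the zero section $\Z_{\mu,g}\subset \overline{\J}_{\mu,g}$ parametrizes pairs $(X,\O_X)$ (i.e., the quasistable representative of the trivial class), the preimage $(\alpha_{\mathcal A}\circ\beta_{\mathcal A})^{-1}(\Z_{\mu,g})$ is precisely the closed substack of $\overline{\M}_{\mathcal A,\mu}$ over which $\I$ restricts fiberwise to the trivial sheaf.

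Second, I would identify the normal data of $\Z_{\mu,g}$ inside $\overline{\J}_{\mu,g}$. Over the smooth locus $\M_{g,1}$ this is standard: $\J_{0,g}\to \M_{g,1}$ is an abelian scheme and the tangent space at the origin is canonically $R^1(\pi_{\mu,g})_*\O$, a rank-$g$ vector bundle whose top Chern class represents $[\Z_{\mu,g}]$ in the self-intersection formula. Via the universal property of $\I$ and base change, the pullback of this bundle under $\alpha_{\mathcal A}\circ\beta_{\mathcal A}$ is, along $(\alpha_{\mathcal A}\circ\beta_{\mathcal A})^{-1}(\Z_{\mu,g})$, identified with $R^1\pi_*\I$. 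Combining these identifications yields
\[
(\alpha_{\mathcal A}\circ\beta_{\mathcal A})^*[\Z_{\mu,g}] = c_g(R^1\pi_*\I)\cap [\overline{\M}_{\mathcal A,\mu}].
\]
Pushing forward by $\beta_{\mathcal A}$ and invoking the definition of $DRC_{\mathcal A}$ together with $[H, \text{Theorem 1.3}]$ (which identifies $DRC_{\mathcal A}$ with the classical versions of \cite{L1,L2,GV,JPPZ}) produces the claimed formula.

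The main obstacle is extending the tangent/normal bundle identification across the boundary, where fibers are nodal and $\I$ is only torsion-free (rather than locally free); moreover $\overline{\J}_{\mu,g}$ is singular along $\Z_{\mu,g}$ over the boundary of $\overline{\M}_{g,1}$. This is precisely the step where Dudin's computation in \cite{D} must be invoked: his deformation-theoretic analysis of simple torsion-free rank-$1$ sheaves shows that $R^1\pi_*\I$ continues to govern the local structure of $\overline{\J}_{\mu,g}$ around $\Z_{\mu,g}$, so that the Chern class formula remains valid in the sense of refined intersection theory. The essential input that our paper contributes to this argument is the existence of $\overline{\M}_{\mathcal A,\mu}$ together with the universal sheaf $\I$: without these, $\alpha_{\mathcal A}\circ\beta_{\mathcal A}$ would only be rational and the pullback $(\alpha_{\mathcal A}\circ\beta_{\mathcal A})^*[\Z_{\mu,g}]$ would not be intrinsically defined.
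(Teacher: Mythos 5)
Your proposal takes a route that is conceptually different from the paper's and, as written, contains a genuine gap.

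The paper's proof is short and computational: it invokes Dudin's Corollary 2.2 (namely that $\I|_X\cong\O_X$ if and only if $h^0(\I|_X)=1$), twists by a sufficiently $\pi$-ample effective divisor $D$ to produce a four-term exact sequence
\[
0\to \pi_*\I\to\pi_*(\I(D))\xrightarrow{\;\varphi\;}\pi_*(\I(D)|_D)\to R^1\pi_*\I\to 0,
\]
then identifies $DRC_{\mathcal A}$ with the degeneracy class of the morphism $\varphi$ of locally free sheaves and applies Thom--Porteous, using $\pi_*\I=0$ to simplify. No normal-bundle or tubular-neighborhood construction enters.

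Your argument goes through the normal bundle of $\Z_{\mu,g}$. Two steps do not hold up. First, $R^1\pi_*\I$ is \emph{not} locally free on $\ol{\M}_{\mathcal A,\mu}$: for $d=0$ and $\mathcal A\ne 0$ its rank is $g-1$ generically and jumps to $g$ exactly on the double ramification locus. So the phrase ``the pullback of this bundle \dots is identified with $R^1\pi_*\I$'' is ill-posed --- the pullback of the rank-$g$ normal bundle under $\alpha_{\mathcal A}\circ\beta_{\mathcal A}$ is a genuine vector bundle, which $R^1\pi_*\I$ is not. The class $c_g(R^1\pi_*\I)$ in the theorem has to be interpreted through a locally free resolution (or, equivalently, through the perfect complex $R\pi_*\I$); this is precisely what the paper's four-term sequence and Thom--Porteous provide, and your outline never produces such a resolution. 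Second, the identification you use --- that $R^1\pi_*\I$ agrees with the pulled-back normal bundle along $(\alpha_{\mathcal A}\circ\beta_{\mathcal A})^{-1}(\Z_{\mu,g})$ --- is true, but it is local to the DR locus, whereas $c_g(R^1\pi_*\I)\cap[\ol{\M}_{\mathcal A,\mu}]$ is a global class. Passing from the local identification to the displayed equality $(\alpha_{\mathcal A}\circ\beta_{\mathcal A})^*[\Z_{\mu,g}]=c_g(R^1\pi_*\I)\cap[\ol{\M}_{\mathcal A,\mu}]$ is a non sequitur: if one could push a naive normal-bundle pullback through, it would yield $c_g$ of the pulled-back rank-$g$ bundle $(\alpha_{\mathcal A}\circ\beta_{\mathcal A})^*N_{\Z/\ol{\J}}$, which away from the DR locus differs from $R^1\pi_*\I$. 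Finally, the appeal to ``Dudin's deformation-theoretic analysis'' to bridge the boundary does not reflect what \cite{D} actually proves; Dudin's Corollary~2.2 is the $h^0$-characterization used to set up the degeneracy locus, not a normal-bundle identification across the boundary. To repair your argument you would need to rephrase $\alpha_{\mathcal A}\circ\beta_{\mathcal A}$ near $\Z_{\mu,g}$ as the zero locus of a section of a globally defined bundle, or use a localized top Chern class / Segre-class computation, at which point you would essentially be redoing the paper's Thom--Porteous argument.
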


\begin{proof}
By \cite[Corollary 2.2]{D}, we have $\I|_X\cong \O_X$ if and only if $h^0(\I|_X)=1$ for every stable $(n+1)$-pointed curve $X$. Hence, choosing a sufficiently relative ample divisor $D$ of relative degree $d$ on 
$\pi\col\overline{\M}_{\mathcal A,\mu}\times_{\overline{\M}_{g,n+1}}\overline{\M}_{g,n+2}\to\overline{\M}_{\mathcal A,\mu}$, we get an exact sequence
\[
0\to \I\to \I(D)\to \I(D)|_D\to 0.
\]
Taking pushforward by $\phi$, we obtain an exact sequence
\[
0\to \pi_*(\I)\to\pi_*(\I(D))\stackrel{\varphi}{\to}\pi_*(\I(D)|_D)\to R^1\pi_*(I)\to 0.
\]
Hence, $DRC_{\mathcal A}=D_{d-g}(\varphi)$, where $D_{d-g}(\varphi)$ is the degeneracy class of $\varphi$. By the Thom-Porteous formula, and the fact that $\pi_*(I)$ is either $0$ or trivial (when $\mathcal A=(0,\ldots,0,0)$), we get the result.
\end{proof}

\begin{Rem}
If instead of the canonical degree-$0$ polarization $\mu$, we consider a polarization $\mu'$ which is a ``nondegenerate small perturbation'' of $\mu$ (in the sense of \cite[Definition 2.3]{HKP}), then Theorem \ref{thm:drc} (with the polarization $\mu'$) is a consequence of \cite[Equations (5) and (6), and Theorem 3.1]{HKP}.
\end{Rem}

  Recently, in \cite{PR}, it was given an explicit formula for the Chern character $\text{ch}(R\pi'_*(\L'))$, where $\pi'\col\ol{\M}_{g,n+2}\to\ol{\M}_{g,n+1}$ is the universal family, and
	\[
	\L'=\omega_{\pi'}^m\otimes \O_{\ol{\M}_{g,n+2}}(\sum a_i\text{Im}(\Delta_i))\otimes \O_{\ol{\M}_{g,n+2}}(\Y'),
	\]
	with $\Y'$ a sum of boundary divisors. A similar formula for the universal sheaf $\I$ of $\ol{\M}_{\mathcal{A},\mu}$ would give an explicit formula for the double ramification cycle in terms of the tautological classes (see also the discussion in Subsection \ref{subsec:moduli}).\par

Now we give a tropical analogue of the locus underlying the double ramification cycle. Let $DRL_{\mathcal{A}}^\trop$ be the preimage in $\ol{\M}^\trop_{\mathcal{A},\mu}$  of the zero section of $\ol{\J}^\trop_{\mu,g}$ under the morphism of generalized cone complexes $\alpha_{\mathcal{A}}^\trop\circ\beta_{\mathcal{A}}^\trop\col \ol{\M}^\trop_{\mathcal{A},\mu}\ra \ol{\J}^\trop_{\mu,g}$ (recall Theorem \ref{thm:abeltrop}). 

\begin{Prop}
The locus $DRL_{\mathcal{A}}^\trop$ is the generalized cone complex
\[
DRL_{\mathcal{A}}^\trop=\underset{\longrightarrow}{\lim} K_{\Gamma,\emptyset,\phi}
\]
where  $(\Gamma,\emptyset,\phi)$ runs through all genus-$g$ stable weighted graphs $\Gamma$ with $n+1$ legs and a flow $\phi$ on $\Gamma$ such that
 $D_{\Gamma,\mathcal{A}}+\div(\phi)=0$ (recall \eqref{eq:DGamma}).
\end{Prop}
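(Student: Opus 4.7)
The plan is to unwind the explicit description of $\alpha^{\trop}_{\mathcal A,\mu}\circ\beta^{\trop}_{\mathcal A,\mu}$ provided by the proof of Theorem \ref{thm:abeltrop}. That proof shows that on each open cone $K^{\circ}_{\Gamma,\E,\phi}$ of $M^{\trop}_{\mathcal A,\mu}$, the composition factors as a cone morphism
\[
K_{\Gamma,\E,\phi}\longrightarrow\sigma_{(\st(\Gamma),\E',D')},
\]
where $(\E',D')$ is the $(v_0,\mu)$-quasistable pseudo-divisor on $\st(\Gamma)$ induced from the pseudo-divisor $(\E,D_{\Gamma,\mathcal A}+\div(\phi))$ on $\Gamma$ (via $\textnormal{red}_\Gamma$ and Remark \ref{rem:subdivision}). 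Because $\mu$ is the canonical degree-$0$ polarization (identically zero on every vertex), the zero divisor is itself $(v_0,\mu)$-quasistable on every stable $1$-pointed genus-$g$ graph, so the zero section of $J^{\trop}_{\mu,g}\to M^{\trop}_{g,1}$ is the union of the open cones $\sigma^{\circ}_{(\st(\Gamma),\emptyset,0)}/\Aut$. Hence $K^{\circ}_{\Gamma,\E,\phi}\subseteq DRL^{\trop}_{\mathcal A}$ if and only if $(\E',D')=(\emptyset,0)$.

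The forward inclusion is immediate: given a stable weighted graph $\Gamma$ with $n+1$ legs and a flow $\phi$ satisfying $D_{\Gamma,\mathcal A}+\div(\phi)=0$, the pseudo-divisor $(\emptyset,0)$ is trivially $(v_0,\mu)$-quasistable on $\Gamma$, so $(\emptyset,\phi)$ is admissible; its push-forward to $\widehat{\st(\Gamma)}$ followed by Remark \ref{rem:subdivision} yields the pseudo-divisor $(\emptyset,0)$ on $\st(\Gamma)$, and $K_{\Gamma,\emptyset,\phi}$ maps into the zero section.

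For the converse, which is the core of the proof, I would work on the tropical curve rather than on $\Gamma$ directly. Fix a point in $K^{\circ}_{\Gamma,\E,\phi}$ and let $X$ be the corresponding tropical curve (with $\Gamma$ as model). By the factorization above the image of $X$ in $J^{\trop}_{\mu,g}$ lies in $\sigma^{\circ}_{(\st(\Gamma),\emptyset,0)}$, so $m\omega_X+\sum_i a_i p_i\sim 0$ on $\st(X,p_0)$; since the Picard group is a birational invariant of tropical curves, the same equivalence holds on $X$ itself. By Theorem \ref{thm:quasistable} the $(p_0,\mu)$-quasistable representative of this class on $X$ is unique, and because $\mu=0$ the zero divisor is already $(p_0,\mu)$-quasistable, so the representative must be $0$. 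Translating back to the graph $\Gamma$ via Corollary \ref{cor:quasiquasi}, the corresponding pseudo-divisor is exactly the one defining the cone, namely $(\E,D_{\Gamma,\mathcal A}+\div(\phi))$; it must therefore coincide with $(\emptyset,0)$, forcing $\E=\emptyset$ and $D_{\Gamma,\mathcal A}+\div(\phi)=0$ on $\Gamma$.

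Once this bijection between open cones of $DRL^{\trop}_{\mathcal A}$ and triples $(\Gamma,\emptyset,\phi)$ of the statement is established, the comparison of face relations is routine: specializations $(\Gamma,\emptyset,\phi)\to(\Gamma',\emptyset,\phi')$ automatically preserve the condition $D+\div(\phi)=0$ and coincide with the face morphisms inherited from $M^{\trop}_{\mathcal A,\mu}$, so the two generalized cone complex structures agree. The main obstacle is the converse inclusion above: since the target cone of $\alpha^{\trop}\circ\beta^{\trop}$ only sees the stable model $\st(\Gamma)$, a priori cancellations along the edges of $\Gamma$ contracted by $\textnormal{red}_\Gamma$ could produce additional preimage cones; invoking uniqueness of the quasistable representative on $X$ (Theorem \ref{thm:quasistable}), together with Corollary \ref{cor:quasiquasi} translating quasistability between $X$ and $\Gamma$, is precisely what rules out such cancellations.
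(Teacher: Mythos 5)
Your proposal is correct and, although phrased more elaborately, follows essentially the same route as the paper: show that a point $[X]$ lands in the zero section if and only if $\D_{X,\mathcal A}\sim 0$ on $X$ (transporting the equivalence between $\st(X)$ and $X$ via the stable reduction), then translate the rational function witnessing this into a flow $\phi$ on $\Gamma$ with $D_{\Gamma,\mathcal A}+\div(\phi)=0$. The paper's proof is considerably terser — it simply produces the flow from the rational function and observes the converse directly — but the substance is the same; the "cancellation" worry you raise is exactly what uniqueness of the quasistable representative (Theorem \ref{thm:quasistable}) rules out, as you correctly note.
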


\begin{proof}
The divisor $\D_{X,\mathcal{A}}:=m\omega_X+\sum_{i=0}^n a_ip_i$ on a genus-$g$ stable $n+1$-pointed tropical curve $(X,p_0,\ldots,p_n)$ is equivalent to $0$ if and only if there exists a rational function $f$ on $X$ such that $\D_{X,\mathcal{A}}+\div(f)=0$.  This function $f$ will induce a flow $\phi$ on the stable model $\Gamma_{X}$ of $X$ such that  $D_{\Gamma_X, \mathcal{A}}+\div(\phi)=0$, hence $[X]\in K_{\Gamma_X,\emptyset,\phi}$ with $D_{\Gamma_X,\mathcal{A}}+\div(\phi)=0$. \par
Vice versa, let $\phi$ be a flow on $\Gamma$ such that $D_{\Gamma,\mathcal{A}}+\div(\phi)=0$, and $X$ be a tropical curve in $K_{\Gamma,\emptyset,\phi}$. Since $[X]\in K_{\Gamma,\emptyset,\phi}$, the flow $\phi$ induces a rational function $f$ on $X$. The condition $D_{\Gamma,\mathcal{A}}+\div(\phi)=0$ implies $\D_{X,\mathcal{A}}+\div(f)=0$, and we are done.
\end{proof}

\begin{Rem}
We note that $DRL_{\mathcal{A}}^\trop$ is precisely the tropical spaces implicitly considered in \cite{H} (see also Section \ref{sec:comparison}).
\end{Rem}

\subsection{Geometric properties of $\ol{\M}_{\mathcal{A},\mu}$.}
\label{subsec:moduli}
 The stack $\ol{\M}_{\mathcal{A},\mu}$ is normal but it is usually singular. One can see that $\ol{\M}_{A,\mu}$, with the log-structure induced by the atlas $T_{\L_X,\mu}$,  is log-smooth. 
  It would be interesting to give a modular description of $\ol{\M}_{\mathcal{A},\mu}$. A first guess would be the functor $\mathcal F$ from the category of schemes to the category of grupoids mapping a scheme $B$ to
\[
\mathcal F(B)=\{(\pi\col \X\to B, T)\},
\]
where $\pi\col \X \to B$ is a family of $(n+1)$-pointed stable curves with sections $\Delta_0,\ldots, \Delta_n$,  and $T$ is a twister  on $\pi$, in the sense of \cite[Subsection 3.1]{CEG}, such that $\omega_{\pi}^m\otimes \O_\X(\sum \Delta_i(B))\otimes T$ is $(\Delta_0,\mu)$-quasistable. However, our impression is that this functor only describes the image of the Abel section, because it does not take into account when a twister comes from two different flows. An alternative approach for a modular interpretation could be to extend the work of \cite{MW} for torsion-free rank-$1$ sheaves.\par
	 
	Another interesting problem could be to describe the (tautological classes of the) Chow group of $\ol{\M}_{\mathcal{A},\mu}$, or at least the boundary classes and their intersections. For instance, if $\mathcal{A}=(2,-2,0)$ and $\mu=0$, then $\ol{\M}_{\mathcal{A},\mu}$ is the blow up of $\ol{\M}_{g,2}$ along the locus of two-components/two-nodes curves and two-components/three-nodes curves. It would interesting to expand the computation of the class $DRC_{\mathcal{A}}$ in terms of the tautological classes of $\ol{\M}_{g,n+1}$ and a good description of the Chow group of $\ol{\M}_{\mathcal{A},\mu}$ would be very helpful.

\subsection{Relation with the class $H_{g,\mathcal{A}}$}
 Let $\mathcal{A}$ be as in Subsection \ref{sec:double}. In \cite{FP}, Farkas and Pandharipande introduced the locus $\widetilde{H}_{g,\mathcal{A}}$ of twisted (pluri-)canonical divisors. One can see that this locus is the locus $DRL_{\mathcal{A}}$ underlying $DRC_{\mathcal A}$ (up to a change of signs in the entries of $\mathcal{A}$). If $m\geq 1$ and $\mathcal{A}$ has at least one positive entry, then $\widetilde{H}_{g,\mathcal{A}}$ has the expected codimension $g$, see \cite{FP} and \cite{S}. Moreover they defined a class $H_{g,\mathcal{A}}$ as a weighted sum of the fundamental classes of the irreducible components of $\widetilde{H}_{g,\mathcal{A}}$. We expect that for $m\geq1$, we have an equality of cycles $DRC_{\mathcal{A}}=H_{g,\mathcal{A}}$.

\subsection{Tropicalization of the Abel map}
Following the recent result \cite{ACP} on the tropicalization of $\ol{\M}_{g,n}$, 
 we proved   in \cite{AP2} that $\ol{J}^\trop_{\mu,g}$ is isomorphic to the skeleton of the Berkovich analytification of $\ol{\J}_{\mu,g}$. We believe that the same is true for $\ol{\M}_{\mathcal{A},\mu}$, i.e., the skeleton of the Berkovich analytification of $\ol{\M}_{\mathcal{A},\mu}$ is isomorphic to $\ol{M}_{\mathcal{A},\mu}^\trop$. Moreover it is expected that the following diagram 
\[
\begin{tikzcd}
  \ol{\M}^{\an}_{\mathcal{A},\mu}\arrow{r}{\beta_{\mathcal{A}}^{\an}}\arrow[swap]{rd}{\alpha_{\mathcal A}^{\an}\circ\beta_{\mathcal A}^{\an}}\arrow[bend left=28]{rrr}{\bf p} &\ol{\M}^{\an}_{g,n+1}\arrow[dashed]{d}{\alpha^{\an}_{\mathcal{A}}}\arrow{r}{\bf p}& \ol{M}^\trop_{g,n+1}\arrow{d}{\alpha_{\mathcal{A}}^\trop} &\arrow[swap]{l}{\beta_{\mathcal{A}}^\trop}\arrow{ld}{\alpha_{\mathcal A}^\trop\circ\beta_{\mathcal A}^\trop} \ol{M}^\trop_{\mathcal{A},\mu}\\
                                            & \ol{\J}^{\an}_{\mu,g} \arrow{r}{\bf p}& \ol{J}^\trop_{\mu,g}
\end{tikzcd}
\]
is commutative, where $\text{an}$ is the Berkovich analytification and $\bf p$ denotes the natural retraction maps to the skeleton. This would extend \cite[Theorem 1.3]{BR} to the universal setting. 

\section{Comparison with previous works}\label{sec:comparison}

    As mentioned in the introduction, there are a lot of recent results concerning Abel maps and the double ramification cycle. In this section, we compare the different approaches and results. We fix a sequence $\mathcal A=(a_0,\ldots, a_n,m)$ such that 
    \[
    \sum_{0\leq i\leq n} a_i+m(2g-2)=d
    \]
    and fix a universal degree-$d$ polarization $\mu$.
    \par
    
    The works \cite{H}, \cite{MW} are closely related with the present paper.  Before giving an explicit comparison, we point out that the three papers share related techniques, although set in different languages and contexts.     The part played by tropical curves in this article is played by the \emph{thickness} in \cite[Definition 3.1]{H} and by the \emph{logarithmic structure} in \cite[Section 3.1]{MW}. The part played by flows considered here is played by \emph{weightings} in \cite[Definition 2.2]{H} and by \emph{logarithmic divisors} in \cite[Definition 4.6]{MW}.  
    
    First of all, we describe the relation between the work \cite{H} and our results. The former paper deals with the case $d=0$, so let us specialize to the case $d=0$.
    In \cite[Definition 3.12]{H}, Holmes introduces a stack $\M^{\diamond}$ with a birational morphism $\beta^\diamond\col\M^\diamond\to \ol{\M}_{g,n+1}$ such that $\alpha_{\mathcal{A}}\circ\beta^\diamond$ is a morphism. The map $\beta^\diamond$ is induced by the fans $\Sigma^H_{\Gamma,D_0}$, whose cones are of the form $K_{\Gamma,\emptyset,\phi}$, with $D_0+\div(\phi)=0$. In loc.cit., these fans are called $F_{\Gamma}$ (see \cite[Definition 3.3]{H}). Clearly, when $\mu$ is the degree-$0$ canonical polarization, we have that the divisor $0$ on $\Gamma$ is $(v_0,\mu)$-quasistable. Hence, $\Sigma_{\Gamma,D_0}^H\subset \Sigma_{\Gamma,\D_0,\mu}$, which means that $\M^\diamond$ is an open dense substack of the stack $\ol{\M}_{\mathcal{A},\mu}$ introduced in the present paper; in other words $\ol{\M}_{\mathcal{A},\mu}$ is a compactification of $\M^\diamond$.  Actually, we have the same picture for any universal polarization $\mu$ such that the divisor $0$ on $\Gamma$ is $(v_0,\mu)$-quasistable for every graph $\Gamma$. Note, that since $\ol{\M}_{\mathcal{A},\mu}$ is the normalization of the closure of the image of the Abel-Jacobi section, we recover \cite[Corollary 4.6]{H}. \par 
    Next, we describe the relation with the work \cite{MW}. Define $\mathcal{A}':=(a_0,\ldots, a_n,0)$ and $d':=\sum_{0\leq i\leq n}a_i=d-m(2g-2)$.
      Let us abandon the notation in Section \ref{sec:compjac}, and denote by $\J_{d',g,n+1}\to \overline{\M}_{g,n+1}$ the universal Jacobian parametrizing invertible sheaves of total degree $d'$ on $(n+1)$-pointed stable curves of genus $g$. Note that $\J_{d',g,n+1}$ is neither separated nor it satisfies the existence part of the valuative criterion. In \cite[Theorem A]{MW}, Marcus and Wise construct a logarithmic stack $\DivMW_{g,\mathcal{A}'}$, with an explicit modular description,  sitting in a commutative diagram 
      \[
      \begin{tikzcd}
      \DivMW_{g,\mathcal{A}'} \arrow{r}{\aj}\arrow{dr}{q}& {\J}_{d',g,n+1}\arrow{d}\\
                       &\overline{\M}_{g,n+1} \arrow[bend right=30, dashed]{u}[swap]{\alpha_{\mathcal{A}'}}
      \end{tikzcd}
      \]
      such that $\aj$ is a closed embbeding, $q$ is birational and $\aj=\alpha_{\mathcal{A}'}\circ q$. \par
           
      Let $\mu'$ be a universal degree-$d'$ polarization and  consider the Jacobian $\J_{\mu',g,n+1}$ parametrizing $(\Delta,\mu')$-quasistable invertible sheaves. Of course, we have an inclusion $\J_{\mu',g,n+1}\subset \J_{d',g,n+1}$.   We have that $\ol{\M}_{\mathcal{A}',\mu'}$ is a compactification of the normalization of $\DivMW_{g,\mathcal{A}'}\cap \J_{\mu',g,n+1}$. We note that $\DivMW_{g,\mathcal{A}'}$ is not separated, while $\DivMW_{g,\mathcal{A}'}\cap \J_{\mu',g,n+1}$ is a separated open substack of $\DivMW_{g,\mathcal{A}'}$. 
      If $m=0$ and $d=0$, then $\mathcal{A}=\mathcal{A}'$, $d=d'$,  and $\M^{\diamond}$ is the normalization of $\DivMW_{g,\mathcal{A}'}\cap \mathcal{P}ic_{g,n+1}^{\underline 0}$, where $\mathcal{P}ic_{g,n+1}^{\underline 0}\subset \J_{0,g,n+1}$ is the space parametrizing pairs $(C,L)$ consisting of a $(n+1)$-pointed stable curve $C$ of genus $g$ and an invertible sheaf $L$ on $C$ having degree $0$ on every component. \par
      
       We already saw that the spaces $\M^{\diamond}$ and $\DivMW_{g,\mathcal{A}'}$ are not proper over $\ol{\M}_{g,n+1}$. Nevertheless, the former space has the property that the pre-image of the zero section in $\J_{0,g,n+1}$ is proper over $\ol{\M}_{g,n+1}$, and the latter space (when $d=0$ and $d'=-m(2g-2)$) has the property that the pre-image of the pluricanonical section $\alpha_{(0,\ldots,0,-m)}\col \overline{\M}_{g,n+1}\to \J_{d',g,n+1}$ is proper over $\ol{\M}_{g,n+1}$.  Each one of these properties is enough to construct a double ramification cycle for any $m$ (see \cite[Theorem 1.2]{H} and \cite[Theorem C]{MW}), even though the stack $\DivMW_{g,{\mathcal{A}'}}$ is only constructed for $m=0$. As expected, both constructions give rise to the same cycle  (see \cite[Section 2.6]{HS}), which agrees,  when $m=0$, with the double ramification cycle defined in \cite{L1,L2,GV} (see \cite[Theorem 1.3]{H}). Since our space $\ol{\M}_{\mathcal{A},\mu}$ is a compactification of $\M^{\diamond}$, this is the same double ramification cycle considered in Section \ref{sec:double}.\par

      The main advantage of our approach is that we construct a proper Deligne-Mumford stack, hence better suited to do intersection theory. We describe explicitly the local structure of this stack by means of tropical geometry. Our construction works for every degree, which can be helpful if one intends to study the pull back of other Brill-Noether cycles. The price to pay is that, as expected, we have to consider torsion-free rank-$1$ sheaves instead of just invertible sheaves.\par

      Another alternative way to solve the Abel-Jacobi section, that works in any degree, is to find a suitable polarization $\mu$ for which the map is already defined. In \cite[Proposition 4.25]{M15}, Melo explicitly describes one such polarization. In \cite[Corollary 6.8]{KP}, Kass and Pagani characterize all polarizations $\mu$ such that the Abel-Jacobi section is a morphism. In this case, we have that $\ol{\M}_{\mathcal{A},\mu}=\ol{\M}_{g,n+1}$. However, it is not always the case that the zero section extends to a section  $\overline{\M}_{g,n+1}\to \overline{\J}_{\mu,g,n+1}$: this is an obstruction to construct a double ramification cycle.
    If the zero section does extend, that is, if the universal polarization $\mu$ is such that $\mathcal{P}ic_{g,n+1}^{\underline{0}}\subset \overline{\J}_{\mu,g,n+1}$, then we can define the double ramification cycle simply by pulling back the zero section. This construction agrees with the ones already mentioned: this is \cite[Theorem 3.1]{HKP}.\par

       As a final note, we remark that in \cite[Theorem 0.1]{Gu}, Gu\'er\'e  constructs a moduli space of $m$-log canonical divisors, carrying a perfect obstruction theory and allowing the definition of a double ramification cycle. This construction is more in the spirit of Li, Graber and Vakil \cite{L1,L2, GV}, and bypass the need for resolving the Abel Jacobi section. However, for $m>1$, this cycle does not coincide with the cycle constructed in \cite{H} and considered here, see \cite[Section 1.5]{HS}.

\section*{Acknowledgments}	

We wish to thank Dan Abramovich, Matthew Baker, Lucia Caporaso, Renzo Cavalieri, Eduardo Esteves, David Holmes, Margarida Melo, Nicola Pagani, Dhruv Ranganathan, and Martin Ulirsch for useful conversations and comments. We thank the referee for carefully reading the paper and for the constructive suggestions.  The second author was supported by CNPq, 301314/2016-0.

\bigskip
\noindent{\smallsc Alex Abreu and Marco Pacini,}

\noindent{\smallsc Instituto de Matem\'atica--Universidade Federal Fluminense,}

\noindent{\smallsc Rua Prof.M.W.de Freitas, s/n,  24210-201, Niter\'oi-Rio de Janeiro, Brazil.}

\noindent 
{\smallsl E-mail addresses: \small\verb?alexbra1@gmail.com?  and \small\verb?pacini.uff@gmail.com?}

\end{document}